\documentclass{jams-l}

\usepackage{pifont}
\usepackage{amssymb}
\usepackage[makeroom]{cancel}
\usepackage{graphicx}
\usepackage[cmtip,all]{xy}
\usepackage[usenames,dvipsnames]{color}
\usepackage{upgreek}
\usepackage{multirow}
\usepackage{hyperref}
\definecolor{linkcolour}{rgb}{0,0.2,0.6}
\hypersetup{colorlinks,breaklinks,urlcolor=linkcolour, linkcolor=linkcolour}
\usepackage{multicol}
\usepackage{colortbl}
\definecolor{bobby}{gray}{0}
\definecolor{LightCyan}{rgb}{0.88,1,1}
\usepackage{arydshln}
\usepackage{soul}


\newtheorem{theorem}{Theorem}[section]
\newtheorem{lemma}[theorem]{Lemma}
\newtheorem{proposition}[theorem]{Proposition}

\theoremstyle{definition}
\newtheorem{definition}[theorem]{Definition}
\newtheorem{convention}[theorem]{Convention}

\newtheorem{example}[theorem]{Example}

\theoremstyle{remark}
\newtheorem{remark}[theorem]{Remark}

\numberwithin{equation}{section}

\newcommand\dcap{\mathrel{\ooalign{\rotatebox[origin=c]{-90}{$\longrightarrow$}\cr\kern0.4ex\hbox{$\not$}}}}

\usepackage{geometry}
 \geometry{
 a4paper,
 left=25mm,
 right=25mm,
 top=35mm,
 bottom=35mm,
 }

\begin{document}

\large

\title{A category theoretical argument for causal inference} 

\author{R\'{e}my Tuy\'{e}ras}
\address{MIT Computer Science \& Artificial Intelligence Laboratory, 32 Vassar St, Cambridge, MA 02139}
\curraddr{}
\email{rtuyeras@mit.edu}

\thanks{This research was supported by the NIH grants R01-HG008155 and R01-AG058002. Email Manolis Kellis at manoli@mit.edu}

\date{}

\dedicatory{}

\begin{abstract}
The goal of this paper is to design a causal inference method accounting for complex interactions between causal factors. The proposed method relies on a category theoretical reformulation of the definitions of dependent variables, independent variables and latent variables in terms of products and arrows in the category of unlabeled partitions. Throughout the paper, we demonstrate how the proposed method accounts for possible hidden variables, such as environmental variables or noise, and how it can be interpreted statistically in terms of $p$-values. This interpretation, from category theory to statistics, is implemented through a collection of propositions highlighting the functorial properties of ANOVA. We use these properties in combination with our category theoretical framework to provide solutions to causal inference problems with both sound algebraic and statistical properties. As an application, we show how the proposed method can be used to design a combinatorial genome-wide association algorithm for the field of genetics.
\end{abstract}

\maketitle

\section{Introduction}\label{sec:introduction}

\subsection{Background}
Causal inference is the art of recognizing whether a given observation can be the cause of another given observation \cite{Pearl,MorganWinship}. Because observations are likely to be measured with noise, it is important to take into account possible measurement defects in the analysis process. Furthermore, these defects can hide non-observable factors (usually referred to as \emph{latent variables}) that also contribute to the observation. For instance, if an observation is the result of a large combination of factors, then the measured effects may not be fully explainable through the set of considered variables. All these aspects have been extensively studied through a wide range of statistical and hypothesis testing methods. Examples of such methods are: (multiple) analysis of variance (known as (M)ANOVA) \cite{Oehlert,Warne}; generalized linear models \cite{Gelman}, including least squares analyses \cite{Oehlert,Gelman}, linear mixed models \cite{Raudenbush,Pinheiro,Zhu} and hierarchical linear models \cite{Duckworth}; instrument variables analysis such as two-stage least squares methods \cite{Hill,AngristImbens}; and structural equation modeling \cite{Loehlin}.

Unfortunately, the aforementioned methods are often limited by linear restrictions that prevent them from detecting complex interactions between causal factors; see \cite[Ch. 1, page 6]{Loehlin} and \cite[section 2]{Moore}. Specifically, these linear assumptions would limit the method to returning hints of correlations rather than clear dependency patterns. Because correlation does not imply causation, these methods also need to be supported with further discussion, analysis or analytical methods (see \cite{Shi}).
In spite of these limitations, linear-model-based methods have remained more popular than combinatorial methods due to their superior computational speed.
On the other hand, answers to current and emerging research questions seem to lie in the discovery of complex causal relationships. Because detecting these complex relationships can be a challenge for linear methods, there is now a need for causal inference methods that can go beyond the linear realm.

In this paper, we chose to diverge from the usual path of statistical analysis and use the category of unlabeled partitions to formalize and design a theoretical method that detects multivariate and combinatorial causal relationships. Since partitioning is equivalent to reading data, we expect the implementations of such an algorithm to be reasonably fast. At the end of the paper, we explain how our methods can be used to design a combinatorial GWAS for the field of genetics.

\subsection{Motivation and presentation} \label{ssec:motivations}
One of the goals of the present article is to tackle the question of designing a general combinatorial genome-wide association algorithm for the field of genetics. Before explaining what this sentence means, we shall recall what genome-wide association studies (GWASs) are and explain why these studies usually fail to be `combinatorial.' 

The basic principle of a GWAS is to identify, for a given population, the genetic mutations (called \emph{single nucleotide polymorphisms}, or SNPs) that lead to a given phenotype (\emph{e.g} a disease) within that population. Hence, the set up of a GWAS would often involve a set of individuals $S$ whose genetic information is known and a function $y:S \to \mathbb{R}$ that associates every individual in $S$ with a phenotype measurement $\mathbb{R}$. The genetic information of every individual would be modeled by a function 
\[
g:S \times \{0,1\} \times \{1,\dots,N\} \to \{\mathtt{A},\mathtt{C},\mathtt{G},\mathtt{T}\}
\]
that takes, as inputs,

1) an individual $i \in S$, possessing the corresponding genetic information;

2) a binary index $c \in \{0,1\}$, indicating which of the two chromatids\footnote{either the mother's chromatid or the father's chromatid} is looked at ;

3) an index $p$, indicating the position of the nucleotide considered in the chromatid, 

\noindent and returns the nucleotide symbol observed at position $p$ on the chromatid $c$ of the individual $i$. For chemical and statistical reasons, fixing the position $p$ would usually make the images $g(i,c,p)$ run over a set of two nucleotide symbols. As a result, if we fix the position $p$, then the word $g(i,0,p)g(i,1,p)$, call it $g_p(i)$, is likely to run over a set of four elements, as shown below.
\[
\{\mathtt{AA},\mathtt{TA},\mathtt{AT},\mathtt{TT}\}.
\]
Now, in a GWAS, we would use this type of information to partition the individuals $i \in S$ in three groups depending on whether the image $g_p(i)$ is either $\mathtt{AA}$ (first group), $\mathtt{TT}$ (third group) or either one of the pairs $\mathtt{TA}$ or $\mathtt{AT}$ (the second group). Once the individuals of $S$ are partitioned\vspace{-6pt}

\hspace{-11.9pt}\begin{minipage}[b]{0.4\linewidth}
for every position $p$, the GWAS consists in finding those partitions of $S$ that best match a non-trivial linear regression of the phenotype measurements $y:S \to \mathbb{R}$ (see on the right). The slope of the linear curve is then taken as a measurement of causation: the further from zero, the more likely the SNP located at position $p$ is the cause of the observed phenotype -- at least in theory. In a way, the goal of the linear regression is to reduce the set of phenotype values to three repre-
\end{minipage}\hspace{15pt}
\begin{minipage}[b]{0.5\linewidth}
\[
\includegraphics[width = 8.1cm]{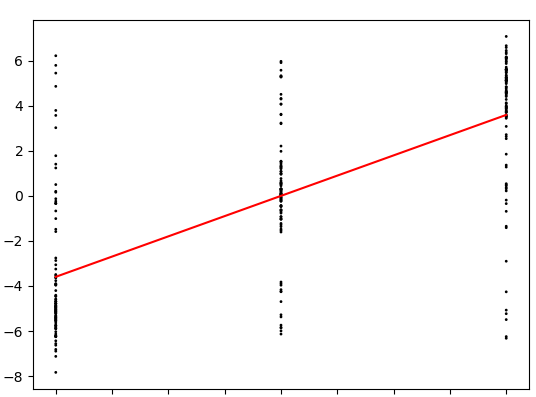}
\]
\end{minipage}
\vspace{4pt}

\noindent
sentative values, namely the values taken by the linear curve at the coordinates associated with the three groups of individuals (usually 0, 1 and 2). In essence, a GWAS compares these three representative phenotype values to the whole distribution of the phenotype values as a way to assess the quality of the partitioning of $S$. 

In this spirit, a GWAS could be seen as a study in which one tries to match each genotypic partition of $S$ to a theoretical partition of $S$ induced from the phenotype values. Although this phenotypic partition of $S$ is only theoretical, nothing prevents us from inferring this partition and actually compare it to the genotypic partitions. This line of thought can be useful if we want to go beyond additive genetic effects, which is to say those genetic effects that do not necessarily fit a linear equation. For instance, certain genetic effects, such as overdominant and underdominant effects, better fit quadratic curves, while other genetic effects, such as dominant and recessive effects, better fit exponential curves \cite{Tsepilov,Verona}. In addition to these non-additive effects, non-linearity may also occur when detecting combined effects of SNPs \cite{Corradin,Nelson,Culverhouse,Moore,Mackay}. For instance, it is not obvious to see how a partitioning induced by several positions $p_1,p_2,\dots,p_n$ can fit a linear equation. This is why GWASs and most statistical methods built around it (see \cite{Zhu,Paternoster,Shi,Loh,Yang}) cannot be combinatorial: they will usually struggle reconstructing the multivariate complex effects generated by an interaction of several mutations and their environment (see \cite{Moore,GWASChallenge}).

It is worth noting that previous works \cite{Llinarez2017,Llinarez-software} have managed to design combinatorial association methods for the study of genotype-phenotype associations. However, these methods only look at binary data and use preprocessing operations in order to reduce the complexity of the genome. Such limitations can be a problem if we want to analyze the type of data used in classical GWASs. 

Motivated by the aforementioned challenges, the present article tackles the question of making GWASs more combinatorial by proposing a translation of the usual formalism of linear regressions into a category theoretical setting  \cite{MacLane}. In order to receive validation from the biological community, we will design our method so that it is possible to interpret our results statistically in terms of $p$-values. Importantly, these $p$-values will not be generated by statistical linear regression methods, but through ANOVA \cite{Oehlert}, which is essentially a combinatorial relaxation of linear regression methods. 

While linear regressions for GWAS take the form of matrix equations 
\[
Y = \beta G + \varepsilon
\]
where $Y$ is the vector of phenotype values, $G$ is a vector of genotypes, $\beta$ is the slope of the regression and $\varepsilon$ is the error vector (see \cite{Zhu}), in our case, we translate such an analysis into the existence of an arrow 
\[
g \times \varepsilon \to y
\]
in the category of ``unlabeled partitions'' (see Definition \ref{def:unlabeled_partitions}), where the object $g$ corresponds to the genotypic partitioning of $S$, $y$ corresponds to the phenotypic partitioning of $S$ and $\varepsilon$ is a latent variable correcting for what the partition $g$ cannot explain in the partition of $y$. Hence, we are no longer solving an equation, but finding a relation of the form $g \times \varepsilon \to y$. As will be seen later, this will essentially amount to finding a partition $\varepsilon$ with good properties.

The advantage of our formalism over a usual linear regression is that the genotype vector $G$ would usually look at one SNP at a time, whereas our genotypic partition $g$ can be induced by any collection of SNPs. This difference is what makes our GWAS method combinatorial. Unfortunately, this advantage also brings its share of technicalities. Indeed, an obvious solution for the partition $\varepsilon$ is the partition $y$ itself. This means that random solutions $\varepsilon$ could well overfit the phenotypic information $y$, which is statistically not desirable. To avoid overfitting $y$, we will need to make sure that the latent variable $\varepsilon$ is minimal among the space of solutions. This minimality property will ensure that the latent variable $\varepsilon$ is only accounting for what the genotypic partition $g$ cannot explain in the partition $y$. 

The construction of latent variables $\varepsilon$ that are minimal and satisfy desirable statistical properties is the main technical achievement of this article. 
Note that even constructing minimal solutions is already a non-trivial problem in itself. For instance, we could think that if $\varepsilon_1$ and $\varepsilon_2$ are solutions for the pair $(g,y)$, then the coproduct partition $\varepsilon_1+\varepsilon_2$ (which is `smaller' than $\varepsilon_1$ and $\varepsilon_2$ in the category of unlabeled partitions) is also a solution. However, this may not be the case because the left arrow of the following canonical span is not necessarily an isomorphism.
\[
g \times (\varepsilon_1+\varepsilon_2) \leftarrow g \times \varepsilon_1+ g \times \varepsilon_2 \rightarrow y
\] 
As a result, the arrow $g \times (\varepsilon_1+\varepsilon_2) \to y$ may simply not exist. Hence, constructing a minimal solution as a copoduct of other solutions is likely to end up with no solution and cannot be used as an algorithmic principle.
Instead, we will tackle the problem by characterizing minimal solutions combinatorially (see Theorem \ref{theo:minimal-solutions}).

As an application, we will use our construction to design a general combinatorial GWAS algorithm that takes a phenotypic partition $y$ and returns a genotypic partition $g$ whose associated latent variable $\varepsilon$ is as insignificant as possible, meaning that the genotypic partition $g$ can almost (if not completely) explain the partitioning $y$ through an arrow $g \to y$. 

\subsection{Road map and results}
The goal of this article is to design a method that detects combinatorial causal relationships between a set of candidate causal variables and a set of observations. To do so, we use categorical concepts such as functoriality and universality. Our main contribution consists of two theorems, stated in Theorem \ref{theo:optimal-solutions-construction} and Theorem \ref{theo:minimal-solutions}, as well as a theoretical algorithm for combinatorial GWASs (see section \ref{ssec:combinatorial-GWAS}).

The article consists of four main sections (in addition to section \ref{sec:introduction}). First, section \ref{sec:partitions} (\emph{Partitions})  sets up the categorical formalism in which most of our results are expressed:
\begin{itemize}
\item[$\blacktriangleright$] in section \ref{ssec:labeled-partitions}, we define the category of labeled partitions $\mathbf{LP}(S)$ (Definition \ref{def:labeled-partitions}) and give a construction for its finite products (see Proposition \ref{prop:products_labeled_partitions}). In Proposition \ref{prop:product-idempotency:labeled}, we show that these products are idempotent;
\item[$\blacktriangleright$] in section \ref{ssec:unlabeled-partitions}, we construct an equivalent subcategory $\mathbf{UP}(S)$ of $\mathbf{LP}(S)$ in which all isomorphisms are identities (see Proposition \ref{lem:adjoint_equivalence} and Proposition \ref{prop:reflection_functor_partitions});
\item[$\blacktriangleright$] in section \ref{ssec:product-decomposition}, we show various combinatorial results on products in $\mathbf{UP}(S)$ and $\mathbf{LP}(S)$. These results will be substantial to the proof of our main theorems.
\end{itemize}
Then comes section \ref{sec:causal-inference} (\emph{Causal inference}), in which we define causal inference problems in terms of ``embedding problems'' and provide theorems regarding their solutions:
\begin{itemize}
\item[$\blacktriangleright$] in section \ref{ssec:embedding-problems}, we define embedding problems and their solutions. This essentially amounts to considering an arrow $a\times x \to b$ in $\mathbf{UP}(S)$ where the pair $(a,b)$ defines the embedding problem and $x$ is a solution (see Definition \ref{def:embedding-problems} and Definition \ref{def:embedding-problems:solutions}). We characterize a type of solutions said to be ``optimal'' (see Definition \ref{def:optimal-solutions}) and show that these solutions are minimal (see Proposition \ref{prop:minimality}). We finish the section with our first main result Theorem \ref{theo:optimal-solutions-construction}, which gives a recipe for constructing optimal (and hence minimal) solutions.
\item[$\blacktriangleright$] in section \ref{ssec:embedding-theorems}, we use Theorem \ref{theo:optimal-solutions-construction} to construct optimal solutions of embedding problems. This is possible thanks to Proposition \ref{prop:embedding-decomposition-III}, which allows us to satisfy all the conditions required by Theorem \ref{theo:optimal-solutions-construction} (see Proposition \ref{prop:embedding-IV}). We conclude the section with our major result, Theorem \ref{theo:minimal-solutions}, which gives us a recipe for constructing minimal solutions of embedding problems.
\end{itemize}
Then follows section \ref{sec:functorial-properties-of-ANOVA} (\emph{Functorial properties of ANOVA}), in which we unravel the functorial properties surrounding the ANOVA method:
\begin{itemize}
\item[$\blacktriangleright$] in section \ref{ssec:ANOVA-functor}, we define a category $\mathbf{Data}$ that allows us to formalize the ANOVA formalism \cite{Oehlert} in terms of a functor going from a category of unlabeled partitions to $\mathbf{Data}$ (see Proposition \ref{prop:ANOVA-functor});
\item[$\blacktriangleright$] in section \ref{ssec:between-group-mss-and-codegenerary-morphims}, we re-formalize the well-known concept of between-group mean square sums \cite[page 47]{Oehlert} (see Definition \ref{def:between-group-variance}) and show that these satisfy a certain functorial property on the category $\mathbf{Data}$ (see Proposition \ref{prop:between-group-variance:functoriality}). We take advantage of this functoriality to characterize the numerical variations of between-group mean square sums relative to all morphisms in $\mathbf{Data}$. We do so by retricting our study to a type of generating morphisms, called \emph{degeneracy morphisms} (see Definition \ref{def:codegeneracy-arrows}). Importantly, we relate codegeneracy morphisms to the construction of optimal solutions (Proposition \ref{prop:contractions-codegeneracy-morphisms}). This gives us, in section \ref{sec:association-studies}, a way to control the construction of optimal and minimal solutions statistically.
\item[$\blacktriangleright$] in section \ref{ssec:F-ratios}, we re-formalize the concept of within-group mean square sums \cite[page 47]{Oehlert} (see Definition \ref{def:within-group-variance}) and use it with that of between-group mean square sums (see Definition \ref{def:within-group-variance}) to recover the well-known concept of $\mathsf{F}$-ratio associated with ANOVA \cite[page 48]{Oehlert} (see Definition \ref{def:F-ratio}). Finally, we use the functorial properties shown in section \ref{ssec:between-group-mss-and-codegenerary-morphims} to characterize the numerical variations of $\mathsf{F}$-ratios (see Proposition \ref{prop:changes-F-ratio}).
\end{itemize}
Finally, in section \ref{sec:association-studies} (\emph{Association studies}), we use the theorems of section \ref{sec:causal-inference} and the functorial properties investigated in section \ref{sec:functorial-properties-of-ANOVA} to construct minimal solutions of embedding problems with high statistical significance. A major part of the section consists in simplifying the formalism of \cite{CTGI} and using the resulting language to express our main theorems within a genetic framework. Our main goal is to use this framework to design a general combinatorial GWAS algorithm:
\begin{itemize}
\item[$\blacktriangleright$] in section \ref{ssec:plain-segments}, we borrow from the formalism of \cite{CTGI} and introduce plain segments (see Definition \ref{def:plain-segments}), which are a simplification of the concept of segments \cite{CTGI}. We associate these plain segments with a sum operation (see Definition \ref{def:sums:segments}) that play a central role in the design of our combinatorial GWAS algorithm.
\item[$\blacktriangleright$] in section \ref{ssec:genotypes}, we use the sum operation defined in section \ref{def:plain-segments} to define a \emph{pedigrad} \cite{CTGI} in the category of unlabeled partitions (see Proposition \ref{prop:pedigrad-property:genotype-partition}). This pedigrad turns out to encompass the collection of genotypic partitions naturally associated with a GWAS (see section \ref{ssec:motivations} above).
\item[$\blacktriangleright$] in section \ref{ssec:combinatorial-GWAS}, we define the concept of combinatorial association (see Definition \ref{def:combinatorial-association}) and explain, through an example (see Example \ref{exa:combinatorial-association}) how this concept relates to combinatorial GWASs in the sense of section \ref{ssec:motivations}. Finally, we show how the pedigrad of section \ref{ssec:genotypes} can be used to construct combinatorial associations and we use the resulting procedure to design our combinatorial GWAS algorithm.
\end{itemize}

\section{Partitions}\label{sec:partitions}

\subsection{Labeled partitions}\label{ssec:labeled-partitions}
In this section, we define the category of labeled partitions $\mathbf{LP}(S)$ (Definition \ref{def:labeled-partitions}) and give a construction for its finite products (see Proposition \ref{prop:products_labeled_partitions}). In Proposition \ref{prop:product-idempotency:labeled}, we show that these products are idempotent.

We start with the definition of labeled partitions. In Convention \ref{conv:Representation}, we will see that these objects encode partitions whose parts are each labeled by an element of a finite set (see Convention \ref{conv:Representation}).

\begin{definition}[Labeled partitions]\label{def:labeled-partitions}
For every finite set $S$, we will denote by $\mathbf{LP}(S)$ the category whose objects are surjections of the form $p:S \to K$ for some set $K$ and whose arrows $p_1 \to p_2$ between two objects $p_1:S \to K_1$ and $p_2:S \to K_2$ are functions $f:K_1 \to K_2$ for which the following diagram commutes.
\[
\xymatrix{
S\ar@{=}[r]&S\\
K_1\ar[r]^{f} \ar@{<-}[u]^{p_1}&K_2\ar@{<-}[u]_{p_2}
}
\]
The composition of two arrows $f:p_1 \to p_2$ and $g:p_2 \to p_3$ corresponds to the composition of the underlying functions $f$ and $g$. An object $p:S \to K$ in $\mathbf{LP}(S)$ will be called a \emph{labeled partition of $S$} and its associated set $K$ will be called the \emph{set of labels}.
\end{definition}

The following remark will be useful for proving that two given arrows can be equal.

\begin{remark}[Pre-order category]\label{rem:pre-order-LP}
For every finite set $S$, the category $\mathbf{LP}(S)$ is a pre-order category, meaning that every pair of arrows between the same pair of objects are equal. Indeed, if $f:p_1 \to p_2$ and $g:p_1 \to p_2$ are two morphisms of $\mathbf{LP}(S)$, then the identity $f \circ p_1 = p_2 = g \circ p_1$ holds. Since $p_1$ is an epimorphism of sets, the previous identity gives the identity $f = g$. We deduce that arrows of a given type $p_1 \to p_2$ are unique in $\mathbf{LP}(S)$. As a result, the relation $p_1 \to p_2$ defines a pre-order structure on the objects of $\mathbf{LP}(S)$.
\end{remark}

Below, we give an intuitive representation for labeled partitions. This representation will be useful in our examples.

\begin{convention}[Representation]\label{conv:Representation}
For every finite set $S$, a labeled partition $p:S \to K$ in $\mathbf{LP}(S)$ will be represented by its collection of fibers (also called ``parts''). Recall that, for every $k \in K$, the \emph{fiber of $p$ above $k$} is the subset $p^{-1}(k)$ of $S$ containing the elements $x \in S$ for which the equation $p(x) = k$ holds. 
For example, a labeled partition of the form $$p:S \to \{0,\mathtt{A},\flat\}$$ will be represented by the collection $p^{-1}(0),p^{-1}(\mathtt{A}),p^{-1}(\flat)$.
Because the elements of the set $K$ are not ordered in an obvious way, we will label the fibers of $p$ with the corresponding elements of $K$ in a visible way. For example, if we take $K = \{0,\mathtt{A},\flat\}$ and $S = \{a,b,c,d,e,f\}$ such that the surjection $p$ is equipped with the fibers $p^{-1}(0) = \{b,c,e\}$, $p^{-1}(\mathtt{A}) = \{a,d\}$, and $p^{-1}(\flat) = \{f\}$, then we represent the associated partition of $p$ as $\{b,c,e\}_{0},\{a,d\}_{\mathtt{A}},\{f\}_{\flat}$.
\end{convention}

Definition \ref{def:image} and Proposition \ref{prop:factorization_system} provide a construction that will allow us to turn any function into a labeled partition. In Proposition \ref{prop:products_labeled_partitions}, we will use this construction to define products in categories of labeled partitions.

\begin{definition}[Image]\label{def:image}
For every function $f:X \to Y$, we will call the \emph{image of $f$} the subset $\mathsf{Im}(f)$ of $Y$ that contains the element $f(x)$ for every $x \in X$.
\end{definition}

\begin{proposition}[Factorization system]\label{prop:factorization_system}
Every function $f:X \to Y$ can be factorized as a composite $\mathsf{m}(f) \circ \mathsf{e}(f)$ where $\mathsf{e}(f):S \to \mathsf{Im}(f)$ is a surjection and $\mathsf{m}(f):\mathsf{Im}(f) \to Y$ is an injection of the form given below.
\[
\mathsf{e}(f):
\left\{
\begin{array}{lll}
X &\to &\mathsf{Im}(f)\\
x &\mapsto & f(x)
\end{array}
\right.
\quad\quad\quad\quad
\mathsf{m}(f):
\left\{
\begin{array}{lll}
\mathsf{Im}(f) &\to &Y\\
y &\mapsto & y
\end{array}
\right.
\]
For every other factorization $f = m \circ e$ where $e:X \to Z$ is a surjection and $m:Z \to Y$ is any morphism, there exists a unique arrow $u:Z \to \mathsf{Im}(f)$ making the following diagram commute.
\[
\xymatrix{
X\ar[r]^{\mathsf{e}(f)}\ar[d]_{e}&\mathsf{Im}(f)\ar[d]^{\mathsf{m}(f)}\\
Z\ar@{-->}[ru]^u\ar[r]_{m}&Y
}
\]
\end{proposition}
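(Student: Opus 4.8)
The plan is to prove the standard (epi, mono) factorization result for functions by verifying each stated claim directly. First I would check that the proposed factorization $f = \mathsf{m}(f)\circ\mathsf{e}(f)$ holds pointwise: for every $x\in X$, we have $\mathsf{m}(f)(\mathsf{e}(f)(x)) = \mathsf{m}(f)(f(x)) = f(x)$, using that $f(x)\in\mathsf{Im}(f)$ by Definition \ref{def:image} and that $\mathsf{m}(f)$ is the inclusion. I would then confirm the two factor maps have the asserted properties: $\mathsf{e}(f)$ is a surjection because every element of $\mathsf{Im}(f)$ is by definition of the form $f(x)$ for some $x\in X$, and $\mathsf{m}(f)$ is an injection because it is the inclusion of a subset, so distinct elements of $\mathsf{Im}(f)$ map to distinct elements of $Y$.

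Next I would establish the universal (diagonal fill-in) property. Given any other factorization $f = m\circ e$ with $e:X\to Z$ a surjection, I want a unique $u:Z\to\mathsf{Im}(f)$ with $u\circ e = \mathsf{e}(f)$ and $\mathsf{m}(f)\circ u = m$. To construct $u$, I would define it on an arbitrary $z\in Z$ by choosing some $x\in X$ with $e(x)=z$ (possible since $e$ is surjective) and setting $u(z) := f(x)$. The key point is well-definedness: if $e(x)=e(x')$, then $m(e(x))=m(e(x'))$, i.e. $f(x)=f(x')$, so the value $u(z)$ does not depend on the chosen preimage. By construction $u(z)=f(x)\in\mathsf{Im}(f)$, so $u$ indeed lands in $\mathsf{Im}(f)$.

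I would then verify that this $u$ makes both triangles commute. For the upper triangle, $u(e(x)) = f(x) = \mathsf{e}(f)(x)$ for all $x$, so $u\circ e = \mathsf{e}(f)$. For the lower triangle, $\mathsf{m}(f)(u(z)) = \mathsf{m}(f)(f(x)) = f(x) = m(e(x)) = m(z)$, so $\mathsf{m}(f)\circ u = m$. Finally, for uniqueness, suppose $u'$ also satisfies $u'\circ e = \mathsf{e}(f)$. Since $e$ is an epimorphism of sets (being surjective), $u'\circ e = u\circ e$ forces $u' = u$. This cancellation argument is exactly the mechanism already invoked in Remark \ref{rem:pre-order-LP}.

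I do not expect any genuine obstacle here; the result is the elementary image factorization of a function, and every step reduces to an immediate set-theoretic check. The only point requiring a moment of care is the well-definedness of $u$, where one must use the factorization identity $f = m\circ e$ to see that the choice of preimage under $e$ is irrelevant — this is where the hypothesis that $e$ be a surjection (and that $m\circ e$ equals $f$) is genuinely used. Everything else is routine verification of commuting triangles and the standard epi-cancellation for uniqueness.
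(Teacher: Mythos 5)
Your proposal is correct and follows essentially the same route as the paper's proof: define $u$ on $z \in Z$ via a preimage under the surjection $e$, use the identity $f = m \circ e$ to establish well-definedness, and conclude uniqueness by cancelling the epimorphism $e$. The only cosmetic difference is that you observe $\mathsf{e}(f)(x) = f(x)$ directly, whereas the paper reaches the same equality through the injectivity of $\mathsf{m}(f)$; this changes nothing of substance.
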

\begin{proof}
It is easy to verify that $\mathsf{e}(f)$ is a surjection; $\mathsf{m}(f)$ is an injection and that the equation $f = \mathsf{m}(f) \circ \mathsf{e}(f)$ holds. To show the universality property associated with this factorization, let us consider commutative square as given below, on the left, where $e$ is a surjection. 
\[
\xymatrix{
X\ar[r]^-{\mathsf{e}(f)}\ar[d]_{e}&*+!L(.7){\mathsf{Im}(f)}\ar[d]^{\mathsf{m}(f)}\\
Z\ar[r]_-{m}&Y
}
\] 
For every element $z$ in the $Z$, the fiber $e^{-1}(z)$ is not empty since $e$ is a surjection. For every pair of elements $t_1$ and $t_2$ in $e^{-1}(z)$, the following identity holds.
\[
f(t_1) = \mathsf{m}(\mathsf{e}(t_1)) = \mathsf{m}(z) = \mathsf{m}(\mathsf{e}(t_2)) = f(t_2)
\]
Furthermore, the identity $\mathsf{e}(f)(t_1) = \mathsf{e}(f)(t_2)$ holds. Indeed, if it did not, then because $\mathsf{m}(f)$ is an injection and $f = \mathsf{m}(f) \circ \mathsf{e}(f)$, we would have the identity $f(t_1) \neq f(t_2)$, which contradicts our assumption. Thus, the mapping $z \mapsto \mathsf{e}(f)(e^{-1}(z))$ is well-defined and provides a function $u:Z \to \mathsf{Im}(f)$  making the following diagram commute.
\[
\xymatrix{
X\ar[r]^-{\mathsf{e}(f)}\ar[d]_{e}&*+!L(.7){\mathsf{Im}(f)}\ar[d]^{\mathsf{m}(f)}\\
Z\ar[ru]^-{u}\ar[r]_-{m}&Y
}
\]
Since $e$ is an surjection (\emph{i.e.} an epimorphism), such an arrow $u$ must be unique.
\end{proof}

We can now use Proposition \ref{prop:factorization_system} to define products in categories of partitions.

\begin{proposition}[Products]\label{prop:products_labeled_partitions}
For every finite set $S$, the Cartesian product of any pair of labeled partitions $p_1:S \to K_1$ and $p_2:S \to K_2$ in $\mathbf{LP}(S)$ exists and is given by canonical surjection $\mathsf{e}(p_1,p_2):S \to \mathsf{Im}(p_1,p_2)$ induced by the Cartesian pairing $(p_1,p_2):S \to K_1\times K_2$.
\[
\xymatrix@C+10pt{
S \ar[r]^-{\mathsf{e}(p_1,p_2)}\ar@/_1.4pc/[rr]_-{(p_1,p_2)} & \mathsf{Im}(p_1,p_2) \ar[r]^-{\mathsf{m}(p_1,p_2)} & K_1\times K_2
}
\]
\end{proposition}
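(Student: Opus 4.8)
The plan is to verify that $\mathsf{e}(p_1,p_2)$, equipped with suitable projection arrows, satisfies the universal property of a Cartesian product in the pre-order category $\mathbf{LP}(S)$. First I would note that $\mathsf{e}(p_1,p_2):S \to \mathsf{Im}(p_1,p_2)$ is a genuine object of $\mathbf{LP}(S)$: by Proposition \ref{prop:factorization_system} it is a surjection, hence a labeled partition of $S$ in the sense of Definition \ref{def:labeled-partitions}.

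Next I would produce the two projection arrows. Writing $\mathrm{pr}_1:K_1\times K_2 \to K_1$ and $\mathrm{pr}_2:K_1\times K_2 \to K_2$ for the canonical set projections, I would set $\pi_i := \mathrm{pr}_i \circ \mathsf{m}(p_1,p_2)$ for $i = 1,2$. To see that $\pi_i$ is an arrow $\mathsf{e}(p_1,p_2) \to p_i$ in $\mathbf{LP}(S)$, I would check the defining commutativity of Definition \ref{def:labeled-partitions}: using the identity $\mathsf{m}(p_1,p_2)\circ \mathsf{e}(p_1,p_2) = (p_1,p_2)$ supplied by Proposition \ref{prop:factorization_system} together with $\mathrm{pr}_i\circ(p_1,p_2) = p_i$, one obtains $\pi_i \circ \mathsf{e}(p_1,p_2) = p_i$, which is exactly the required condition.

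The heart of the argument is universality. Given any labeled partition $q:S \to L$ together with arrows $a:q\to p_1$ and $b:q\to p_2$ (that is, functions $a:L\to K_1$ and $b:L\to K_2$ satisfying $a\circ q = p_1$ and $b\circ q = p_2$), I would form the pairing $(a,b):L \to K_1\times K_2$ and observe that $(a,b)\circ q = (p_1,p_2)$. This exhibits $(p_1,p_2)$ as a factorization $m\circ e$ with surjection $e = q$ and arbitrary morphism $m = (a,b)$, so applying the universal property of Proposition \ref{prop:factorization_system} to $f = (p_1,p_2)$ yields a unique function $u:L \to \mathsf{Im}(p_1,p_2)$ with $u\circ q = \mathsf{e}(p_1,p_2)$ and $\mathsf{m}(p_1,p_2)\circ u = (a,b)$. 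The first identity says precisely that $u$ is an arrow $q \to \mathsf{e}(p_1,p_2)$ in $\mathbf{LP}(S)$, while post-composing the second with $\mathrm{pr}_1$ and $\mathrm{pr}_2$ gives $\pi_1 \circ u = a$ and $\pi_2\circ u = b$, so $u$ commutes with the projections.

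Finally, uniqueness of the mediating arrow is essentially free. Since $\mathbf{LP}(S)$ is a pre-order category (Remark \ref{rem:pre-order-LP}), there is at most one arrow $q \to \mathsf{e}(p_1,p_2)$, and any such arrow automatically commutes with $\pi_1$ and $\pi_2$, because $\pi_i\circ u$ and the given arrow into $p_i$ are both arrows $q \to p_i$ and are therefore equal. I expect the only genuinely delicate point to be the bookkeeping: correctly matching the data $(e,m) = (q,(a,b))$ against the hypotheses of Proposition \ref{prop:factorization_system}, so that its universal arrow simultaneously lands in $\mathsf{Im}(p_1,p_2)$, serves as the $\mathbf{LP}(S)$-arrow $q \to \mathsf{e}(p_1,p_2)$, and acts as the product mediator.
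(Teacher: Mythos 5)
Your proof is correct and follows essentially the same route as the paper's: both define the projections as $\mathrm{pr}_i \circ \mathsf{m}(p_1,p_2)$, form the Cartesian pairing of the given cone into $K_1\times K_2$, and invoke the universal property of the image factorization (Proposition \ref{prop:factorization_system}) to obtain the mediating arrow. Your explicit appeal to the pre-order structure (Remark \ref{rem:pre-order-LP}) for uniqueness is a minor presentational difference, not a different argument.
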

\begin{proof}
Let $\pi_1:K_1\times K_2 \to K_1$ and $\pi_2:K_1\times K_2 \to K_2$ denote the Cartesian structure of $K_1 \times K_2$.
To show the proposition, we will show that the span of morphisms $\pi_1 \circ \mathsf{m}(p_1,p_2):\mathsf{e}(p_1,p_2) \to p_1$ and $\pi_2 \circ \mathsf{m}(p_1,p_2):\mathsf{e}(p_1,p_2) \to p_2$ in $\mathbf{LP}(S)$ is a product in $\mathbf{LP}(S)$. Showing this statement amounts to showing that for every labeled partition $p:S \to K$ for which there exist two functions $f_1$ and $f_2$ making the following diagram commutes
\[
\xymatrix{
&S\ar[dl]_{p_1}\ar[drrr]|<<<<<<<<<<\hole^>>>>>>>>>>>>>{p_2}\ar[rr]^{p}&&K\ar[dlll]_>>>>>>>>>>>>{f_1}\ar[dr]^{f_2}&\\
K_1&&&&K_2
}
\]
there exists a unique function $f:K \to \mathsf{Im}(p_1,p_2)$ making the following diagrams commute.
\begin{equation}\label{eq:product_labeled_partitions}
\xymatrix{
S\ar@{=}[r]\ar[d]_{p}&S\ar[d]^{\mathsf{e}(p_1,p_2)}\\
K \ar[r]^-{f} &\mathsf{Im}(p_1,p_2)
}
\quad\quad\quad
\xymatrix@C+20pt{
&K\ar[dl]_-{f_1}\ar[dr]^-{f_2}\ar[d]_{f}&\\
K_1&\mathsf{Im}(p_1,p_2)\ar[l]^-{\pi_1 \circ \mathsf{m}(p_1,p_2)}\ar[r]_-{\pi_2 \circ \mathsf{m}(p_1,p_2)}&K_2
}
\end{equation}
To show this, observe that the universality of $K_1\times K_2$ gives us the Cartesian pairing $(f_1,f_2):K \to K_1 \times K_2$ making the following diagrams commute.
\begin{equation}\label{eq:product_labeled_partitions:pre}
\xymatrix{
S\ar@{=}[r]\ar[d]_{p}&S\ar[d]^{(p_1,p_2)}\\
K \ar[r]^-{(f_1,f_2)} &K_1\times K_2
}
\quad\quad\quad
\xymatrix@C+20pt{
&K\ar[dl]_-{f_1}\ar[dr]^-{f_2}\ar[d]|{(f_1,f_2)}&\\
K_1&K_1\times K_2\ar[l]^-{\pi_1}\ar[r]_-{\pi_2}&K_2
}
\end{equation}
By Propostion \ref{prop:factorization_system}, the factorization of $(p_1,p_2) = \mathsf{m}(p_1,p_2) \circ \mathsf{e}(p_1,p_2)$ is associated with a universal property, which we can use in the leftmost diagram of (\ref{eq:product_labeled_partitions:pre}). Doing so gives us a canonical arrow $f:K \to \mathsf{Im}(p_1,p_2)$ for which the equations $(f_1,f_2) = \mathsf{m}(p_1,p_2) \circ f$ and $f \circ p = \mathsf{e}(p_1,p_2)$ hold. While the latter gives us leftmost diagram of (\ref{eq:product_labeled_partitions}), we can the former in the rightmost commutative diagram of (\ref{eq:product_labeled_partitions:pre}) to obtain the rightmost commutative diagram of (\ref{eq:product_labeled_partitions}).
\end{proof}

Throughout the paper, our use of products in categories of partitions will involve a number of shuffling operations. This shuffling will particularly be important in Proposition \ref{prop:product_d:d_star}. To prepare the proof of this proposition, we discuss in Remark \ref{rem:associativity} the associativity of the products from the point of view of their encoding.

\begin{remark}[Associativity]\label{rem:associativity}
Let $S$ be a finite set. It follows from Definition \ref{def:image} and the construction given in Proposition \ref{prop:factorization_system} that, for every triple $(p_1,p_2,p_3)$ of partitions in $\mathbf{LP}(S)$, the surjection $\mathsf{e}(\mathsf{e}(p_1,p_2),p_3)$ is of the following form:
\[
\left(
\begin{array}{lll}
S &\to& \mathsf{Im}(\mathsf{e}(p_1,p_2),p_3)\\
s &\mapsto & \big(\big(p_1(s),p_2(s)\big),p_3(s)\big)
\end{array}
\right).
\]
Composing the surjection $\mathsf{e}(\mathsf{e}(p_1,p_2),p_3)$ with the bijection 
\[
\varphi:\left(
\begin{array}{lll}
\mathsf{Im}(\mathsf{e}(p_1,p_2),p_3) &\to& \mathsf{Im}(p_1,p_2,p_3)\\
((x,y),z) &\mapsto & (x,y,z)
\end{array}
\right).
\]
gives the surjection $\mathsf{e}(p_1,p_2,p_3):S \to \mathsf{Im}(p_1,p_2,p_3)$, which maps every $s \in S$ to the triple $(p_1(s),p_2(s),p_3(s))$. Hence, we obtain an isomorphism in $\mathbf{LP}(S)$ of the following form:
\[
\xymatrix{
S\ar@{=}[r]\ar[d]_{\mathsf{e}(\mathsf{e}(p_1,p_2),p_3)}&S\ar[d]^{\mathsf{e}(p_1,p_2,p_3)}\\
*+!R(.7){\mathsf{Im}(p_1,\mathsf{e}(p_2,p_3))}\ar[r]_{\cong}^{\varphi}&*+!L(.7){\mathsf{Im}(p_1,p_2,p_3)}
}
\]
Extending the previous reasoning to any finite collection $p_1,\dots, p_n$ of partitions in $\mathbf{LP}(S)$ shows that the partition $\mathsf{e}(\dots(\mathsf{e}(p_1,p_2),\dots),p_n)$ is isomorphic to the partition 
$\mathsf{e}(p_1,p_2,\dots,p_n):S \to \mathsf{Im}(p_1,p_2,\dots,p_n)$, which maps every $s \in S$ to the tuple $(p_1(s),p_2(s),\dots,p_n(s))$.
\end{remark}

The following proposition will later be used in Proposition \ref{prop:product-idempotency:unlabeled}, which will play an important in our computations. 

\begin{proposition}[Product idempotency]\label{prop:product-idempotency:labeled}
For every finite set $S$ and every labeled partition $p$ in $\mathbf{LP}(S)$, the diagonal morphism $p \to p \times p$ is a isomorphism whose inverse is the Cartesian projection $p \times p \to p$.
\end{proposition}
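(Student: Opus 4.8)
My plan is to exploit the fact, recorded in Remark~\ref{rem:pre-order-LP}, that $\mathbf{LP}(S)$ is a pre-order category, so that any two parallel arrows automatically coincide; this reduces the whole statement to merely exhibiting arrows in both directions and lets the pre-order structure supply the inverse relations for free. Concretely, I write $p:S \to K$ and recall from Proposition~\ref{prop:products_labeled_partitions} that the product $p \times p$ is the surjection $\mathsf{e}(p,p):S \to \mathsf{Im}(p,p)$ induced by the pairing $(p,p)$, whose projection onto the first factor is the arrow $\pi_1 \circ \mathsf{m}(p,p):p\times p \to p$. This furnishes, with no further work, one of the two morphisms I need, namely the Cartesian projection.

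Next I would produce the diagonal $\Delta:p \to p \times p$. This is immediate from the universal property established in Proposition~\ref{prop:products_labeled_partitions}: taking the partition $p$ itself together with the two maps $f_1 = f_2 = \mathrm{id}_p$, which trivially make the relevant triangle commute, yields a unique arrow $\Delta:p \to p \times p$ compatible with both projections. Explicitly, $\Delta$ is the function $K \to \mathsf{Im}(p,p)$ sending $k \mapsto (k,k)$; it is well defined precisely because the surjectivity of $p$ forces $\mathsf{Im}(p,p)$ to consist exactly of the diagonal pairs $(k,k)$, and it commutes with $\mathsf{e}(p,p)$ since $\mathsf{e}(p,p)(s) = (p(s),p(s))$ for every $s \in S$.

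With both $\Delta:p \to p\times p$ and $\pi_1 \circ \mathsf{m}(p,p):p\times p \to p$ in hand, the conclusion follows formally from the pre-order structure. The composite of $\Delta$ with the projection is an arrow $p \to p$, hence equals $\mathrm{id}_p$ by Remark~\ref{rem:pre-order-LP}; likewise the reverse composite is an arrow $p\times p \to p\times p$ and therefore equals $\mathrm{id}_{p\times p}$. Thus $\Delta$ and the projection are mutually inverse, so the diagonal is an isomorphism whose inverse is the Cartesian projection, as claimed. The only point requiring any care is the existence of $\Delta$ as a genuine morphism of $\mathbf{LP}(S)$, that is, as a function between label sets commuting with the surjections onto $S$; I expect this to be the main (and only mild) obstacle, and once it is granted the pre-order property does all the remaining work, dispensing with any explicit computation of composites.
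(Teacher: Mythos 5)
Your proof is correct and takes essentially the same route as the paper: the paper's own proof likewise invokes the pre-order property of Remark~\ref{rem:pre-order-LP} to conclude that the composites $p \to p \times p \to p$ and $p \times p \to p \to p \times p$ must be identities. The only difference is that you explicitly construct the diagonal (and verify it is a genuine morphism) via the universal property of Proposition~\ref{prop:products_labeled_partitions}, a step the paper leaves implicit.
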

\begin{proof}
By Remark \ref{rem:pre-order-LP}, the compositions $p \to p \times p \to p$ and $p \times p \to p \to p \times p$ have unique representative, meaning that they must be identities. This shows the statement.
\end{proof}

\subsection{Unlabeled partitions}\label{ssec:unlabeled-partitions}
In this section, we construct, for every finite strict linear order\footnote{A transitive relation $<$ on a finite set $S$ such that for every pair $(x,y) \in S \times S$, either $x < y$, or $y< x$, or $x = y$ holds.} $(S,\leq)$, an equivalent subcategory $\mathbf{UP}(S)$ of $\mathbf{LP}(S)$ in which all isomorphisms are identities (see Proposition \ref{lem:adjoint_equivalence} and Proposition \ref{prop:reflection_functor_partitions}).

The definition of a subcategory such as $\mathbf{UP}(S)$ usually requires a strengthening of the structure of the objects and the morphisms. In our case, this strengthening will amount to replacing the set of labels associated with a partition with a canonical choice of labels. To do so, we will need to be able to define a strict linear order structure on the labels of any labeled partition.

\begin{definition}[Induced order]\label{def:induced-order}
Let $(S,<)$ be a finite strict linear order and denote by $\mathsf{min}$ the minimum operation for this linear order. For every labeled partition $p:S \to K$ in $\mathbf{LP}(S)$ and every $x,y \in K$, we write $x <_p y$ if and only if the relation $\mathsf{min}(p^{-1}(x)) < \mathsf{min}(p^{-1}(y))$ holds.
\end{definition}

It should be an easy exercise to verify that the relation defined in Definition \ref{def:induced-order} defines a strict linear order.

\begin{proposition}[Induced linear order]\label{prop:linear_order_on_K}
Let $(S,<)$ be a finite strict linear order and $p:S \to K$ be an object in $\mathbf{LP}(S)$. The relation $<_p$ is a strict linear order on $K$.
\end{proposition}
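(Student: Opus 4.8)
The plan is to verify directly that the relation $<_p$ defined in Definition \ref{def:induced-order} satisfies the three defining properties of a strict linear order on $K$: transitivity, and the trichotomy condition that for every pair $(x,y) \in K \times K$, exactly one of $x <_p y$, $y <_p x$, or $x = y$ holds. The key observation that makes everything work is that, since $p$ is a surjection, each fiber $p^{-1}(x)$ for $x \in K$ is a nonempty subset of $S$, so the quantity $\mathsf{min}(p^{-1}(x))$ is well-defined; this lets us transport the strict linear order $<$ on $S$ to a relation on $K$ via the injective assignment $x \mapsto \mathsf{min}(p^{-1}(x))$.

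First I would record that the map $\mu : K \to S$ sending $x \mapsto \mathsf{min}(p^{-1}(x))$ is injective. Indeed, if $\mu(x) = \mu(y)$, then $\mathsf{min}(p^{-1}(x)) = \mathsf{min}(p^{-1}(y))$ is a single element $s \in S$ lying in both $p^{-1}(x)$ and $p^{-1}(y)$; but an element of $S$ has exactly one image under the function $p$, so $x = p(s) = y$. This injectivity is the crucial ingredient, because it shows that the fibers having equal minima is equivalent to the labels being equal. Once $\mu$ is injective, the relation $<_p$ is literally the pullback of the strict linear order $<$ along $\mu$, i.e.\ $x <_p y \iff \mu(x) < \mu(y)$, and every order-theoretic property of $<_p$ reduces to the corresponding property of $<$.

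With injectivity in hand, transitivity of $<_p$ follows immediately from transitivity of $<$: if $x <_p y$ and $y <_p z$, then $\mu(x) < \mu(y) < \mu(z)$, hence $\mu(x) < \mu(z)$, i.e.\ $x <_p z$. For trichotomy, given any $x, y \in K$, apply the trichotomy of $<$ to the pair $(\mu(x), \mu(y)) \in S \times S$: exactly one of $\mu(x) < \mu(y)$, $\mu(y) < \mu(x)$, or $\mu(x) = \mu(y)$ holds. The first two cases translate directly into $x <_p y$ and $y <_p x$ respectively, while the third case, by the injectivity of $\mu$, forces $x = y$. Thus exactly one of the three alternatives holds for $x$ and $y$, as required.

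I do not expect any genuine obstacle here, since the statement is, as the text itself notes, essentially an exercise: the entire content is the reduction to properties of $<$ via the injective minimum map. The only point requiring a moment of care is the injectivity of $\mu$ and, relatedly, ensuring that equal minima forces equal labels rather than merely overlapping fibers; this is where surjectivity of $p$ (guaranteeing nonempty fibers so that minima exist) and the functionality of $p$ (guaranteeing each element has a unique label) are both used. Everything else is a transparent transport of the ambient order $<$ along $\mu$.
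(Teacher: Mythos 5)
Your proof is correct: the injectivity of the minimum map $\mu : K \to S$, $x \mapsto \mathsf{min}(p^{-1}(x))$ (which uses surjectivity of $p$ for nonempty fibers and functionality of $p$ for uniqueness of labels) is exactly the point that makes $<_p$ a transport of $<$ along $\mu$, from which transitivity and trichotomy follow at once. The paper itself offers no argument here --- its proof reads simply ``Straightforward'' --- so your write-up is precisely the routine verification the author leaves to the reader, carried out in the natural way.
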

\begin{proof}
Straightforward.
\end{proof}

We now give a proposition that we later be used in Proposition \ref{prop:skeletal}. The latter will allow us to avoid many category theoretical justifications.

\begin{proposition}[Isomorphism]\label{prop:isomorphism-order}
Let $(S,<)$ be a finite strict linear order. For every isomorphism $f:p_1 \to p_2$ in $\mathbf{LP}(S)$, the underlying bijection $f$ preserves the orders defined in Definition \ref{def:induced-order}, meaning that if $x<_{p_1}y$, then $f(x)<_{p_2}f(y)$.
\end{proposition}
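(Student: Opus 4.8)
The plan is to reduce the order-preservation statement to a direct comparison of fibers. By Definition \ref{def:labeled-partitions}, an isomorphism $f:p_1 \to p_2$ in $\mathbf{LP}(S)$ is a bijection $f:K_1 \to K_2$ satisfying the commutation $p_2 = f \circ p_1$, and this identity rigidly ties the fibers of $p_2$ to those of $p_1$. So first I would fix an arbitrary label $k \in K_1$ and show that the fiber $p_2^{-1}(f(k))$ coincides, as a subset of $S$, with the fiber $p_1^{-1}(k)$.

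To establish this fiber identity I would simply unfold the definitions. An element $s \in S$ lies in $p_2^{-1}(f(k))$ precisely when $p_2(s) = f(k)$, that is, when $f(p_1(s)) = f(k)$ after substituting $p_2 = f \circ p_1$. Since $f$ is a bijection, hence injective, this is equivalent to $p_1(s) = k$, i.e. to $s \in p_1^{-1}(k)$. Thus $p_2^{-1}(f(k)) = p_1^{-1}(k)$ for every $k \in K_1$.

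With this in hand the order-preservation is immediate. Because the two fibers are literally the same subset of $S$, their minima with respect to $<$ agree: $\mathsf{min}(p_2^{-1}(f(x))) = \mathsf{min}(p_1^{-1}(x))$, and similarly for $y$. Hence if $x <_{p_1} y$, which by Definition \ref{def:induced-order} means $\mathsf{min}(p_1^{-1}(x)) < \mathsf{min}(p_1^{-1}(y))$, then substituting the equalities above yields $\mathsf{min}(p_2^{-1}(f(x))) < \mathsf{min}(p_2^{-1}(f(y)))$, which is exactly the relation $f(x) <_{p_2} f(y)$ sought.

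I do not expect any genuine obstacle here; the only point requiring care is the fiber computation, specifically the use of injectivity of $f$ to pass from $f(p_1(s)) = f(k)$ to $p_1(s) = k$. Without injectivity the fiber $p_2^{-1}(f(k))$ could be a strict union of several $p_1$-fibers, so its minimum could drop and the order need not be preserved. It is worth noting that I never invoke surjectivity of $f$ nor that its inverse be a morphism of $\mathbf{LP}(S)$: the commuting triangle $p_2 = f \circ p_1$ together with injectivity of the underlying map of $f$ already suffices, so the hypothesis could in principle be weakened from \emph{isomorphism} to \emph{morphism with injective underlying map}.
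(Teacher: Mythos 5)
Your proof is correct and follows essentially the same route as the paper's: both arguments reduce the claim to the fiber identity $p_2^{-1}(f(k)) = p_1^{-1}(k)$ (you obtain it elementwise from $p_2 = f\circ p_1$ and injectivity of $f$, the paper by substituting $x = f^{-1}(f(x))$ and using $p_2^{-1} = p_1^{-1}\circ f^{-1}$), and then compare minima under Definition \ref{def:induced-order}. Your closing observation is also fine, though note that in $\mathbf{LP}(S)$ a morphism with injective underlying map is automatically an isomorphism, since surjectivity of $p_2 = f \circ p_1$ forces $f$ to be surjective, so the weakening is only apparent.
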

\begin{proof}
Suppose that the partitions $p_1$ and $p_2$ are of the form $S \to K_1$ and $S \to K_2$, respectively. In particular, the bijection $f$ is of the form $K_1 \to K_2$. Let $x$ and $y$ two elements of $K_1$ such that the inequality $x <_{p_1} y$ holds. According to Definition \ref{def:induced-order}, this is equivalent to the inequality $\mathsf{min}(p^{-1}_1(x)) < \mathsf{min}(p^{-1}_1(y))$. Because $f:K_1 \to K_2$ is an isomorphism, this inequality is also equivalent to the following one:
\[
\mathsf{min}(p^{-1}_1(f^{-1}(f(x)))) < \mathsf{min}(p^{-1}_1(f^{-1}(f(x))))
\]
Because the identity $p_2 = f \circ p_1$ holds, the previous inequality can be turned into the inequality 
\[
\mathsf{min}(p^{-1}_2(f(x))) < \mathsf{min}(p^{-1}_2(f(x))),
\]
which means that $f(x) <_{p_2} f(y)$. This shows that $f$ is an order-preserving isomorphism from the strict linear order $(K_1,<_{p_1})$ to the strict linear order $(K_2,<_{p_2})$.
\end{proof}

We now introduce a type of set that we will use to label certain of our partitions.

\begin{convention}[Canonical finite set]\label{canonical-finite-set}
For every non-negative integer $n$, we will denote by $[n]$ the set of integer comprised between 1 and $n$. The set $[n]$ will be equipped with the natural order on integers. 
\end{convention}

Below, in Convention \ref{ssec:Cardinal} and Definition \ref{def:from_labeled_to_unlabeled}, we see that we can relate the set of labels of any partition to the type of sets defined in Convention \ref{canonical-finite-set}. This transformation operation will eventually lead to a functor operation.

\begin{convention}[Cardinal]\label{ssec:Cardinal}
Let $(S,<)$ be a finite strict linear order and $p:S \to K$ be an object in $\mathbf{LP}(S)$. Because $S$ is finite and $p$ is a surjection, the set $K$ must also be finite. We will denote the finite cardinality of $K$ as $n_K$. According to Proposition \ref{prop:linear_order_on_K}, the set $K$ is equipped with a strict linear order $<_p$, which implies that there is an order-preserving bijection $\phi_K:K \to [n_K]$.
\end{convention}

\begin{definition}[Pointed structure]\label{def:from_labeled_to_unlabeled}
Let $(S,<)$ be a finite strict linear order. For every object $p:S \to K$ in $\mathbf{LP}(S)$, we will denote by $R(p):S \to [n_K]$ the composite of $p:S \to K$ with the bijection $\phi_K:K \to [n_K]$. The following commutative square gives an isomorphism $\eta_p:p \to R(p)$ in $\mathbf{LP}(S)$.
\[
\xymatrix{
S\ar[d]_{p} \ar@{=}[r]& S\ar[d]^{R(p)}\\
K \ar[r]^{\phi_p} & [n_K]
}
\]
\end{definition}

The remark given below will play an important role in the proof of Proposition \ref{prop:skeletal}.

\begin{remark}[Induced order and integer order]\label{rem:induced-order:integer-order}
Let $(S,<)$ be a finite strict linear order. A consequence of Convention \ref{ssec:Cardinal} is that for every labeled partition $p:S \to K$ in $\mathbf{LP}(S)$, the strict linear order $<_{R(p)}$ on $[n_K]$, induced by the partition $R(p):S \to [n_K]$, corresponds to the natural strict linear order defined on the integers of $[n_K]$. 
\end{remark}

\begin{remark}[Idempotency]\label{rem:idempotency}
Let $(S,<)$ be a finite strict linear order. For every object $p:S \to K$ in $\mathbf{LP}(S)$, the arrow $\eta_{R(p)}:R(p) \to R(R(p))$ is an identity. Indeed, the arrow $\eta_{R(p)}$ is induced by the order-preserving bijection $\phi_{[n_K]}$ (see Definition \ref{def:from_labeled_to_unlabeled}), which must be of the form $[n_K] \to [n_K]$ because the cardinality of $[n_K]$ is $n_K$ itself. By Remark \ref{rem:induced-order:integer-order} and Convention \ref{ssec:Cardinal}, both the source and the target of $\phi_{[n_K]}$ are equipped with the natural order of integers. As a result, the bijection $\phi_{[n_K]}$ must be the identity $[n_K] \to [n_K]$.
\end{remark}

Definition \ref{def:unlabeled_partitions} (see below) is justified by Remark \ref{rem:idempotency}. Here, the main point of Remark \ref{rem:idempotency} and Definition \ref{def:unlabeled_partitions} is that removing the labels of a partition will yield the partition that was obtained after the first removal.

\begin{definition}[Unlabeled partitions]\label{def:unlabeled_partitions}
For every finite strict linear order $(S,<)$, we will denote by $\mathbf{UP}(S)$ the full subcategory of $\mathbf{LP}(S)$ whose objects are of the form $R(p)$ (Definition \ref{def:from_labeled_to_unlabeled}) for some labeled partition $p$ in $\mathbf{LP}(S)$. An object $p:S \to [n]$ in $\mathbf{UP}(S)$ will be called an \emph{unlabeled partition of $S$}.
\end{definition}

The following convention addresses the representation of unlabeled partitions relative to labeled partitions.

\begin{convention}[Representation]
For every finite strict linear order $(S,<)$, an unlabeled partition $p:S \to [n]$ in $\mathbf{UP}(S)$ will be represented by its collection of fibers without labels. For example, if we take $p$ to be the labeled partition 
\[
\{b,c,e\}_{0},\{a,d\}_{\mathtt{A}},\{f\}_{\flat}
\]
considered in Convention \ref{conv:Representation} for the linear order $S = \{a<b<c<d<e<f\}$, then the unlabeled partition $R(p):S \to [3]$ is associated with the collection of fibers $R(p)^{-1}(1) = \{a,d\}$, $R(p)^{-1}(2) = \{b,c,e\}$, and $R(p)^{-1}(3) = \{f\}$ so that its representation is as follows.
\[
\{a,d\},\{b,c,e\},\{f\}
\]
Here, note that the order relation $<_p$ of Proposition \ref{prop:linear_order_on_K} re-orders the fibers $p$ according to their least elements contained in the fibers. Above, the fiber $\{a,d\}$ appeared before the fiber $\{b,c,e\}$ because the least element $a$ of $\{a,d\}$ is less than the least element $b$ of $\{b,c,e\}$. 
\end{convention}

The following proposition shows that categories of unlabeled partitions are \emph{skeletal}, meaning that all isomorphisms are between identical objects. This result constitutes one of the main reasons why categories of unlabeled partitions are better settings than categories of labeled partitions.

\begin{proposition}[Skeletal]\label{prop:skeletal}
Let $(S,<)$ be a finite strict linear order. Every isomorphism in $\mathbf{UP}(S)$ is an identity.
\end{proposition}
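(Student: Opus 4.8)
The plan is to take an arbitrary isomorphism $f:p_1 \to p_2$ in $\mathbf{UP}(S)$ and show both that its underlying function is the identity and that its source and target coincide, so that $f$ is an identity morphism. First I would record the easy structural facts. Since $\mathbf{UP}(S)$ is a full subcategory of $\mathbf{LP}(S)$, the isomorphism $f$ is also an isomorphism in $\mathbf{LP}(S)$, so its underlying function is a bijection between the two label sets. By Definition \ref{def:unlabeled_partitions}, every object of $\mathbf{UP}(S)$ is of the form $R(q):S \to [n]$, hence $p_1$ and $p_2$ take the form $S \to [n_1]$ and $S \to [n_2]$; the existence of the bijection $f:[n_1] \to [n_2]$ then forces $n_1 = n_2 =: n$.

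Next I would bring in the order-theoretic content, which is where the two preceding results do the real work. By Proposition \ref{prop:isomorphism-order}, the bijection $f$ preserves the induced orders of Definition \ref{def:induced-order}, i.e.\ it is an order-isomorphism from $([n],<_{p_1})$ to $([n],<_{p_2})$. Because $p_1$ and $p_2$ are unlabeled partitions, each is of the form $R(q)$, so Remark \ref{rem:induced-order:integer-order} tells us that both $<_{p_1}$ and $<_{p_2}$ coincide with the natural strict order on the integers of $[n]$. Consequently, $f$ is an order-preserving bijection from $[n]$, equipped with the natural integer order, to itself.

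I would then conclude by the elementary fact that the only order-preserving self-bijection of a finite linear order is the identity map; hence $f = \mathrm{id}_{[n]}$ as a function of sets. Finally, the commuting triangle defining a morphism of $\mathbf{LP}(S)$ gives $p_2 = f \circ p_1 = p_1$, so $p_1$ and $p_2$ are literally the same object and $f$ is the identity morphism on it.

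I do not expect any genuine obstacle here: once the structural facts are in place, the argument is just the assembly of Proposition \ref{prop:isomorphism-order} with Remark \ref{rem:induced-order:integer-order}. The one step deserving emphasis is the recognition that, on unlabeled partitions, the induced order is \emph{forced} to be the standard integer order, which is precisely what collapses the order-isomorphism $f$ to the identity; everything else is routine.
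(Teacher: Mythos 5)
Your proposal is correct and follows essentially the same route as the paper's own proof: establish $n_1 = n_2$ from the underlying bijection, invoke Proposition \ref{prop:isomorphism-order} for order preservation, and use Remark \ref{rem:induced-order:integer-order} to identify both induced orders with the natural integer order, forcing $f$ to be the identity. Your final step making explicit that $p_2 = f \circ p_1 = p_1$ (so the two objects literally coincide) is a small but welcome addition of detail that the paper leaves implicit.
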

\begin{proof}
Let $f:p_1 \to p_2$ be an isomorphism in $\mathbf{UP}(S)$ where we have $p_1:S \to [n_1]$ and $p_2:S \to [n_2]$. Since $[n_2]$ and $[n_1]$ are in bijection through the underlying function $f$, the cardinalities of $[n_2]$ and $[n_1]$ must be equal, meaning that the identity $n_1 = n_2$ holds. According to Proposition \ref{prop:isomorphism-order}, the bijection $f:[n_1] \to [n_2]$ is order-preserving and because the source and target of $f$ are equal and equipped with the same order structure (see Remark \ref{rem:induced-order:integer-order}), the bijection $f$ must be an identity.
This proves that the isomorphism $f:p_1 \to p_2$ is an identity in $\mathbf{UP}(S)$.
\end{proof}

Lemma \ref{lem:from_operation_to_functor} and its application to categories of partitions (see Proposition \ref{prop:reflection_functor_partitions}) will allow us to transfer our reasonings in categories of labeled partitions to categories of unlabeled partitions.

\begin{lemma}[Functor]\label{lem:from_operation_to_functor}
Let $\mathcal{C}$ be a category in which every object $X$ is equipped with an object $R(X)$ in $\mathcal{C}$ and an isomorphism $\eta_X:X \to R(X)$. Then the mapping $X \mapsto R(X)$ extends to a functor $\mathcal{C} \to \mathcal{C}$ by mapping every arrow $f:X \to Y$ in $\mathcal{C}$ to the arrow $R(X) \to R(Y)$ defined by the following composite:
\[
\xymatrix{
R(X) \ar[r]^-{\eta_X^{-1}} & X \ar[r]^{f} & Y \ar[r]^-{\eta_Y} & R(Y).
}
\]
In addition, the arrow $\eta_X:X \to R(X)$ induces a natural transformation $\mathrm{id}_{\mathcal{C}} \Rightarrow R$.
\end{lemma}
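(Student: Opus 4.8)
The plan is to verify directly that the assignment $R$, together with the prescribed action $f\mapsto \eta_Y\circ f\circ\eta_X^{-1}$ on morphisms, satisfies the two functor axioms, and then to read off naturality of $\eta$ straight from the formula defining $R$ on arrows. Every step reduces to inserting or cancelling the invertible pair $\eta_X^{-1}\circ\eta_X=\mathrm{id}_X$, so no genuinely hard step arises; the only thing requiring care is the bookkeeping of sources and targets. First I would record that, for $f\colon X\to Y$, the candidate arrow $R(f)\colon R(X)\to R(Y)$ is well-typed, being the composite of $\eta_X^{-1}\colon R(X)\to X$, then $f\colon X\to Y$, then $\eta_Y\colon Y\to R(Y)$.

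Next I would check preservation of identities, which is immediate: $R(\mathrm{id}_X)=\eta_X\circ\mathrm{id}_X\circ\eta_X^{-1}=\eta_X\circ\eta_X^{-1}=\mathrm{id}_{R(X)}$, using only that $\eta_X$ is an isomorphism. The crux of the argument — such as it is — is preservation of composition. Given $f\colon X\to Y$ and $g\colon Y\to Z$, I would expand $R(g)\circ R(f)=(\eta_Z\circ g\circ\eta_Y^{-1})\circ(\eta_Y\circ f\circ\eta_X^{-1})$ and simplify the middle factor $\eta_Y^{-1}\circ\eta_Y$ to $\mathrm{id}_Y$, leaving $\eta_Z\circ(g\circ f)\circ\eta_X^{-1}=R(g\circ f)$. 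This telescoping cancellation is the single place where invertibility of $\eta$ is used in an essential way, and it is worth flagging that the statement would fail for a mere family of arrows $\eta_X$ that are not isomorphisms.

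Finally, for the natural transformation $\eta\colon\mathrm{id}_{\mathcal{C}}\Rightarrow R$ with components $\eta_X$, I would verify the naturality square for each $f\colon X\to Y$, namely that $R(f)\circ\eta_X$ and $\eta_Y\circ f$ agree. Substituting the definition of $R(f)$ gives $R(f)\circ\eta_X=\eta_Y\circ f\circ\eta_X^{-1}\circ\eta_X=\eta_Y\circ f$, which is exactly the desired identity, so the square commutes. I expect the whole proof to be a short diagram chase with no real obstacle; its only conceptual content is that functoriality is driven entirely by the cancellation $\eta^{-1}\circ\eta=\mathrm{id}$ in the composition step, which is why the hypothesis that each $\eta_X$ be an isomorphism cannot be weakened.
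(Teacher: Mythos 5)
Your proof is correct and is exactly the direct verification the paper leaves implicit (its proof of this lemma is simply ``Straightforward''): well-typedness, identity and composition preservation via the cancellation $\eta^{-1}\circ\eta=\mathrm{id}$, and naturality read off from the defining formula for $R(f)$. Nothing is missing, and your remark that invertibility of each $\eta_X$ is essential is accurate, since without it the composite defining $R(f)$ cannot even be written down.
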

\begin{proof}
Straightforward.
\end{proof}

Lemma \ref{lem:from_operation_to_functor} can be applied to the isomorphism $\eta_p:p \to R(p)$ of Definition \ref{def:from_labeled_to_unlabeled} to construct a functor turning labeled partitions into unlabeled partitions. As will be seen in the following results, this functor is at the same time a reflection (as a consequence of  Proposition \ref{prop:reflection_functor_partitions} and Lemma \ref{lem:adjoint_equivalence}) and an equivalence (as a consequence of Proposition \ref{prop:reflection_functor_partitions} and Lemma \ref{lem:from_operation_to_functor}).

\begin{proposition}[Reflection]\label{prop:reflection_functor_partitions}
Let $(S,<)$ be a finite strict linear order. The functor $R:\mathbf{LP}(S) \to \mathbf{LP}(S)$ induced by Lemma \ref{lem:from_operation_to_functor} lifts to an obvious functor $R:\mathbf{LP}(S) \to \mathbf{UP}(S)$.
\end{proposition}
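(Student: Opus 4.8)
The plan is to observe that the functor $R:\mathbf{LP}(S) \to \mathbf{LP}(S)$ already takes all of its values inside the full subcategory $\mathbf{UP}(S)$, so that ``lifting'' it amounts to nothing more than corestricting its codomain. First I would unwind the construction of $R$ coming from Lemma \ref{lem:from_operation_to_functor}, applied to the family of isomorphisms $\eta_p:p \to R(p)$ of Definition \ref{def:from_labeled_to_unlabeled}: on objects, $R$ sends a labeled partition $p:S \to K$ to the partition $R(p):S \to [n_K]$, and on an arrow $f:p_1 \to p_2$ it produces the composite $\eta_{p_2}\circ f \circ \eta_{p_1}^{-1}:R(p_1) \to R(p_2)$.

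The key step is then to check that this data factors through the inclusion $\mathbf{UP}(S) \hookrightarrow \mathbf{LP}(S)$. On objects this is immediate: by Definition \ref{def:unlabeled_partitions}, the objects of $\mathbf{UP}(S)$ are precisely the labeled partitions of the form $R(p)$, so every object in the image of $R$ belongs to $\mathbf{UP}(S)$ by construction. On morphisms, I would invoke the fullness of $\mathbf{UP}(S)$: for any $f:p_1 \to p_2$, the arrow $R(f):R(p_1)\to R(p_2)$ is a morphism of $\mathbf{LP}(S)$ between two objects that both lie in $\mathbf{UP}(S)$, and since $\mathbf{UP}(S)$ is a full subcategory this arrow is automatically a morphism of $\mathbf{UP}(S)$. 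Thus the object and arrow assignments defining $R$ restrict to well-defined maps landing in $\mathbf{UP}(S)$.

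Finally I would note that the corestricted assignment is genuinely functorial. Because composition and identities in the full subcategory $\mathbf{UP}(S)$ agree with those computed in $\mathbf{LP}(S)$, the preservation of composites and identities for the corestriction is inherited verbatim from the functoriality of $R:\mathbf{LP}(S)\to\mathbf{LP}(S)$ already established in Lemma \ref{lem:from_operation_to_functor}. This yields the desired functor $R:\mathbf{LP}(S)\to\mathbf{UP}(S)$ whose composite with the inclusion recovers the original $R$.

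I do not expect a genuine obstacle here: the whole content is the observation that the image of $R$ consists of objects of the prescribed form $R(p)$, after which fullness of $\mathbf{UP}(S)$ makes the corestriction automatic. The only point worth stating with care is that ``lifting'' is to be read as corestriction of the codomain, and not as a claim that $R$ restricted to $\mathbf{UP}(S)$ acts as the identity -- the latter idempotency being a separate matter handled by Remark \ref{rem:idempotency}.
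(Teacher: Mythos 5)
Your proposal is correct and follows essentially the same route as the paper, whose entire proof is the one-line remark that the statement ``follows from Definition \ref{def:unlabeled_partitions}'': the objects of $\mathbf{UP}(S)$ are by definition exactly those of the form $R(p)$, so the corestriction is automatic, and fullness of $\mathbf{UP}(S)$ plus inherited composition handles morphisms and functoriality just as you describe. You have simply spelled out carefully what the paper leaves implicit, including the correct reading of ``lifts'' as corestriction rather than a claim about idempotency.
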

\begin{proof}
Follows from Definition \ref{def:unlabeled_partitions}.
\end{proof}

The following lemma states that categories of unlabeled partitions are very similar to their associated categories of labeled partitions. 

\begin{lemma}[Adjoint equivalence]\label{lem:adjoint_equivalence}
Let $(S,<)$ be a finite strict linear order. For every unlabeled partition $p$ in $\mathbf{UP}(S)$ and labeled partition $q$ in $\mathbf{LP}(S)$, the post-composition operation by the natural isomorphism $\eta_q:q \to R(q)$ gives the following isomorphism, natural in $p$ and $q$.
\[
\begin{array}{clc}
\mathbf{LP}(S)(p,q) &\cong& \mathbf{UP}(S)(p,R(q))\\
f & \mapsto & \eta_q \circ f 
\end{array}
\]
\end{lemma}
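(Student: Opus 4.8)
The plan is to separate the claim into two independent verifications: that $\Phi_{p,q}\colon f \mapsto \eta_q \circ f$ is a bijection of sets for each fixed pair $(p,q)$, and that the resulting family $(\Phi_{p,q})$ is natural in $p$ and in $q$. For the bijection, I would first note that, since $\mathbf{UP}(S)$ is a \emph{full} subcategory of $\mathbf{LP}(S)$ (Definition \ref{def:unlabeled_partitions}) and both $p$ and $R(q)$ are objects of $\mathbf{UP}(S)$ (the former by hypothesis, the latter by Proposition \ref{prop:reflection_functor_partitions}), there is an equality of hom-sets $\mathbf{UP}(S)(p,R(q)) = \mathbf{LP}(S)(p,R(q))$. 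Thus $\Phi_{p,q}$ is just the post-composition map $\mathbf{LP}(S)(p,q) \to \mathbf{LP}(S)(p,R(q))$ induced by $\eta_q$, and since $\eta_q\colon q \to R(q)$ is an isomorphism, this map is a bijection with inverse given by post-composition by $\eta_q^{-1}$. Well-definedness of $\Phi_{p,q}$ needs only the observation that $\eta_q \circ f\colon p \to R(q)$ is a morphism between objects of a full subcategory, hence lives in $\mathbf{UP}(S)$.

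For naturality in $p$, let $h\colon p' \to p$ be a morphism of $\mathbf{UP}(S)$; both hom-functors act on $h$ by pre-composition, so commutativity of the naturality square amounts to the identity $\eta_q \circ (f \circ h) = (\eta_q \circ f) \circ h$, which is just associativity of composition. The only computation with content is naturality in $q$: given $g\colon q \to q'$ in $\mathbf{LP}(S)$, the left-hand functor acts by post-composition with $g$ and the right-hand functor by post-composition with $R(g)$, so the square commutes precisely when $\eta_{q'} \circ g = R(g) \circ \eta_q$. This is exactly the naturality square of the transformation $\mathrm{id}_{\mathbf{LP}(S)} \Rightarrow R$ produced in Lemma \ref{lem:from_operation_to_functor}; equivalently, it follows by unwinding the definition $R(g) = \eta_{q'} \circ g \circ \eta_q^{-1}$ of $R$ on morphisms and cancelling $\eta_q^{-1} \circ \eta_q$.

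The main (and essentially only) obstacle is keeping the variances straight and confirming that the $q$-naturality square reduces to the already-established naturality of $\eta$; the bijection and the $p$-naturality are formal. I would also remark that, because $\mathbf{LP}(S)$ is a pre-order category (Remark \ref{rem:pre-order-LP}), every hom-set in sight has at most one element, so naturality could alternatively be dispatched on the grounds that any diagram of functions between such sets commutes automatically. I nonetheless prefer the direct argument above, since it exhibits the precise role played by $\eta$ and does not rely on thinness.
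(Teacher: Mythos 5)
Your proposal is correct and takes essentially the same route as the paper: the paper's entire proof is a one-line appeal to the naturality of $\eta_q:q \to R(q)$ established in Lemma \ref{lem:from_operation_to_functor}, which is precisely the step you identify as the only computation with content (the bijection via post-composition by the isomorphism $\eta_q$, fullness of $\mathbf{UP}(S)$, and $p$-naturality via associativity being the formal parts the paper leaves implicit). Your expanded treatment, including the observation that thinness of $\mathbf{LP}(S)$ would make all naturality squares commute automatically, is a faithful unpacking rather than a different argument.
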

\begin{proof}
Follows from the naturality of the isomorphism $\eta_q:q \to R(q)$ (Lemma \ref{lem:from_operation_to_functor}).
\end{proof}

Since categories of labeled partitions have products, so do categories of unlabeled partitions. This is shown in the following proposition.

\begin{proposition}[Products]\label{prop:adjoint_equivalence:products}
For every finite strict linear order $(S,<)$, the category $\mathbf{UP}(S)$ admits binary products, which are the images of the products in $\mathbf{LP}(S)$ through the functor $R$.
\end{proposition}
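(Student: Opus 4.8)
The plan is to exhibit, for any two objects $p_1,p_2$ of $\mathbf{UP}(S)$, the object $R(p_1 \times p_2)$ as their product, where $p_1 \times p_2$ denotes the product computed in $\mathbf{LP}(S)$ (which exists by Proposition \ref{prop:products_labeled_partitions}). I would take the projections in $\mathbf{UP}(S)$ to be the images $R(\pi_1),R(\pi_2)$ under the functor $R$ of the Cartesian projections $\pi_1:p_1\times p_2 \to p_1$ and $\pi_2:p_1 \times p_2 \to p_2$. Since $p_1$ and $p_2$ already lie in $\mathbf{UP}(S)$, Remark \ref{rem:idempotency} gives $R(p_i) = p_i$, so each $R(\pi_i)$ is genuinely a morphism of type $R(p_1\times p_2) \to p_i$ in $\mathbf{UP}(S)$ (recall that $R$ lands in $\mathbf{UP}(S)$ by Proposition \ref{prop:reflection_functor_partitions}).

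The key step is a chain of natural isomorphisms of hom-sets. Fixing an arbitrary object $x$ of $\mathbf{UP}(S)$, I would apply Lemma \ref{lem:adjoint_equivalence} with $p = x$ and $q = p_1 \times p_2$ to obtain a bijection $\mathbf{LP}(S)(x, p_1\times p_2) \cong \mathbf{UP}(S)(x, R(p_1\times p_2))$, natural in $x$. Next, the universal property of the product $p_1 \times p_2$ in $\mathbf{LP}(S)$ gives a natural bijection $\mathbf{LP}(S)(x, p_1\times p_2) \cong \mathbf{LP}(S)(x,p_1) \times \mathbf{LP}(S)(x,p_2)$. Finally, because $\mathbf{UP}(S)$ is a \emph{full} subcategory of $\mathbf{LP}(S)$ (Definition \ref{def:unlabeled_partitions}) and $x,p_1,p_2$ all belong to $\mathbf{UP}(S)$, one has $\mathbf{LP}(S)(x,p_i) = \mathbf{UP}(S)(x,p_i)$. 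Composing these three identifications yields a natural bijection $\mathbf{UP}(S)(x, R(p_1\times p_2)) \cong \mathbf{UP}(S)(x,p_1)\times \mathbf{UP}(S)(x,p_2)$, which is exactly the universal property characterizing $R(p_1\times p_2)$ as the product of $p_1$ and $p_2$ in $\mathbf{UP}(S)$.

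To finish, I would check that the constructed bijection is the one induced by post-composition with $R(\pi_1)$ and $R(\pi_2)$, so that $R(p_1\times p_2)$ is a genuine product cone rather than merely an abstractly isomorphic object. Here the fact that $\mathbf{LP}(S)$, and hence $\mathbf{UP}(S)$, is a pre-order category (Remark \ref{rem:pre-order-LP}) removes essentially all bookkeeping: any two parallel arrows coincide, so both the commutativity of the cone and the uniqueness of the mediating morphism are automatic. It therefore suffices to produce, for given $f_1:x\to p_1$ and $f_2:x\to p_2$ in $\mathbf{UP}(S)$, some arrow $x \to R(p_1\times p_2)$ in $\mathbf{UP}(S)$, which is obtained by transporting the Cartesian pairing $(f_1,f_2):x \to p_1\times p_2$ across the isomorphism $\eta_{p_1\times p_2}$.

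I expect no serious obstacle: this is a routine instance of the principle that a reflective full subcategory inherits the products of its ambient category through the reflector. The only points requiring a little attention are the identification $R(p_i)=p_i$ for objects already in $\mathbf{UP}(S)$, supplied by Remark \ref{rem:idempotency}, and the verification that the transported cone carries the stated projections; the pre-order property of Remark \ref{rem:pre-order-LP} makes the latter trivial.
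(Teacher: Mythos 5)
Your proposal is correct and follows essentially the same route as the paper: the paper's proof is precisely the chain of natural isomorphisms $\mathbf{UP}(S)^{\times 2}((x,x),(p_1,p_2)) \cong \mathbf{LP}(S)^{\times 2}((x,x),(p_1,p_2)) \cong \mathbf{LP}(S)(x,p_1\times p_2) \cong \mathbf{UP}(S)(x,R(p_1\times p_2))$, using fullness, Proposition \ref{prop:products_labeled_partitions}, and Lemma \ref{lem:adjoint_equivalence} — the same three ingredients you compose, merely read in the opposite direction. Your additional verifications (that $R(p_i)=p_i$, and that the bijection is realized by the cone $R(\pi_1),R(\pi_2)$, trivialized by the pre-order property) are details the paper leaves implicit, not a different argument.
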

\begin{proof}
We use the definition of limits in terms of adjunctions. The proposition follows from the following series of natural isomorphisms for every triple of unlabeled partitions $p$, $p_1$ and $p_2$.
\begin{align*}
\mathbf{UP}(S)^{\times 2}((p,p),(p_1,p_2))& \cong \mathbf{LP}(S)^{\times 2}((p,p),(p_1,p_2))&(\textrm{Full subcategory})\\
& \cong \mathbf{LP}(S)(p,p_1 \times p_2) & (\textrm{Lemma \ref{prop:products_labeled_partitions}})\\
& \cong \mathbf{UP}(S)(p,R(p_1 \times p_2)) & (\textrm{Lemma \ref{lem:adjoint_equivalence}})
\end{align*}
The object $R(p_1 \times p_2)$ is therefore the product of $p_1$ and $p_2$ in $\mathbf{UP}(S)^{\times 2}$.
\end{proof}

Products in categories of unlabeled partitions take finite collections of partitions and return a universal refinement of these, meaning that the returned partition will contain smaller parts than the parts contained by the partitions used in the product. This is illustrated below, in Example \ref{exa:product_of_partitions}.

\begin{example}[Products of partitions]\label{exa:product_of_partitions}
Let $S$ denote the set $\{a,b,c,d,e,f,g\}$, which will be considered linearly ordered with respect to its present specification. The product of two unlabeled partitions of $S$ is the \emph{refinement} of the bracketing defining the partitions, that is to say the partition containing all the possible intersections between the different parts of the two partitions. For instance, the product of the two partitions given below, at the top of the deduction rule, is given by the partition displayed at the bottom.
\[
\xymatrix@R-25pt@C-20pt{
&\{a,b,c\},\{d,e\},\{f,g\}\quad\quad\quad\quad\{a,e\},\{b,c,d,g\},\{f\}&\ar@/^10pt/[dd]^{\prod}\\
\ar@{-}[rr]&&\\
&\{a\},\{b,c\},\{d\},\{e\},\{f\},\{g\}&
}
\]
\end{example}

The following proposition makes categories of unlabeled partitions places in which it is easy to do calculations.

\begin{proposition}[Product idempotency]\label{prop:product-idempotency:unlabeled}
For every finite strict linear order $(S,<)$ and every unlabeled partition $p$ in $\mathbf{UP}(S)$, the identity $p = p \times p$ holds.
\end{proposition}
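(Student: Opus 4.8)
The plan is to deduce the unlabeled idempotency from the labeled one (Proposition \ref{prop:product-idempotency:labeled}) by transporting it along the reflection functor $R$, and then to collapse the resulting isomorphism to an identity using skeletality (Proposition \ref{prop:skeletal}). In other words, I would treat $p = p \times p$ as a statement about equality of objects in a skeletal category, so that it suffices to produce an isomorphism $p \to p \times p$ inside $\mathbf{UP}(S)$.

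First I would recall, via Proposition \ref{prop:adjoint_equivalence:products}, that the product of $p$ with itself in $\mathbf{UP}(S)$ is \emph{not} the labeled product computed in $\mathbf{LP}(S)$, but rather its image $R(p \times p)$ under the reflection. Thus the target identity $p = p \times p$ in $\mathbf{UP}(S)$ unwinds to the equality of objects $p = R(p \times p)$, where from now on $p \times p$ denotes the labeled product from Proposition \ref{prop:products_labeled_partitions}.

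Next, by Proposition \ref{prop:product-idempotency:labeled} the diagonal $p \to p \times p$ is an isomorphism in $\mathbf{LP}(S)$. Since $R \colon \mathbf{LP}(S) \to \mathbf{UP}(S)$ is a functor (Lemma \ref{lem:from_operation_to_functor} and Proposition \ref{prop:reflection_functor_partitions}), it preserves isomorphisms, so applying $R$ to the diagonal yields an isomorphism $R(p) \to R(p \times p)$ in $\mathbf{UP}(S)$. Because $p$ already lies in $\mathbf{UP}(S)$ it has the form $R(q)$, and Remark \ref{rem:idempotency} gives $R(p) = R(R(q)) = R(q) = p$ on the nose; hence I obtain an isomorphism $p \to R(p \times p)$ in $\mathbf{UP}(S)$. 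Finally, Proposition \ref{prop:skeletal} says every isomorphism of $\mathbf{UP}(S)$ is an identity, and an identity forces its source and target to coincide, so $p = R(p \times p) = p \times p$, as desired.

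The argument is essentially bookkeeping, and the one point requiring care is the step invoking Proposition \ref{prop:adjoint_equivalence:products}: one must remember that $\mathbf{UP}(S)$ computes its product as $R(p \times p)$ rather than as the raw labeled product, so that the isomorphism obtained by applying $R$ genuinely lives in the skeletal category where it can be upgraded to an equality. A more hands-on alternative, for anyone wishing to bypass the reflection, is to note directly that the common refinement of $p$ with itself reproduces the fibers of $p$ verbatim—each part intersected with itself is itself—which identifies the two partitions; but the functorial route above is cleaner and reuses the machinery already in place.
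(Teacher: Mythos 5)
Your proposal is correct and follows essentially the same route as the paper's proof: invoke the labeled idempotency (Proposition \ref{prop:product-idempotency:labeled}), push the diagonal isomorphism through the reflection functor $R$ using that $R(p)=p$ for $p$ already unlabeled, and collapse the resulting isomorphism to an identity via skeletality (Proposition \ref{prop:skeletal}). Your version is merely more explicit about the bookkeeping point that the product in $\mathbf{UP}(S)$ is $R$ applied to the labeled product (Proposition \ref{prop:adjoint_equivalence:products}), a detail the paper leaves implicit.
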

\begin{proof}
By Proposition \ref{prop:product-idempotency:labeled}, the diagonal $p \to p \times p$ in $\mathbf{UP}(S)$ is an isomorphism. Since $p$ is an unlabeled partition, it is an image of $R$ (Definition \ref{def:unlabeled_partitions}) and applying the idempotent functor $R$ on the isomorphism $p \to p \times p$ gives an isomorphism $p \to p \times p$ in $\mathbf{UP}(S)$. The statement follows from Proposition \ref{prop:skeletal}.
\end{proof}

\subsection{Product decompositions}\label{ssec:product-decomposition}
In this section, we show various combinatorial results on products in $\mathbf{UP}(S)$ and $\mathbf{LP}(S)$. Our most general results are Proposition \ref{prop:triple-xyz-delta}, Proposition \ref{prop:delta-complement-fiber} and Proposition \ref{prop:union-property:delta}, which will play a key role in the proof of our main theorems, shown in section \ref{sec:causal-inference}.
 
In the following convention, we introduce a partition that will play a nice intermediate in many of our calculations. We give various examples of these in Example \ref{exa:relative_discrete_partitions}. In Convention \ref{conv:relative_discrete_partitions:familial}, we will introduce a more general object that can be described as a product of these intermediates.

\begin{convention}[Atomic partitions]\label{conv:relative_discrete_partitions}
Let $S$ be a finite set. For every subset $x \subseteq S$, let $K_{x}$ denote the set that contains the set $x$ if $x$ is non-empty and its complement $S\backslash x := \{t \in S~|~t \notin x\}$ if $x$ is not $S$. In other words, we have the following identities:
\[
K_x = \left\{
\begin{array}{ll}
\{x,S\backslash x\}&\textrm{if }x \neq \emptyset\textrm{ and }S\backslash x \neq \emptyset\\
\{x\}&\textrm{if }S\backslash x = \emptyset\\
\{S\backslash x\}&\textrm{if }x = \emptyset
\end{array}
\right.
\]
We will denote by $d(x)$ the obvious surjection $S \to K_{x}$ that maps every $t \in x$ to the set $x$ in $K_x$ and that maps every $t\notin x$ to the set $S\backslash x$ in $K_x$.
\end{convention}

\begin{example}[Atomic partitions]\label{exa:relative_discrete_partitions}
Let $S$ be the set $\{a,b,c,d,e,f\}$. For every non-empty subset $x \subseteq S$, the partition $d(x)$ possesses $x$ as one of its parts while its other part may exist or not depending on whether $x$ equals $S$. For example, the partition $d(\{a,b,e\})$ takes the form
\[
\{a,b,e\}_{a,b,e}, \{c,d,f\}_{c,d,f}
\] 
while the partitions $d(S)$ and $d(\emptyset)$ are equal to the terminal partition $\{a,b,e,c,d,f\}_{S}$.
\end{example}

\begin{remark}[Duality]\label{rem:duality}
Let $S$ be a finite set and let $x$ be a subset of $S$. It directly follows from Definition \ref{conv:relative_discrete_partitions} that the identity $d(x) = d(S \backslash x)$ holds.
\end{remark}

Below, we introduce a generalization of Convention \ref{conv:relative_discrete_partitions}. In Proposition \ref{prop:product_d:d_star}, we relate the two conventions through an isomorphism involving products.

\begin{convention}[Familial partitions]\label{conv:relative_discrete_partitions:familial}
Let $S$ be a finite set. For every finite collection $x= \{x_i\}_{i \in [n]}$ of pairwise disjoint subsets $x_i$ of $S$, we denote by $K_{x}$ the set that contains every non-empty set $x_i$ of $x$ as well as the complement 
\[
S\backslash x := \{t \in S~|~t \notin \bigcup_{i = 1}^n x_i\}
\]
if $\bigcup_{i = 1}^n x_i$ is not equal to $S$. We will denote by $d^*(x)$ the obvious surjection $S \to K_{x}$ that maps every $t \in x_i$ to the set $x_i$ in $K_x$ and that maps every $t\notin \bigcup_{i = 1}^n x_i$ to the set $S \backslash x$ in $K_x$.
\end{convention}

\begin{proposition}[Products and familial partitions]\label{prop:product_d:d_star}
Let $S$ be a finite set. For every finite collection $x= \{x_i\}_{i \in [n]}$ of pairwise disjoint subsets $x_i$ of $S$, there is an isomorphism as shown in (\ref{eq:products-and-familial-partitions}) living in $\mathbf{LP}(S)$.
\begin{equation}\label{eq:products-and-familial-partitions}
\big(\dots\big(d(x_1) \times d(x_2)\big) \times \dots \big)\times d(x_n)\Big) \to d^*(x)
\end{equation}
\end{proposition}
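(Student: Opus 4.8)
The plan is to reduce the statement to a comparison of fibers and to exploit the fact that $\mathbf{LP}(S)$ is a pre-order (Remark \ref{rem:pre-order-LP}). Recall that a morphism $p_1 \to p_2$ in $\mathbf{LP}(S)$ exists if and only if $p_1$ refines $p_2$, i.e. $p_1(s) = p_1(t)$ implies $p_2(s) = p_2(t)$ for all $s,t \in S$; consequently two objects of $\mathbf{LP}(S)$ are isomorphic precisely when they induce the same partition of $S$, and the arrow pointing to $d^*(x)$ together with its inverse then come for free. So it suffices to show that the iterated product and $d^*(x)$ have identical fibers.

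First I would use Remark \ref{rem:associativity} to identify the left-hand side of (\ref{eq:products-and-familial-partitions}) with the surjection $\mathsf{e}(d(x_1),\dots,d(x_n)):S \to \mathsf{Im}(d(x_1),\dots,d(x_n))$ sending each $s \in S$ to the tuple $(d(x_1)(s),\dots,d(x_n)(s))$. Two elements $s,t \in S$ therefore lie in the same fiber of this product if and only if $d(x_i)(s) = d(x_i)(t)$ for every $i \in [n]$.

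The key computation is to unravel these tuples using the pairwise disjointness of the $x_i$. By Convention \ref{conv:relative_discrete_partitions}, for each $i$ the value $d(x_i)(s)$ equals the set $x_i$ when $s \in x_i$ and equals $S\backslash x_i$ otherwise. Since the $x_i$ are pairwise disjoint, every $s \in S$ lies in at most one $x_j$, so only two kinds of tuple can occur: if $s \in x_j$ then the tuple carries $x_j$ in coordinate $j$ and $S\backslash x_i$ in every other coordinate, while if $s \notin \bigcup_i x_i$ then the tuple is $(S\backslash x_1,\dots,S\backslash x_n)$. Hence two elements share a tuple exactly when they lie in a common $x_j$ or both lie in the complement $S\backslash x$ of Convention \ref{conv:relative_discrete_partitions:familial}. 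These are precisely the fibers of $d^*(x)$, so the two surjections induce the same partition of $S$ and are therefore isomorphic in $\mathbf{LP}(S)$.

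The main thing to be careful about is the bookkeeping of degenerate cases rather than any deep obstacle: an empty $x_i$ makes $d(x_i)$ the terminal partition, which contributes nothing to the product and is, consistently, omitted from $K_x$ in $d^*(x)$; and if $\bigcup_i x_i = S$ then no element realizes the all-complement tuple, matching the absence of the complement part in $d^*(x)$. The crucial hypothesis is pairwise disjointness, since it is exactly what prevents the coordinatewise tuples from cutting the $x_j$ into finer intersection parts; without it the product would properly refine $d^*(x)$ and no isomorphism would exist. Once the fibers are matched, the induced bijection between the two label sets automatically commutes with the surjections from $S$, so it is a genuine arrow of $\mathbf{LP}(S)$ and not merely a bijection of fiber sets.
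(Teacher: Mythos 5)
Your proof is correct and takes essentially the same route as the paper's: both invoke Remark \ref{rem:associativity} to rewrite the iterated product as the tuple surjection $\mathsf{e}(d(x_1),\dots,d(x_n))$, use pairwise disjointness to see that the only tuples occurring are those with a single $x_j$ entry (all other coordinates being complements) or the all-complement tuple, and then match these with the parts of $d^*(x)$. The only cosmetic difference is that the paper writes down the bijection $\varphi:\mathsf{Im}(d(x_1),\dots,d(x_n)) \to K_x$ explicitly and checks the commuting square, whereas you package that final step as ``equal fibers implies isomorphic'' via the pre-order structure of $\mathbf{LP}(S)$ --- your closing remark about the induced bijection commuting with the surjections is exactly the paper's $\varphi$.
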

\begin{proof}
According to Remark \ref{rem:associativity}, the product shown in the left-hand side of (\ref{eq:products-and-familial-partitions}) is isomorphic to the labeled partition
\[
\mathsf{e}(d(x_1), d(x_2), \dots, d(x_n)):\left(
\begin{array}{lll}
S &\to& \mathsf{Im}(d(x_1), d(x_2), \dots, d(x_n))\\
s &\mapsto & (d(x_1)(s),d(x_2)(s),\dots,d(x_n)(s))
\end{array}
\right).
\]
Since the elements $x_1,x_2,\dots,x_n$ are all pairwise disjoint, the image of the underlying surjection $\mathsf{e}(d(x_1), d(x_2), \dots, d(x_n))$ at an element $s \in S$ is equal to the following tuple:
\[
\left\{
\begin{array}{ll}
(S \backslash x_1,\dots,S \backslash x_{i-1},x_i,S \backslash x_{i+1},\dots,S \backslash x_{n})&\textrm{if }s \in x_i\textrm{ for }i \in [n];\\
(S \backslash x_1,\dots,S \backslash x_{n})&\textrm{if }s \in S \backslash x.\\
\end{array}
\right.
\]
Because $\mathsf{e}(d(x_1), d(x_2), \dots, d(x_n))$ is a surjection, the previous elements describe all the elements of the set $\mathsf{Im}(d(x_1), d(x_2), \dots, d(x_n))$. Hence, there is a bijection
\[
\varphi:
\left(
\begin{array}{lll}
\mathsf{Im}(d(x_1), d(x_2), \dots, d(x_n)) &\to& K_x\\
(S \backslash x_1,\dots,S \backslash x_{i-1},x_i,S \backslash x_{i+1},\dots,S \backslash x_{n}) &\mapsto & x_i\\
(S \backslash x_1,\dots,S \backslash x_{n})& \mapsto & S \backslash x
\end{array}
\right).
\]
making the following diagram commute.
\[
\xymatrix{
S\ar@{=}[r]\ar[d]_{\mathsf{e}(d(x_1), d(x_2), \dots, d(x_n))}&S\ar[d]^{d^*(x)}\\
*+!R(.7){\mathsf{Im}(d(x_1), d(x_2), \dots, d(x_n))} \ar[r]^{\varphi}_{\cong} &K_x
}
\]
This last commutative diagram encodes an isomorphism in $\mathbf{LP}(S)$, so the statement follows from the stability of isomorphisms under the composition operation.
\end{proof}

In the convention given below, we refine the operations defined in Convention \ref{conv:relative_discrete_partitions:familial} and Convention \ref{conv:relative_discrete_partitions} to categories of unlabeled partitions.

\begin{convention}[Delta partitions]\label{conv:relative_discrete_partitions_delta}
Let $(S,<)$ be a finite strict linear order. For every subset $x$ of $S$, we will denote by $\delta(x)$ the image of the labeled partition $d(x)$ through the functor $R:\mathbf{LP}(S) \to \mathbf{UP}(S)$ (Definition \ref{prop:reflection_functor_partitions}). Similarly, for every finite collection $x= \{x_i\}_{i \in [n]}$ of pairwise disjoint subsets $x_i$ of $S$, we will denote by $\delta^*(x)$ the unlabeled partition $d^*(x)$ through the functor $R:\mathbf{LP}(S) \to \mathbf{UP}(S)$.
\end{convention}

The following proposition is a translation of Proposition \ref{prop:product_d:d_star} in the context of unlabeled partitions. We will use this equation in the proof of Proposition \ref{prop:fibers}.

\begin{proposition}[Products and familial partitions]\label{prop:familial-product:unlabeled-partitions}
Let $S$ be a finite set. For every finite collection $x= \{x_i\}_{i \in [n]}$ of pairwise disjoint subsets $x_i$ of $S$, the following identity holds in $\mathbf{LP}(S)$.
\[
\delta(x_1) \times \delta(x_2) \times \dots \times \delta(x_n) = \delta^*(x)
\]
\end{proposition}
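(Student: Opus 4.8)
The plan is to derive the identity by transporting the isomorphism of Proposition \ref{prop:product_d:d_star} through the reflection functor $R$ and using the skeletality of $\mathbf{UP}(S)$ to rigidify it into an equality. I read the product on the left-hand side as the product of the unlabeled partitions $\delta(x_i) = R(d(x_i))$ formed in $\mathbf{UP}(S)$; by Proposition \ref{prop:adjoint_equivalence:products} this product is the object $R\big(\delta(x_1) \times \dots \times \delta(x_n)\big)$, where the inner product is the canonical one in $\mathbf{LP}(S)$. Since this object and $\delta^*(x) = R(d^*(x))$ both lie in the full subcategory $\mathbf{UP}(S) \subseteq \mathbf{LP}(S)$, the asserted equation is an equation of concrete surjections $S \to [m]$, and the goal is to show these surjections are literally the same --- not merely isomorphic.

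The decisive ingredient I would isolate is a collapsing property of $R$: if $p \cong q$ in $\mathbf{LP}(S)$, then $R(p) = R(q)$. Indeed, $R:\mathbf{LP}(S) \to \mathbf{UP}(S)$ is a functor (Proposition \ref{prop:reflection_functor_partitions}), so it carries an isomorphism $p \to q$ to an isomorphism $R(p) \to R(q)$ in $\mathbf{UP}(S)$; by Proposition \ref{prop:skeletal} the latter is an identity, and an identity morphism has equal source and target, whence $R(p) = R(q)$. This is precisely the mechanism that promotes an isomorphism in $\mathbf{LP}(S)$ to a genuine identity once $R$ is applied.

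With this in hand the argument is a brief chain of rewrites. Each $\delta(x_i)$ is isomorphic to $d(x_i)$ in $\mathbf{LP}(S)$ through the natural isomorphism $\eta_{d(x_i)}$, and as the canonical product in $\mathbf{LP}(S)$ preserves isomorphisms I would obtain
\[
\delta(x_1) \times \dots \times \delta(x_n) \;\cong\; d(x_1) \times \dots \times d(x_n) \;\cong\; d^*(x),
\]
the second isomorphism being Proposition \ref{prop:product_d:d_star} (with the bracketing of the iterated product governed by Remark \ref{rem:associativity}). Applying the collapsing property to this composite isomorphism gives $R\big(\delta(x_1) \times \dots \times \delta(x_n)\big) = R(d^*(x)) = \delta^*(x)$, and by Proposition \ref{prop:adjoint_equivalence:products} the left-hand member is exactly the $\mathbf{UP}(S)$-product $\delta(x_1) \times \dots \times \delta(x_n)$. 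This establishes the claimed identity as an equation in $\mathbf{LP}(S)$.

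The hard part is conceptual rather than computational: one must not settle for the isomorphism of Proposition \ref{prop:product_d:d_star}, but recognize that its image under $R$ is forced to be an identity because $\mathbf{UP}(S)$ is skeletal (Proposition \ref{prop:skeletal}). Everything else --- identifying the $\mathbf{UP}(S)$-product via Proposition \ref{prop:adjoint_equivalence:products}, the preservation of isomorphisms by products, and the idempotency of $R$ --- is routine bookkeeping.
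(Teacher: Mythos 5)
Your proof is correct and is essentially the paper's own argument: the paper's proof of this proposition reads, in full, ``Directly follows from applying the functor $R$ on the isomorphism of Proposition \ref{prop:product_d:d_star} and using Proposition \ref{prop:skeletal}.'' You have simply made explicit the bookkeeping the paper leaves implicit --- the collapsing property ($p \cong q$ in $\mathbf{LP}(S)$ implies $R(p) = R(q)$ by skeletality) and the identification of the $\mathbf{UP}(S)$-product via Proposition \ref{prop:adjoint_equivalence:products} --- which is a faithful expansion of the intended proof.
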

\begin{proof}
Directly follows from applying the functor $R$ on the isomorphism of Proposition \ref{prop:product_d:d_star} and using Proposition \ref{prop:skeletal}.
\end{proof}

\begin{definition}[Product decompositions]\label{def:product-decompositions}
Let $(S,<)$ be a finite strict linear order and let $p:S \to [n]$  be an unlabeled partition in $\mathbf{UP}(S)$. A \emph{product decomposition for $p$} is a non-empty finite collection $x = \{x_i\}_{i \in [r]}$ of subsets $x_i$ of $S$ for which the following equation holds:
\[
p = \delta(x_1) \times \delta(x_2) \times \dots \times \delta(x_n)
\]
\end{definition}

\begin{convention}[Fibers]\label{conv:fibers}
Let $(S,<)$ be a finite strict linear order. For every unlabeled partition $p:S \to [n]$ in $\mathbf{UP}(S)$, we will denote the collection of fibers $\{p^{-1}(i)\}_{i \in [n]}$ as $\mathsf{fib}(p)$.
\end{convention}

The following proposition provides a combinatorial description of unlabeled partitions. This result will be useful for our future calculations.

\begin{proposition}[Fibers]\label{prop:fibers}
Let $(S,<)$ be a finite strict linear order. For every unlabeled partition $p:S \to [n]$ in $\mathbf{UP}(S)$, the collection of fibers $\mathsf{fib}(p)$ is a product decomposition for $p$, meaning that the following identity holds:
\[
p = \delta^*(\mathsf{fib}(p))
\]
\end{proposition}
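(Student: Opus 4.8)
The plan is to exhibit a canonical isomorphism between $p$ and the labeled partition $d^*(\mathsf{fib}(p))$ inside $\mathbf{LP}(S)$, and then transport this isomorphism through the reflection functor $R$ to collapse it to an identity in the skeletal category $\mathbf{UP}(S)$.

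First I would record the elementary properties of the collection $\mathsf{fib}(p) = \{p^{-1}(i)\}_{i \in [n]}$. Since these are the fibers of a function, they are automatically pairwise disjoint, so $\mathsf{fib}(p)$ is a legitimate input for Convention \ref{conv:relative_discrete_partitions:familial} and the object $\delta^*(\mathsf{fib}(p))$ is well-defined. Moreover, because $p$ is a surjection, each fiber $p^{-1}(i)$ is non-empty and their union is all of $S$; consequently the complement $S \backslash \mathsf{fib}(p)$ is empty, and the label set $K_{\mathsf{fib}(p)}$ from Convention \ref{conv:relative_discrete_partitions:familial} is exactly $\{p^{-1}(1),\dots,p^{-1}(n)\}$, with no extra complement part adjoined and no empty part discarded.

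Next I would compare $p:S \to [n]$ with the labeled partition $d^*(\mathsf{fib}(p)):S \to K_{\mathsf{fib}(p)}$. By construction, $d^*(\mathsf{fib}(p))$ maps every $t \in p^{-1}(i)$ to the part $p^{-1}(i)$, which is to say it equals the composite of $p$ with the assignment $\beta:[n] \to K_{\mathsf{fib}(p)}$, $i \mapsto p^{-1}(i)$. By the previous paragraph $\beta$ is a bijection, and the identity $\beta \circ p = d^*(\mathsf{fib}(p))$ is precisely the commuting square that exhibits $\beta$ as an isomorphism $p \to d^*(\mathsf{fib}(p))$ in $\mathbf{LP}(S)$.

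Finally I would apply the functor $R:\mathbf{LP}(S) \to \mathbf{UP}(S)$ to $\beta$. As a functor, $R$ carries the isomorphism $\beta$ to an isomorphism $R(p) \to R(d^*(\mathsf{fib}(p))) = \delta^*(\mathsf{fib}(p))$ in $\mathbf{UP}(S)$. Since $p$ is an unlabeled partition it is fixed by the idempotent functor $R$ (Definition \ref{def:unlabeled_partitions} and Remark \ref{rem:idempotency}), so $R(p) = p$ and we obtain an isomorphism $p \to \delta^*(\mathsf{fib}(p))$; by skeletality (Proposition \ref{prop:skeletal}) this isomorphism must be an identity, yielding $p = \delta^*(\mathsf{fib}(p))$, as desired. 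The only step requiring genuine attention rather than bookkeeping is the verification in the second paragraph that surjectivity of $p$ forces $K_{\mathsf{fib}(p)}$ to consist of exactly the fibers (so that $\beta$ is a bijection rather than merely an injection); once that is in place, everything else is the now-standard move of promoting an $\mathbf{LP}(S)$-isomorphism to an identity in the skeletal category $\mathbf{UP}(S)$.
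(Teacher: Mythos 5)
Your proof is correct and follows essentially the same route as the paper's: both arguments observe that surjectivity of $p$ makes the fibers non-empty, pairwise disjoint, and covering (so $K_{\mathsf{fib}(p)}$ consists exactly of the fibers), both exhibit the canonical bijection between $[n]$ and $K_{\mathsf{fib}(p)}$ as an isomorphism in $\mathbf{LP}(S)$ via a commuting triangle over $S$, and both finish by applying the idempotent functor $R$ and invoking skeletality (Proposition \ref{prop:skeletal}) to collapse that isomorphism to an identity in $\mathbf{UP}(S)$. The only cosmetic difference is directional: you write the isomorphism as $\beta:p \to d^*(\mathsf{fib}(p))$, whereas the paper writes its inverse $\varphi:d^*(\mathsf{fib}(p)) \to p$.
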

\begin{proof}
Since the collection $\mathsf{fib}(p) = \{p^{-1}(i)\}_{i \in [n]}$ is pairwise disjoint, the partition $d^*(\mathsf{fib}(p)):S \to K_{\mathsf{fib}(p)}$ is well defined. Because the set $S \backslash \mathsf{fib}(p)$ is empty and all the fibers $p^{-1}(i)$ of the surjection $p:S \to [n]$ are non-empty, we can define the following bijection (see Definition \ref{conv:relative_discrete_partitions:familial}):
\[
\varphi:\left(
\begin{array}{lll}
K_{\mathsf{fib}(p)}&\to&[n]\\
p^{-1}(i)& \mapsto &i
\end{array}
\right)
\]
Because the composite $\varphi \circ d^*(\mathsf{fib}(p)):S \to [n]$ maps every $s \in S$ to the integer $\varphi(p^{-1}(i)) = i$ whenever $s \in p^{-1}(s)$, the identity $\varphi \circ d^*(\mathsf{fib}(p))(s) = p(s)$ holds, which gives the following commutative diagram.
\[
\xymatrix{
S\ar@{=}[r]\ar[d]_{d^*(\mathsf{fib}(p))}&S\ar[d]^{p}\\
K_{\mathsf{fib}(p)} \ar[r]^-{\varphi}_-{\cong} &[n]
}
\]
Since this diagram defines an isomorphism $d^*(\mathsf{fib}(p)) \to p$ in $\mathbf{LP}(S)$ and $p$ is an object of $\mathbf{UP}(S)$ (see Definition \ref{def:unlabeled_partitions}), we can apply the idempotent functor $R$ on the isomorphism $d^*(\mathsf{fib}(p)) \to p$ to obtain the identities $\delta^*(\mathsf{fib}(p)) = R(p) = p$ in $\mathbf{LP}(S)$ (see Proposition \ref{prop:skeletal}).
\end{proof}

The rest of the section provides a collection of results that we will use in our calculations. These results often generalizes relationships holding for sets.

\begin{proposition}[Product rearrangement]\label{prop:product-rearrangement}
Let $(S,<)$ be a finite strict linear order. For every pair $(x,y)$ of subsets of $S$, the following equation holds in $\mathbf{UP}(S)$.
\[
\delta(x) \times \delta(y) = \delta(y \backslash x) \times \delta(x \backslash y) \times \delta(y \cap x)
\]
\end{proposition}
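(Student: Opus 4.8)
The plan is to prove this equation of unlabeled partitions at the level of fibers and then invoke the fact that an unlabeled partition is completely determined by its collection of fibers. Recall that, by Proposition \ref{prop:adjoint_equivalence:products} together with the refinement description of Example \ref{exa:product_of_partitions}, a product in $\mathbf{UP}(S)$ has as its fibers the non-empty pairwise intersections of the fibers of its factors, and that the functor $R$ merely relabels a partition by the canonical labels $[n]$ without altering its fibers as subsets of $S$. Thus the fibers of $\delta(x)$ are the non-empty members of $\{x, S\backslash x\}$ and those of $\delta(y)$ are the non-empty members of $\{y, S\backslash y\}$. Taking all non-empty pairwise intersections, I would compute the fiber collection of the left-hand side to be
\[
\{\, x\cap y,\ x\backslash y,\ y\backslash x,\ S\backslash(x\cup y) \,\},
\]
where empty sets are discarded.

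Next I would handle the right-hand side. The three sets $y\backslash x$, $x\backslash y$ and $y\cap x$ are pairwise disjoint, so Proposition \ref{prop:familial-product:unlabeled-partitions} applies and rewrites the right-hand side as the single familial partition $\delta^*(\{y\backslash x, x\backslash y, y\cap x\})$. By Convention \ref{conv:relative_discrete_partitions:familial} (and again because $R$ does not change fibers), the fibers of this partition are the non-empty members of $\{y\backslash x, x\backslash y, y\cap x\}$ together with the complement $S\backslash\big((y\backslash x)\cup(x\backslash y)\cup(y\cap x)\big)$ when that complement is non-empty. A routine set computation gives $(y\backslash x)\cup(x\backslash y)\cup(y\cap x) = x\cup y$, so this complement is exactly $S\backslash(x\cup y)$. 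Hence the right-hand side has the same fiber collection as the left-hand side.

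Finally, since both products are unlabeled partitions with identical collections of fibers, Proposition \ref{prop:fibers} (which expresses any unlabeled partition $p$ as $\delta^*(\mathsf{fib}(p))$) forces them to be equal; equivalently, the skeletal property of Proposition \ref{prop:skeletal} rules out any non-trivial relabeling. The only point I expect to require care is the systematic treatment of degenerate cases, namely when $x$ and $y$ overlap so that one of the four pieces $x\cap y$, $x\backslash y$, $y\backslash x$, $S\backslash(x\cup y)$ happens to be empty. This, however, causes no real obstacle, because both the refinement computation on the left and the familial construction $\delta^*$ on the right discard empty parts by their very definition, so the two fiber collections coincide uniformly in every case.
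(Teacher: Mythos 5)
Your proof is correct, and its backbone is the same as the paper's: compute fibers and conclude with Proposition \ref{prop:familial-product:unlabeled-partitions} and Proposition \ref{prop:fibers}. The difference is organizational but worth noting. The paper first disposes of the degenerate cases ($x$ or $y$ empty or equal to $S$) by separate algebraic arguments (terminality of $\delta(\emptyset)$, duality as in Remark \ref{rem:duality}, and idempotency via Proposition \ref{prop:product-idempotency:unlabeled}), and only then, assuming $x$ and $y$ are proper non-empty subsets, computes the fibers of $d(x)\times d(y)$ in $\mathbf{LP}(S)$ and identifies the left-hand side with $\delta^*(\{x\backslash y,\,x\cap y,\,y\backslash x\})$. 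You instead run a single fiber computation on \emph{both} sides, valid in every case, using the observation that the refinement on the left and the $\delta^*$ construction on the right each discard empty parts by definition, so no case split is needed; equality then follows from Proposition \ref{prop:fibers} and the skeletal property (Proposition \ref{prop:skeletal}). Your version also tracks the fourth fiber $S\backslash(x\cup y)$ explicitly, which the paper's enumeration of cases in its displayed function omits (its conclusion survives only because $\delta^*$ of the three-set collection re-inserts that complement automatically via Convention \ref{conv:relative_discrete_partitions:familial}). In short, your uniform treatment buys brevity and robustness against edge cases, while the paper's route showcases the algebraic identities in the degenerate cases at the cost of a slightly incomplete enumeration in the main one.
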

\begin{proof}
If $y$ is empty, then the equation of the statement is equivalent to the equation:
\[
\delta(x) \times \delta(\emptyset) = \delta(\emptyset) \times \delta(x) \times \delta(\emptyset),
\]
Since $\delta(\emptyset)$ is terminal in $\mathbf{UP}(S)$ (see Example \ref{exa:relative_discrete_partitions}) and the isomorphisms of $\mathsf{UP}(S)$ are identities (Proposition \ref{prop:skeletal}), it is straightforward to see that the two sides of the previous equation are equal to $\delta(x)$, which shows the statement. Similarly, the case where $x$ is empty is straightforward.

If $y$ is equal to $S$, then the equation of the statement is equivalent to 
\[
\delta(x) \times \delta(S) = \delta(S \backslash x) \times \delta(x) \times \delta(x).
\]
By Remark \ref{rem:duality} and Proposition \ref{prop:product-idempotency:unlabeled}, we have $\delta(x) = \delta(S \backslash x)$ and since $\delta(x) \times \delta(x) = \delta(x)$, it is straightforward to see that the two sides of the previous equation are equal to $\delta(x)$, which shows the statement. Similarly, the case where $x$ is equal to $S$ is straightforward. 

Suppose that $x$ and $y$ are not empty or equal to $S$.
Let us first describe the fibers of the product partition $\delta(x) \times \delta(y)$ -- we will then conclude with Proposition \ref{prop:familial-product:unlabeled-partitions}. First, let us describe the associated labeled partition $d(x) \times d(y)$ in $\mathbf{LP}(S)$. By Proposition \ref{prop:products_labeled_partitions}, the product $d(x) \times d(y)$ is isomorphic to the labeled partition
\[
\mathsf{e}(d(x), d(y)):\left(
\begin{array}{lll}
S &\to& \mathsf{Im}(d(x), d(y))\\
s &\mapsto & (d(x)(s),d(y)(s))
\end{array}
\right).
\]
By Convention \ref{conv:relative_discrete_partitions:familial}, because $x$ and $y$ are not empty or equal to $S$, the previous function is equal to the following one.
\[
\left(
\begin{array}{llll}
S &\to& \mathsf{Im}(d(x), d(y))&\\
s &\mapsto & (x,y)&\textrm{if }s \in x \cap y\\
s &\mapsto & (x,S\backslash y)&\textrm{if }s \in x\textrm{ and }s \notin y\\
s &\mapsto & (S\backslash x,y)&\textrm{if }s \notin x\textrm{ and }s \in y\\
\end{array}
\right).
\]
In other words, the fibers of $d(x) \times d(y)$ are $x\backslash y$, $x \cap y$ and $y \backslash x$. It follows from Proposition \ref{prop:fibers} that $\delta(x) \times \delta(y)$, which is the image of the partition $d(x) \times d(y)$ through the functor $R$ (Proposition \ref{prop:adjoint_equivalence:products}), is equal to $\delta^*(\{x\backslash y,x \cap y,y \backslash x\})$. Because the sets $x\backslash y$, $x \cap y$ and $y \backslash x$ are pairwise disjoint, we deduce that the equation of the statement holds from Proposition \ref{prop:familial-product:unlabeled-partitions}.
\end{proof}

\begin{proposition}\label{prop:delta:change-backlash-to-cap}
Let $(S,<)$ be a finite strict linear order. For every pair $(x,y)$ of subsets of $S$, the following equation holds in $\mathbf{UP}(S)$.
\[
\delta(x) \times \delta(x \backslash y) = \delta(x) \times \delta(x \cap y)
\]
\end{proposition}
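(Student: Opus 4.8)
The plan is to reduce both sides of the claimed identity to a common normal form by applying the product rearrangement of Proposition \ref{prop:product-rearrangement} twice, once to each side. The key observation is that both $x \backslash y$ and $x \cap y$ are subsets of $x$, so when we rearrange a product of $\delta(x)$ with either of them, the ``symmetric difference'' term collapses to $\delta(\emptyset)$, which is terminal and can be discarded.

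First I would treat the left-hand side. Applying Proposition \ref{prop:product-rearrangement} to the pair $(x, x\backslash y)$ gives
\[
\delta(x) \times \delta(x\backslash y) = \delta\big((x\backslash y)\backslash x\big) \times \delta\big(x \backslash (x\backslash y)\big) \times \delta\big((x\backslash y)\cap x\big).
\]
Since $x\backslash y \subseteq x$, the set-theoretic computations $(x\backslash y)\backslash x = \emptyset$, $x\backslash(x\backslash y) = x\cap y$, and $(x\backslash y)\cap x = x\backslash y$ hold, so the right-hand side becomes $\delta(\emptyset) \times \delta(x\cap y) \times \delta(x\backslash y)$. As $\delta(\emptyset)$ is terminal in $\mathbf{UP}(S)$ (Example \ref{exa:relative_discrete_partitions}) and all isomorphisms in $\mathbf{UP}(S)$ are identities (Proposition \ref{prop:skeletal}), the product with $\delta(\emptyset)$ can be dropped, yielding $\delta(x) \times \delta(x\backslash y) = \delta(x\cap y) \times \delta(x\backslash y)$.

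Next I would treat the right-hand side symmetrically. Applying Proposition \ref{prop:product-rearrangement} to the pair $(x, x\cap y)$ and using $x\cap y \subseteq x$ gives $(x\cap y)\backslash x = \emptyset$, $x\backslash(x\cap y) = x\backslash y$, and $(x\cap y)\cap x = x\cap y$, whence $\delta(x)\times\delta(x\cap y) = \delta(\emptyset)\times\delta(x\backslash y)\times\delta(x\cap y)$. Discarding $\delta(\emptyset)$ as before leaves $\delta(x)\times\delta(x\cap y) = \delta(x\backslash y)\times\delta(x\cap y)$. Since products in $\mathbf{UP}(S)$ are commutative up to canonical isomorphism and such isomorphisms are identities by Proposition \ref{prop:skeletal}, the two reduced expressions $\delta(x\cap y)\times\delta(x\backslash y)$ and $\delta(x\backslash y)\times\delta(x\cap y)$ coincide, which establishes the identity.

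I do not expect a genuine obstacle here: the argument is a routine double application of Proposition \ref{prop:product-rearrangement}. The only points requiring care are the bookkeeping of the three set-theoretic subexpressions in each rearrangement and the explicit invocation of the terminality of $\delta(\emptyset)$ together with the skeletality of $\mathbf{UP}(S)$, which is what licenses treating the products as strictly associative and commutative rather than merely up to isomorphism.
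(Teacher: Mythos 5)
Your proposal is correct and follows essentially the same route as the paper: both apply Proposition \ref{prop:product-rearrangement} once to each side of the claimed identity, simplify the resulting set-theoretic expressions using $x\backslash y \subseteq x$ and $x \cap y \subseteq x$, and conclude by the symmetry of products together with the skeletality of $\mathbf{UP}(S)$. The only cosmetic difference is that you explicitly discard the $\delta(\emptyset)$ factor via terminality, whereas the paper keeps it in both expressions and matches the two right-hand sides directly.
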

\begin{proof}
By Proposition \ref{prop:product-rearrangement}, we have the following two formulas:
\[
\begin{array}{lll}
\delta(x) \times \delta(x \backslash y) &= &\delta((x \backslash y) \backslash x) \times \delta(x \backslash (x \backslash y)) \times \delta((x \backslash y) \cap x)\\
\delta(x) \times \delta(x \cap y) &=& \delta(x \cap (y \backslash x)) \times \delta(x \backslash (x \cap y)) \times \delta(x \cap y \cap x)
\end{array}
\]
Simplifying the previous equations gives the following pair of relations:
\[
\begin{array}{lll}
\delta(x) \times \delta(x \backslash y) &= &\delta(\emptyset) \times \delta(x \cap y) \times \delta(x \backslash y)\\
\delta(x) \times \delta(x \cap y) &=& \delta(\emptyset) \times \delta(x \backslash y) \times \delta(x \cap y)
\end{array}
\]
By symmetry of products and Proposition \ref{prop:product-idempotency:unlabeled}, the left-hand sides of the two previous equations are equal, which proves the statement.
\end{proof}

\begin{proposition}\label{prop:triple-xyz-delta}
Let $(S,<)$ be a finite strict linear order. For every triple $(x,y,z)$ of subsets of $S$ such that $x \cap z = \emptyset$, the following equations hold in $\mathbf{UP}(S)$.
\[
\begin{array}{llllllll}
\delta(x) \times \delta(y \backslash z) &=& \delta(x  \backslash y) &\times& \delta(x \cap y)&\times &\delta(y \backslash (z \cup x))\\
\delta(x) \times \delta(y \backslash z) &=& \delta(x) &\times& \delta(x \cap y)&\times &\delta(y \backslash (z \cup x))
\end{array}
\]
\end{proposition}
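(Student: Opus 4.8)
The plan is to derive both equations by repeated application of the product rearrangement formula of Proposition \ref{prop:product-rearrangement}, together with the symmetry of the (categorical) product, product idempotency (Proposition \ref{prop:product-idempotency:unlabeled}), and the fact that $\delta(\emptyset)$ is terminal in $\mathbf{UP}(S)$ (see Example \ref{exa:relative_discrete_partitions}); equalities will be read off directly since isomorphisms in $\mathbf{UP}(S)$ are identities (Proposition \ref{prop:skeletal}).

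First I would attack the first equation. Applying Proposition \ref{prop:product-rearrangement} to the pair $(x, y\backslash z)$ gives
\[
\delta(x)\times\delta(y\backslash z) = \delta\big((y\backslash z)\backslash x\big)\times\delta\big(x\backslash(y\backslash z)\big)\times\delta\big((y\backslash z)\cap x\big).
\]
The content of this step is then a routine set-theoretic simplification of the three arguments, in which the hypothesis $x\cap z=\emptyset$ is used exactly twice: one checks that $(y\backslash z)\backslash x = y\backslash(z\cup x)$ (no hypothesis needed), that $x\backslash(y\backslash z) = (x\backslash y)\cup(x\cap z) = x\backslash y$ (here $x\cap z=\emptyset$ is used), and that $(y\backslash z)\cap x = (x\cap y)\backslash z = x\cap y$ (again using $x\cap z=\emptyset$). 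Reordering the three factors by symmetry of the product yields the first equation.

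For the second equation I would isolate the auxiliary identity $\delta(x)\times\delta(x\cap y) = \delta(x\backslash y)\times\delta(x\cap y)$. This follows by applying Proposition \ref{prop:product-rearrangement} to the pair $(x, x\cap y)$: since $x\cap y\subseteq x$, the three arguments simplify to $(x\cap y)\backslash x = \emptyset$, $x\backslash(x\cap y) = x\backslash y$ and $(x\cap y)\cap x = x\cap y$, so that
\[
\delta(x)\times\delta(x\cap y) = \delta(\emptyset)\times\delta(x\backslash y)\times\delta(x\cap y).
\]
Because $\delta(\emptyset)$ is terminal, it is a unit for the product up to the identities guaranteed by Proposition \ref{prop:skeletal}, so the factor $\delta(\emptyset)$ drops out and the auxiliary identity follows. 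Substituting $\delta(x\backslash y)\times\delta(x\cap y) = \delta(x)\times\delta(x\cap y)$ into the right-hand side of the first equation (keeping the common factor $\delta(y\backslash(z\cup x))$ untouched) immediately produces the second equation.

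I expect no serious obstacle: the argument is essentially bookkeeping layered on top of Proposition \ref{prop:product-rearrangement}. The only point requiring a little care is the second equation, where one must notice that the factor $\delta(x\backslash y)$ can be upgraded to $\delta(x)$ precisely because it is accompanied by $\delta(x\cap y)$; this is exactly what the auxiliary identity encodes, and it is also where the terminality of $\delta(\emptyset)$ enters. One could alternatively extract the auxiliary identity from Proposition \ref{prop:delta:change-backlash-to-cap}, but the direct route through Proposition \ref{prop:product-rearrangement} is cleaner.
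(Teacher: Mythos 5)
Your proof is correct. For the first equation you follow exactly the paper's route: apply Proposition \ref{prop:product-rearrangement} to the pair $(x,y\backslash z)$ and simplify the three resulting arguments set-theoretically, using $x\cap z=\emptyset$ to discard the term $x\cap z$ in $x\backslash(y\backslash z)=(x\backslash y)\cup(x\cap z)$ and to identify $(y\backslash z)\cap x$ with $x\cap y$ (you are in fact slightly more explicit than the paper, which performs the latter simplification silently). For the second equation your mechanics differ from the paper's: the paper multiplies both sides of the first equation by $\delta(x)$, invokes its already-established Proposition \ref{prop:delta:change-backlash-to-cap} to trade $\delta(x)\times\delta(x\backslash y)$ for $\delta(x)\times\delta(x\cap y)$, and then removes the duplicated factors $\delta(x)\times\delta(x)$ and $\delta(x\cap y)\times\delta(x\cap y)$ via Proposition \ref{prop:product-idempotency:unlabeled}; you instead derive the one-sided identity $\delta(x)\times\delta(x\cap y)=\delta(x\backslash y)\times\delta(x\cap y)$ directly from Proposition \ref{prop:product-rearrangement} applied to $(x,x\cap y)$, together with terminality of $\delta(\emptyset)$ and skeletality, and then substitute it into the right-hand side of the first equation. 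Both arguments rest on the same underlying fact, namely that $\delta(x\backslash y)$ can be upgraded to $\delta(x)$ in the presence of the factor $\delta(x\cap y)$, but your substitution route is more direct: it avoids the multiply-then-cancel maneuver and hence avoids idempotency altogether, at the modest cost of re-deriving a variant of Proposition \ref{prop:delta:change-backlash-to-cap} rather than quoting it.
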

\begin{proof}
By Proposition \ref{prop:product-rearrangement}, we have the following formula:
\[
\delta(x) \times \delta(y \backslash z) = \delta((y \backslash z) \backslash x) \times \delta(x \backslash (y \backslash z)) \times \delta((y \backslash z) \cap x)
\]
Simplifying the previous equations gives the following pair of relations:
\begin{align*}
\delta(x) \times \delta(y \backslash z) &= \delta(y \backslash (z \cup x)) \times \delta((x \backslash y) \cup (x \cap z)) \times \delta(y \cap x)&\\
& =   \delta(y \backslash (z \cup x)) \times \delta(x \backslash y) \times \delta(y \cap x) & (x \cap z = \emptyset)
\end{align*}
By symmetry of products and Proposition \ref{prop:product-idempotency:unlabeled}, we obtain the topmost formula given in the statement. The show the other formula, we multiply the previous one by $\delta(x)$ and use Proposition \ref{prop:delta:change-backlash-to-cap} as shown below.
\begin{align*}
\delta(x) \times\delta(x) \times \delta(y \backslash z) &= \delta(x) \times\delta(x  \backslash y) \times \delta(x \cap y)\times \delta(y \backslash (z \cup x))\\
 &= \delta(x) \times\delta(x \cap y) \times \delta(x \cap y)\times \delta(y \backslash (z \cup x))
\end{align*}
Finally, we conclude by using Proposition \ref{prop:product-idempotency:unlabeled} on both sides of the previous equation to eliminate repeated terms.
\end{proof}

Our next result (Proposition \ref{prop:delta-complement-fiber}) will involve a long product of partitions. The following convention recalls the usual notation for this type of products.

\begin{convention}[Long products]
From now on, we will denote sequences of Cartesian products in $\mathbf{UP}(S)$ by using the conventional operation symbol  $\prod$, namely for every collection of objects $p_1,\dots,p_n$, we have the relation $\prod_{i=1}^n p_i := p_1 \times \dots \times p_n$.
\end{convention}

\begin{proposition}[Complement property]\label{prop:delta-complement-fiber}
Let $(S,<)$ be a finite strict linear order. For every subset $x$ of $S$ and every sequence of pairwise disjoint subsets $y_1,\dots,y_n$ of $x$, the following equation holds.
\[
\delta(x) \times \prod_{i=1}^n\delta(y_i) = \delta(x\backslash \bigcup_{i = 1}^n y_i) \times \prod_{i=1}^n\delta(y_i)
\]
\end{proposition}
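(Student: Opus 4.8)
The plan is to reduce the claim to the already-established rearrangement and absorption lemmas by exploiting the hypothesis that each $y_i$ is contained in $x$ and that the $y_i$ are pairwise disjoint. The key observation is that the left-hand and right-hand sides differ only in the factor $\delta(x)$ versus $\delta(x \backslash \bigcup_i y_i)$, while they share the common factor $\prod_{i=1}^n \delta(y_i)$. So it suffices to show that, once we multiply in the $\delta(y_i)$, the partition $\delta(x)$ and the partition $\delta(x \backslash \bigcup_i y_i)$ become interchangeable. I would carry this out by induction on $n$, using Proposition \ref{prop:delta:change-backlash-to-cap} (which lets us replace a set difference by an intersection inside a product) as the single-step engine.

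First I would set $U_k = \bigcup_{i=1}^k y_i$ and prove by induction the intermediate identity
\[
\delta(x) \times \prod_{i=1}^k \delta(y_i) = \delta(x \backslash U_k) \times \prod_{i=1}^k \delta(y_i).
\]
The base case $k=0$ is the trivial identity $\delta(x) = \delta(x)$. For the inductive step, I would start from the left-hand side with $k+1$ factors, apply Proposition \ref{prop:product-idempotency:unlabeled} to duplicate $\delta(x)$, use the inductive hypothesis on one copy to replace $\delta(x)$ by $\delta(x \backslash U_k)$, and then deal with the newly multiplied factor $\delta(y_{k+1})$. The crucial point is that $y_{k+1} \subseteq x$ and $y_{k+1}$ is disjoint from $U_k$, so $y_{k+1} \subseteq x \backslash U_k$; hence $(x \backslash U_k) \cap y_{k+1} = y_{k+1}$ and $(x \backslash U_k) \backslash y_{k+1} = x \backslash U_{k+1}$. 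Applying Proposition \ref{prop:delta:change-backlash-to-cap} with the pair $(x \backslash U_k, y_{k+1})$ converts $\delta(x \backslash U_k) \times \delta(y_{k+1})$ into a product involving $\delta(x \backslash U_{k+1})$, after which Proposition \ref{prop:product-idempotency:unlabeled} absorbs the redundant copies of $\delta(y_{k+1})$. Taking $k = n$ gives exactly the statement.

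The main obstacle I anticipate is purely bookkeeping rather than conceptual: I must track carefully how Proposition \ref{prop:delta:change-backlash-to-cap} reshapes the product at each step, making sure the set identities $(x \backslash U_k) \cap y_{k+1} = y_{k+1}$ and $(x \backslash U_k) \backslash y_{k+1} = x \backslash U_{k+1}$ are justified by the disjointness and containment hypotheses, and that every spurious factor is eliminated using product idempotency (Proposition \ref{prop:product-idempotency:unlabeled}) together with commutativity of products in $\mathbf{UP}(S)$. A cleaner, non-inductive alternative would be to compute both sides directly via Proposition \ref{prop:familial-product:unlabeled-partitions}: the fibers on each side are generated by the same pairwise-disjoint family, since the complement term $x \backslash U_n$ on the right reconstitutes exactly the portion of $x$ not covered by the $y_i$, so both sides equal the same $\delta^*$ of the family $\{y_1,\dots,y_n, x\backslash U_n\}$. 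If this fiber-level description can be made rigorous, it avoids the induction entirely, and I would prefer it for brevity; but the inductive route using Proposition \ref{prop:delta:change-backlash-to-cap} is the safer fallback since it only invokes lemmas already proven in closed form.
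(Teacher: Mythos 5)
Your induction skeleton is the same as the paper's, but the engine you chose for the inductive step does not turn over, and the failure is already visible at $n=1$. Writing $U_k = \bigcup_{i=1}^k y_i$, the identity your step actually needs is
\[
\delta(y_{k+1})\times\delta(x\backslash U_k)\;=\;\delta(y_{k+1})\times\delta(x\backslash U_{k+1}),
\]
in which the \emph{coarse} factor gets rewritten with $\delta(y_{k+1})$ acting as the anchor. Proposition \ref{prop:delta:change-backlash-to-cap} applied to the pair $(x\backslash U_k,\,y_{k+1})$ does the opposite: it keeps its anchor and only exchanges the satellite, giving $\delta(x\backslash U_k)\times\delta(y_{k+1})=\delta(x\backslash U_k)\times\delta(x\backslash U_{k+1})$. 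After that substitution you are left with the factor $\delta(x\backslash U_k)$ (and, in your variant that duplicates $\delta(x)$ first, with $\delta(x)$ as well), and no cited tool removes it: Proposition \ref{prop:product-idempotency:unlabeled} only merges two \emph{equal} factors, and $\delta(x\backslash U_k)\neq\delta(x\backslash U_{k+1})$ whenever $y_{k+1}\neq\emptyset$; there are no ``redundant copies of $\delta(y_{k+1})$'' to absorb. This is precisely why the paper runs its induction (with base case $n=1$, where the whole difficulty already lives) on the second equation of Proposition \ref{prop:triple-xyz-delta}: taking the triple $(y_{k+1},\,x,\,U_k)$, admissible since $y_{k+1}\cap U_k=\emptyset$ by pairwise disjointness, it yields $\delta(y_{k+1})\times\delta(x\backslash U_k)=\delta(y_{k+1})\times\delta(y_{k+1}\cap x)\times\delta(x\backslash U_{k+1})$, and since $y_{k+1}\cap x=y_{k+1}$ the middle factor merges by idempotency, so the coarse factor is genuinely replaced. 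If you want a repair closest to your plan, two options work: Proposition \ref{prop:union-property:delta} applied to the triple $(y_{k+1},\,y_{k+1},\,x\backslash U_{k+1})$ gives the displayed identity at once (its proof does not rely on the present proposition, so there is no circularity, though it is stated after it); alternatively, your own lemma does suffice if you anchor at the complement, i.e.\ apply Proposition \ref{prop:delta:change-backlash-to-cap} to the pair $(S\backslash y_{k+1},\,x\backslash U_k)$ and then invoke the duality $\delta(A)=\delta(S\backslash A)$ of Remark \ref{rem:duality} --- a step your write-up never takes.

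Your ``cleaner, non-inductive alternative'' has a gap of the same nature. Proposition \ref{prop:familial-product:unlabeled-partitions} identifies a product of $\delta$'s with a $\delta^*$ only for a \emph{pairwise disjoint} family; it handles the right-hand side $\{x\backslash U_n,y_1,\dots,y_n\}$, but the left-hand family $\{x,y_1,\dots,y_n\}$ is not pairwise disjoint (each $y_i$ sits inside $x$), so the fibers of $\delta(x)\times\prod_{i}\delta(y_i)$ cannot be obtained by that citation. Identifying those fibers as $\{y_1,\dots,y_n,\,x\backslash U_n,\,S\backslash x\}$ is exactly the content of the proposition being proved, and would require a from-scratch computation in the style of the proof of Proposition \ref{prop:product-rearrangement} rather than an appeal to an existing statement.
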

\begin{proof}
We show the proposition by induction on $n$. Suppose that $n = 1$. It follows from the second equation of Proposition \ref{prop:triple-xyz-delta} that the following equation holds:
\[
\delta(y_1) \times \delta(x \backslash \emptyset) = \delta(y_1) \times \delta(y_1 \cap x) \times\delta(x \backslash (\emptyset \cup y_1)) 
\]
Because the inclusion $y_1 \subseteq x$ holds, the previous equation gives us the equations:
\begin{align*}
\delta(y_1) \times \delta(x) &=  \delta(y_1) \times \delta(y_1) \times\delta(x \backslash y_1)& \\
&=  \delta(y_1) \times\delta(x \backslash y_1).& (\textrm{Proposition \ref{prop:product-idempotency:unlabeled}})
\end{align*}
This shows the base step of our induction. Let us now suppose that the following equation holds.
\[
\delta(x) \times \prod_{i=1}^{n-1}\delta(y_i) = \delta(x\backslash \bigcup_{i = 1}^{n-1} y_i) \times \prod_{i=1}^{n-1}\delta(y_i)
\]
Multiplying the two sides of the previous equation gives us the following relation:
\begin{equation}\label{eq:inductive-step:delta-complement-fiber}
\delta(y_n)  \times \delta(x) \times \prod_{i=1}^{n-1}\delta(y_i) = \delta(y_n)  \times \delta(x\backslash \bigcup_{i = 1}^{n-1} y_i) \times \prod_{i=1}^{n-1}\delta(y_i)
\end{equation}
It follows from the second equation of Proposition \ref{prop:triple-xyz-delta} that the following equation holds:
\[
\delta(y_n) \times \delta(x \backslash \bigcup_{i = 1}^{n-1} y_i) = \delta(y_n) \times \delta(y_n \cap x) \times\delta(x \backslash \bigcup_{i = 1}^{n} y_i) 
\]
Because the inclusion $y_n \subseteq x$ holds, the previous equation gives us the equations:
\begin{align*}
\delta(y_n) \times \delta(x \backslash \bigcup_{i = 1}^{n-1} y_i) &=  \delta(y_n) \times \delta(y_n) \times\delta(x \backslash \bigcup_{i = 1}^{n} y_i)& \\
&=  \delta(y_n) \times\delta(x \backslash \bigcup_{i = 1}^{n} y_i).& (\textrm{Proposition \ref{prop:product-idempotency:unlabeled}})
\end{align*}
Hence, equation (\ref{eq:inductive-step:delta-complement-fiber}) can be rearranged as follows:
\[
\delta(y_n)  \times \delta(x) \times \prod_{i=1}^{n-1}\delta(y_i) = \delta(y_n)  \times \delta(x\backslash \bigcup_{i = 1}^{n} y_i) \times \prod_{i=1}^{n-1}\delta(y_i)
\]
Finally, we obtain the equation of the statement from the previous one by inserting the term $\delta(y_n)$ inside the products. 
\end{proof}

The property stated in Proposition \ref{prop:union-property:delta} (see below) contains the essence of our main theorems, which, in a sense, are refinements of the equation given by Proposition \ref{prop:union-property:delta}.

\begin{proposition}[Union property]\label{prop:union-property:delta}
Let $(S,<)$ be a finite strict linear order. For every triple $(x,y,z)$ of subsets of $S$ such that $y \subseteq x$ and $x \cap z = \emptyset$, the following equation holds in $\mathbf{UP}(S)$.
\[
\delta(x) \times \delta(y) \times \delta(z) = \delta(x) \times \delta(y \cup z)
\]
\end{proposition}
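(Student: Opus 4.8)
The plan is to reduce the claimed identity to a direct application of the complement property (Proposition~\ref{prop:delta-complement-fiber}) after rewriting both sides so that the relevant pieces sit inside products of pairwise-disjoint $\delta$-terms. The key structural fact I would exploit is that $y \subseteq x$ and $x \cap z = \emptyset$ together force $y$ and $z$ to be disjoint, and more importantly $z \subseteq S \backslash x$, so that $z$ is ``complementary'' to the block $x$ in exactly the sense that the duality relation $\delta(z) = \delta(S \backslash z)$ (Remark~\ref{rem:duality}) and the complement property are designed to handle.

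First I would establish the right-hand side. Starting from $\delta(x) \times \delta(y \cup z)$, I would use Proposition~\ref{prop:product-rearrangement} with the pair $(x, y \cup z)$ to split $\delta(y \cup z)$ into pieces indexed by $x$. Since $y \subseteq x$ and $z \cap x = \emptyset$, the set-theoretic simplifications give $(y \cup z) \backslash x = z$, $\;x \backslash (y \cup z) = x \backslash y$, and $(y \cup z) \cap x = y$, so that
\[
\delta(x) \times \delta(y \cup z) = \delta(z) \times \delta(x \backslash y) \times \delta(y).
\]
Next I would transform the left-hand side. Using product idempotency (Proposition~\ref{prop:product-idempotency:unlabeled}) I may freely duplicate or absorb factors, and I would apply Proposition~\ref{prop:delta-complement-fiber} to the subconfiguration $\delta(x) \times \delta(y)$ with the single disjoint subset $y \subseteq x$, obtaining $\delta(x) \times \delta(y) = \delta(x \backslash y) \times \delta(y)$. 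Multiplying through by $\delta(z)$ then yields
\[
\delta(x) \times \delta(y) \times \delta(z) = \delta(x \backslash y) \times \delta(y) \times \delta(z),
\]
which matches the rewritten right-hand side up to the commutativity of products and the fact that isomorphisms in $\mathbf{UP}(S)$ are identities (Proposition~\ref{prop:skeletal}).

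The main obstacle, as is typical in this section, is not the categorical machinery but the bookkeeping of the set-theoretic identities under the hypotheses $y \subseteq x$ and $x \cap z = \emptyset$; in particular I must verify carefully that the three blocks $z$, $x \backslash y$, and $y$ produced on the right are genuinely pairwise disjoint before invoking Proposition~\ref{prop:familial-product:unlabeled-partitions} or the rearrangement results, and that no degenerate case (such as $y = \emptyset$, $z = \emptyset$, or $x = S$) is silently excluded. I would handle these degenerate cases exactly as in the proof of Proposition~\ref{prop:product-rearrangement}, where terminality of $\delta(\emptyset)$ and the duality relation dispatch them quickly. Once both sides are expressed as the same product of $\delta$-terms over the common refinement $\{z, x \backslash y, y\}$, the equality is immediate.
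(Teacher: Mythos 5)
Your proof is correct and non-circular: both Proposition \ref{prop:product-rearrangement} and Proposition \ref{prop:delta-complement-fiber} are established before the union property in the paper and do not depend on it. The first half of your argument coincides with the paper's own proof: both apply Proposition \ref{prop:product-rearrangement} to $\delta(x)\times\delta(y\cup z)$ and simplify, under $y\subseteq x$ and $x\cap z=\emptyset$, to $\delta(z)\times\delta(x\backslash y)\times\delta(y)$. The finishing move is where you diverge. The paper proceeds one-directionally: it multiplies this identity by $\delta(x)$, converts $\delta(x)\times\delta(x\backslash y)$ into $\delta(x)\times\delta(x\cap y)=\delta(x)\times\delta(y)$ via Proposition \ref{prop:delta:change-backlash-to-cap}, and then strips the duplicated factors with Proposition \ref{prop:product-idempotency:unlabeled}. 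You instead meet in the middle: you rewrite the left-hand side with the $n=1$ case of the complement property, $\delta(x)\times\delta(y)=\delta(x\backslash y)\times\delta(y)$, so that both sides land on the common product $\delta(x\backslash y)\times\delta(y)\times\delta(z)$, equality then following from strict commutativity of products in the skeletal category $\mathbf{UP}(S)$ (Proposition \ref{prop:skeletal}). Your route is arguably tidier --- no artificial multiplication by $\delta(x)$ and no cancellation step --- at the cost of invoking a later and stronger lemma (Proposition \ref{prop:delta-complement-fiber}, itself proved by induction through Proposition \ref{prop:triple-xyz-delta}), whereas the paper gets by with the more elementary Proposition \ref{prop:delta:change-backlash-to-cap}. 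One small remark: your concern about degenerate cases ($y=\emptyset$, $z=\emptyset$, $x=S$) and about verifying pairwise disjointness of $z$, $x\backslash y$, $y$ before invoking the rearrangement results is unnecessary --- the propositions you cite are stated for arbitrary subsets with no disjointness hypothesis, so they already cover these cases, and the disjointness of the three blocks is immediate from $y\subseteq x$ and $x\cap z=\emptyset$ in any event.
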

\begin{proof}
By Proposition \ref{prop:product-rearrangement}, we have the following equation:
\[
\delta(x) \times \delta(y \cup z) = \delta(x\backslash (y \cup z)) \times \delta((y \cup z) \backslash x) \times \delta((y \cup z) \cap x)
\]
Since $y \subseteq x$ and $x \cap z = \emptyset$, the previous identity can be simplified as follows:
\[
\delta(x) \times \delta(y \cup z) = \delta(x\backslash y) \times \delta(z) \times \delta(y).
\]
If we multiply the previous equation by $\delta(x)$, we can use Proposition \ref{prop:delta:change-backlash-to-cap} to deduce the following identities:
\begin{align*}
\delta(x) \times \delta(x) \times \delta(y \cup z) &= \delta(x) \times \delta(x\backslash y) \times \delta(z) \times \delta(y)&\\
&= \delta(x) \times \delta(x\cap y) \times \delta(z) \times \delta(y)& (\textrm{ Proposition \ref{prop:delta:change-backlash-to-cap}})\\
&= \delta(x) \times \delta(y) \times \delta(z) \times \delta(y)& (y \subseteq x)
\end{align*}
After using Proposition \ref{prop:product-idempotency:unlabeled} on the two sides of the last equation, to reduce the extra terms $\delta(x)$ and $\delta(y)$, we obtain the equation of the statement.
\end{proof}

\section{Causal inference}\label{sec:causal-inference}

\subsection{Embedding problems}\label{ssec:embedding-problems}
In this section, we study causal relationships through the concepts of embedding problems and their solutions. This formalization essentially amounts to considering an arrow $a\times x \to b$ in $\mathbf{UP}(S)$ where the pair $(a,b)$ defines the embedding problem, formalizing the potential causal relationship from $a$ to $b$, and where $x$ is a solution, formalizing the causal strength of this relationship (see Definition \ref{def:embedding-problems} and Definition \ref{def:embedding-problems:solutions}). We then characterize a type of solutions said to be ``optimal'' (see Definition \ref{def:optimal-solutions}) and show that these solutions are minimal (see Proposition \ref{prop:minimality}). We finish the section with our first main result, stated in Theorem \ref{theo:optimal-solutions-construction}, which gives a recipe for constructing optimal solutions and hence minimal ones.

\begin{definition}[Embedding problems]\label{def:embedding-problems}
Let $(S,<)$ be a finite strict linear order. An \emph{embedding problems} consists of a pair $(a,b)$ of unlabeled partitions in $\mathbf{UP}(S)$. A \emph{solution} for the embedding problem $(a,b)$ is an unlabeled partition $x$ in $\mathbf{UP}(S)$ for which there exists a morphism $a \times x \to b$ in $\mathbf{UP}(S)$.
\end{definition}

Below, Convention \ref{def:embedding-problems:solutions} establishes an implicit analogy between the solutions of an embedding problem and the concept of an affine algebraic set for a linear polynomial  of the form $ax-b$. In our case, the so-called polynomial is expressed as an arrow $a \times x \to b$ in the category of unlabeled partitions.

\begin{convention}[Solutions]\label{def:embedding-problems:solutions}
Let $(S,<)$ be a finite strict linear order. For every embedding problem $(a,b)$, we will denote by $\mathbb{V}(a,b)$ is set 
\[
\{x\textrm{ in }\mathbf{UP}(S)~|~\mathbf{UP}(S)(a \times x,b) \neq \emptyset\},
\]
which contains all the solutions of the embedding problem $(a,b)$.
\end{convention}

Contrary to linear equations, an embedding problem can be associated with many solutions -- this is illustrated in Example \ref{exa:embedding-problems-and-their-solutions}. Below, in Proposition \ref{prop:existence:embedding-solutions}, we show that embedding problems possess at least one solution.

\begin{proposition}[Existence]\label{prop:existence:embedding-solutions}
Let $(S,<)$ be a finite strict linear order. For every embedding problem $(a,b)$ in $\mathbf{UP}(S)$, then the partition $b$ belongs to $\mathbb{V}(a,b)$;
\end{proposition}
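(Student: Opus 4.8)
The plan is to show that $b \in \mathbb{V}(a,b)$, which by Convention \ref{def:embedding-problems:solutions} amounts to exhibiting a single morphism $a \times b \to b$ in $\mathbf{UP}(S)$. Since the solution set is defined by the non-emptiness of the hom-set $\mathbf{UP}(S)(a \times b, b)$, I only need to produce one such arrow. The natural candidate is the Cartesian projection onto the second factor, so the whole argument reduces to observing that such a projection always exists.

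First I would recall that by Proposition \ref{prop:adjoint_equivalence:products}, the category $\mathbf{UP}(S)$ admits binary products, so the object $a \times b$ is well-defined and comes equipped with its universal Cartesian structure. In particular, there is a canonical projection morphism $\pi_2 : a \times b \to b$ onto the second component. Taking $x := b$, this projection is precisely a morphism $a \times x \to b$ in $\mathbf{UP}(S)$, which witnesses that $\mathbf{UP}(S)(a \times b, b) \neq \emptyset$. By Convention \ref{def:embedding-problems:solutions}, this means $b \in \mathbb{V}(a,b)$, as claimed.

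There is essentially no obstacle here: the statement is a direct consequence of the existence of products and their projections, and the proof is a one-line invocation of the universal property of the product. The only thing worth being slightly careful about is that $\mathbf{UP}(S)$ is a pre-order category (see Remark \ref{rem:pre-order-LP}), so the projection, like every arrow, is automatically unique of its type; but this subtlety is not needed for the existence claim, since all we require is that at least one arrow $a \times b \to b$ exists, which the projection guarantees.
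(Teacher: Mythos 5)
Your proposal is correct and follows exactly the paper's own argument: the paper likewise observes that the product $a \times b$ comes with a second projection $a \times b \to b$, which witnesses that $b$ is a solution. The extra care you take in citing Proposition \ref{prop:adjoint_equivalence:products} for the existence of products and Convention \ref{def:embedding-problems:solutions} for the definition of $\mathbb{V}(a,b)$ only makes the same one-line proof more explicit.
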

\begin{proof}
Recall that the product $a \times b$ is equipped with two projections $a \times b \to a$ and $a \times b \to b$. The second projection makes $b$ a solution for the embedding problem $(a,b)$. 
\end{proof}





\begin{example}[Embedding problems and their solutions]\label{exa:embedding-problems-and-their-solutions}
Let $(S,<)$ be the finite strict linear order $\{\texttt{A}<\texttt{B}<\texttt{C}<\texttt{D}<\texttt{E}<\texttt{F}<\texttt{G}\}$ and let us consider the embedding problem defined by the following pair $(a,b)$ of partitions in $\mathbf{UP}(S)$:
\[
a = \{\texttt{A},\texttt{F}\},\{\texttt{B}\},\{\texttt{C},\texttt{D}\},\{\texttt{E},\texttt{G}\}\quad\quad\quad
b = \{\texttt{A}\},\{\texttt{B},\texttt{D},\texttt{F}\},\{\texttt{C},\texttt{E},\texttt{G}\}
\]
We can see that we can easily recover the partition $b$ from the partition $a \times b$, displayed below, by merging brackets together.
\[
a \times b = \{\texttt{A}\},\{\texttt{B}\},\{\texttt{C}\},\{\texttt{E},\texttt{G}\},\{\texttt{D}\},\{\texttt{F}\} \longrightarrow \{\texttt{A}\},\{\texttt{B},\texttt{D},\texttt{F}\},\{\texttt{C},\texttt{E},\texttt{G}\}
\]
We can also see that $x = b$ is not the only solution for the existence of a morphism $a \times x \to b$. For instance, the partition $a \times b$ is also a solution since $a \times a \times b = a \times b$ (by Proposition \ref{prop:product-idempotency:unlabeled}). Below, we give several examples of solutions for the problem $(a,b)$ -- these solutions can all be obtained by merging brackets in the fiber representation of the partition $a \times b$ .
\[
\begin{array}{ll}
x_1 &= \{\texttt{A}\},\{\texttt{B}\},\{\texttt{C}\},\{\texttt{D},\texttt{E},\texttt{F},\texttt{G}\}\\
x_2 &= \{\texttt{A}\},\{\texttt{C}\},\{\texttt{B},\texttt{D},\texttt{E},\texttt{F},\texttt{G}\}\\
x_3 &= \{\texttt{A}\},\{\texttt{B},\texttt{C}\},\{\texttt{D},\texttt{E},\texttt{F},\texttt{G}\}\\
\end{array}
\quad\quad\quad
\begin{array}{ll}
x_4 &= \{\texttt{A},\texttt{B}\},\{\texttt{C}\},\{\texttt{D},\texttt{E},\texttt{F},\texttt{G}\}\\
x_5 &= \{\texttt{A},\texttt{C}\},\{\texttt{B},\texttt{D},\texttt{E},\texttt{F},\texttt{G}\}\\
x_6 &= \{\texttt{A},\texttt{B},\texttt{C}\},\{\texttt{D},\texttt{E},\texttt{F},\texttt{G}\}
\end{array}
\]
Note that the solutions $x_6$ and $x_5$ are minimal, meaning that we cannot form new solutions of $\mathbb{V}(a,b)$ by merging their brackets further. Formally, this means that, for every $x \in \{x_5,x_6\}$, if there is a solution $y \in \mathbb{V}(a,b)$ such that there exists an arrow $x \to y$ in $\mathbf{UP}(S)$, then the identity $x = y$ holds. Intuitively, we can see the minimal solutions $x_5$ and $x_6$ as ``classifying'' partitions that would separate information from noise in distinct brackets. This is precisely the type of solutions that we want to consider in order to assess the causal strength of the relationship linking $a$ to $b$. We will show how to assess the causal strength of an embedding problem statistically throughout section \ref{sec:functorial-properties-of-ANOVA} and section \ref{sec:association-studies}. In this section, we focus on constructing minimal solutions of embedding problems through the characterization of a sub-type of solutions called ``optimal solutions'' (see Proposition \ref{prop:minimality} and Theorem \ref{theo:optimal-solutions-construction}).
\end{example}


The following proposition gives a combinatorial description of the solutions of an embedding problem expressed in terms of the fibers of the partitions.

\begin{proposition}[Properties]\label{prop:properties-of-solutions}
Let $(S,<)$ be a finite strict linear order, $(a,b)$ be an embedding problem in $\mathbf{UP}(S)$ and $x$ be a solution in $\mathbb{V}(a,b)$ such that $\mathsf{fib}(a) = \{a_i\}_{i \in [n]}$, $\mathsf{fib}(b) = \{b_k\}_{k \in [m]}$, and $\mathsf{fib}(x) = \{x_j\}_{j \in [p]}$ (see Convention \ref{conv:fibers}). The following statements hold:
\begin{itemize}
\item[1)] the equation $(a_i \cap x_j) \cap (a_{i'} \cap x_{j'})  = \emptyset$ holds for every $(i,j) \neq (i',j')$ in $[n] \times [p]$;
\item[2)] for every $k \in [m]$, if we denote $U(k)=\{(i,j)~|~a_i \cap x_j \cap b_k \neq \emptyset\}$, then:
\[
b_k = \bigcup_{(i,j) \in U(k)}a_i \cap x_j 
\]
As a result, any inequality of the form  $a_i \cap x_j \cap b_k \neq \emptyset$ implies that $a_i \cap x_j \subseteq b_k$.
\end{itemize}
\end{proposition}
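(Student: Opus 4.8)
The plan is to reduce both statements to elementary set theory once the categorical hypothesis ``$x \in \mathbb{V}(a,b)$'' has been unpacked. By Convention \ref{def:embedding-problems:solutions} and Definition \ref{def:embedding-problems}, this hypothesis means that there exists a morphism $a \times x \to b$ in $\mathbf{UP}(S)$. Since $\mathbf{UP}(S)$ is a full subcategory of $\mathbf{LP}(S)$, this morphism is a function $f$ between the underlying label sets satisfying $f \circ (a \times x) = b$ at the level of surjections. First I would record the only two facts I need about the product: by Proposition \ref{prop:products_labeled_partitions} (together with Example \ref{exa:product_of_partitions} and Proposition \ref{prop:adjoint_equivalence:products}) the fibers of $a \times x$ are exactly the non-empty intersections $a_i \cap x_j$, and the commutation $f \circ (a \times x) = b$ forces any two elements lying in a common fiber of $a \times x$ to lie in a common fiber of $b$. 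In other words, $a \times x$ refines $b$: each non-empty $a_i \cap x_j$ is contained in a unique fiber $b_k$.

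Statement (1) is then immediate and does not even use the morphism: for $(i,j) \neq (i',j')$ we have either $i \neq i'$ or $j \neq j'$, and since $\mathsf{fib}(a)$ and $\mathsf{fib}(x)$ are each pairwise disjoint collections of subsets of $S$ (being fibers of surjections), the first case gives $a_i \cap a_{i'} = \emptyset$ and the second gives $x_j \cap x_{j'} = \emptyset$; either way the fourfold intersection is empty. This is also what guarantees that the non-empty sets $a_i \cap x_j$ form a genuine partition of $S$, a fact I will reuse in (2).

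For the ``As a result'' clause I would argue directly from refinement: if $a_i \cap x_j \cap b_k \neq \emptyset$, then $a_i \cap x_j$ is non-empty, hence a fiber of $a \times x$, hence contained in a unique fiber $b_{k'}$; since it also meets $b_k$ and the fibers of $b$ are pairwise disjoint, $k' = k$, so $a_i \cap x_j \subseteq b_k$. Finally, for the main equation of (2), I fix $k$ and decompose $b_k$ along the partition $\{a_i \cap x_j\}$ of $S$, writing $b_k = \bigcup_{(i,j)} (b_k \cap a_i \cap x_j)$. A term is non-empty exactly when $(i,j) \in U(k)$, and for such $(i,j)$ the containment just established upgrades $b_k \cap a_i \cap x_j$ to $a_i \cap x_j$; this yields $b_k = \bigcup_{(i,j) \in U(k)} a_i \cap x_j$.

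The one genuinely non-routine point is the opening translation step: correctly reading the existence of the arrow $a \times x \to b$ as the refinement condition, and justifying that the fibers of the product $a \times x$ are precisely the non-empty $a_i \cap x_j$. Everything after that is bookkeeping with disjoint sets. I would therefore spend the care budget on citing the product construction of Proposition \ref{prop:products_labeled_partitions} cleanly (using Proposition \ref{prop:adjoint_equivalence:products} to transport it to $\mathbf{UP}(S)$), and treat the remaining set-theoretic manipulations as immediate.
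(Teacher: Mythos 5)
Your proof is correct, and while it rests on the same basic translation as the paper's --- unpacking the arrow $a \times x \to b$ into a function $f$ satisfying $b(s) = f(a(s),x(s))$, with the non-empty intersections $a_i \cap x_j$ as the fibers of the product --- the argument for item 2 is organized in the opposite direction, and this is a genuine difference. The paper first writes $b_k = \bigcup_{(i,j) \in f^{-1}(k)} a_i \cap x_j$ and then proves the set identity $f^{-1}(k) = U(k)$ by double inclusion, each inclusion via a proof by contradiction; the inclusion $a_i \cap x_j \subseteq b_k$ appears only at the very end, as a corollary of the established formula. You never mention the index sets $f^{-1}(k)$ at all: you extract from the commutation the refinement property (each non-empty fiber of $a \times x$ lies in a unique fiber of $b$), use disjointness of $\mathsf{fib}(b)$ to get the inclusion clause first, and then recover the displayed formula by intersecting $b_k$ with the partition $\{a_i \cap x_j\}$ of $S$. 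Your route is shorter and avoids both contradiction arguments, at the cost of not recording the identification $U(k) = f^{-1}(k)$, which is the extra piece of information the paper's proof makes explicit (it pins down the underlying function $f$ combinatorially, even though the paper itself only reuses the inclusion statement later, e.g.\ in Proposition \ref{prop:minimality}). One small point of care: your claim that the fibers of the product are exactly the non-empty $a_i \cap x_j$ does need the citations you flag --- Proposition \ref{prop:products_labeled_partitions} for the construction in $\mathbf{LP}(S)$ and Proposition \ref{prop:adjoint_equivalence:products} to transport it through $R$ --- and with those in place the rest is, as you say, bookkeeping.
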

\begin{proof}
We show each item in the order given in the statement.

\fbox{1} First, recall that the collections $\mathsf{fib}(a) = \{a_i\}_{i \in [n]}$ and $\mathsf{fib}(x) = \{x_j\}_{j \in [p]}$ are pairwise disjoint. Also, recall that any inequality of the form $(i,j) \neq (i',j')$ in $[n] \times [p]$ means that either $i \neq i'$ or $j \neq j'$. In the context of our assumptions, if $i \neq i'$, then $a_i \cap a_{i'} = \emptyset$, which implies that the identity $a_i \cap x_j \cap a_{i'} \cap x_{j'}  = \emptyset$ holds. On the other hand, if $j \neq j'$, then $x_j \cap x_{j'} = \emptyset$, which also implies that the identity $a_i \cap x_j \cap a_{i'} \cap x_{j'}  = \emptyset$ holds. This shows item 1.

\fbox{2} Let $f:a \times x \to b$ be the morphism of $\mathbf{UP}(S)$ that encodes the solution structure of $x$ in $\mathbb{V}(a,b)$. Since $\mathbf{UP}(S)$ is a subcategory of $\mathbf{LP}(S)$, we can interpret the product $a \times x$ in $\mathbf{LP}(S)$ so that we can consider the underlying function $f$ to be of the form $\mathsf{Im}(a,x) \to [m]$. It them follows from Definition \ref{def:labeled-partitions} that the equation $b(s) = f(a(s),x(s))$ holds for every $s \in S$. As a result, we have the following two equivalent equations:
\begin{equation}\label{eq:b_a_x-expression}
b^{-1}(k) = \bigcup_{(i,j) \in f^{-1}(k)} a^{-1}(i) \cap x^{-1}(j) \quad\quad \Leftrightarrow \quad\quad b_k = \bigcup_{(i,j) \in f^{-1}(k)} a_i \cap x_j.
\end{equation}
To show item 2, we need to show that $f^{-1}(k) = U(k)$. We will proceed by a double inclusion using a proof by contradiction. 

\fbox{2.1} Suppose that there exists $(i',j') \in U(k)$ such that $(i',j') \notin f^{-1}(k)$. First, the leftmost equation of (\ref{eq:b_a_x-expression}) gives us the equation:
\begin{equation}\label{eq:properties-of-solutions:2-1}
b_k \cap a_{i'} \cap x_{j'} = \bigcup_{(i,j) \in f^{-1}(k)} a_i \cap x_j \cap a_{i'} \cap x_{j'}.
\end{equation}
Since $(i',j') \notin f^{-1}(k)$, item 1 implies that we have te inequality $(a_i \cap x_j) \cap (a_{i'} \cap x_{j'}) = \emptyset$ for every $(i,j) \in f^{-1}(k)$. This inequality and equation (\ref{eq:properties-of-solutions:2-1}) imply that the set $b_k \cap a_{i'} \cap x_{j'}$ is not empty. However, this contradicts our assumption that $(i',j') \in U(k)$. We deduce that the inclusion $U(k) \subseteq f^{-1}(k)$ holds for every $k \in [m]$. 

\fbox{2.2} Suppose that exists $(i',j') \in f^{-1}(k)$ such that $(i',j') \notin U(k)$. Since the relation $(i',j') \in f^{-1}(k)$ holds, the leftmost equation of (\ref{eq:b_a_x-expression}) gives rise to the following identities:
\[
b_k \cap a_{i'} \cap x_{j'} = \bigcup_{(i,j) \in f^{-1}(k)} a_i \cap x_j \cap a_{i'} \cap x_{j'} = a_{i'} \cap x_{j'}.
\]
Since $(i',j') \notin U(k)$, we have $a_{i'} \cap x_{j'} = \emptyset$. Because $f$ is of the form $\mathsf{Im}(a,x) \to [m]$, we have $(i',j') \in f^{-1}(k) \subseteq \mathsf{Im}(a,x)$, which means that there exists $s \in S$ such that $(i',j') = (a(s),x(s))$. In other words, the element $s$ is in the intersection $a_{i'} \cap x_{j'}$. This is not possible since $a_{i'} \cap x_{j'} = \emptyset$. This shows that the inclusion $f^{-1}(k) \subseteq U(k)$ holds for every $k \in [m]$.  

\fbox{2.3} Hence, we have shown that the identity $f^{-1}(k) = U(k)$ holds for every $k \in [m]$. The formula of item 2 follows from the rightmost equation of (\ref{eq:b_a_x-expression}). The last statement of item 2 is a direct consequence of that formula: the relation $a_i \cap x_j \cap b_k \neq \emptyset$ is equivalent to the relation $(i,j) \in U(k)$, which, by the formula, means that the inclusion $a_i \cap x_j \subseteq b_k$ holds.
%
\end{proof}

In the definition given below, we define a type of solution whose fibers minimally describe the subdivision of the fibers associated with the partition $b$ of an embedding problem $(a,b)$. As will be shown in Proposition \ref{prop:minimality}, this characterization defines a type of solution that is minimal in $\mathbb{V}(a,b)$.

\begin{definition}[Optimal solutions]\label{def:optimal-solutions}
Let $(S,<)$ be a finite strict linear order, $(a,b)$ be an embedding problem in $\mathbf{UP}(S)$ and $x$ be a solution in $\mathbb{V}(a,b)$ such that $\mathsf{fib}(a) = \{a_i\}_{i \in [n]}$, $\mathsf{fib}(b) = \{b_k\}_{k \in [m]}$, and $\mathsf{fib}(x) = \{x_j\}_{j \in [p]}$. The solution $x$ is said to be \emph{optimal} if for every pair of distinct elements $j_1,j_2 \in [p]$, there exist $i \in [n]$ and two distinct elements $k_1,k_2 \in [m]$ such that $a_i \cap x_{j_1} \cap b_{k_1} \neq \emptyset$ and $a_i \cap x_{j_2} \cap b_{k_2} \neq \emptyset$.
\end{definition}

\begin{proposition}[Minimality]\label{prop:minimality}
Let $(S,<)$ be a finite strict linear order and $(a,b)$ be an embedding problem in $\mathbf{UP}(S)$. If $x$ is an optional solution of $\mathbb{V}(a,b)$, then for every $y \in \mathbb{V}(a,b)$ such that there exists an arrow $x \to y$ in $\mathbf{UP}(S)$, the equation $x = y$ holds.
\end{proposition}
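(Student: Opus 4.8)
The plan is to argue by contradiction, showing that an optimal solution $x$ cannot have two of its fibers merged by any coarser solution $y$. First I would translate the hypothesis that there is an arrow $x \to y$ in $\mathbf{UP}(S)$ into a combinatorial statement about refinement: by Definition \ref{def:labeled-partitions}, an arrow $x \to y$ is a function $f$ with $f \circ x = y$, so every fiber of $x$ is contained in a fiber of $y$. Writing $\mathsf{fib}(y) = \{y_l\}_l$, each $y_l$ is therefore a disjoint union of fibers $x_j$ of $x$. Proving $x = y$ then amounts to showing that each $y_l$ contains exactly one fiber of $x$, since two unlabeled partitions with the same underlying collection of fibers are equal (Proposition \ref{prop:fibers} together with Proposition \ref{prop:skeletal}).

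Next, suppose for contradiction that some fiber $y_l$ contains two distinct fibers $x_{j_1}$ and $x_{j_2}$ of $x$, with $j_1 \neq j_2$. Because $x$ is optimal (Definition \ref{def:optimal-solutions}), there exist an index $i \in [n]$ and two distinct indices $k_1, k_2 \in [m]$ with $a_i \cap x_{j_1} \cap b_{k_1} \neq \emptyset$ and $a_i \cap x_{j_2} \cap b_{k_2} \neq \emptyset$. The inclusions $x_{j_1} \subseteq y_l$ and $x_{j_2} \subseteq y_l$ then immediately give $a_i \cap y_l \cap b_{k_1} \neq \emptyset$ and $a_i \cap y_l \cap b_{k_2} \neq \emptyset$.

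The decisive step is to apply the second item of Proposition \ref{prop:properties-of-solutions} to the solution $y$ (not to $x$): from $a_i \cap y_l \cap b_{k_1} \neq \emptyset$ one obtains $a_i \cap y_l \subseteq b_{k_1}$, and likewise $a_i \cap y_l \subseteq b_{k_2}$. Hence $a_i \cap y_l \subseteq b_{k_1} \cap b_{k_2}$, which is empty because $b_{k_1}$ and $b_{k_2}$ are distinct fibers of $b$. This contradicts $a_i \cap y_l \supseteq a_i \cap x_{j_1} \cap b_{k_1} \neq \emptyset$. Therefore no $y_l$ can contain two distinct fibers of $x$, each $y_l$ equals a single fiber $x_j$, and so $x = y$.

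The only subtlety I expect to require care is the direction of the bookkeeping: optimality is a property of $x$, whereas the inclusion-forcing consequence (item 2 of Proposition \ref{prop:properties-of-solutions}) must be invoked for the coarser solution $y$. Pairing these two facts across the common $a$-fiber $a_i$ is exactly what forces the two distinct $b$-fibers $b_{k_1}$ and $b_{k_2}$ to overlap, which is impossible. Everything else is routine set manipulation once the arrow $x \to y$ is read as a refinement of $y$ by $x$.
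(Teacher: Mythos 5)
Your proposal is correct and follows essentially the same argument as the paper's proof: translate the arrow $x \to y$ into fibers of $y$ being unions of fibers of $x$, apply optimality of $x$ to a merged pair $(j_1,j_2)$, then invoke item 2 of Proposition \ref{prop:properties-of-solutions} for the solution $y$ to force $a_i \cap y_l$ into two distinct, hence disjoint, fibers of $b$. The only cosmetic difference is that you conclude by comparing fiber collections directly, whereas the paper phrases the same endgame as showing the underlying function $f$ must be an identity.
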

\begin{proof}
Suppose that $\mathsf{fib}(a) = \{a_i\}_{i \in [n]}$, $\mathsf{fib}(b) = \{b_k\}_{k \in [m]}$, $\mathsf{fib}(y) = \{y_k\}_{k \in [p]}$ and $\mathsf{fib}(x) = \{x_j\}_{j \in [q]}$. The arrow $x \to y$ implies that there exists a function $f:[q] \to [p]$ such that, for every $k \in [p]$, the following equation holds:
\begin{equation}\label{eq:x-y-minimality}
y_k = \bigcup_{j \in f^{-1}(k)} x_j.
\end{equation}
Suppose that $f$ is not an identity arrow. By Proposition \ref{prop:skeletal}, the function $f:[q] \to [p]$ cannot be a bijection. Because the equation $f \circ x = y$ holds and $y:S \to [p]$ is a surjection (Definition \ref{def:labeled-partitions}), so is the function $f$. We deduce that $f$ cannot be an injection, which implies that there exists $k \in [p]$ and two distinct elements $j_1,j_2 \in [q]$ such that $j_1,j_2 \in f^{-1}(k)$.
Because $x$ is optimal (Definition \ref{def:optimal-solutions}), there exists $i \in  [n]$ and two distinct elements $l_1,l_2 \in [m]$ such that $a_i \cap x_{j_1} \cap b_{l_1} \neq \emptyset$ and $a_i \cap x_{j_2} \cap b_{l_2} \neq \emptyset$. Since $j_1,j_2 \in f^{-1}(k)$, equation (\ref{eq:x-y-minimality}) gives us the following relations:
\[
y_k \cap a_i \cap b_{l_1} = \bigcup_{j \in f^{-1}(k)} a_i \cap x_{j} \cap b_{l_1} \neq \emptyset,
\quad\quad
y_k \cap a_i \cap b_{l_2} = \bigcup_{j \in f^{-1}(k)} a_i \cap x_{j} \cap b_{l_2} \neq \emptyset.
\]
In particular, this shows that $y_k \cap a_i$ is not empty. Since $y \in \mathbb{V}(a,b)$, it follows from the inequalities $y_k \cap a_i \cap b_{l_1} \neq \emptyset$, $y_k \cap a_i \cap b_{l_2} \neq \emptyset$ and item 2 of Proposition \ref{prop:properties-of-solutions} that the following inclusions hold:
\[
y_k \cap a_i \subseteq b_{l_1}, \quad\quad\quad y_k \cap a_i \subseteq b_{l_2}.
\]
Since $y_k \cap a_i$ is not empty, the intersection $b_{l_1} \cap b_{l_2}$ is not empty, which contradicts the fact that the collection $\mathsf{fib}(b) = \{b_k\}_{k \in [m]}$ is pairwise disjoint. As a result, the function $f$ is an identity and $x = y$.
\end{proof}

In the rest of the section, we describe an algorithm that constructs optimal solutions of embedding problems. As was suggested at the end of Example \ref{exa:embedding-problems-and-their-solutions}, our algorithm will consist in merging the parts of a given solution until the parts of the resulting partitions cannot be merged further. These successive merging steps will be called ``contractions'' and will be characterized as follows.

\begin{definition}[Contractions]\label{def:contraction}
Let $(S,<)$ be a finite strict linear order and $x$ be a partition in $\mathbf{UP}(S)$ such that $\mathsf{fib}(x) = \{x_j\}_{j \in [p]}$. For every pair $(j_1,j_2) \in [p]\times [p]$, we define the \emph{contraction of $x$ at $(j_1,j_2)$} as the following partition
\[
x[j_1,j_2] := \prod_{i = [p] \backslash \{j_1,j_2\}}\delta(x_i) \times \delta(x_{j_1} \cup x_{j_2})
\]
\end{definition}

Importantly, any contraction can be related to its associated partition through an arrow, as shown in Proposition \ref{prop:contraction:canonical-arrow}. In section \ref{sec:functorial-properties-of-ANOVA}, these arrows will allow us to control contractions statistically.

\begin{proposition}[Canonical arrow]\label{prop:contraction:canonical-arrow}
Let $(S,<)$ be a finite strict linear order. For every partition $x$ in $\mathbf{UP}(S)$ such that $\mathsf{fib}(x) = \{x_j\}_{j \in [p]}$ and every pair $(j_1,j_2) \in [p]\times [p]$, there is an arrow $x \to x[j_1,j_2]$ in $\mathbf{UP}(S)$. 
\end{proposition}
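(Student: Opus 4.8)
The plan is to exploit the fact that $\mathbf{UP}(S)$ is a skeletal preorder category (Remark \ref{rem:pre-order-LP} and Proposition \ref{prop:skeletal}) equipped with products (Proposition \ref{prop:adjoint_equivalence:products}). In such a setting, to produce an arrow $x \to x[j_1,j_2]$ it suffices to establish the equality of objects $x \times x[j_1,j_2] = x$: once this holds, the Cartesian projection $x \times x[j_1,j_2] \to x[j_1,j_2]$ is literally a morphism whose source is $x$, which is the desired arrow. When $j_1 = j_2$ one has $x_{j_1}\cup x_{j_2} = x_{j_1}$, so $x[j_1,j_2] = x$ and the arrow is the identity; I would dispatch this degenerate case first and then assume $j_1 \neq j_2$.

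First I would rewrite both objects as products of delta partitions. By Proposition \ref{prop:fibers} and Proposition \ref{prop:familial-product:unlabeled-partitions} we have $x = \prod_{i \in [p]}\delta(x_i)$, while by definition $x[j_1,j_2] = \prod_{i \in [p]\backslash\{j_1,j_2\}}\delta(x_i) \times \delta(x_{j_1}\cup x_{j_2})$. Taking the product of the two and collapsing the repeated factors $\delta(x_i)$ (for $i \in [p]\backslash\{j_1,j_2\}$) by Proposition \ref{prop:product-idempotency:unlabeled}, the problem reduces to the single absorption identity
\[
\delta(x_{j_1}) \times \delta(x_{j_2}) \times \delta(x_{j_1}\cup x_{j_2}) = \delta(x_{j_1}) \times \delta(x_{j_2}),
\]
since establishing it yields $x \times x[j_1,j_2] = \prod_{i\in[p]}\delta(x_i)\times\delta(x_{j_1}\cup x_{j_2}) = \prod_{i\in[p]}\delta(x_i) = x$.

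The crux, and the step I expect to be the main (if modest) obstacle, is this absorption identity, which I would derive from the Union property (Proposition \ref{prop:union-property:delta}). Because distinct fibers of $x$ are disjoint we have $x_{j_1}\cap x_{j_2} = \emptyset$, so applying Proposition \ref{prop:union-property:delta} to the triple $(x_{j_1},x_{j_1},x_{j_2})$ (noting $x_{j_1}\subseteq x_{j_1}$ and $x_{j_1}\cap x_{j_2} = \emptyset$) and then simplifying the left-hand side with idempotency gives
\[
\delta(x_{j_1}) \times \delta(x_{j_2}) = \delta(x_{j_1}) \times \delta(x_{j_1}\cup x_{j_2}).
\]
Multiplying both sides by $\delta(x_{j_2})$ and collapsing the duplicated factor on the left by Proposition \ref{prop:product-idempotency:unlabeled} then produces exactly the absorption identity above.

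Finally, combining the two previous paragraphs gives $x \times x[j_1,j_2] = x$ as objects of $\mathbf{UP}(S)$, whereupon the Cartesian projection onto the second factor furnishes the required arrow $x \to x[j_1,j_2]$. The only points demanding care are the bookkeeping of which $\delta$-factors survive the idempotency collapse and the correct substitution into the Union property; once these are pinned down, the argument is purely formal.
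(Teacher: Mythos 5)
Your proof is correct and follows essentially the same route as the paper's: both exhibit the desired arrow as a Cartesian projection out of a product that is shown to equal $x$, with the key identity obtained from the Union property (Proposition \ref{prop:union-property:delta}) followed by a collapse via product idempotency (Proposition \ref{prop:product-idempotency:unlabeled}). The only differences are cosmetic --- you multiply $x[j_1,j_2]$ by all of $x$ rather than just by $\delta(x_{j_1}) \times \delta(x_{j_2})$ as the paper does, and you treat the degenerate case $j_1 = j_2$ separately (a small point of care the paper glosses over).
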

\begin{proof}
The product structure gives us a projection of the following form.
\[
\delta(x_{j_1}) \times \delta(x_{j_2}) \times x[j_1,j_2] \to x[j_1,j_2]
\]
We are going to show that the domain of this arrow is $x$. For convenience, we will denote this domain as $z$. By assumption, we have
\begin{align*}
z & = \prod_{i = [p] \backslash \{j_1,j_2\}}\delta(x_i) \times \delta(x_{j_1}) \times \delta(x_{j_2}) \times \delta(x_{j_1} \cup x_{j_2}) &(\textrm{Definition \ref{def:contraction}})\\
& = \prod_{i = [p] \backslash \{j_1,j_2\}}\delta(x_i) \times \delta(x_{j_1}) \times \Big(\delta(x_{j_2}) \times \delta(x_{j_2}) \times \delta(x_{j_1})\Big) & (\textrm{Proposition \ref{prop:union-property:delta}})\\
& = \prod_{i = [p] \backslash \{j_1,j_2\}}\delta(x_i) \times \delta(x_{j_1}) \times \delta(x_{j_2}) &(\textrm{Proposition \ref{prop:product-idempotency:unlabeled}})
\end{align*}
The previous expression clearly shows that the product $z$ is equal to the partition  $\prod_{i = 1}^p\delta(x_i) = \delta^*(\mathsf{fib}(x)) = x$ (see Proposition \ref{prop:fibers} and Proposition \ref{prop:familial-product:unlabeled-partitions}). This finishes the proof of the proposition.
\end{proof}

The following definition formalizes our algorithm in terms of an object called a ``reduction''. At every step, the algorithm requires to make a choice of contraction. Eventually, these successive contractions lead us to find a minimal solution for a given embedding problem (see Theorem \ref{theo:optimal-solutions-construction}).

\begin{definition}[Reduction]\label{def:reduction}
Let $(S,<)$ be a finite strict linear order and $a$ be a partition in $\mathbf{UP}(S)$ such that $\mathsf{fib}(a) = \{a_i\}_{i \in [n]}$. For every partition $x$ in $\mathbf{UP}(S)$ such that $\mathsf{fib}(x) = \{x_j\}_{j \in [p]}$, we define the set 
\[
L_{a}(x) := \{(j_1,j_2) \in [p] \times [p]~|~j_1 \neq j_2\textrm{ and } a_i \cap x_{j_1} \neq \emptyset \Rightarrow a_i \cap x_{j_2} = \emptyset \}.
\]
We will call a \emph{reduction of $x$} any infinite sequence $(u_{k})_{k\geq 0}$ of partitions in $\mathbf{UP}(S)$ satisfying the following construction:
\[
\left\{
\begin{array}{l}
u_0 = x\\
u_{k+1} = 
\left\{
\begin{array}{ll}
u_{k}[j_1,j_2]&\textrm{for some }(j_1,j_2) \in L_{a}(u_{k})\textrm{ if }L_{a}(u_{k}) \neq \emptyset\\
u_{k}&\textrm{if }L_{a}(u_{k}) = \emptyset\\
\end{array}
\right.
\end{array}
\right.
\]
\end{definition}

In the following proposition, we show a completeness result that ensures that our algorithm always terminates and returns an output. Theorem \ref{theo:optimal-solutions-construction} uses this algorithm to yield optimal solutions for a given embedding problem. 

\begin{proposition}[Completeness]
Let $(S,<)$ be a finite strict linear order and $a$ be a partition in $\mathbf{UP}(S)$. For every partition $x$ in $\mathbf{UP}(S)$ and every reduction $(u_{k})_{k\geq 0}$ of $x$, there exists an non-negative integer $k_0$ such that $L_{a}(u_{k_0})$ is empty.
\end{proposition}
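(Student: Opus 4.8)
The plan is to exhibit a strictly decreasing monovariant for the reduction and then invoke well-foundedness of the natural numbers. The natural candidate is the number of parts, so for a partition $u$ in $\mathbf{UP}(S)$ I write $N(u)$ for the cardinality of $\mathsf{fib}(u)$; since every object of $\mathbf{UP}(S)$ is a surjection onto a canonical set $[n]$ with $n \geq 1$, we always have $N(u) \geq 1$.

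The crux is to check that a single admissible contraction drops $N$ by exactly one. First I would fix $k$, write $\mathsf{fib}(u_k) = \{w_j\}_{j \in [q]}$, and take any $(j_1,j_2) \in L_a(u_k)$. By the definition of $L_a$ in Definition \ref{def:reduction}, such a pair satisfies $j_1 \neq j_2$, so $w_{j_1}$ and $w_{j_2}$ are two genuinely distinct parts. The family $\{w_i \mid i \in [q] \setminus \{j_1,j_2\}\} \cup \{w_{j_1} \cup w_{j_2}\}$ is then a pairwise disjoint collection of non-empty subsets that still covers $S$ and has exactly $q - 1$ members. By Proposition \ref{prop:familial-product:unlabeled-partitions}, the contraction $u_k[j_1,j_2]$ of Definition \ref{def:contraction} is the partition $\delta^*$ of this family, and because the family covers $S$ its fibers are exactly its members (Convention \ref{conv:relative_discrete_partitions:familial}). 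Hence $N(u_k[j_1,j_2]) = q - 1 = N(u_k) - 1$.

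With this in hand, the construction in Definition \ref{def:reduction} gives $N(u_{k+1}) = N(u_k) - 1$ whenever $L_a(u_k) \neq \emptyset$. I would then conclude by contradiction: were $L_a(u_k)$ non-empty for every $k \geq 0$, an immediate induction would yield $N(u_k) = N(x) - k$ for all $k$, so that $N(u_{N(x)}) = 0$, contradicting $N \geq 1$. Therefore some $u_{k_0}$ must satisfy $L_a(u_{k_0}) = \emptyset$, and in fact the first such index obeys $k_0 \leq N(x) - 1$, which proves the statement.

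The only real obstacle is the middle step, namely identifying the fibers of the contraction. This is precisely where one must use that $j_1 \neq j_2$ (so that two distinct parts are merged, rather than a part being replaced by itself, which would leave $N$ unchanged) and that merging preserves the pairwise-disjoint covering property needed to read off the fibers of $\delta^*$ through Proposition \ref{prop:fibers} and Proposition \ref{prop:familial-product:unlabeled-partitions}. Once the monovariant is established, termination is just the observation that a strictly decreasing sequence of positive integers cannot be infinite.
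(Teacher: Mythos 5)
Your proof is correct and takes essentially the same approach as the paper's: the monovariant is the number of parts of $u_k$, which strictly decreases with each admissible contraction, and termination follows because an infinite strictly decreasing sequence of non-negative integers cannot exist. The only difference is that you carefully verify the decrement $N(u_{k+1}) = N(u_k) - 1$ through Proposition \ref{prop:familial-product:unlabeled-partitions} and Convention \ref{conv:relative_discrete_partitions:familial}, a step the paper merely asserts (indeed with a typo, writing $p_{k-1} < p_k$ for what should be $p_k < p_{k-1}$), so your write-up fills in that detail rather than changing the argument.
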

\begin{proof}
Suppose that for every non-negative integer $k$, the set $L_{a}(u_k)$ is not empty. This means that we can write $u_k$ as $u_{k-1}[j_1,j_2]$. If we suppose that the partition $u_k$ is of the form $S \to [p_k]$, then we have $p_{k-1} < p_k$. Hence we obtain an infinite sequence of decreasing non-negative integers as follows:
\[
p_0 > p_1 > p_2 > \dots > p_{p_0} > \underbrace{p_{p_0+1}}_{<0} > \dots
\] 
But this is impossible, otherwise the $(p_0+1)$-th term $p_{p_0+1}$ must be negative. Hence, there exists a non-negative integer $k_0$ such that $L_{a}(u_{k_0})$ is empty.
\end{proof}

In the following definition, we name the outputs of our algorithm ``reduced elements''. 

\begin{definition}[Reduced elements]\label{def:reduced-elements}
Let $(S,<)$ be a finite strict linear order and $a$ be a partition in $\mathbf{UP}(S)$. For every partition $x$ in $\mathbf{UP}(S)$ and every reduction $u = (u_{k})_{k\geq 0}$ of $x$, we denote by $k(u)$ the smallest non-negative integer for which $L_{a}(u_{k(u)})$ is empty. We then denote by $u^*$ the term $u_{k(u)}$, which we call the \emph{reduced element of $u$}. 
\end{definition}

The following theorem shows that we can construct an optimal solution by merging the parts of some other non-optimal solution satisfying certain properties. Later, in section \ref{ssec:embedding-theorems}, we will construct such an non-optimal solution. Overall, this will give us an algorithm that constructs optimal solutions for a given embedding problem.

\begin{theorem}[Optimal solutions]\label{theo:optimal-solutions-construction}
Let $(S,<)$ be a finite strict linear order, $(a,b)$ be a pair of partitions in $\mathbf{UP}(S)$ such that $\mathsf{fib}(a) = \{a_i\}_{i \in [n]}$ and $\mathsf{fib}(b) = \{b_k\}_{k \in [m]}$. For every partition $x$ in $\mathbf{UP}(S)$ and every reduction $u = (u_{k})_{k\geq 0}$ of $x$, the following statements hold:
\begin{itemize}
\item[1)] there exists an arrow $x \to u^{*}$ in $\mathbf{UP}(S)$;
\item[2)] if $x$ is in $\mathbb{V}(a,b)$, then so is $u^{*}$;
\item[3)] if $x$ is in $\mathbb{V}(a,b)$ such that there exist a product decomposition $\{v_k\}_{k \in [m]}$ of $x$ and a function $r:[n] \to [m]$ satisfying the following properties:
\begin{itemize}
\item[$\triangleright$] the inclusion $v_{k} \subseteq b_{k}$ holds for every $k \in [m]$,
\item[$\triangleright$] the complement $S \backslash \bigcup_{k \in [m]} v_{k}$ is included in $\bigcup_{i \in [n]} a_i \cap b_{r(i)}$,
\item[$\triangleright$] if the intersection $a_{i} \cap v_{k}$ is not empty, then $k \neq r(i)$,
\end{itemize}
then $u^{*}$ is an optimal solution of $\mathbb{V}(a,b)$;
\end{itemize}
\end{theorem}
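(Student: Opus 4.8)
The three claims are cumulative, so I would prove them in order, reserving the real work for (3). For statement (1), observe that by Definition~\ref{def:reduction} each $u_{k+1}$ is either $u_k$ itself or a contraction $u_k[j_1,j_2]$; in the latter case Proposition~\ref{prop:contraction:canonical-arrow} supplies an arrow $u_k \to u_{k+1}$, and in the former the identity does the job. Composing these finitely many arrows up to the index $k(u)$ of Definition~\ref{def:reduced-elements} yields the arrow $x = u_0 \to u^{*}$.

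\textbf{Statement (2).} The key is the lemma that whenever $(j_1,j_2)\in L_a(x)$ one has the equality $a\times x = a\times x[j_1,j_2]$, which I would prove by comparing fibers via Proposition~\ref{prop:fibers}. The fibers of $a\times x$ are the nonempty sets $a_i\cap x_l$, whereas the fibers of $a\times x[j_1,j_2]$ are the nonempty sets $a_i\cap x_l$ with $l\neq j_1,j_2$ together with the sets $a_i\cap(x_{j_1}\cup x_{j_2})$. The defining property of $L_a(x)$ forces, for each $i$, at most one of $a_i\cap x_{j_1}$ and $a_i\cap x_{j_2}$ to be nonempty, so $a_i\cap(x_{j_1}\cup x_{j_2})$ reduces to whichever of the two is nonempty (or is empty). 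Hence the two fiber collections coincide and the partitions are equal. Applying this at each reduction step where $L_a(u_k)\neq\emptyset$, and using $u_{k+1}=u_k$ otherwise, gives $a\times u^{*}=a\times x$ by induction; a morphism $a\times x\to b$ then restricts to a morphism $a\times u^{*}\to b$, so $u^{*}\in\mathbb{V}(a,b)$.

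\textbf{Statement (3), setup.} By (2) we have $u^{*}\in\mathbb{V}(a,b)$, and by Definition~\ref{def:reduced-elements} we have $L_a(u^{*})=\emptyset$. Writing $\mathsf{fib}(u^{*})=\{w_j\}_j$, emptiness of $L_a(u^{*})$ means that for any two distinct parts $w_{j_1},w_{j_2}$ there is an index $i$ with $a_i\cap w_{j_1}\neq\emptyset$ and $a_i\cap w_{j_2}\neq\emptyset$; by item~2 of Proposition~\ref{prop:properties-of-solutions} each of these intersections lies in a single part, say $b_{k_1}$ and $b_{k_2}$. To verify Definition~\ref{def:optimal-solutions} it remains to prove $k_1\neq k_2$, and this is exactly where the three hypotheses enter. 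I would first record that, since each $v_k\subseteq b_k$ and the $b_k$ are pairwise disjoint, the $v_k$ are pairwise disjoint; Proposition~\ref{prop:familial-product:unlabeled-partitions} then identifies $\mathsf{fib}(x)$ with the nonempty $v_k$ together with the leftover $v_0:=S\setminus\bigcup_k v_k$. Because a reduction only merges parts, every $w_j$ is a union $\bigcup_{l\in T_j}v_l$ for some $T_j\subseteq\{0,1,\dots,m\}$, with $T_{j_1}\cap T_{j_2}=\emptyset$.

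\textbf{Statement (3), contradiction and main obstacle.} Suppose for contradiction $k_1=k_2=k$. Since $a_i\cap w_{j_1}\subseteq b_k$ and $a_i\cap w_{j_2}\subseteq b_k$, while each nonempty $a_i\cap v_l$ with $l\geq 1$ lies in $b_l$ by hypothesis, the only index $l\geq 1$ that can contribute to either intersection is $l=k$; as $k$ belongs to at most one of $T_{j_1},T_{j_2}$, the part not containing $k$ can meet $a_i$ only through $v_0$. If neither set contains $k$, both parts would need the index $0$, contradicting $T_{j_1}\cap T_{j_2}=\emptyset$. Otherwise, say $k\in T_{j_1}$ and $0\in T_{j_2}$: hypothesis~(2) forces $a_i\cap v_0\subseteq b_{r(i)}$, so $a_i\cap w_{j_2}=a_i\cap v_0\subseteq b_k$ gives $r(i)=k$; hypothesis~(3) then yields $a_i\cap v_k=\emptyset$, and since $0\notin T_{j_1}$ no index of $T_{j_1}$ contributes, whence $a_i\cap w_{j_1}=\emptyset$, contradicting the choice of $i$. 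Thus $k_1\neq k_2$ and $u^{*}$ is optimal. The main obstacle is precisely this bookkeeping: one must track which original fibers $v_l$ constitute each merged part and orchestrate all three hypotheses together — the purity $v_k\subseteq b_k$, the covering of the leftover $v_0$ by the diagonal pieces $a_i\cap b_{r(i)}$, and the avoidance condition $a_i\cap v_k\neq\emptyset\Rightarrow k\neq r(i)$ — so that two distinct reduced parts cannot both meet a common $a_i$ inside a single $b_k$.
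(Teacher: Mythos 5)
Your proposal is correct, and it shares the paper's skeleton — compose the contraction arrows of Proposition \ref{prop:contraction:canonical-arrow} for (1), prove by induction that $a\times u_k$ stays constant along the reduction for (2), and combine $L_{a}(u^{*})=\emptyset$ with the three hypotheses for (3) — but the two key steps are carried out by genuinely different means. For the invariance lemma in (2), the paper stays inside its $\delta$-product calculus: it introduces $I_k=\{i~:~a_i\cap u_{k,j_1}\neq\emptyset\}$, proves $u_{k,j_1}\subseteq\bigcup_{i\in I_k}a_i$ and $u_{k,j_2}\cap\bigcup_{i\in I_k}a_i=\emptyset$, and then works through the union property (Proposition \ref{prop:union-property:delta}) and idempotency; you instead compare the fiber collections of $a\times u_k$ and $a\times u_k[j_1,j_2]$ directly and observe that the $L_a$-condition makes them coincide, which is shorter and more elementary — just note that the fact you need (fibers of a product are exactly the nonempty pairwise intersections) comes from the construction in Proposition \ref{prop:products_labeled_partitions}, not literally from Proposition \ref{prop:fibers}, which you cite. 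For (3), the paper argues constructively and bottom-up: it descends from two reduced parts to fibers $x_{j_1},x_{j_2}$ of $x$ meeting a common $a_{i_0}$, runs a case analysis on whether each is some $v_k$ or the leftover $S\setminus\bigcup_{k}v_k$, and exhibits distinct $k_1,k_2$ with the required nonempty triple intersections. You argue top-down and by contradiction: you first invoke item 2 of Proposition \ref{prop:properties-of-solutions} (which the paper proves but never uses in this theorem) to place each nonempty $a_i\cap w_j$ inside a single $b_k$, then track which $v_l$ constitute each reduced part and use disjointness of the index sets $T_{j_1},T_{j_2}$ together with the three hypotheses to rule out $k_1=k_2$. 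Your organization makes the role of each hypothesis (purity $v_k\subseteq b_k$, covering of the leftover by the sets $a_i\cap b_{r(i)}$, the avoidance condition) very visible and avoids the paper's explicit bookkeeping with the injection $\iota$ describing $\mathsf{fib}(x)$; the paper's version, in exchange, produces the witnesses $k_1\neq k_2$ explicitly and is self-contained in that it does not route through Proposition \ref{prop:properties-of-solutions}. Both arguments rest on the same combinatorial content, and yours is sound as written.
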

\begin{proof}
We show each item in the order given in the statement.

\fbox{1} By Definition \ref{def:reduction}, for every $k \in [k(u)]$, there exists pair $(j_1^k,j_2^k)$ in $L_{a}(u_k)$ such that we have the relation $u_{k+1} = u_k[j_1^k,j_2^k]$.
It follows from Proposition \ref{prop:contraction:canonical-arrow} and Definition \ref{def:reduced-elements} that we have a finite sequence of arrows in $\mathbf{UP}(S)$ as follows:
\[
x = u_0 \to u_1 \to \dots \to u_{k(u)} = u^*
\]
The composition of these arrows give us an arrow $x \to u^*$ in $\mathbf{UP}(S)$.

\fbox{2}
Let us show, by induction, that for every $k \in \{0,1,\dots,k(u)\}$, the identity $a \times x = a \times u_k$ holds. The case where $k=0$ is straightforward since $x = u_0$. 
Let us now suppose that the equation $a \times x = a \times u_k$ holds for some $k < k(u)$ and let us show that we have the equation $a \times x = a \times u_{k+1}$. We will denote $\mathsf{fib}(u_k)$ as $\{u_{k,j}\}_{j \in[p_k]}$. 

First, since $k$ is smaller than $k(u)$, the set $L_{a}(u_k)$ is not empty. Let $(j_1,j_2)$ be the pair of $L_{a}(u_k)$ for which the identity $u_{k+1} = u_k[j_1,j_2]$ holds. By Definition \ref{def:reduction}, if we denote $\mathsf{fib}(a) = \{a_i\}_{i \in [n]}$, then we have the implication
\begin{equation}\label{eq:theo-optimal-solution:implication}
a_i \cap u_{k,j_1} \neq \emptyset \Rightarrow a_i \cap u_{k,j_2} = \emptyset
\end{equation}
Denote by $I_k$ the set $\{i \in [n]~|~a_i \cap u_{k,j_1} \neq \emptyset\}$, 

\fbox{2.1} It directly follows from implication (\ref{eq:theo-optimal-solution:implication}) that the intersection $u_{k,j_2} \cap \bigcup_{i \in I_k} a_i$ is empty.

\fbox{2.2} Let us show that $u_{k,j_1}$ is included in $\bigcup_{i \in I_k} a_i$. If it was not, then $u_{k,j_1} \cap S \backslash  \bigcup_{i \in I_k} a_i$ would be non-empty. Since $a:S \to [n]$ is a surjection, the complement $S \backslash  \bigcup_{i \in I_k} a_i$ is equal to the union
\[
\bigcup_{i \in [p] \backslash I_k} a_i
\]
This means that there exists an integer $i_0 \in [p] \backslash I_k$ for which the relation $u_{k,j_1} \cap a_{i_0}$ is not empty. By definition of the set $I_k$, this implies that $i_0 \in I_k$, which is impossible since $i_0$ belongs to the complement of $I_k$. We deduce that the following inclusion holds:
\[
u_{k,j_1} \subseteq \bigcup_{i \in I_k} a_i
\]

\fbox{2.3} Recall that, by Proposition \ref{prop:fibers}, the unlabeled partition $a$ can be written as $\delta^*(\mathsf{fib}(a))$. In fact, Proposition \ref{prop:familial-product:unlabeled-partitions} shows that $a$ is the product of the partitions $\delta(a_i)$ for every $i \in [m]$. Because the collection $\{a_i\}_{i \in [n]}$ is pairwise disjoint, we can use Proposition \ref{prop:union-property:delta} to show that the following identity holds:
\begin{equation}\label{eq:optimal-solutions-construction:a-equals-a-a_i}
a = a \times \delta(\bigcup_{i \in I_k} a_i)
\end{equation}
The previous identity combined with the observation made in \fbox{2.1} allow us to show that the following identities hold:
\begin{align*}
a \times u_k & = a \times \delta(\bigcup_{i \in I_k} a_i) \times \prod_{j=1}^{p_k} \delta(u_{k,j})&(\textrm{Equation (\ref{eq:optimal-solutions-construction:a-equals-a-a_i})})\\
 & = a \times \delta(\bigcup_{i \in I_k} a_i) \times \delta(u_{k,j_1}) \times \delta(u_{k,j_2}) \times \prod_{j \in [p_k]\backslash\{j_1,j_2\}} \delta(u_{k,j}) &\\
  & = a \times \delta(\bigcup_{i \in I_k} a_i) \times \delta(u_{k,j_1} \cup u_{k,j_2}) \times \prod_{j \in [p_k]\backslash\{j_1,j_2\}} \delta(u_{k,j}) &(\textrm{Proposition \ref{prop:union-property:delta}})\\
    & = a  \times \delta(u_{k,j_1} \cup u_{k,j_2}) \times \prod_{j \in [p_k]\backslash\{j_1,j_2\}} \delta(u_{k,j}) &(\textrm{Equation (\ref{eq:optimal-solutions-construction:a-equals-a-a_i})})\\
        & = a  \times u_{k}[j_1,j_2] &(\textrm{Definition \ref{def:contraction}})
\end{align*}
The previous equation shows that the identity $a \times u_k = a \times u_{k+1}$ holds, which concludes our induction. In fact, by induction, we have shown that the identity $a \times x = a \times u^{*}$ holds. Now, because $x \in \mathbb{V}(a,b)$, there exists an arrow $a \times x \to b$ in $\mathbf{UP}(S)$. As a result, we also have an arrow $a \times u^{*} \to b$, which proves item 2.

\fbox{3} By item 2, we already know that $u^{*}$ is in $\mathbb{V}(a,b)$. Let us show that the reduced element $u^{*}$ is an optimal solution of $\mathbb{V}(a,b)$. To do so, we will have to express the fibers of $u^{*}$ in terms of the sets $\{v_{k}\}_{k \in [m]}$.

First, we can express the fibers of $u^{*}$ in terms of unions of fibers of $x$. More specifically, if we denote $\mathsf{fib}(u^{*}) = \{u^{*}_{l}\}_{l \in [q]}$ and we denote by $g$ the function $[p] \to [q]$ inducing the arrow $x \to u^{*}$ in $\mathbf{UP}(S)$ (by item 1), then the following equation holds for every $l \in [q]$:
\begin{equation}\label{eq:optimal-solutions-construction:x-to-reduced-elt}
u^{*}_l = \bigcup_{j \in g^{-}(l)} x_j
\end{equation}
Now, we can express the fibers of $x$ in terms of the product decomposition of $x$. By Definition \ref{def:product-decompositions}, the equation $x = \prod_{k = 1}^m \delta(v_k)$ holds. Since we have an inclusion $v_k \subseteq b_k$ for every $k \in [m]$ and the collection $\{b_k\}_{k \in [m]}$ is pairwise disjoint, so is the collection the collection $\{v_k\}_{k \in [m]}$. By Proposition \ref{prop:familial-product:unlabeled-partitions}, this means that the identity $x = \delta^*(\{v_k\}_{k \in [m]})$ holds. It is then easy to describe the fibers $\{x_j\}_{j \in [p]}$ of the partition $x = \delta^*(\{v_k\}_{k \in [m]})$ by using Definition \ref{conv:relative_discrete_partitions:familial}. Specifically, Definition \ref{conv:relative_discrete_partitions:familial} implies that $m$ must be equal to either $p$ or $p-1$ such that there exists an injection $\iota:[m] \to [p]$ for which the following equations hold:
\begin{equation}\label{eq:optimal-solutions-construction:x-as-v}
x_{j} =
\left\{
\begin{array}{ll}
v_k&\textrm{if }j = \iota(k)\\
S \backslash \bigcup_{k=1}^m v_k & \textrm{if $j$ is not in the image of }\iota
\end{array}
\right.
\end{equation}

Let us now show that the solution $u^{*} \in \mathbb{V}(a,b)$ is optimal by using the characterization of Definition \ref{def:optimal-solutions}. Consider two distinct elements $l_1,l_2 \in [q]$. By Definition \ref{def:reduced-elements}, the set $L_{a}(u^{*})$ is empty, so the pair $(l_1,l_2)$ cannot belong to $L_{a}(u^{*})$. As a result, there exists $i_0 \in [n]$ such that $a_{i_0} \cap u^{*}_{l_1} \neq \emptyset$ and $a_{i_0} \cap u^{*}_{l_2} \neq \emptyset$. It follows from equation (\ref{eq:optimal-solutions-construction:x-to-reduced-elt}) that there exists $j_1 \in g^{-}(l_1)$ and $j_2 \in g^{-}(l_2)$ such that:
\begin{equation}\label{eq:optimal-solutions-construction:item3:a-x-non-empty}
a_{i_0} \cap x_{j_1} \neq \emptyset \quad\quad\quad\quad\quad a_{i_0} \cap x_{j_2} \neq \emptyset
\end{equation}
The following points conclude the proof of item 3. Each point makes an assumption about the form taken by the sets $x_{j_1}$ and $x_{j_2}$. In particular, their forms must follow the formulas of (\ref{eq:optimal-solutions-construction:x-as-v}).

\fbox{3.1} Suppose that $x_{j_1}$ is of the form $v_{k_1}$. By our assumptions on $x$,n the inclusion $v_{k_1} \subseteq b_{k_1}$ holds, which gives the inclusion:
\[
a_{i_0} \cap x_{j_1} = a_{i_0} \cap x_{j_1} \cap v_{k_1} \subseteq a_{i_0} \cap x_{j_1} \cap b_{k_1}.
\]
The leftmost inequality of (\ref{eq:optimal-solutions-construction:item3:a-x-non-empty}) then iplies that $a_{i_0} \cap x_{j_1} \cap b_{k_1} \neq \emptyset$. 

\fbox{3.1.1} If $x_{j_2}$ is also of the form $v_{k_2}$, then the inclusion $v_{k_2} \subseteq b_{k_2}$ (which is part of our assumptions on $x$) gives us the inclusion
\[
a_{i_0} \cap x_{j_2} = a_{i_0} \cap x_{j_2} \cap v_{k_2} \subseteq a_{i_0} \cap x_{j_2} \cap b_{k_2}.
\]
Then, the rightmost inequality of (\ref{eq:optimal-solutions-construction:item3:a-x-non-empty}) implies that $a_{i_0} \cap x_{j_1} \cap b_{k_1} \neq \emptyset$. Let us show that the inequality $k_1 \neq k_2$ holds. If the equality $k_1 = k_2$ was true, then the sets $x_{j_1}$ and $x_{j_2}$ would contain the following non-empty set:
\[
a_{i_0} \cap x_{j_1} = a_{i_0} \cap v_{k_1} = a_{i_0} \cap v_{k_2} = a_{i_0} \cap x_{j_2}
\]
Because we have the inequality $j_1 \ne j_2$ and the collection $\{x_j\}_{j \in [p]}$ is pairwise disjoint, the sets $x_{j_1}$ and $x_{j_2}$ must be disjoint and the inequality $k_1 \neq k_2$ cannot hold. As a result, we have shown that the following relations hold:
\[
a_{i_0} \cap x_{j_1} \cap b_{k_1} \neq \emptyset,\quad
a_{i_0} \cap x_{j_2} \cap  b_{k_2} \neq \emptyset
\quad\textrm{ where }k_1 \neq k_2.
\]

\fbox{3.1.2} On the other hand, if $x_{j_2}$ is of the form $S \backslash \bigcup_{k=1}^m v_k$, then we have the inclusions
\[
a_{i_0} \cap x_{j_2} \subseteq a_{i_0} \cap x_{j_2} \cap (S \backslash \bigcup_{k=1}^m v_k) \subseteq a_{i_0} \cap x_{j_2} \cap (\bigcup_{i \in [n]} a_i \cap b_{r(i)})
\]
Since the collection $\{a_i\}_{i \in [n]}$ is pairwise disjoint, the intersection of the set $a_{i_0} \cap x_{j_2}$ must have a non-empty intersection with the set $a_{i_0} \cap b_{r(i_0)}$, meaning that the inequality $a_{i_0} \cap x_{j_2} \cap  b_{r(i_0)} \neq \emptyset$ holds. Now, because $a_{i_0} \cap v_{k_1}$ is not empty (see the leftmost inequality of (\ref{eq:optimal-solutions-construction:item3:a-x-non-empty})), our assumptions on $x$ give us the inequality $k_1 \neq r(i_0)$. As a result, we have shown that the following relations hold:
\[
a_{i_0} \cap x_{j_1} \cap b_{k_1} \neq \emptyset,\quad
a_{i_0} \cap x_{j_2} \cap  b_{r(i_0)} \neq \emptyset
\quad\textrm{ where }k_1 \neq r(i_0).
\]
\fbox{3.2} Suppose that $x_{j_1}$ is of the form $S \backslash \bigcup_{k=1}^m v_k$. By our assumptions on $x$, we have the inclusion:
\[
a_{i_0} \cap x_{j_1} = a_{i_0} \cap x_{j_1} \cap (S \backslash \bigcup_{k=1}^m v_k) \subseteq a_{i_0} \cap x_{j_1} \cap (\bigcup_{i \in [n]} a_i \cap b_{r(i)}).
\]
Since the collection $\{a_i\}_{i \in [n]}$ is pairwise disjoint, the intersection of the set $a_{i_0} \cap x_{j_1}$ must have a non-empty intersection with the set $a_{i_0} \cap b_{r(i_0)}$, meaning that the inequality $a_{i_0} \cap x_{j_1} \cap  b_{r(i_0)} \neq \emptyset$ holds.
Because $x_{j_1}$ and $x_{j_2}$ are disjoint sets and $x_{j_1} = S \backslash \bigcup_{k=1}^m v_k$, theset $x_{j_2}$ must necessarily be of the form $v_{k_2}$. By our assumptions on $x$, the inclusion $v_{k_2} \subseteq b_{k_2}$ holds, which gives us the inclusion
\[
a_{i_0} \cap x_{j_2} = a_{i_0} \cap x_{j_2} \cap v_{k_2} \subseteq a_{i_0} \cap x_{j_1} \cap b_{k_1}
\]
It follows from the rightmost inequality of (\ref{eq:optimal-solutions-construction:item3:a-x-non-empty}) that $a_{i_0} \cap x_{j_1} \cap b_{k_1}$ is not empty. In addition, the rightmost inequality of (\ref{eq:optimal-solutions-construction:item3:a-x-non-empty}) implies that the intersection $a_{i_0} \cap v_{k_2}$ is not empty, which, by our assumptions on $x$, gives us the inequality $k_2 \neq r(i_0)$. As a result, we have shown that the following relations hold:
\[
a_{i_0} \cap x_{j_1} \cap b_{r(i_0)} \neq \emptyset,\quad
a_{i_0} \cap x_{j_2} \cap  b_{k_2} \neq \emptyset
\quad\textrm{ where }r(i_0) \neq k_2.
\]
\fbox{3.3} We now conclude the proof. According to formula (\ref{eq:optimal-solutions-construction:x-to-reduced-elt}), we have the following inclusions:
\[
a_{i_0} \cap x_{j_1} \subseteq a_{i_0} \cap u^*_{l_1}\quad\quad\quad
a_{i_0} \cap x_{j_2} \subseteq a_{i_0} \cap u^*_{l_2}
\]
The previous points have shown that there exist distinct elements $k_1,k_2 \in [m]$ for which the two relations displayed in the following two arrays hold (the bottom relations directly follow form the previous two inclusions):
\[
\left\{
\begin{array}{ll}
a_{i_0} \cap x_{j_1} \cap b_{k_1} &\neq \emptyset\\
a_{i_0} \cap x_{j_1} \cap b_{k_1} &\subseteq a_{i_0} \cap u^*_{l_1} \cap b_{k_1}
\end{array}
\right.
\quad\quad\quad
\left\{
\begin{array}{ll}
a_{i_0} \cap x_{j_2} \cap b_{k_2} &\neq \emptyset\\
a_{i_0} \cap x_{j_2} \cap b_{k_2} &\subseteq a_{i_0} \cap u^*_{l_2} \cap b_{k_2}
\end{array}
\right.
\]
By Definition \ref{def:optimal-solutions}, the previous set of relations implies that $u^{*}$ is optimal.
\end{proof}

\subsection{Embedding theorems}\label{ssec:embedding-theorems}
In this section, we use item 3 of Theorem \ref{theo:optimal-solutions-construction} to construct optimal solutions of embedding problems. To do so, we use Proposition \ref{prop:embedding-decomposition-III}, which gives us a solution that satisfies all the conditions required by Theorem \ref{theo:optimal-solutions-construction} (see Proposition \ref{prop:embedding-IV}). We conclude the section with our major result, stated in Theorem \ref{theo:minimal-solutions}, that gives us a recipe for constructing minimal solutions of embedding problems.

We start with Proposition \ref{prop:embedding-decomposition-I}, which can be used to refine solutions of embedding problems into larger solutions, meaning that these partitions contain smaller parts. Later, this refinement will be used to construct partitions satisfying the properties listed in Theorem \ref{theo:optimal-solutions-construction}. Below, we express this refinement in terms of a product of images of the operation $\delta$ (see Convention \ref{conv:relative_discrete_partitions_delta}).

\begin{proposition}[Embedding I]\label{prop:embedding-decomposition-I}
Let $(S,<)$ be a finite strict linear order and let $a$ be an unlabeled partition in $\mathbf{UP}(S)$ such that $\mathsf{fib}(a) = \{a_i\}_{i \in [n]}$. For every subset $x$ of $S$, the following equation holds:
\[
a \times \delta(x) = a \times \prod_{i = 1}^n \delta(a_i \cap x) 
\]
\end{proposition}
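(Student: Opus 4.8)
The plan is to reduce everything to the fact that $a$ equals the product $\prod_{i=1}^n \delta(a_i)$ (by Proposition \ref{prop:fibers} and Proposition \ref{prop:familial-product:unlabeled-partitions}) and then to rewrite $\delta(x)$ fiberwise by repeatedly splitting it along the parts of $a$. Throughout, I would set $y_i := a_i \cap x$ and observe that, because $\mathsf{fib}(a)=\{a_i\}_{i \in [n]}$ is pairwise disjoint and covers $S$ (as $a$ is a surjection), the sets $y_1,\dots,y_n$ are pairwise disjoint with $\bigcup_{i=1}^n y_i = x$. The statement to prove is then the identity $a \times \delta(x) = a \times \prod_{i=1}^n \delta(y_i)$.

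The first key step is an absorption lemma: for every subset $J \subseteq [n]$ one has $a \times \delta(\bigcup_{i \in J} a_i) = a$. I would prove this by applying the Complement property (Proposition \ref{prop:delta-complement-fiber}) with ambient set $\bigcup_{i \in J}a_i$ and the pairwise disjoint family $\{a_i\}_{i \in J}$: since $\big(\bigcup_{i\in J}a_i\big)\backslash \bigcup_{i\in J} a_i = \emptyset$ and $\delta(\emptyset)$ is terminal (Example \ref{exa:relative_discrete_partitions}), this yields $\delta(\bigcup_{i\in J}a_i) \times \prod_{i\in J}\delta(a_i) = \prod_{i\in J}\delta(a_i)$; multiplying by $\prod_{i \notin J}\delta(a_i)$ and using $a = \prod_{i=1}^n \delta(a_i)$ then gives the claim. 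This is precisely the fact already used as equation (\ref{eq:optimal-solutions-construction:a-equals-a-a_i}) in the proof of Theorem \ref{theo:optimal-solutions-construction}.

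Next, writing $Y_k := \bigcup_{i=1}^k y_i$, I would prove by induction on $k \in \{0,1,\dots,n\}$ the chain of identities
\[
a \times \prod_{i=1}^n \delta(y_i) = a \times \delta(Y_k) \times \prod_{i=k+1}^n \delta(y_i).
\]
The base case $k=0$ is immediate since $Y_0 = \emptyset$ and $\delta(\emptyset)$ is terminal. For the inductive step it suffices to show $a \times \delta(Y_k) \times \delta(y_{k+1}) = a \times \delta(Y_{k+1})$, and here I would apply the Union property (Proposition \ref{prop:union-property:delta}) with $x' := \bigcup_{i=1}^k a_i$, taking $y := Y_k$ and $z := y_{k+1}$. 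The hypotheses hold because $Y_k = \bigcup_{i=1}^k(a_i \cap x) \subseteq x'$ and $x' \cap y_{k+1} = \big(\bigcup_{i=1}^k a_i\big)\cap a_{k+1}\cap x = \emptyset$ by disjointness of the parts of $a$. This gives $\delta(x') \times \delta(Y_k) \times \delta(y_{k+1}) = \delta(x') \times \delta(Y_{k+1})$; multiplying by $a$ and absorbing $\delta(x')$ via the lemma above yields the step. Taking $k=n$ (so $Y_n = x$ and the trailing product is empty) produces exactly $a \times \prod_{i=1}^n \delta(y_i) = a \times \delta(x)$, the desired equation.

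The only delicate points are bookkeeping ones: verifying the two hypotheses of the Union property at each stage and confirming that the auxiliary factor $\delta(x')$ is absorbed into $a$ at each stage. The genuine content lies in the absorption lemma, which is what lets every factor $\delta(\bigcup_{i\le k} a_i)$ disappear after the combination; once that is in place, the induction is routine.
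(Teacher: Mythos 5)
Your proof is correct, but it runs the induction in the opposite direction from the paper's and rests on different lemmas, so it is a genuinely distinct route. The paper starts from $a \times \delta(x)$ and \emph{splits}: writing $a = \prod_{i=1}^n \delta(a_i)$, it applies Proposition \ref{prop:triple-xyz-delta} once per fiber to maintain the invariant $a \times \delta(x) = a \times \big(\prod_{i=1}^{k}\delta(a_i \cap x)\big) \times \delta\big(x \backslash \bigcup_{i=1}^{k} a_i\big)$, so each step simultaneously creates the factor $\delta(a_k \cap x)$ and shrinks a residual complement, which becomes the terminal partition $\delta(\emptyset)$ at $k=n$. You instead start from $a \times \prod_{i=1}^n \delta(a_i \cap x)$ and \emph{merge}, carrying a growing union $\delta(Y_k)$; each step uses the Union property (Proposition \ref{prop:union-property:delta}) and requires the auxiliary absorption identity $a \times \delta\big(\bigcup_{i \in J} a_i\big) = a$, which you correctly derive from the Complement property (Proposition \ref{prop:delta-complement-fiber}) --- though note that this proposition is itself proved via Proposition \ref{prop:triple-xyz-delta}, so your route is not logically independent of the paper's key lemma, only organized differently. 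There is no circularity in either direction, since all tools invoked come from section \ref{ssec:product-decomposition}. What the paper's version buys is economy: a single lemma, no auxiliary factor to discharge. What yours buys is modularity and a visible link to the later machinery: your absorption identity is exactly equation (\ref{eq:optimal-solutions-construction:a-equals-a-a_i}) from the proof of Theorem \ref{theo:optimal-solutions-construction}, and your inductive step $a \times \delta(Y_k) \times \delta(y_{k+1}) = a \times \delta(Y_{k+1})$ has precisely the shape of the computation in item 2 of that theorem (showing $a \times u_k = a \times u_{k+1}$ across a contraction), so your argument makes explicit that Embedding I is, relative to $a$, a sequence of union-property merges of the pieces $a_i \cap x$.
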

\begin{proof}
By Proposition \ref{prop:fibers}, we can rewrite the unlabeled partition $a$ as $\delta^*(\mathsf{fib}(a))$ and it follows from Proposition \ref{prop:familial-product:unlabeled-partitions} that we have the following equation:
\[
a \times \delta(x) = \Big(\prod_{i=1}^n \delta(a_i)\Big) \times \delta(x) 
\]
By Proposition \ref {prop:triple-xyz-delta}, we have the decomposition 
\[
\delta(a_1) \times \delta(x\backslash \emptyset) = \delta(a_1) \times \delta(a_1 \cap x) \times  \delta(x \backslash (\emptyset \cup a_1)).
\]
As a result, we obtain the following equation:
\[
a \times \delta(x) = \Big(\prod_{i=1}^n \delta(a_i)\Big) \times \delta(a_1 \cap x) \times  \delta(x \backslash a_1)
\]
Our goal is to reduce the partition $\delta(x \backslash a_1)$ to a terminal partition by using a similar argument for each partition $\delta(a_i)$ where the index $i$ is between 2 and $n$ -- we proceed by induction as follows. Suppose that we have the following formula for some positive integer $k-1 \in [n]$.
\[
a \times \delta(x) = \Big(\prod_{i=1}^n \delta(a_i)\Big) \times \Big(\prod_{i=1}^{k-1} \delta(a_{i} \cap x) \Big) \times  \delta(x \backslash \bigcup_{i=1}^{k-1} a_i)
\] 
By Proposition \ref{prop:triple-xyz-delta}, we have the following formula (since the fibers of $a$ are pairwise disjoint):
\[
\delta(a_{k}) \times \delta(x \backslash \bigcup_{i=1}^{k-1} a_i) = \delta(a_k) \times \delta(x \backslash a_k) \times \delta(x \backslash \bigcup_{i=1}^{k} a_i)
\]
Hence, we now have the following relation:
\[
a \times \delta(x) = \Big(\prod_{i=1}^n \delta(a_i)\Big) \times \Big(\prod_{i=1}^{k} \delta(a_{i} \cap x) \Big) \times  \delta(x \backslash \bigcup_{i=1}^{k} a_i)
\] 
By induction, this shows that the following formula holds:
\[
a \times \delta(x) = \Big(\prod_{i=1}^n \delta(a_i)\Big) \times \Big(\prod_{i=1}^{n} \delta(a_{i} \cap x) \Big) \times  \delta(x \backslash \bigcup_{i=1}^{n} a_i)
\]
Since the fibers of the surjection $a:S \to [n]$ satisfy the equation $\bigcup_{i=1}^{n} a_i = S$, we deduce that $\delta(x \backslash \bigcup_{i=1}^{n} a_i)$ is equal to $\delta(\emptyset)$, which is terminal in $\mathbf{UP}(S)$ (see Example \ref{exa:relative_discrete_partitions}). In other words, the equation of the statement holds.
\end{proof}

The following proposition refines the partition provided by Proposition \ref{prop:embedding-decomposition-I} to a partition containing fewer parts. Specifically, this partition can be used to construct solutions (of embedding problems) satisfying the third property listed in item 3 of Theorem \ref{theo:optimal-solutions-construction}.

\begin{proposition}[Embedding II]\label{prop:embedding-decomposition-II}
Let $(S,<)$ be a finite strict linear order and let $(a,x)$ be a pair of unlabeled partitions in $\mathbf{UP}(S)$ such that $\mathsf{fib}(a) = \{a_i\}_{i \in [n]}$ and $\mathsf{fib}(x) = \{x_j\}_{j \in [m]}$. For every function $r:[n] \to [m]$, the following equation holds:
\[
a \times x = a \times \prod_{i=1}^n\prod_{j=1,j \neq r(i)}^m \delta(a_i \cap x_j)
\]
\end{proposition}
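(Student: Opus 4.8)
The plan is to expand $a \times x$ into a product over the full grid $[n]\times[m]$ of the atomic partitions $\delta(a_i \cap x_j)$, and then to show that for each fixed $i$ the single factor indexed by $j = r(i)$ is redundant once the factor $a$ is present. This matches the flow of the section, where Proposition \ref{prop:embedding-decomposition-I} supplies exactly the expansion needed.

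First I would rewrite $x$ as the product of its fibers: by Proposition \ref{prop:fibers} together with Proposition \ref{prop:familial-product:unlabeled-partitions} we have $x = \prod_{j=1}^m \delta(x_j)$, so that $a \times x = a \times \prod_{j=1}^m \delta(x_j)$. Applying Proposition \ref{prop:embedding-decomposition-I} to each factor $\delta(x_j)$ (that is, multiplying the identity $a \times \delta(x_j) = a \times \prod_{i=1}^n \delta(a_i \cap x_j)$ by the remaining factors and using the symmetry of products) yields the full grid expansion
\[
a \times x = a \times \prod_{i=1}^n \prod_{j=1}^m \delta(a_i \cap x_j).
\]

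Next comes the key reduction. Fix $i \in [n]$. Since $\{x_j\}_{j \in [m]} = \mathsf{fib}(x)$ is a partition of $S$, the sets $\{a_i \cap x_j\}_{j \in [m]}$ are pairwise disjoint subsets of $a_i$ whose union is $a_i$; consequently $a_i \cap x_{r(i)} = a_i \backslash \bigcup_{j \neq r(i)} (a_i \cap x_j)$. Applying Proposition \ref{prop:delta-complement-fiber} with ambient set $a_i$ and the pairwise disjoint subsets $\{a_i \cap x_j\}_{j \neq r(i)}$ then gives
\[
\delta(a_i) \times \prod_{j \neq r(i)} \delta(a_i \cap x_j) = \delta(a_i \cap x_{r(i)}) \times \prod_{j \neq r(i)} \delta(a_i \cap x_j) = \prod_{j=1}^m \delta(a_i \cap x_j),
\]
so that, in the presence of $\delta(a_i)$, the full product over $j$ agrees with the product missing the term $j = r(i)$.

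Finally I would substitute this identity, for every $i$, into the grid expansion and recombine. Because $a = \prod_{i=1}^n \delta(a_i)$ (again by Proposition \ref{prop:fibers} and Proposition \ref{prop:familial-product:unlabeled-partitions}), all the newly introduced factors $\delta(a_i)$ assemble into a copy of $a$, which is absorbed by the leading $a$ via the idempotency $a \times a = a$ of Proposition \ref{prop:product-idempotency:unlabeled}, leaving exactly $a \times x = a \times \prod_{i=1}^n \prod_{j=1,\,j \neq r(i)}^m \delta(a_i \cap x_j)$. The only genuinely delicate point is the set-theoretic identity $a_i \cap x_{r(i)} = a_i \backslash \bigcup_{j \neq r(i)}(a_i \cap x_j)$: it is precisely what lets the complement property convert ``removing the factor $\delta(a_i \cap x_{r(i)})$'' into ``adding the harmless factor $\delta(a_i)$,'' everything else being bookkeeping with commutativity and idempotency of products.
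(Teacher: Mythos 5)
Your proposal is correct and takes essentially the same route as the paper's own proof: both expand $a \times x$ into the full grid product $a \times \prod_{i=1}^n\prod_{j=1}^m \delta(a_i \cap x_j)$ via Proposition \ref{prop:embedding-decomposition-I}, both derive the key identity $\delta(a_i) \times \prod_{j \neq r(i)} \delta(a_i \cap x_j) = \prod_{j=1}^m \delta(a_i \cap x_j)$ from Proposition \ref{prop:delta-complement-fiber} together with the set-theoretic fact $a_i \cap x_{r(i)} = a_i \backslash \bigcup_{j \neq r(i)}(a_i \cap x_j)$, and both recombine the introduced factors $\delta(a_i)$ into $a$ using Proposition \ref{prop:product-idempotency:unlabeled}. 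The only difference is the order of bookkeeping (the paper establishes the key identity before the chain of equalities, whereas you expand first and substitute afterwards), which is immaterial.
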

\begin{proof}
First, it follows from Proposition \ref{prop:delta-complement-fiber} that we have the following equation:
\[
\delta(a_i) \times \prod_{j=1,j \neq r(i)}^m \delta(a_i \cap x_j) = \delta(a_i \backslash \bigcup_{j=1,j \neq r(i)}^n a_i \cap x_j) \times \prod_{j=1,j \neq r(i)}^m \delta(a_i \cap x_j)
\]
Because the collection $x_1,\dots,x_m$ constitutes the collection of fibers of the surjection $b:S \to [m]$, the equation $a_i \backslash \bigcup_{j=1,j \neq r(i)}^n a_i \cap x_j = a_i \cap x_{r(i)}$ holds. As a result, the equation previous equation is equivalent to the following one:
\begin{equation}\label{eq:embedding-decomposition-II}
\delta(a_i) \times \prod_{j=1,j \neq r(i)}^m \delta(a_i \cap x_j) = \prod_{j=1}^m \delta(a_i \cap x_j)
\end{equation}
Recall that, by Proposition \ref{prop:fibers}, the unlabeled partition $a$ can be written as $\delta^*(\mathsf{fib}(a))$ and the unlabeled partition $x$ can be written as $\delta^*(\mathsf{fib}(x))$.
We can use equation (\ref{eq:embedding-decomposition-II}) and Proposition \ref{prop:familial-product:unlabeled-partitions} to show that the following identities hold:
\begin{align*}
a \times x &= a \times \prod_{j=1}^m \delta(x_j)&\\
&= \prod_{j=1}^m \Big(a \times\delta(x_j)\Big)&(\textrm{Prosition \ref{prop:product-idempotency:unlabeled}})\\
&= \prod_{j=1}^m \Big(a \times \prod_{i=1}^n\delta(a_i \cap x_j)\Big)&(\textrm{Proposition \ref{prop:embedding-decomposition-I}})\\
&= \prod_{i=1}^n\Big(\delta(a_i) \times \prod_{j=1}^m \delta(a_i \cap x_j)\Big) &(\textrm{Proposition \ref{prop:product-idempotency:unlabeled}})\\
&= \prod_{i=1}^n\Big(\delta(a_i) \times \prod_{j=1,j \neq r(i)}^m \delta(a_i \cap x_j)\Big) &(\textrm{Equation \ref{eq:embedding-decomposition-II}})
\end{align*}
It follows from an obvious use of Proposition \ref{prop:product-idempotency:unlabeled} and a rearrangement of the products that the last equation given above is equivalent to the equation of the statement.
\end{proof}

While the new partition provided by Proposition \ref{prop:embedding-decomposition-II} satisfies the third property listed in item 3 of Theorem \ref{theo:optimal-solutions-construction}, it may not satisfy the two other properties listed there. To force these properties to hold, we need to merge some of the parts of the partition constructed in Theorem \ref{theo:optimal-solutions-construction}. We do so through the construction of Convention \ref{conv:chi-notation} and show, in Proposition \ref{prop:embedding-decomposition-III}, that the resulting partition satisfies the type of equation shown in Proposition \ref{prop:embedding-decomposition-I} and Proposition \ref{prop:embedding-decomposition-II}.

\begin{convention}[Notation]\label{conv:chi-notation}
Let $(S,<)$ be a finite strict linear order and let $(a,x)$ be a pair of unlabeled partitions in $\mathbf{UP}(S)$ such that $\mathsf{fib}(a) = \{a_i\}_{i \in [n]}$ and $\mathsf{fib}(x) = \{x_j\}_{j \in [m]}$. For every function $r:[n] \to [m]$, we denote by $I(j,r,a,x)$ the set
\[
I(j,r,a,x) =\{i \in [n]~|~r(i) \neq j\textrm{ and } a_i \cap x_j \notin \{\emptyset, a_i\}\}
\]
and we denote by $\chi(r,a,x)$ the partition
\[
\prod_{j=1}^m \delta\Big(\Big(\bigcup_{i \in I(j,r,a,x)} a_i\Big) \cap x_j\Big)
\]
\end{convention}

\begin{proposition}[Embedding III]\label{prop:embedding-decomposition-III}
Let $(S,<)$ be a finite strict linear order, let $(a,x)$ be a pair of unlabeled partitions in $\mathbf{UP}(S)$ such that $\mathsf{fib}(a) = \{a_i\}_{i \in [n]}$ and $\mathsf{fib}(x) = \{x_j\}_{j \in [m]}$. For every function $r:[n] \to [m]$, the following equation holds:
\[
a \times x = a \times \chi(r,a,x)
\]
\end{proposition}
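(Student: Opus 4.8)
The plan is to start from Proposition \ref{prop:embedding-decomposition-II}, which already provides
\[
a \times x = a \times \prod_{i=1}^n\prod_{j=1,j\neq r(i)}^m \delta(a_i\cap x_j),
\]
and to simplify the right-hand side, modulo multiplication by $a$, until it becomes $a\times\chi(r,a,x)$. Since products in $\mathbf{UP}(S)$ are commutative and associative (Remark \ref{rem:associativity} and Proposition \ref{prop:adjoint_equivalence:products}), I would first reorganize the double product by grouping the factors according to the index $j$, rewriting it as $\prod_{j=1}^m\prod_{i:\,r(i)\neq j}\delta(a_i\cap x_j)$.

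The first simplification discards the factors that do not contribute. For a fixed $j$ and an index $i$ with $r(i)\neq j$, the intersection $a_i\cap x_j$ falls into exactly one of three cases. If $a_i\cap x_j=\emptyset$, then $\delta(a_i\cap x_j)=\delta(\emptyset)$ is terminal (Example \ref{exa:relative_discrete_partitions}) and can be dropped from any product. If $a_i\cap x_j=a_i$, then the factor is $\delta(a_i)$; since $a=\prod_{i'=1}^n\delta(a_{i'})$ by Proposition \ref{prop:fibers} and Proposition \ref{prop:familial-product:unlabeled-partitions}, the identity $a\times\delta(a_i)=a$ follows from Proposition \ref{prop:product-idempotency:unlabeled}, so such a factor is absorbed once we multiply by $a$. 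The surviving factors are precisely those with $a_i\cap x_j\notin\{\emptyset,a_i\}$, that is, those with $i\in I(j,r,a,x)$. Discarding and absorbing the non-contributing factors simultaneously then yields
\[
a\times x = a\times\prod_{j=1}^m\prod_{i\in I(j,r,a,x)}\delta(a_i\cap x_j).
\]

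The key step is to merge, for each fixed $j$, the inner product $\prod_{i\in I(j,r,a,x)}\delta(a_i\cap x_j)$ into the single factor of $\chi(r,a,x)$ indexed by $j$ (Convention \ref{conv:chi-notation}). Writing $Y_j:=\big(\bigcup_{i\in I(j,r,a,x)}a_i\big)\cap x_j$, I would apply Proposition \ref{prop:embedding-decomposition-I} to the subset $Y_j$ to obtain $a\times\delta(Y_j)=a\times\prod_{i=1}^n\delta(a_i\cap Y_j)$. The crucial observation is that, because the fibers $a_i$ are pairwise disjoint, one computes $a_i\cap Y_j=a_i\cap x_j$ when $i\in I(j,r,a,x)$ and $a_i\cap Y_j=\emptyset$ otherwise; dropping the terminal factors $\delta(\emptyset)$ gives exactly $a\times\delta(Y_j)=a\times\prod_{i\in I(j,r,a,x)}\delta(a_i\cap x_j)$. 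Multiplying this identity by an arbitrary extra factor and substituting the blocks $j=1,\dots,m$ one at a time (the global factor $a$ remaining available at each substitution, using $a\times a=a$) converts the whole product into $\prod_{j=1}^m\delta(Y_j)=\chi(r,a,x)$, which is the claimed equation.

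I expect the main obstacle to be organizational rather than conceptual: carefully justifying the three-way case split on $a_i\cap x_j$ together with the simultaneous discarding of terminal factors and absorption of the $\delta(a_i)$ factors into $a$, and phrasing the per-$j$ merging so that a copy of $a$ is present at each replacement. Once the identity $a\times\delta(Y_j)=a\times\prod_{i\in I(j,r,a,x)}\delta(a_i\cap x_j)$ is isolated via Proposition \ref{prop:embedding-decomposition-I}, the remainder is routine bookkeeping with the idempotency of products (Proposition \ref{prop:product-idempotency:unlabeled}).
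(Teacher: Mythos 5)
Your proof is correct, and it follows the paper's overall skeleton --- start from Proposition \ref{prop:embedding-decomposition-II}, regroup the double product by $j$, cut the factors down to $I(j,r,a,x)$, and then merge each inner product into the single factor $\delta(Y_j)$ with $Y_j := \big(\bigcup_{i\in I(j,r,a,x)}a_i\big)\cap x_j$ --- but the mechanism you use for the merging step is genuinely different. The paper invokes the union property (Proposition \ref{prop:union-property:delta}): it inserts the factors $\prod_{i\in I(j,r,a,x)}\delta(a_i)$, which are available from $a$ by Proposition \ref{prop:product-idempotency:unlabeled}, collapses $\prod_{i\in I(j,r,a,x)}\delta(a_i\cap x_j)$ into $\delta(Y_j)$ in their presence, and then reabsorbs the $\delta(a_i)$'s into $a$. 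You instead read Proposition \ref{prop:embedding-decomposition-I} from right to left with the subset $Y_j$, namely $a\times\delta(Y_j)=a\times\prod_{i=1}^n\delta(a_i\cap Y_j)$, and use the pairwise disjointness of the $a_i$ to compute $a_i\cap Y_j = a_i\cap x_j$ for $i\in I(j,r,a,x)$ and $a_i \cap Y_j = \emptyset$ otherwise. This buys you some economy: iterating the three-term union property across all of $I(j,r,a,x)$ requires an induction that the paper glosses over, whereas that induction is already packaged inside Proposition \ref{prop:embedding-decomposition-I}, so your merging step is a single application of an existing lemma plus a set-theoretic computation. A further merit of your write-up is that you justify explicitly the passage from the index set $\{i \mid r(i)\neq j\}$ to $I(j,r,a,x)$ (dropping terminal factors $\delta(\emptyset)$ and absorbing factors $\delta(a_i)$ into $a$), a step the paper's corresponding equation performs silently; your final blockwise substitution using $a\times a=a$ matches the paper's bookkeeping exactly.
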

\begin{proof}
It follows from permuting the products of the formula given in Proposition \ref{prop:embedding-decomposition-II} that the following equations hold (see Convention \ref{conv:chi-notation}):
\begin{equation}\label{eq:embedding-decomposition-III-1}
a \times x =  a \times \prod_{j=1}^m \prod_{i =1, r(i) \neq j}^n \delta(a_i \cap x_j) = a \times \prod_{j=1}^m \prod_{i \in I(j,r,a,x)} \delta(a_i \cap x_j)
\end{equation}
Note that, for every $j \in [m]$, we can use Proposition \ref{prop:union-property:delta} to show that the following identity holds:
\begin{equation}\label{eq:embedding-decomposition-III-2}
\prod_{i \in I(j,r,a,x)} \delta(a_i) \times \prod_{i \in I(j,r,a,x)} \delta(a_i \cap x_j) = \prod_{i \in I(j,r,a,x)} \delta(a_i) \times \delta\Big(\Big(\bigcup_{i \in I(j,r,a,x)} a_i\Big) \cap x_j\Big)
\end{equation}
Since, for every $i \in I(j,r,a,x)$, the partition $a$ is a product made of the partition $\delta(a_i)$, we can use equation (\ref{eq:embedding-decomposition-III-2}) inside equation (\ref{eq:embedding-decomposition-III-1}) to deduce the following identities:
\begin{align*}
a \times x & = a \times \prod_{j=1}^m \prod_{i \in I(j,r,a,x)} \delta(a_i \cap x_j)&\\
& = a \times \prod_{j=1}^m \Big(\prod_{i \in I(j,r,a,x)} \delta(a_i) \times \prod_{i \in I(j,r,a,x)} \delta(a_i \cap x_j)\Big)&(\textrm{Proposition \ref{prop:product-idempotency:unlabeled}})\\
& = a \times \prod_{j=1}^m \Big(\prod_{i \in I(j,r,a,x)} \delta(a_i) \times \delta\Big(\Big(\bigcup_{i \in I(j,r,a,x)} a_i\Big) \cap x_j\Big)\Big)&(\textrm{Equation \ref{eq:embedding-decomposition-III-2}})\\
& = a \times \prod_{j=1}^m \delta\Big(\Big(\bigcup_{i \in I(j,r,a,x)} a_i\Big) \cap x_j\Big)&(\textrm{Proposition \ref{prop:product-idempotency:unlabeled}})
\end{align*}
By Convention \ref{conv:chi-notation}, the last equation shows the statement.
\end{proof}


The following proposition essentially shows that the partition constructed in Convention \ref{conv:chi-notation} satisfies the properties listed in Theorem \ref{theo:optimal-solutions-construction}.

\begin{proposition}[Embedding IV]\label{prop:embedding-IV}
Let $(S,<)$ be a finite strict linear order, let $(a,x)$ be a pair of unlabeled partitions in $\mathbf{UP}(S)$ such that $\mathsf{fib}(a) = \{a_i\}_{i \in [n]}$ and $\mathsf{fib}(x) = \{x_j\}_{j \in [m]}$ and let $r$ be a function $[n] \to [m]$. If, for every $j \in [m]$, we write
\begin{equation}\label{eq:embediing-IV}
v_j = \Big(\bigcup_{i \in I(j,r,a,x)} a_i\Big) \cap x_j\\
\end{equation}
then the following properties hold:
\begin{itemize}
\item[1)] if the function $r:[n] \to [m]$ satisfies the logical implication $(a_i \subseteq x_{j}) \Rightarrow (j = r(i))$ for every $i \in [n]$ and $j\in [m]$, then the following identity holds:
\[
S \backslash \bigcup_{j =1}^m v_j = \bigcup_{i=1}^n a_i \cap x_{r(i)}
\]
\item[2)] if the inequality $a_i \cap v_{j} \neq \emptyset$ holds for some $j \in [m]$ and $i \in [n]$, then $j \neq r(i)$.
\end{itemize}
\end{proposition}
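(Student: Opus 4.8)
The plan is to work entirely set-theoretically, exploiting the fact that the fibers $\{a_i\}_{i\in[n]}$ and $\{x_j\}_{j\in[m]}$ are each partitions of $S$, so that the intersections $a_i\cap x_j$ form a partition of $S$ into (possibly empty) cells. Every element $s\in S$ lies in exactly one cell $a_i\cap x_j$, and since each $v_j=\bigcup_{i\in I(j,r,a,x)}(a_i\cap x_j)$ is a union of cells, both $\bigcup_j v_j$ and its complement are unions of cells. The whole argument then reduces to deciding, for each cell, on which side it lies.

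First I would dispatch item 2, which is the easier half. Since the fibers $a_i$ are pairwise disjoint, intersecting $v_j=\bigcup_{i'\in I(j,r,a,x)}(a_{i'}\cap x_j)$ with $a_i$ annihilates every term with $i'\neq i$, giving $a_i\cap v_j=a_i\cap x_j$ when $i\in I(j,r,a,x)$ and $a_i\cap v_j=\emptyset$ otherwise. Hence $a_i\cap v_j\neq\emptyset$ forces $i\in I(j,r,a,x)$, and by the very definition of this set (Convention \ref{conv:chi-notation}) this requires $r(i)\neq j$, that is, $j\neq r(i)$.

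For item 1 I would decide, cell by cell, whether $a_i\cap x_j$ lies in $\bigcup_{j'}v_{j'}$. A nonempty cell $a_i\cap x_j$ meets $v_{j'}$ only for $j'=j$ (its points lie in the single fiber $x_j$), and it lies in $v_j$ exactly when $i\in I(j,r,a,x)$; for a nonempty cell the clause $a_i\cap x_j\notin\{\emptyset,a_i\}$ reduces to $a_i\not\subseteq x_j$, so the cell falls in the complement $S\setminus\bigcup_j v_j$ iff $r(i)=j$ or $a_i\subseteq x_j$. The hypothesis $(a_i\subseteq x_j)\Rightarrow(j=r(i))$ now collapses this disjunction to $r(i)=j$ alone, so the complement is exactly $\bigcup_{i}(a_i\cap x_{r(i)})$. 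The converse inclusion, that each cell $a_i\cap x_{r(i)}$ avoids every $v_{j'}$, needs no hypothesis, since membership in $v_{r(i)}$ would require $i\in I(r(i),r,a,x)$ and hence $r(i)\neq r(i)$. The main point requiring care, rather than a genuine obstacle, is the bookkeeping of the three clauses defining $I(j,r,a,x)$: for a nonempty cell the emptiness clause is automatic, while the clause $a_i\cap x_j\neq a_i$ is precisely $a_i\not\subseteq x_j$, and this is exactly the clause on which the hypothesis of item 1 acts.
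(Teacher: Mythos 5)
Your proposal is correct and is in substance the same argument as the paper's: both unpack the definition of $I(j,r,a,x)$, use the pairwise disjointness of the fibers $\{a_i\}$ and $\{x_j\}$ to see that each point (or cell) can only interact with one $v_j$, and invoke the hypothesis on $r$ at exactly the same spot, namely to collapse the case $a_i \subseteq x_{j}$ into the case $r(i)=j$. The only difference is organizational — you classify the cells $a_i \cap x_j$ by a direct biconditional, whereas the paper runs an element-wise double inclusion with two proofs by contradiction — and your observation that one inclusion needs no hypothesis matches the paper's step 1.1.
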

\begin{proof}
We show each item in the order given in the statement.

\fbox{1} We proceed by a double inclusion. Our first inclusion will use a proof by contradiction.

\fbox{1.1} Let $i_0$ be an element in $[n]$. For every element $t \in a_{i_0} \cap x_{r(i_0)}$, suppose that $t \in \bigcup_{j = 1}^m v_j$. This means that there exists $j_0 \in [m]$ such that: 
\[
t \in v_{j_0} = \Big(\bigcup_{i \in I(j_0,r,a,x)} a_i\Big) \cap x_{j_0}.
\]
As a result, we have $t \in x_{j_0}$ and there exists $i_1 \in I(j_0,r,a,x)$ such that $t \in a_{i_1}$. By assumption, we also have $t \in x_{r(i_0)}$ and $t \in a_{i_0}$. Since the collections $\{a_i\}_{i \in [n]}$ and $\{x_j\}_{j \in [m]}$ are pairwise disjoint, the identities $r(i_0) = j_0$ and $i_0 = i_1$ must hold. Since we have $i_1 \in I(j_0,r,a,x)$, we must also have $r(i_1) \neq j_0$. The equation $r(i_0) = j_0$ implies that the relation $r(i_1) = r(i_0)$, but since the equation $i_0 = i_1$ holds, this is impossible. As a result, the element $t$ cannot belong to $\bigcup_{j = 1}^m v_j$. This shows that the following inclusion holds:
\[
\bigcup_{i=1}^n a_i \cap x_{r(i)} \subseteq S \backslash \bigcup_{j =1}^m v_j
\]
\fbox{1.2} If $t \in S \backslash \bigcup_{j =1}^m v_j$, then for every $j \in [m]$, the relation $t \notin v_{j}$ holds. It then follows from equation (\ref{eq:embediing-IV}) that we have the following implication
\begin{equation}\label{eq:complement-expression:1-2}
\left\{
\begin{array}{lll}
t \in \bigcup_{i \in I(j,r,a,x)} a_i &\Rightarrow& t \notin x_{j}\\
t \in x_{j} &\Rightarrow& t \notin \bigcup_{i \in I(j,r,a,x)} a_i
\end{array}
\right.
\end{equation}
If we denote $j_0 = x(t)$ and $i_0 = a(t)$, the definition of fibers of $x$ and $a$ imply that we have the relations $t \in x_{j_0}$ and $t \in a_{i_0}$. It then follows from the bottom implication of (\ref{eq:complement-expression:1-2}) that $t \notin \bigcup_{i \in I(j_0,r,a,x)} a_i$. Since $t \in a_{i_0}$, the element $i_0$ cannot be in $I(j_0,r,a,x)$. By Convention \ref{conv:chi-notation}, this means that $r(i_0) = j_0$ or $a_{i_0} \cap x_{j_0} \in \{ \emptyset, a_{i_0}\}$. Since $t \in a_{i_0} \cap x_{j_0}$, we must have $r(i_0) = j_0$ or $a_{i_0} \cap x_{j_0} = a_{i_0}$. Since the latter equation is equivalent to the inclusion $a_{i_0} \subseteq x_{j_0}$, our assumption on the function $r:[n] \to [m]$ implies that the equation $r(i_0) = j_0$ is satisfied in any case. We conclude that the relation $t \in a_{i_0} \cap x_{j_0}$ can be rewritten as $t \in a_{i_0} \cap x_{r(i_0)}$. Hence, we have shown the other inclusion:
\[
S \backslash \bigcup_{j =1}^m v_j \subseteq \bigcup_{i=1}^n a_i \cap x_{r(i)}
\]
\fbox{2} Let $i_0 \in [n]$ and $j_0 \in [m]$ such that the set $a_{i_0} \cap v_{j_0}$ is not empty. Because the collection $\{a_i\}_{i \in [n]}$ is pairwise disjoint, equation (\ref{eq:embediing-IV}) implies that the element $i_0$ belongs to $I(j_0,r,a,x)$. By Convention \ref{conv:chi-notation}, this means that the equation $r(i_0) \neq j_0$ holds.
\end{proof}

We now prove our main result, which gives an algorithm to construct optimal solutions. Note that this algorithm is not completely deterministic as it requires to pick a reduction of the partition constructed in Convention \ref{conv:chi-notation}. Indeed, picking such a reduction may involve various choices of contractions for which we do not have criteria yet (see Definition \ref{def:reduction}). In section \ref{sec:functorial-properties-of-ANOVA}, we will see how one can make this process deterministic by picking contractions that optimize the statistical properties of the partitions involved in the reduction.

\begin{theorem}[Construction of minimal solutions]\label{theo:minimal-solutions}
Let $(S,<)$ be a finite strict linear order, let $(a,b)$ be an embedding problem in $\mathbf{UP}(S)$ such that $\mathsf{fib}(a) = \{a_i\}_{i \in [n]}$ and $\mathsf{fib}(b) = \{b_j\}_{j \in [m]}$ and let $r:[n] \to [m]$ be a function satisfying the logical implication for every $i \in [n]$ and $j\in [m]$.
\[
(a_i \subseteq b_{j}) \Rightarrow (j = r(i))
\]
If $u$ is a reduction of the partition $\chi(r,a,b)$ (see Definition \ref{def:reduction}), then the reduced element $u^*$ is an optimal solution in $\mathbb{V}(a,b)$. In addition, there is an arrow $\chi(r,a,b) \to u^*$ in $\mathbf{UP}(S)$ and $u^*$ is minimal in $\mathbb{V}(a,b)$ for the canonical pre-order structure on $\mathbf{UP}(S)$ (see Remark \ref{rem:pre-order-LP}).
\end{theorem}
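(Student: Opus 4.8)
The plan is to realize $\chi(r,a,b)$ as a solution of the embedding problem $(a,b)$ that satisfies every hypothesis of item 3 of Theorem \ref{theo:optimal-solutions-construction}, and then read off all three conclusions from the machinery already assembled. Concretely, I would apply Convention \ref{conv:chi-notation}, Proposition \ref{prop:embedding-decomposition-III} and Proposition \ref{prop:embedding-IV} with the second unlabeled partition taken to be $b$, so that the symbol $x$ in those statements is $b$ and $\mathsf{fib}(b) = \{b_j\}_{j \in [m]}$. The first step is to check that $\chi(r,a,b)$ lies in $\mathbb{V}(a,b)$: Proposition \ref{prop:embedding-decomposition-III} gives the identity $a \times \chi(r,a,b) = a \times b$, and composing the second projection $a \times b \to b$ (available by Proposition \ref{prop:existence:embedding-solutions}) with this identity yields an arrow $a \times \chi(r,a,b) \to b$, witnessing $\chi(r,a,b) \in \mathbb{V}(a,b)$.

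Next I would supply the required product decomposition and verify the three bullet conditions of item 3 of Theorem \ref{theo:optimal-solutions-construction}, taking the solution to be $x := \chi(r,a,b)$. Writing $v_j = \big(\bigcup_{i \in I(j,r,a,b)} a_i\big) \cap b_j$ as in (\ref{eq:embediing-IV}), Convention \ref{conv:chi-notation} shows directly that $\chi(r,a,b) = \prod_{j=1}^m \delta(v_j)$, so $\{v_j\}_{j \in [m]}$ is a product decomposition of $\chi(r,a,b)$ in the sense of Definition \ref{def:product-decompositions}. The inclusion $v_j \subseteq b_j$ is immediate from this formula. The complement condition $S \backslash \bigcup_{j} v_j \subseteq \bigcup_{i} a_i \cap b_{r(i)}$ is exactly item 1 of Proposition \ref{prop:embedding-IV}, whose hypothesis is the standing implication $(a_i \subseteq b_j) \Rightarrow (j = r(i))$ assumed in the theorem. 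Finally, the condition that $a_i \cap v_j \neq \emptyset$ implies $j \neq r(i)$ is precisely item 2 of Proposition \ref{prop:embedding-IV}. Thus all hypotheses of item 3 are met.

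With this in place, the three assertions of the theorem follow mechanically. Item 3 of Theorem \ref{theo:optimal-solutions-construction} yields that $u^*$ is an optimal solution in $\mathbb{V}(a,b)$; item 1 of the same theorem produces the arrow $\chi(r,a,b) \to u^*$ in $\mathbf{UP}(S)$; and minimality of $u^*$ in $\mathbb{V}(a,b)$ follows from Proposition \ref{prop:minimality} applied to the optimal solution $u^*$. I expect the only genuine care to lie in the bookkeeping: keeping the dual role of $b$ (as the second argument of $\chi$ and simultaneously as the target of the embedding problem) and the index set $[m]$ aligned, so that the two items of Proposition \ref{prop:embedding-IV} match the second and third bullets of item 3 verbatim. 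There is no analytic obstacle here; the substance was already carried by Proposition \ref{prop:embedding-decomposition-III}, Proposition \ref{prop:embedding-IV} and Theorem \ref{theo:optimal-solutions-construction}, and this final argument is purely their assembly.
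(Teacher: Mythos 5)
Your proposal is correct and follows essentially the same route as the paper's own proof: establishing $\chi(r,a,b) \in \mathbb{V}(a,b)$ via Proposition \ref{prop:embedding-decomposition-III} and Proposition \ref{prop:existence:embedding-solutions}, reading the product decomposition $\{v_j\}_{j\in[m]}$ off Convention \ref{conv:chi-notation}, verifying the three bullet conditions through Proposition \ref{prop:embedding-IV}, and then invoking items 1 and 3 of Theorem \ref{theo:optimal-solutions-construction} together with Proposition \ref{prop:minimality}. The only cosmetic difference is that you note the inclusion $v_j \subseteq b_j$ is immediate from the defining formula, which the paper folds into its appeal to Proposition \ref{prop:embedding-IV}; the substance is identical.
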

\begin{proof}
By Proposition \ref{prop:embedding-decomposition-III}, the equation $a \times \chi(r,a,b) = a \times b$ holds. Because, by Proposition \ref{prop:existence:embedding-solutions}, the partition $b$ is an element of $\mathbb{V}(a,b)$, we have an arrow $a \times \chi(r,a,b) = a \times b \to b$, which shows that $\chi(r,a,b)$ is in $\mathbb{V}(a,b)$. By Convention \ref{conv:chi-notation}, the partition $\chi(r,a,b)$ admits a product decomposition of the form $\{v_j\}_{j \in [m]}$ where the term $v_j$ satisfies the following equation for every $j \in [m]$:
\[
v_j = \Big(\bigcup_{i \in I(j,r,a,x)} a_i\Big) \cap b_j
\]
By assumption on the function $r$ and Proposition \ref{prop:embedding-IV}, the following properties hold:
\begin{itemize}
\item[$\triangleright$] the inclusion $v_{k} \subseteq b_{j}$ holds for every $j \in [m]$,
\item[$\triangleright$] the complement $S \backslash \bigcup_{j \in [m]} v_{j}$ is included in $\bigcup_{i \in [n]} a_i \cap b_{r(i)}$,
\item[$\triangleright$] if the intersection $a_{i} \cap v_{j}$ is not empty, then $j \neq r(i)$,
\end{itemize}
By item 3 of Proposition \ref{theo:optimal-solutions-construction}, this means that the reduced element $u^{*}$ is an optimal solution of $\mathbb{V}(a,b)$. Furthermore, item 1 of Proposition \ref{theo:optimal-solutions-construction} implies that there exists an arrow $\chi(r,a,b) \to u^*$ in $\mathbf{UP}(S)$. Since $u^*$ is an optimal solution in $\mathbb{V}(a,b)$, Proposition \ref{prop:minimality} implies that $u^*$ is minimal in $\mathbb{V}(a,b)$.
\end{proof}

\section{Functorial properties of ANOVA}\label{sec:functorial-properties-of-ANOVA}

Theorem \ref{theo:minimal-solutions} depends on two undetermined variables: a function $r$ and a reduction $u$. While the statement of this theorem tells us how to choose the function $r$, it does not give any information regarding the choices involved in the construction of the reduction $u$ (see Definition \ref{def:reduction}). In this section, we show how to construct a reduction that statistically optimizes the amount of information and noise detected by the minimal solution. We do so by studying the functorial properties of the  statistical method ANOVA and show how these properties can help us to statistically analyze the contraction arrows (Proposition \ref{prop:contraction:canonical-arrow}) associated with this reductions.

\subsection{ANOVA Functor}\label{ssec:ANOVA-functor}
In this section, we formalize the ANOVA formalism \cite{Oehlert} in terms of a functor going from a category of unlabeled partitions to a certain category $\mathbf{Data}$ (see Proposition \ref{prop:ANOVA-functor}). Intuitively, the objects of the category $\mathbf{Data}$ contain the minimal information required to reason in the ANOVA formalism.

From Definition \ref{def:fiber-matrix} to Proposition \ref{prop:functoriality-fiber-matrix}, our goal is to translate the formalism of partitions into linear algebra.

\begin{definition}[Fiber matrix]\label{def:fiber-matrix}
Let $g$ and $g'$ be two non-negative integers and let $f:[g] \to [g']$ be a function. We denote by $\mathsf{M}(f)$ the $g \times g'$-matrix whose coefficient $m_{i,j}(f)$ is defined as follows: 
\[
m_{i,j}(f) = 
\left\{
\begin{array}{ll}
1&\textrm{if }i = f(j)\\
0&\textrm{if }i \neq f(j)
\end{array}
\right.
\]
\end{definition}

\begin{definition}[Matrix category]
We will denote by $\mathbf{Mat}$ the category whose objects are non-negative integers and whose arrows $g \to g'$ are given by $g \times g'$-matrices of real numbers. The composition is given by the usual matrix multiplication.
\end{definition}

\begin{proposition}[Functoriality of fiber matrices]\label{prop:functoriality-fiber-matrix}
Let $S$ be a finite strict linear order. The mapping rule $f \mapsto \mathsf{M}(f)$ defined in Definition \ref{def:fiber-matrix} induces a functor $\mathbf{UP}(S) \to \mathbf{Mat}$ whose mapping rule on the objects sends a partition $p:S \to [n]$ to the integer $n$.
\end{proposition}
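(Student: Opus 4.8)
The plan is to verify the two functor axioms—preservation of identities and of composition—after first confirming that the proposed assignment is well-defined at the level of hom-sets. By Definition \ref{def:labeled-partitions}, a morphism $p_1 \to p_2$ in $\mathbf{UP}(S)$ between $p_1:S\to[n_1]$ and $p_2:S\to[n_2]$ is exactly a function $f:[n_1]\to[n_2]$ (subject to $f\circ p_1 = p_2$), and Definition \ref{def:fiber-matrix} then returns an $n_1\times n_2$ matrix $\mathsf{M}(f)$, which is precisely an arrow $n_1\to n_2$ in $\mathbf{Mat}$. Hence the morphism assignment $f\mapsto\mathsf{M}(f)$ is compatible with the stated object assignment $p_i\mapsto n_i$, and there is nothing further to check for well-definedness.

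For identities, the identity morphism of $p:S\to[n]$ in $\mathbf{UP}(S)$ is the identity function $\mathrm{id}:[n]\to[n]$, and by Definition \ref{def:fiber-matrix} the coefficient of $\mathsf{M}(\mathrm{id})$ in a given position is $1$ exactly when the two indices agree. Thus $\mathsf{M}(\mathrm{id})$ is the $n\times n$ identity matrix, which is the identity arrow on $n$ in $\mathbf{Mat}$, so identities are preserved.

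For composition I would argue by a direct computation of matrix entries. Recall again from Definition \ref{def:labeled-partitions} that composition of morphisms in $\mathbf{UP}(S)$ is composition of the underlying functions; so given $f:[g]\to[g']$ and $h:[g']\to[g'']$, the goal is to show that $\mathsf{M}(h\circ f)$ equals the composite of $\mathsf{M}(f)$ and $\mathsf{M}(h)$ in $\mathbf{Mat}$. Each coefficient of $\mathsf{M}(f)$ and of $\mathsf{M}(h)$ is an indicator value, nonzero exactly on the graph of the corresponding function. The relevant entry of the matrix product is therefore a sum over the intermediate index $[g']$ of a product of two such indicators; this product is nonzero only at the single intermediate index forced to equal both $f$ of the source index and the $h$-preimage of the target index, so the sum collapses to the indicator recording whether the target index equals $h(f(\cdot))$. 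That is precisely the coefficient of $\mathsf{M}(h\circ f)$, whence $\mathsf{M}(h\circ f)$ agrees with the composite $\mathsf{M}(h)\circ\mathsf{M}(f)$ computed in $\mathbf{Mat}$.

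The one genuine obstacle is bookkeeping of conventions. One must orient the row and column indices (against the graph of each function) and the order of the matrix product so that multiplying $\mathsf{M}(f)$, a $g\times g'$ arrow, with $\mathsf{M}(h)$, a $g'\times g''$ arrow, produces the $g\times g''$ matrix representing the arrow $g\to g''$, matching the composition rule declared for $\mathbf{Mat}$. Once the indices are aligned so that the intermediate summation index ranges over $[g']$, the indicator collapse above applies verbatim, both functor axioms are verified, and the proof is complete.
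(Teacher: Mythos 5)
Your proof is correct and follows essentially the same route as the paper's: both verify identity preservation by observing that $\mathsf{M}(\mathrm{id}_{[n]})$ is the $n\times n$ identity matrix, and both verify composition preservation by computing each entry of the matrix product as a sum of products of indicator coefficients that collapses to the indicator of the graph of the composite function. Your additional checks of hom-set well-definedness and of the row/column and multiplication-order conventions only make explicit what the paper leaves implicit.
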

\begin{proof}
For every pair of composable arrows $f:p_1 \to p_2$ and $g:p_2 \to p_3$ in $\mathbf{UP}(S)$, the multiplication $\mathsf{M}(g)\mathsf{M}(f)$ is defined by the following coefficients:
\[
\sum_{k} m_{i,k}(g)m_{k,j}(f) = 
\left\{
\begin{array}{ll}
1&\textrm{if }\exists k\,:\,i = g(k)\textrm{ and }k = f(j)\\
0&\textrm{otherwise }
\end{array}
\right.
=
\left\{
\begin{array}{ll}
1&\textrm{if }i = g \circ f(j)\\
0&\textrm{otherwise }
\end{array}
\right.
\]
The rightmost brackets shows that $\mathsf{M}(g)\mathsf{M}(f)$ is equal to $\mathsf{M}(g \circ f)$ (by Definition \ref{def:fiber-matrix}). Finally, for every non-negative integer $n$, the image $\mathsf{M}(\mathsf{id}_{[n]})$ of the identity $\mathsf{id}_{[n]}:[n] \to [n]$ is such that $m_{i,j}(f) = 1$ if $i =j$ and $m_{i,j}(f) = 0$ otherwise. In other words, the matrix $\mathsf{M}(\mathsf{id}_{[n]})$ is the identity $n\times n$-matrix.
\end{proof}

We recall the definition of the Hadamard product, which we will use in the definition of the category $\mathbf{Data}$ (see Definition \ref{def:data-category}).

\begin{convention}[Hadamard product]
Let $g$ be a non-negative integer and let $v$ and $c$ be two vectors in $\mathbb{R}^g$. We will denote by $v \odot c$ the Hadamard product of $v$ and $c$, namely the componentwise multiplication $(v_1c_1,v_2c_2,\dots,v_gc_g)$ of $c$ and $v$.
\end{convention}

We now introduce the category $\mathbf{Data}$. Each object of $\mathbf{Data}$ contains a vector $v$ representing a sequence of measurement values and a vector $c$ of non-negative integers representing the number of times a given measurement has been observed.

\begin{definition}[Data category]\label{def:data-category}
We denote by $\mathbf{Data}$ the category whose objects are pairs $(g,v,c)$ where $g$ is a non-negative integer, $v$ is a vector in $\mathbb{R}^g$ and $c$ is a vector in $\mathbb{N}^g_{*}$ and whose arrows $(g,v,c) \to (g',v',c')$ are surjections $f:[g] \to [g']$ satisfying the following equations:
\[
\mathsf{M}(f) c = c'
\quad\quad
\mathsf{M}(f)(v \odot c) = (v' \odot c')
\]
The composition operation of $\mathbf{Data}$ is induced by the composition of surjections.
\end{definition}

Using the interpretation mentioned above Definition \ref{def:data-category}, we can associate every object of $\mathbf{Data}$ with a total number of observations and an average measurement for these observations.

\begin{definition}[Cardinal and mean]\label{def:cardinal-mean}
For every object $(g,v,c)$ in $\mathbf{Data}$, we define the \emph{cardinal $N(g,v,c)$} and the \emph{mean $\mu(g,c,v)$} of the object $(g,c,v)$ by the following quantities:
\[
N(g,v,c) = \sum_{k=1}^{g} c_k\quad\quad\quad
\mu(g,v,c) = \frac{1}{N(g,v,c)}\sum_{i=1}^g c_i v_i 
\]
\end{definition}

An important property of $\mathbf{Data}$ is that its cardinal and mean are invariant under the arrows. 

\begin{proposition}[Invariance]\label{prop:invariance:1}
For every arrow $f:(g_1,v_1,c_1) \to (g_2,v_2,c_2)$ in $\mathbf{Data}$, the identities
$N(g_1,v_1,c_1) = N(g_2,v_2,c_2)$ and $\mu(g_1,v_1,c_1) = \mu(g_2,v_2,c_2)$ hold.
\end{proposition}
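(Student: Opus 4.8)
The plan is to unwind the two defining equations of an arrow in $\mathbf{Data}$ into their coefficient form and then sum these coefficients over all fibers of the underlying surjection $f:[g_1]\to[g_2]$. First I would read off what the equation $\mathsf{M}(f)c_1 = c_2$ says coordinatewise. By Definition \ref{def:fiber-matrix}, the coefficient $m_{i,j}(f)$ equals $1$ precisely when $i = f(j)$, that is, when $j$ lies in the fiber $f^{-1}(i)$, and it equals $0$ otherwise; hence the $i$-th coordinate of $\mathsf{M}(f)c_1$ is $\sum_{j \in f^{-1}(i)}(c_1)_j$. Consequently the matrix equation $\mathsf{M}(f)c_1 = c_2$ is equivalent to the family of identities $(c_2)_i = \sum_{j \in f^{-1}(i)}(c_1)_j$ for every $i \in [g_2]$. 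In exactly the same way, the second defining equation $\mathsf{M}(f)(v_1 \odot c_1) = v_2 \odot c_2$ unwinds to $(v_2)_i (c_2)_i = \sum_{j \in f^{-1}(i)}(v_1)_j(c_1)_j$ for every $i \in [g_2]$.

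The cardinal statement then follows by summing the first family of identities over $i$. Since $f$ is a function, its fibers $\{f^{-1}(i)\}_{i \in [g_2]}$ partition the index set $[g_1]$, so that
\[
N(g_2,v_2,c_2) = \sum_{i=1}^{g_2}(c_2)_i = \sum_{i=1}^{g_2}\sum_{j \in f^{-1}(i)}(c_1)_j = \sum_{j=1}^{g_1}(c_1)_j = N(g_1,v_1,c_1),
\]
where the definition of the cardinal (Definition \ref{def:cardinal-mean}) is used at both ends. This yields the first identity.

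For the mean, I would apply the same summation to the second family of identities. The identical partition argument gives $\sum_{i=1}^{g_2}(v_2)_i(c_2)_i = \sum_{j=1}^{g_1}(v_1)_j(c_1)_j$, so the two numerators appearing in the formula for $\mu$ (Definition \ref{def:cardinal-mean}) coincide. Dividing this common value by the common cardinal $N(g_1,v_1,c_1) = N(g_2,v_2,c_2)$ established in the previous paragraph then gives $\mu(g_1,v_1,c_1) = \mu(g_2,v_2,c_2)$, which completes the argument.

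The whole argument is essentially bookkeeping, so there is no serious obstacle. The only point that requires care is the translation between the matrix equations and the fiberwise sums: one must correctly extract from Definition \ref{def:fiber-matrix} that the support of row $i$ of $\mathsf{M}(f)$ is exactly the fiber $f^{-1}(i)$. I note in passing that surjectivity of $f$ guarantees each fiber is non-empty, so the transported vectors keep their entries positive as required by the objects of $\mathbf{Data}$; but it is only the \emph{functionality} of $f$ — the fact that the fibers partition the domain — that actually drives both invariances.
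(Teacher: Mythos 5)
Your proof is correct and follows essentially the same route as the paper's: both arguments unwind the defining equations $\mathsf{M}(f)c_1 = c_2$ and $\mathsf{M}(f)(v_1 \odot c_1) = v_2 \odot c_2$ coefficientwise and then sum over all indices, your regrouping by fibers $\sum_{i}\sum_{j \in f^{-1}(i)} = \sum_{j}$ being exactly the paper's interchange of summation combined with the observation that each column of $\mathsf{M}(f)$ sums to $1$. The only cosmetic difference is that the paper works with the matrix coefficients $m_{k,i}(f)$ directly while you phrase the same computation in terms of fibers partitioning $[g_1]$.
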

\begin{proof}
Let us show the fist equation. It follows from Definition \ref{def:cardinal-mean} and Definition \ref{def:data-category} that the following identities hold:
\[
N(g_2,v_2,c_2) = \sum_{k=1}^{g_2} c_{2,k} \quad\quad\quad c_{2,k} = \sum_{i = 1}^{g_1}m_{k,i}(f)c_{1,i}.
\]
By Definition \ref{def:fiber-matrix}, we have the following relations:
\[
\sum_{k=1}^{g_2} m_{k,i}(f) = m_{f(i),i}(f) + \sum_{k=1,k \neq f(i)}^{g_2} m_{k,i}(f) = 1 + 0 = 1
\]
Finally, we deduce from the previous equations that the following series of equations hold:
\[
N(g_2,v_2,c_2) = \sum_{k=1}^{g_2} \sum_{i = 1}^{g_1}m_{k,i}(f)c_{1,i} = \sum_{i = 1}^{g_1} \Big(\sum_{k=1}^{g_2} m_{k,i}(f)\Big)c_{1,i} = \sum_{i = 1}^{g_1} 1 \times c_{1,i} = N(g_1,v_1,c_1)
\]
Let us now show the second equation. It follows from Definition \ref{def:cardinal-mean} and Definition \ref{def:data-category} that the following identities hold:
\[
\mu(g_2,v_2,c_2) = \sum_{k=1}^{g_2} c_{2,k}v_{2,k} \quad\quad\quad c_{2,k}v_{2,k} = \sum_{i = 1}^{g_1}m_{k,i}(f)c_{1,i}v_{1,i}
\]
Using the previous equations, we deduce the following series of identities:
\[
\mu(g_2,v_2,c_2) = \sum_{k=1}^{g_2} \sum_{i = 1}^{g_1}m_{k,i}(f)c_{1,i}v_{1,i} = \sum_{i = 1}^{g_1} \Big(\sum_{k=1}^{g_2} m_{k,i}(f)\Big)c_{1,i}v_{1,i} = \sum_{i = 1}^{g_1} c_{1,i}v_{1,i} = \mu(g_1,v_1,c_1)
\]
This finishes the proof.
\end{proof}

In the rest of the section, we relate categories of partitions to the category $\mathbf{Data}$. This will allow us to relate the abstract concepts of Definition \ref{def:cardinal-mean} to actual cardinals and means. In the following definition, we recall the concepts of mean and variance for a given list $y$ of measurements .

\begin{definition}[Cardinal, mean and variance]\label{def:cardinal-mean-variance}
Let $S$ be a finte set and $y:S \to \mathbb{R}$ be a function. For every subset $T \subseteq S$, we denote by $|T|$ the cardinal of $T$ and we define the \emph{mean $\mu_y(T)$} and the \emph{variance $\sigma^2_y(T)$} of the object $T$ by the following quantities:
\[
\mu_y(T) = \frac{1}{|T|}\sum_{s \in T} y(s) \quad\quad\quad
\sigma^2_y(T) = \frac{1}{|T|}\sum_{s \in T} (y(s) - \mu_y(T))^2
\]
\end{definition}

The previous definition allows us to recover the usual setting needed to perform ANOVA. 

\begin{definition}[ANOVA setting]\label{def:Anova-setting}
Let $(S,<)$ be a finite strict linear order and $y:S \to \mathbb{R}$ be a function. For every partition $p:S \to [g]$ in $\mathbf{UP}(S)$, we will denote by $\mathsf{Anova}_y(p)$ the object $(g,v,c)$ of $\mathbf{Data}$ defined by the following equations.
\[
v = \Big(\mu_y(p^{-1}(1))),\mu_y(p^{-1}(2)), \dots, \mu_y(p^{-1}(g))\Big)
\quad\quad
c = \Big(|p^{-1}(1))|,|p^{-1}(2))|,\dots,|p^{-1}(g))|\Big)
\]
\end{definition}

Below, Proposition \ref{prop:anova-mapping} and Proposition \ref{prop:ANOVA-functor} show that the mapping $\mathsf{Anova}$ is functorial over categories of unlabeled partitions. Note that, while categories of unlabeled partitions are pre-order categories (see Remark \ref{rem:pre-order-LP}), the category $\mathbf{Data}$ is not. Showing that $\mathsf{Anova}$ is a functor will permit us to focus our reasonings in the category $\mathbf{Data}$, which is more adequate to performing ANOVA.

\begin{proposition}[ANOVA mapping]\label{prop:anova-mapping}
Let $(S,<)$ be a finite strict linear order, let $y:S \to \mathbb{R}$ be a function and let $p_1:S \to [g_1]$ and $p_2:S \to [g_2]$ be two unlabeled partitions in $\mathbf{UP}(S)$. For every morphism $f:p_1 \to p_2$ in $\mathbf{UP}(S)$, the underlying function $f:[g_1] \to [g_2]$ defines a morphism $\mathsf{Anova}_y(p_1) \to \mathsf{Anova}_y(p_2)$ in $\mathbf{Data}$
\end{proposition}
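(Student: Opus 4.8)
The plan is to check directly that the surjection $f:[g_1]\to[g_2]$ underlying the morphism $f:p_1\to p_2$ satisfies the two defining equations of an arrow of $\mathbf{Data}$ (Definition \ref{def:data-category}), with source $(g_1,v_1,c_1)=\mathsf{Anova}_y(p_1)$ and target $(g_2,v_2,c_2)=\mathsf{Anova}_y(p_2)$. The whole argument rests on one combinatorial identity, which I would record first: since $f$ is an arrow of $\mathbf{UP}(S)$, we have $f\circ p_1=p_2$ (Definition \ref{def:labeled-partitions}), so each fiber of $p_2$ refines into fibers of $p_1$,
\[
p_2^{-1}(i)=\bigsqcup_{j\in f^{-1}(i)}p_1^{-1}(j),
\]
a disjoint union because the fibers of $p_1$ are pairwise disjoint. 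I would also note at the outset that $f$ is a surjection, since $p_2=f\circ p_1$ is.

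Next I would unwind the action of $\mathsf{M}(f)$ from Definition \ref{def:fiber-matrix}: on a vector $w\in\mathbb{R}^{g_1}$ it is given componentwise by $(\mathsf{M}(f)w)_i=\sum_{j\in f^{-1}(i)}w_j$. The first equation $\mathsf{M}(f)c_1=c_2$ is then immediate from the displayed decomposition, since summing the cardinalities $|p_1^{-1}(j)|$ over $j\in f^{-1}(i)$ yields $|p_2^{-1}(i)|$ by disjointness.

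For the second equation the crucial observation --- and the only step requiring a moment's thought --- is that passing to the Hadamard product $v\odot c$ turns the mean--cardinal data into a plain sum of measurements. Indeed, by Definition \ref{def:cardinal-mean-variance} and Definition \ref{def:Anova-setting} one has $(v_1\odot c_1)_j=\mu_y(p_1^{-1}(j))\,|p_1^{-1}(j)|=\sum_{s\in p_1^{-1}(j)}y(s)$, and likewise $(v_2\odot c_2)_i=\sum_{s\in p_2^{-1}(i)}y(s)$. Applying $\mathsf{M}(f)$ and using the fiber decomposition once more gives $(\mathsf{M}(f)(v_1\odot c_1))_i=\sum_{j\in f^{-1}(i)}\sum_{s\in p_1^{-1}(j)}y(s)=\sum_{s\in p_2^{-1}(i)}y(s)=(v_2\odot c_2)_i$. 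This verifies both conditions of Definition \ref{def:data-category} and hence that $f$ is an arrow $\mathsf{Anova}_y(p_1)\to\mathsf{Anova}_y(p_2)$.

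The proof involves no genuine difficulty; the one point worth flagging is that it is the \emph{sum} of $y$-values over a fiber, not the mean, that behaves additively under the disjoint refinement of fibers, which is exactly why the morphism equations of $\mathbf{Data}$ are phrased through $v\odot c$ rather than through $v$ directly. Once this is noticed, both equations collapse to the same elementary partition of a sum over $p_2^{-1}(i)$.
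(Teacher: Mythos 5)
Your proof is correct and follows essentially the same route as the paper's: verify surjectivity of $f$ from $f\circ p_1=p_2$, then check the two equations of Definition \ref{def:data-category} componentwise using the fiber decomposition $p_2^{-1}(i)=\bigcup_{j\in f^{-1}(i)}p_1^{-1}(j)$, with the key observation that $(v\odot c)_j$ is the sum $\sum_{s\in p^{-1}(j)}y(s)$, which is additive over disjoint fibers. Your write-up is in fact slightly cleaner than the paper's (which contains a couple of typos, writing $\mu_y(p_2^{-1}(i))$ and $c_{2,i}$ where $(v_2\odot c_2)_i$ is meant), but there is no substantive difference in the argument.
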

\begin{proof}
Let the triples $(g_1,v_1,c_1)$ and $(g_2,v_2,c_2)$ denote the objects $\mathsf{Anova}_y(p_1)$ and $\mathsf{Anova}_y(p_2)$, respectively. By Definition \ref{def:data-category}, we need to show that $f:[g_1] \to [g_2]$ is a surjection and that the following two equations hold:
\begin{equation}\label{eq:anova-mapping}
\mathsf{M}(f) c_1 = c_2
\quad\quad
\mathsf{M}(f)(v_1 \odot c_1) = (v_2 \odot c_2)
\end{equation}
Since the identity $f \circ p_1 = p_2$ holds and $p_2$ is a surjection (by Definition \ref{def:labeled-partitions}), the function $f$ must be surjective. The rest of the section focuses on proving the two identities of (\ref{eq:anova-mapping}).

\fbox{1} Let us show that the equation $\mathsf{M}(f) c_1 = c_2$ holds. By Definition \ref{def:Anova-setting}, the $i$-th coefficient $c_{2,i}$ of $c_2$ is the cardinal $|p_2^{-1}(i)|$ for every $i \in [g_2]$. Since the equation $f \circ p_1 = p_2$ holds (by Definition \ref{def:labeled-partitions}), we have $c_{2,i} = |p_1^{-1}(f^{-1}(i))|$. Now, because the fiber $p_1^{-1}(f^{-1}(i))$ is the union of the fibers $p_1^{-1}(j)$ for every $j \in f^{-1}(i)$, the following identities hold for every $i \in [g_2]$:
\begin{align*}
c_{2,i} & = \left|\bigcup_{j \in f^{-1}(i)} p_1^{-1}(j)\right|&\\
& = \sum_{j \in f^{-1}(i)} |p_1^{-1}(j)|&(\textrm{$p_1$ is a function})\\
& = \sum_{j = 1}^{g_1} m_{i,j}(f)\cdot |p_1^{-1}(j)|&(\textrm{Definition \ref{def:fiber-matrix} and $i = f(j)$})
\end{align*}
The previous equation shows that the identity $c_{2,i} = \sum_{j = 1}^{g_1} m_{i,j}(f)\cdot c_{1,j}$ holds for every $i \in [g_2]$. In particular, this proves the identity $\mathsf{M}(f) c_1 = c_2$.

\fbox{2} Let us show that the equation $\mathsf{M}(f)(v_1 \odot c_1) = (v_2 \odot c_2)$ holds. We will need to use the definition of the operator $\mu_y$ given in Definition \ref{def:cardinal-mean-variance}.
\begin{align*}
(\mathsf{M}(f)(v_1 \odot c_1))_i & = \sum_{j = 1}^{g_1} m_{i,j}(f)\cdot v_{1,j} \cdot c_{1,j} &\\
& = \sum_{j = 1}^{g_1} m_{i,j}(f)\cdot \mu_y(p^{-1}(j)) \cdot |p_1^{-1}(j)| &(\textrm{Definition \ref{def:Anova-setting}})\\
& = \sum_{j = 1}^{g_1} m_{i,j}(f)\sum_{s \in p_1^{-1}(j)} y(s) &(\textrm{Definition \ref{def:cardinal-mean-variance}})\\
& = \sum_{j \in f^{-1}(i)}^{g_1}\sum_{s \in p_1^{-1}(j)} y(s) &(\textrm{Definition \ref{def:fiber-matrix} and $i = f(j)$})
\end{align*}
Since the union of the fiber $p_1^{-1}(j)$ for every $j \in f^{-1}(i)$ is equal to the fiber $p_1^{-1}(f^{-1}(i))$, which is in turn equal to th fiber $p_2^{-1}(i)$, the previous series of identities gives us the following equations:
\[
(\mathsf{M}(f)(v_1 \odot c_1))_i =  \sum_{s \in p_1^{-1}(f^{-1}(i))} y(s) =  \sum_{s \in p_2^{-1}(i)} y(s) = \mu_{y}(p_2^{-1}(i))
\]
By Definition \ref{def:Anova-setting}, this shows that the equation $(\mathsf{M}(f)(v_1 \odot c_1))_i = c_{2,i}$ holds. In other words, we have the identity $\mathsf{M}(f)(v_1 \odot c_1) = (v_2 \odot c_2)$.
\end{proof}

\begin{proposition}[ANOVA functor]\label{prop:ANOVA-functor}
Let $(S,<)$ be a finite strict linear order and $y:S \to \mathbb{R}$ be a function. The mapping $\mathsf{Anova}_{y}$ defined in Proposition \ref{prop:anova-mapping} defines a functor $\mathbf{UP}(S) \to \mathbf{Data}$ that maps any partition $p$ in $\mathbf{UP}(S)$ to the object $\mathsf{Anova}_y(p)$ and any arrow $f:p_1 \to p_2$ in $\mathbf{UP}(S)$ to the matrix $\mathsf{M}(f)$ defined on the underlying function of $f$.
\end{proposition}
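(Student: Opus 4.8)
The plan is to verify the three defining conditions of a functor: that the object assignment lands in $\mathbf{Data}$, that the arrow assignment lands in the correct hom-sets of $\mathbf{Data}$, and that identities and composition are preserved. The first two are essentially already in hand. For objects, Definition \ref{def:Anova-setting} produces, from a partition $p:S \to [g]$, a triple $(g,v,c)$ in which $v \in \mathbb{R}^g$ is the vector of fiber means and $c \in \mathbb{N}^g_{*}$ is the vector of fiber cardinalities; the entries of $c$ are strictly positive because each fiber of a surjection is non-empty, so $(g,v,c)$ is a genuine object of $\mathbf{Data}$. For arrows, Proposition \ref{prop:anova-mapping} shows that every morphism $f:p_1 \to p_2$ of $\mathbf{UP}(S)$ has an underlying surjection $f:[g_1] \to [g_2]$ satisfying the two equations $\mathsf{M}(f)c_1 = c_2$ and $\mathsf{M}(f)(v_1 \odot c_1) = v_2 \odot c_2$ demanded of a morphism of $\mathbf{Data}$. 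Hence the assignment is well defined on objects and on arrows, and only functoriality remains.

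To establish functoriality I would reduce it to Proposition \ref{prop:functoriality-fiber-matrix}, which already proves that $f \mapsto \mathsf{M}(f)$ is a functor $\mathbf{UP}(S) \to \mathbf{Mat}$. That proposition supplies the two facts I need: $\mathsf{M}(\mathsf{id}_{[n]})$ is the identity matrix and $\mathsf{M}(g \circ f) = \mathsf{M}(g)\,\mathsf{M}(f)$. The identity morphism of a partition $p:S \to [n]$ in $\mathbf{UP}(S)$ has underlying function $\mathsf{id}_{[n]}$ by the skeletal property (Proposition \ref{prop:skeletal}), so it is sent to the identity arrow of $\mathsf{Anova}_y(p)$. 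For composition, Definition \ref{def:data-category} stipulates that composition in $\mathbf{Data}$ is induced by composition of the underlying surjections; since the image of a morphism of $\mathbf{UP}(S)$ is exactly its underlying surjection, the image of $g \circ f$ is the arrow underlain by $g \circ f$, which is precisely the $\mathbf{Data}$-composite of the images of $f$ and $g$.

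The one point deserving explicit attention --- and the only possible source of confusion rather than of genuine difficulty --- is the apparent tension between the statement, which records the image of $f$ as the matrix $\mathsf{M}(f)$, and Definition \ref{def:data-category}, in which arrows of $\mathbf{Data}$ are surjections. The two descriptions coincide because a surjection $f:[g] \to [g']$ and its fiber matrix $\mathsf{M}(f)$ determine one another, and because composition of surjections corresponds to multiplication of fiber matrices via the identity $\mathsf{M}(g \circ f) = \mathsf{M}(g)\,\mathsf{M}(f)$. Once this identification is acknowledged, no obstacle remains: the functoriality of $\mathsf{Anova}_y$ follows formally from the well-definedness established in Proposition \ref{prop:anova-mapping} together with the already-proven functoriality of $\mathsf{M}$.
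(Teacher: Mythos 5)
Your proof is correct and follows essentially the same route as the paper, which likewise reduces everything to the well-definedness established in Proposition \ref{prop:anova-mapping} together with the functoriality of $f \mapsto \mathsf{M}(f)$ from Proposition \ref{prop:functoriality-fiber-matrix}; your write-up simply makes explicit (correctly) the identification of surjections with their fiber matrices that the paper leaves implicit. One tiny quibble: the fact that the identity arrow of $p:S \to [n]$ has underlying function $\mathsf{id}_{[n]}$ follows directly from the definition of $\mathbf{LP}(S)$ (or from the pre-order property, Remark \ref{rem:pre-order-LP}), not from the skeletal property, which concerns isomorphisms.
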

\begin{proof}
Directly follows from the functorial properties shown in Proposition \ref{prop:functoriality-fiber-matrix}.
\end{proof}

The following proposition confirms our intuition regarding the concepts of Definition \ref{def:cardinal-mean}.

\begin{proposition}[Invariance properties]\label{prop:invariance:2}
Let $(S,<)$ be a finite strict linear order and $y:S \to \mathbb{R}$ be a function. The following identities hold:
\[
N(\mathsf{Anova}_y(p)) = |S|
\quad\quad
\mu(\mathsf{Anova}_y(p)) = \mu_y(S)
\]
\end{proposition}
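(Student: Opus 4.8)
The plan is to unwind the definitions directly, reducing the abstract quantities $N(\mathsf{Anova}_y(p))$ and $\mu(\mathsf{Anova}_y(p))$ from Definition \ref{def:cardinal-mean} to the concrete quantities $|S|$ and $\mu_y(S)$ from Definition \ref{def:cardinal-mean-variance}. Suppose $p:S \to [g]$, so that $\mathsf{Anova}_y(p)$ is the object $(g,v,c)$ where $c_k = |p^{-1}(k)|$ and $v_k = \mu_y(p^{-1}(k))$ for each $k \in [g]$ (Definition \ref{def:Anova-setting}).

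For the cardinal, Definition \ref{def:cardinal-mean} gives $N(\mathsf{Anova}_y(p)) = \sum_{k=1}^g c_k = \sum_{k=1}^g |p^{-1}(k)|$. Since $p$ is a surjection, its fibers $p^{-1}(1),\dots,p^{-1}(g)$ form a partition of $S$, so this sum equals $|S|$, which is the first identity. For the mean, I would substitute the defining expression $c_k v_k = |p^{-1}(k)| \cdot \mu_y(p^{-1}(k))$, and then invoke the formula for $\mu_y$ in Definition \ref{def:cardinal-mean-variance} to see that
\[
c_k v_k = |p^{-1}(k)| \cdot \frac{1}{|p^{-1}(k)|}\sum_{s \in p^{-1}(k)} y(s) = \sum_{s \in p^{-1}(k)} y(s).
\]
Summing over $k$ and using again that the fibers partition $S$ yields $\sum_{k=1}^g c_k v_k = \sum_{s \in S} y(s)$. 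Dividing by $N(\mathsf{Anova}_y(p)) = |S|$ (the cardinal identity just established) gives $\mu(\mathsf{Anova}_y(p)) = \frac{1}{|S|}\sum_{s \in S} y(s) = \mu_y(S)$, which is the second identity.

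There is no genuine obstacle here: the result is a direct computation that simply identifies the formal cardinal and mean of $\mathbf{Data}$ with the set-theoretic cardinal and the empirical mean of $y$ over $S$. The only point requiring a moment of care is the cancellation of $|p^{-1}(k)|$ in the mean computation, which is valid because every fiber of a surjection is non-empty (consistent with $c \in \mathbb{N}^g_*$ in Definition \ref{def:data-category}). One could alternatively note that the identities follow formally from Proposition \ref{prop:invariance:1} together with the observation that the discrete partition (or any partition) maps to the terminal-type data object carrying $|S|$ and $\mu_y(S)$, but the direct verification above is shorter and self-contained.
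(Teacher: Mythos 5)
Your proposal is correct and follows essentially the same route as the paper's own proof: both arguments unwind Definition \ref{def:Anova-setting}, use the fact that the fibers of the surjection $p:S \to [g]$ partition $S$ to get $\sum_{k=1}^g |p^{-1}(k)| = |S|$, and then compute $\sum_{k=1}^g c_k v_k = \sum_{k=1}^g \sum_{s \in p^{-1}(k)} y(s) = \sum_{s \in S} y(s)$ to conclude $\mu(\mathsf{Anova}_y(p)) = \mu_y(S)$. No gaps; your remark about the non-emptiness of the fibers justifying the cancellation of $|p^{-1}(k)|$ is a small point of care the paper leaves implicit.
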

\begin{proof}
Let $p:S \to [g]$ be an object of $\mathbf{UP}(S)$. Since $p:S \to [g]$ describes a surjection, its collection of fibers $p^{-1}(i)$ form a partition of the set $S$, meaning that $S$ is the union of the sets $p^{-1}(i)$ and each intersection $p^{-1}(i) \cap p^{-1}(i)$ is empty. As a result, we deduce that the following equation holds:
\[
N(\mathsf{M}(p)) = \sum_{i=1}^p|p^{-1}(i)| = \left|\bigcup p^{-1}(i)\right| = |S|
\]
Similarly, we can show that the double sum $\sum_{i=1}^g\sum_{s \in p^{-1}(i)}$ is equal to the sum $\sum_{s \in S}$, which allows us to deduce the following equations:
\[
\mu(\mathsf{M}(p)) = \frac{1}{|S|}\sum_{i=1}^g|p^{-1}(i)|\times \mu_y(p^{-1}(i)) = \frac{1}{|S|}\sum_{i=1}^g\sum_{s \in p^{-1}(i)} y(s) = \frac{1}{|S|}\sum_{s \in S} y(s) = \mu_y(S)
\]
This finishes the proof.
\end{proof}

\subsection{Between-group mean square sum and codegeneracy morphisms}\label{ssec:between-group-mss-and-codegenerary-morphims}
In this section, we re-formalize the well-known concept of between-group mean square sums \cite[page 47]{Oehlert} (see Definition \ref{def:between-group-variance}) and show that these satisfy a certain functorial property on the category $\mathbf{Data}$ (see Proposition \ref{prop:between-group-variance:functoriality}). We take advantage of this functoriality to characterize their numerical variations relative to morphisms in $\mathbf{Data}$ by only looking at a specific type of morphisms, called \emph{degeneracy morphisms} (see Definition \ref{def:codegeneracy-arrows}). Importantly, we relate codegeneracy morphisms to the construction of optimal solutions (Proposition \ref{prop:contractions-codegeneracy-morphisms}). This gives us, in section \ref{sec:association-studies}, a way to control the construction of optimal and minimal solutions statistically.

\begin{definition}[Between-group mean square sums]\label{def:between-group-variance}
For every object $(g,v,c)$ of $\mathbf{Data}$, we define the \emph{between-group mean square sum of $(g,v,c)$} as the following quantity:
\[
\eta(g,v,c) = 
\left\{
\begin{array}{ll}
\displaystyle0&\textrm{if }g=1\\
\displaystyle\frac{1}{g-1}\sum_{i=1}^{g}c_{i}(v_{i}-\mu(g,c,v))^2&\textrm{if }g > 1\\
\end{array}
\right.
\]
\end{definition}

\begin{remark}[A unique formula]
It follows from Definition \ref{def:cardinal-mean} that if an object $(g,v,c)$ of $\mathbf{Data}$ is such that $g=1$, then the identity $\mu(g,c,v) = v_1$ holds. It then follows from Definition \ref{def:between-group-variance} that the following equation holds for every object $(g,v,c)$ of $\mathbf{Data}$.
\[
(g-1)\eta(g,v,c) = \sum_{i=1}^{g}c_{i}(v_{i}-\mu(g,c,v))^2
\]
\end{remark}

While the quantity introduce in Definition \ref{def:between-group-change} may seem formal, it carries interesting properties (Proposition \ref{prop:between-group-change:codegeneracy}) that will allow us to characterize changes in the significance of the outputs of ANOVA.

\begin{definition}[Between-group change]\label{def:between-group-change}
For every arrow $f:(g_1,v_1,c_1) \to (g_2,v_2,c_2)$ of $\mathbf{Data}$, we define the \emph{between-group change of $f$} as the following quantity:
\[
\varepsilon(f) = (g_1-1)\eta(g_1,v_1,c_1) - (g_2-1)\eta(g_2,v_2,c_2)
\]
\end{definition}

A direct consequence of the formula given in Definition \ref{def:between-group-change} is the functoriality of the between-group change. We will later use this functoriality to show that the between-group change always takes non-negative values.

\begin{proposition}[Functoriality]\label{prop:between-group-variance:functoriality}
The mapping $\varepsilon:f \mapsto \varepsilon(f)$ induces a functor from $\mathbf{Data}$ to the monoid $\mathbb{R}$ of real numbers. In others words, for every pair $f:(g_1,v_1,c_1) \to (g_2,v_2,c_2)$ and $g:(g_2,v_2,c_2) \to (g_3,v_3,c_3)$ of arrows in $\mathbf{Data}$, the following identity holds:
\[
\varepsilon(g \circ f) = \varepsilon(g) + \varepsilon(f).
\]
\end{proposition}
\begin{proof}
The equation of the statement directly follows from the formula of Definition \ref{def:between-group-change}.
\end{proof}

\begin{remark}[Computing a between-group change]\label{rem:computing-between-group-change-panda}
It follows from Proposition \ref{prop:between-group-variance:functoriality} that we can decompose the between-group change of an arrow $f$ in $\mathbf{Data}$ as a sum 
\[
\varepsilon(f_1) + \varepsilon(f_2) + \dots + \varepsilon(f_n)
\]
if the equation $f = f_n \circ \dots \circ f_2 \circ f_1$ holds in $\mathbf{Data}$. For instance, the arrows $f_1,f_2,\dots,f_n$ could all be such that only one of their fibers contains 2 elements, and the others only contain one element. This kind of decomposition is particularly relevant when sending sequences of contraction arrows (Proposition \ref{prop:contraction:canonical-arrow}) through a composition of functors as shown below:
\[
\xymatrix@C+10pt{
\mathbf{UP}(S) \ar@{-->}[rd]\ar[r]^{\mathsf{Anova}_y}& \mathbf{Data}\ar[d]^{\varepsilon}\\
&\mathbb{R}
}
\]
Proposition \ref{prop:between-group-change:codegeneracy} (given below) takes care of computing the between-group changes for such arrows, which we formally define in Definition \ref{def:codegeneracy-arrows}.
\end{remark}

In section \ref{ssec:ANOVA-functor}, we mentioned that the category $\mathbf{Data}$ can be seen as a more flexible setting than categories of unlabeled partitions. In particular, concepts holding in categories of partitions should be, to some extent, extendable to the category $\mathbf{Data}$. Below, from Definition \ref{def:codegeneracy-arrows} to Proposition \ref{prop:contractions-codegeneracy-morphisms}, our goal is to find a notion of arrow in $\mathbf{Data}$ that extends the concept of contractions (see Definition \ref{def:contraction}) in categories of unlabeled partitions.

\begin{definition}[Codegeneracy morphisms]\label{def:codegeneracy-arrows}
A morphism $f:(g_1,v_1,c_1) \to (g_2,v_2,c_2)$ in $\mathbf{Data}$ is called a \emph{codegeneracy morphism} if the identity $g_1 = g_2+1$ holds (also, see Proposition \ref{prop:codegeneracy-arrows:equivalent}).
\end{definition}

\begin{proposition}[Equivalent reformulation]\label{prop:codegeneracy-arrows:equivalent}
A morphism $f:(g_1,v_1,c_1) \to (g_2,v_2,c_2)$ in $\mathbf{Data}$ is a codegeneracy morphism if, and only if, there exists $i_0 \in [g_2]$ such that the underlying function $f:[g_1] \to [g_2]$ satisfies the following conditions:
\[
|f^{-1}(i_0)| = 2,\quad\quad\quad
\forall\,i \in [g_2] \backslash \{i_0\}: |f^{-1}(i)| = 1.
\]
If $f^{-1}(i_0)$ is of the form $\{j_1,j_2\}$, then we say that $f$ \emph{contracts the pair $(j_1,j_2)$}.
\end{proposition}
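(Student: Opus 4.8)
The plan is to prove both implications by a direct counting argument on the fibers of the underlying function $f:[g_1] \to [g_2]$. The key observation, which I would establish first, is that since $f$ is a morphism of $\mathbf{Data}$, its underlying function is a surjection (by Definition \ref{def:data-category}); hence each fiber $f^{-1}(i)$ for $i \in [g_2]$ is non-empty, so $|f^{-1}(i)| \geq 1$, and these fibers partition $[g_1]$, giving the identity
\[
\sum_{i=1}^{g_2} |f^{-1}(i)| = g_1.
\]
Everything reduces to analyzing this single equation under the two hypotheses.

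For the forward direction, I would assume $f$ is a codegeneracy morphism, that is $g_1 = g_2+1$. Substituting into the identity above, the sum of $g_2$ positive integers, each at least $1$, equals $g_2+1$. A routine pigeonhole argument then forces exactly one summand to equal $2$ and all others to equal $1$: if every summand were $1$ the sum would be $g_2$, whereas if two or more summands were at least $2$ the sum would be at least $g_2+2$, so in both cases we contradict the value $g_2+1$. This produces the required index $i_0 \in [g_2]$ with $|f^{-1}(i_0)| = 2$ and $|f^{-1}(i)| = 1$ for all $i \neq i_0$.

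For the converse, I would assume the fiber-size condition and simply reverse the computation. All fibers being non-empty makes $f$ a surjection, and summing their cardinalities gives
\[
g_1 = \sum_{i=1}^{g_2} |f^{-1}(i)| = 2 + (g_2-1) = g_2+1,
\]
so $f$ is a codegeneracy morphism by Definition \ref{def:codegeneracy-arrows}.

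I expect no substantial obstacle here: the whole statement is the elementary fact that a surjection from a set of size $g_2+1$ onto a set of size $g_2$ has exactly one fiber of size two and all remaining fibers of size one. The only point requiring a word of care is that surjectivity must be invoked from the definition of $\mathbf{Data}$-morphisms, since the accompanying conditions on the vectors $c$ and $v$ play no role whatsoever in the fiber-counting argument and can be ignored throughout.
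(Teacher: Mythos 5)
Your proof is correct and follows essentially the same route as the paper's: both rest on the identity $g_1 = \sum_{i=1}^{g_2}|f^{-1}(i)|$ obtained from surjectivity of the underlying function (which, as you note, is part of Definition \ref{def:data-category}), followed by a pigeonhole argument in one direction and a direct summation in the other. No gaps worth flagging.
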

\begin{proof}
For every morphism $f:(g_1,v_1,c_1) \to (g_2,v_2,c_2)$ in $\mathbf{Data}$ and every $i_0 \in [g_2]$, we have:
\begin{equation}\label{eq:codegeneracy-arrows:equivalent}
g_1 = |\{1,2,\dots,g_1\}| = \left|\bigcup_{i = 1}^{g_2} f^{-1}(i)\right| = \sum_{i = 1}^{g_2} |f^{-1}(i)| = |f^{-1}(i_0)| + \sum_{i \in [g_2] \backslash \{i_0\}} |f^{-1}(i)|
\end{equation}
In addition, because the underlying function $f:[g_1] \to [g_2]$ is surjective, each fiber of $f$ is non-empty, which implies that the inequality $|f^{-1}(i)|\geq 1$ holds for every $i \in [g_2]$.

\fbox{$\Rightarrow$} If $f$ is a codegeneracy morphism contracting a pair to an element $i_0$, then the equations of (\ref{eq:codegeneracy-arrows:equivalent}) imply that $g_1 = g_2+1$. 

\fbox{$\Leftarrow$} We shows that converse by using a proof by contradiction. Suppose that the identity $g_1 = g_2+1$ holds and suppose that $f$ is not a degeneracy morphism. Since the inequality $|f^{-1}(i)| \geq 1$ holds for every $i \in [g_2]$, there must either be two elements $i_1 \neq i_0$ such that $|f^{-1}(i_1)| \geq 2$ and $|f^{-1}(i_1)| \geq 2$ or be an element $i_0$ such that $|f^{-1}(i)| > 2$. In any case, the relations of (\ref{eq:codegeneracy-arrows:equivalent}) imply that $g_1$ is greater than $g_2+1$, which contradicts our assumption.
\end{proof}

The following proposition shows that codegeneracy morphisms corresponds to contraction morphisms through the lens of the functor $\mathsf{Anova}$.

\begin{proposition}[Contractions and codegeneracy morphisms]\label{prop:contractions-codegeneracy-morphisms}
Let $(S,<)$ be a finite strict linear order, $y:S \to\mathbb{R}$ be a function and $x:S \to [p]$ be a partition in $\mathbf{UP}(S)$. For every pair $(j_1,j_2) \in [p]\times [p]$, the image of the canonical arrow $x \to x[j_1,j_2]$ (see Proposition \ref{prop:contraction:canonical-arrow}) through the functor $\mathsf{Anova}_{y}$ is a codegeneracy morphism that contracts the pair $(j_1,j_2)$.
\end{proposition}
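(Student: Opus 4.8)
The plan is to compute the underlying surjection of the canonical arrow $x \to x[j_1,j_2]$ explicitly and then recognize it as a codegeneracy morphism by means of Proposition \ref{prop:codegeneracy-arrows:equivalent}. Throughout I would assume $j_1 \neq j_2$; if $j_1 = j_2$ then $x_{j_1}\cup x_{j_2} = x_{j_1}$ and Definition \ref{def:contraction} gives $x[j_1,j_2] = x$, so the canonical arrow is an identity rather than a codegeneracy morphism, and the notion of contracting the two-element pair $\{j_1,j_2\}$ in Proposition \ref{prop:codegeneracy-arrows:equivalent} is vacuous.

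First I would determine the fibers of $x[j_1,j_2]$. Since the fibers $x_1,\dots,x_p$ of $x$ are pairwise disjoint and cover $S$, the collection
\[
\{x_i\}_{i \in [p]\setminus\{j_1,j_2\}} \cup \{x_{j_1}\cup x_{j_2}\}
\]
is again pairwise disjoint and covers $S$, and with $j_1\neq j_2$ it has exactly $p-1$ non-empty members. By Proposition \ref{prop:familial-product:unlabeled-partitions} the product defining $x[j_1,j_2]$ in Definition \ref{def:contraction} equals $\delta^*$ of this collection, and by Proposition \ref{prop:fibers} (together with Convention \ref{conv:relative_discrete_partitions:familial}) this is precisely the unlabeled partition whose set of fibers is the displayed collection. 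Hence $x[j_1,j_2]$ is of the form $S \to [p-1]$. Writing $g_1 = p$ and $g_2 = p-1$ for the integers associated to $x$ and $x[j_1,j_2]$, this already yields $g_1 = g_2 + 1$, so by Definition \ref{def:codegeneracy-arrows} the image $\mathsf{Anova}_y(x\to x[j_1,j_2])$ is a codegeneracy morphism.

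It then remains to check which pair it contracts. The functor $\mathsf{Anova}_y$ sends the canonical arrow to its underlying surjection $f\colon[p]\to[p-1]$ (Proposition \ref{prop:anova-mapping} and Proposition \ref{prop:ANOVA-functor}), and this $f$ is the unique function satisfying $f\circ x = x[j_1,j_2]$ (Definition \ref{def:labeled-partitions} and Remark \ref{rem:pre-order-LP}). Reading this equation fibrewise, $f$ sends the index $j$ of a fiber $x_j$ of $x$ to the index in $[p-1]$ of the fiber of $x[j_1,j_2]$ containing $x_j$. For $j\notin\{j_1,j_2\}$ that fiber is $x_j$ itself, which is disjoint from every other member of the collection above, so $f^{-1}(f(j)) = \{j\}$. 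For $j\in\{j_1,j_2\}$ the containing fiber is the merged part $x_{j_1}\cup x_{j_2}$; letting $i_0$ be its index we get $f(j_1)=f(j_2)=i_0$ and, since no $x_j$ with $j\notin\{j_1,j_2\}$ meets $x_{j_1}\cup x_{j_2}$, also $f^{-1}(i_0)=\{j_1,j_2\}$. Thus $|f^{-1}(i_0)|=2$ and $|f^{-1}(i)|=1$ for every $i\neq i_0$, which is exactly the condition of Proposition \ref{prop:codegeneracy-arrows:equivalent} for $f$ to contract the pair $(j_1,j_2)$.

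The only delicate point, and the one I would flag as the main obstacle, is that passing through the functor $R$ (and the fiber-reordering induced by the order of Proposition \ref{prop:linear_order_on_K}) relabels the target $[p-1]$, so the numerical value of $i_0$ depends on the induced order. This is harmless: the claim concerns only the combinatorial type of $f$, namely which preimage has two elements and what that two-element set is, and both are invariant under any relabeling of the codomain. Consequently no explicit control of the ordering is needed, and the identification $f^{-1}(i_0)=\{j_1,j_2\}$ completes the argument.
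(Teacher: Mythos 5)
Your proposal is correct and follows essentially the same route as the paper's proof: identify the fibers of $x[j_1,j_2]$ as the merged collection via Proposition \ref{prop:familial-product:unlabeled-partitions} and Convention \ref{conv:relative_discrete_partitions:familial}, pin down the underlying surjection $[p]\to[p-1]$ by the equation $f\circ x = x[j_1,j_2]$ together with uniqueness from Remark \ref{rem:pre-order-LP}, and conclude with Proposition \ref{prop:codegeneracy-arrows:equivalent}. Your explicit handling of the degenerate case $j_1=j_2$ (where the arrow is an identity, not a codegeneracy morphism) and of the harmless relabeling induced by $R$ are refinements the paper leaves implicit, but they do not change the substance of the argument.
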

\begin{proof}
Because the collection of fibers of $x$ is pairwise disjoint, the formula of Definition \ref{def:contraction} and Proposition \ref{prop:familial-product:unlabeled-partitions} imply that the partition $x[j_1,j_2]$ is of the form $\delta^{*}(x')$ where $x'$ is a collection such that there exists $i_0 \in [p-1]$ for which 
\begin{itemize}
\item[-] the identity $x'_{i_0} = x^{-1}(j_1)\cup x^{-1}(j_2)$ holds
\item[-]  and, for every $i \in [p-1]\backslash \{i_0\}$, there exists an element $j_i \in [p]$ such that $x'_{i} = x^{-1}(j_i)$. 
\end{itemize}
By Convention \ref{conv:relative_discrete_partitions:familial} and Convention \ref{conv:relative_discrete_partitions_delta}, the unlabeled partition $x[j_1,j_2] = \delta^{*}(x')$ can be described as a surjection $y:S \to [p-1]$ whose fibers $\mathsf{fib}(y) = \{y_k\}_{k \in [p-1]}$ are such that there exists $k_0 \in [p-1]$ for which the identity
\[
y_{k_0} = x'_{i_0} = x^{-1}(j_1)\cup x^{-1}(j_2)
\]
holds and, for every $k \in [p-1]\backslash \{k_0\}$, there exists an element $i_k \in [p]$ such that 
\[
y_{k} = x'_{i_k} = x^{-1}(j_{i_k}).
\]
The function defined by the mappings $j_1,j_2 \mapsto k_0$ and $k \mapsto j_{i_k}$, call this function $f$, satisfies the equation $f \circ x = y$ and thus defines a morphism $f:x \to y$ in $\mathbf{UP}(S)$ (see Definition \ref{def:unlabeled_partitions} and Definition \ref{def:labeled-partitions}).
By Remark \ref{rem:pre-order-LP}, the arrow $f:x \to x'$ must be the canonical arrow $x \to x[j_1,j_2]$, which means that the image of $x \to x[j_1,j_2]$ through the functor $\mathsf{Anova}_{y}$ is the function $f:[p] \to [p-1]$. It directly follows from Definition \ref{def:codegeneracy-arrows} (and Proposition \ref{prop:codegeneracy-arrows:equivalent}) that the image of the canonical arrow $x \to x[j_1,j_2]$ through the functor $\mathsf{Anova}_{y}$ is a codegeneracy morphism contracting $(j_1,j_2)$.
\end{proof}

In the rest of this section, we characterize the values taken by between-group changes. To do so, we express the morphisms of $\mathbf{Data}$ as compositions of codegeneracy morphisms and we take advantage of the functoriality property shown in Proposition \ref{prop:between-group-variance:functoriality} to show that the between-group change of any morphism in $\mathbf{Data}$ is non-negative.

\begin{proposition}[Codegeneracy morphisms]\label{prop:between-group-change:codegeneracy}
If $f:(g_1,v_1,c_1) \to (g_2,v_2,c_2)$ is a codegeneracy morphism of $\mathbf{Data}$ that contracts a pair $(j_0,k_0)$, then the following identity holds:
\[
\varepsilon(f) =\frac{c_{1,j_0}c_{1,k_0}}{c_{1,j_0}+c_{1,k_0}}(v_{1,j_0} - v_{1,k_0})^2
\]
\end{proposition}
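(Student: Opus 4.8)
The plan is to reduce the statement to a short algebraic identity after invoking two facts established earlier: the invariance of the mean (Proposition \ref{prop:invariance:1}) and the fiber description of codegeneracy morphisms (Proposition \ref{prop:codegeneracy-arrows:equivalent}). First I would unfold Definition \ref{def:between-group-change} and rewrite each term $(g-1)\eta(g,v,c)$ using the unique formula recorded in the remark after Definition \ref{def:between-group-variance}, obtaining
\[
\varepsilon(f) = \sum_{i=1}^{g_1}c_{1,i}(v_{1,i}-\mu(g_1,v_1,c_1))^2 - \sum_{i=1}^{g_2}c_{2,i}(v_{2,i}-\mu(g_2,v_2,c_2))^2.
\]
By Proposition \ref{prop:invariance:1} the two means coincide, so I denote their common value by $\mu$; this is what makes the whole computation independent of $\mu$ in the end.

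Next I would exploit the structure of $f$. Since $f$ contracts $(j_0,k_0)$ to some $i_0 \in [g_2]$, Proposition \ref{prop:codegeneracy-arrows:equivalent} gives $f^{-1}(i_0)=\{j_0,k_0\}$ with every other fiber a singleton. The defining equations of $\mathbf{Data}$ (Definition \ref{def:data-category}), $\mathsf{M}(f)c_1=c_2$ and $\mathsf{M}(f)(v_1\odot c_1)=v_2\odot c_2$, then force $c_{2,i}=c_{1,l}$ and $v_{2,i}=v_{1,l}$ whenever $f^{-1}(i)=\{l\}$, whereas for the contracted index they yield $c_{2,i_0}=c_{1,j_0}+c_{1,k_0}$ and $v_{2,i_0}=(c_{1,j_0}v_{1,j_0}+c_{1,k_0}v_{1,k_0})/(c_{1,j_0}+c_{1,k_0})$. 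Consequently every summand coming from a singleton fiber cancels between the two sums, and what remains is
\[
\varepsilon(f) = c_{1,j_0}(v_{1,j_0}-\mu)^2 + c_{1,k_0}(v_{1,k_0}-\mu)^2 - c_{2,i_0}(v_{2,i_0}-\mu)^2.
\]

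Finally I would verify the identity. Setting $a=c_{1,j_0}$, $b=c_{1,k_0}$ and observing that $v_{2,i_0}$ is exactly the $(a,b)$-weighted mean of $v_{1,j_0}$ and $v_{1,k_0}$, the classical weighted variance decomposition gives $a(v_{1,j_0}-\mu)^2+b(v_{1,k_0}-\mu)^2 = a(v_{1,j_0}-v_{2,i_0})^2+b(v_{1,k_0}-v_{2,i_0})^2+(a+b)(v_{2,i_0}-\mu)^2$, whose last term cancels the remaining term above. Using $v_{1,j_0}-v_{2,i_0}=\tfrac{b}{a+b}(v_{1,j_0}-v_{1,k_0})$ and the symmetric expression for $v_{1,k_0}-v_{2,i_0}$, the leftover collapses to $\tfrac{ab}{a+b}(v_{1,j_0}-v_{1,k_0})^2$, which is the claimed value. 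I expect the only mildly delicate point to be arguing the cancellation of the singleton-fiber terms cleanly from the two $\mathbf{Data}$-constraints; once that bookkeeping is done, the rest is a routine manipulation.
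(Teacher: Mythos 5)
Your proposal is correct and follows essentially the same route as the paper's proof: both invoke the invariance of the mean (Proposition \ref{prop:invariance:1}), cancel the summands coming from singleton fibers via the two $\mathbf{Data}$ constraints, and reduce $\varepsilon(f)$ to the three-term expression $c_{1,j_0}(v_{1,j_0}-\mu)^2 + c_{1,k_0}(v_{1,k_0}-\mu)^2 - c_{2,i_0}(v_{2,i_0}-\mu)^2$. The only difference is in the concluding algebra: the paper eliminates $\mu$ by expanding the squares and cancelling the $\mu$ and $\mu^2$ terms using $c_{2,i_0}=c_{1,j_0}+c_{1,k_0}$ and $c_{2,i_0}v_{2,i_0}=c_{1,j_0}v_{1,j_0}+c_{1,k_0}v_{1,k_0}$, whereas you invoke the weighted-variance (parallel-axis) decomposition around the weighted mean $v_{2,i_0}$, which cancels the third term in one stroke and is a slightly cleaner organization of the same routine computation.
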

\begin{proof}
We will suppose that $f$ contracts the pair $(j_0,k_0)$ to an element $i_0 \in [g_2]$. We start with a few observations. First, by Proposition \ref{prop:invariance:1}, we can denote the two quantities $\mu(g_1,c_1,v_1)$ and $\mu(g_2,c_2,v_2)$ as $\mu$. Also, because $f:[g_1] \to [g_2]$ is a function, the sum $\sum_{i=1}^{g_2}m_{i,j}(f)$ must be equal to 1 (see Definition \ref{def:fiber-matrix}). As a result, we have the following identities:
\begin{align*}
\varepsilon(f) & = \sum_{j=1}^{g_1}c_{1,j}(v_{1,j}-\mu)^2 - \sum_{i=1}^{g_2}c_{2,i}(v_{2,i}-\mu)^2\\
&=  \sum_{j =1}^{g_1}\Big(\sum_{i=1}^{g_2}m_{i,j}(f)\Big)c_{1,j}(v_{1,j}-\mu)^2 - \sum_{i=1}^{g_2}c_{2,i}(v_{2,i}-\mu)^2 \\
& = \sum_{i=1}^{g_2}\sum_{j =1}^{g_1}m_{i,j}(f)c_{1,j}(v_{1,j}-\mu)^2 - \sum_{i=1}^{g_2}c_{2,i}(v_{2,i}-\mu)^2 
\end{align*}
We can rearrange the previous equation to obtain the following expression for $\varepsilon(f)$:
\begin{equation}\label{eq:between-group-change:codegeneracy}
\varepsilon(f) = \sum_{i=1}^{g_2} \Big(\Big(\sum_{j =1}^{g_1}m_{i,j}(f)c_{1,j}(v_{1,j}-\mu)^2\Big)- c_{2,i}(v_{2,i}-\mu)^2 \Big)
\end{equation}
Note that the summands of sum (\ref{eq:between-group-change:codegeneracy}) at any index $i$ different from $i_0$ must be zero. Indeed, if $i \neq i_0$, then we have $f^{-1}(i) = \{j_1\}$ and Definition \ref{def:fiber-matrix} implies that the following equations hold:
\[
c_{2,i} = \sum_{j=1}^{g_1} m_{i,j}(f) c_{1,j} = c_{1,j_1}
\quad\quad\quad
v_{2,i} = \frac{1}{c_{2,i}}\sum_{j=1}^{g_1} m_{i,j}(f) c_{1,j}v_{1,j} = \frac{c_{1,j_1}}{c_{2,i}} v_{1,j_1} = v_{1,j_1}
\]
We also deduce the following equations from the previous ones.
\[
\sum_{j =1}^{g_1}m_{i,j}(f)c_{1,j}(v_{1,j}-\mu)^2 = c_{1,j_1}(v_{1,j_1}-\mu)^2 = c_{2,i}(v_{2,i}-\mu)^2
\]
Hence, the summand of sum (\ref{eq:between-group-change:codegeneracy}) at $i \neq i_0$ is zero and equation (\ref{eq:between-group-change:codegeneracy}) can be rewritten as follows:
\begin{equation}\label{eq:between-group-change:change:short_formula_with_mu}
\varepsilon(f) = c_{1,j_0}(v_{1,j_0}-\mu)^2 + c_{1,k_0}(v_{1,k_0}-\mu)^2 - c_{2,i_0}(v_{2,i_0}-\mu)^2
\end{equation}
Now, by Definition \ref{def:data-category}, observe that we have the relations $c_{2,i_0} = c_{1,j_0} + c_{1,k_0}$ and $c_{2,i_0}v_{2,i_0} = c_{1,j_0}v_{1,j_0} + c_{1,k_0}v_{1,k_0}$. We can expend expression (\ref{eq:between-group-change:change:short_formula_with_mu}) as a quadratic in $\mu$ and use the two previous formulas to cancel out the terms in $\mu$ and $\mu^2$ as follows:
\begin{equation}\label{eq:between-group-change:change:short_formula}
\varepsilon(f) = c_{1,j_0}v_{1,j_0}^2 + c_{1,k_0}v_{1,k_0}^2 - c_{2,i}v_{2,i}^2 
\end{equation}
Futhermore, we can substitute $c_{2,i_0}$ and $v_{2,i_0}$ in equation (\ref{eq:between-group-change:change:short_formula}) for the relations $c_{2,i_0} = c_{1,j_0} + c_{1,k_0}$ and $c_{2,i_0}v_{2,i_0} = c_{1,j_0}v_{1,j_0} + c_{1,k_0}v_{1,k_0}$, then expand and simply, to obtain the following expressions:
\[
\varepsilon(f)  = c_{1,j_0}v_{1,j_0}^2 + c_{1,k_0}v_{1,k_0}^2 - \frac{(c_{1,j_0}v_{1,j_0} + c_{1,k_0}v_{1,k_0})^2}{c_{1,j_0} + c_{1,k_0}} = \frac{c_{1,j_0}c_{1,k_0}}{c_{1,j_0}+c_{1,k_0}}(v_{1,j_0} - v_{1,k_0})^2
\]
This last equation finishes the proof.
\end{proof}

\begin{proposition}[Factorization]\label{prop:factorization}
For every morphism $f:(g_1,v_1,c_1) \to (g_2,v_2,c_2)$ in $\mathbf{Data}$ such that $g_1>g_2+1$, there exists a codegeneracy morphism $h:(g_1,v_1,c_1) \to (g_3,v_3,c_3)$ and a morphism $f':(g_3,v_3,c_3) \to (g_2,v_2,c_2)$ in $\mathbf{Data}$ for which the equation $f = f' \circ h$ holds.
\end{proposition}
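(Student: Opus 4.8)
The plan is to peel off a single merge from $f$. Since $f:[g_1]\to[g_2]$ is a surjection (Definition \ref{def:data-category}), the counting identity (\ref{eq:codegeneracy-arrows:equivalent}) gives $\sum_{i=1}^{g_2}(|f^{-1}(i)|-1) = g_1-g_2 > 1$, and as each summand is non-negative, some fiber $f^{-1}(i_1)$ must contain at least two elements. I would fix two distinct elements $j_1,j_2 \in f^{-1}(i_1)$, so that in particular $f(j_1)=f(j_2)$.

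Next I would construct the codegeneracy morphism $h$ that contracts $(j_1,j_2)$. Set $g_3 := g_1-1$ and let $h:[g_1]\to[g_3]$ be the surjection identifying $j_1$ with $j_2$ and injective elsewhere; by Proposition \ref{prop:codegeneracy-arrows:equivalent} this is a codegeneracy morphism at the level of functions. Its target in $\mathbf{Data}$ is forced by the defining equations: I would set $c_3 := \mathsf{M}(h)c_1$ and let $v_3$ be the unique vector satisfying $v_3\odot c_3 = \mathsf{M}(h)(v_1\odot c_1)$. Each coordinate of $c_3$ is a sum of positive coordinates of $c_1$, hence lies in $\mathbb{N}_*$, and $v_3$ is then well defined since every coordinate of $c_3$ is nonzero. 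Thus $(g_3,v_3,c_3)$ is a legitimate object of $\mathbf{Data}$ and $h:(g_1,v_1,c_1)\to(g_3,v_3,c_3)$ is a morphism of $\mathbf{Data}$ by construction.

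Because $f(j_1)=f(j_2)$, the function $f$ is constant on the unique nontrivial fiber of $h$, so it descends through the quotient to a unique surjection $f':[g_3]\to[g_2]$ with $f = f'\circ h$. It remains to verify that $f'$ is a morphism of $\mathbf{Data}$, i.e. that $\mathsf{M}(f')c_3 = c_2$ and $\mathsf{M}(f')(v_3\odot c_3) = v_2\odot c_2$. This follows from the multiplicativity $\mathsf{M}(f'\circ h) = \mathsf{M}(f')\mathsf{M}(h)$ of fiber matrices (Proposition \ref{prop:functoriality-fiber-matrix}, whose computation applies to any composable functions) together with the defining equations of $f$:
\[
\mathsf{M}(f')c_3 = \mathsf{M}(f')\mathsf{M}(h)c_1 = \mathsf{M}(f)c_1 = c_2,
\]
and likewise $\mathsf{M}(f')(v_3\odot c_3) = \mathsf{M}(f')\mathsf{M}(h)(v_1\odot c_1) = \mathsf{M}(f)(v_1\odot c_1) = v_2\odot c_2$. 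This exhibits the desired factorization $f = f'\circ h$ with $h$ a codegeneracy morphism.

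I expect the only delicate point to be bookkeeping rather than mathematics: confirming that the pushforward triple $(g_3,v_3,c_3)$ genuinely lands in $\mathbf{Data}$ (positive integer counts and well-defined means) and that $f'$ inherits both conservation equations. Both reduce to functoriality of $\mathsf{M}$ and to the elementary fact that merging two elements of a common fiber of $f$ does not obstruct the set-theoretic factorization, so no nontrivial estimate is needed.
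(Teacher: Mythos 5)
Your proof is correct and takes essentially the same route as the paper's: locate a fiber of $f$ containing two elements, contract it via a codegeneracy morphism $h$ whose target data are forced by $c_3 = \mathsf{M}(h)c_1$ and $v_3 \odot c_3 = \mathsf{M}(h)(v_1 \odot c_1)$ (well-defined since $c_3$ has positive entries), and verify both factors are morphisms of $\mathbf{Data}$ via the functoriality of $\mathsf{M}$ from Proposition \ref{prop:functoriality-fiber-matrix}. The only difference is presentational: you describe $h$ and $f'$ abstractly as the quotient identifying $j_1$ with $j_2$ and the induced map, whereas the paper writes out explicit index-shifting formulas for both functions.
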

\begin{proof}
Since $g_1 > g_2+1$, the function $f:[g_1] \to [g_2]$ admits a fiber $f^{-1}(i_0)$ that contains at least two elements $j_1<j_2$. Hence, the following two functions $h:[g_1] \to [g_1-1]$ and $f':[g_1-1] \to [g_2]$ are well-defined:
\[
h:\left(
\begin{array}{llll}
j&\mapsto&j&\textrm{if }j < j_2\\
j_2&\mapsto&j_1&\\
j&\mapsto&j-1&\textrm{if }j > j_2\\
\end{array}
\right)
\quad\quad\quad
f':\left(
\begin{array}{llll}
j&\mapsto&f(j)&\textrm{if }j < j_2\\
j&\mapsto&f(j+1)&\textrm{if }j \geq j_2\\
\end{array}
\right)
\]
It is straightforward to see that the composition $f' \circ h$ is equal to the function $f$. FInally, we need to check that $f'$ and $h$ are morphism in $\mathbf{Data}$. First, because $f$ is a morphism in $\mathbf{Data}$ (Definition \ref{def:data-category}), we have the equation $c_2 = \mathsf{M}(f)c_1$. By Proposition \ref{prop:functoriality-fiber-matrix}, we also have the decomposition 
\[
c_2 = \mathsf{M}(f)c_1 = \mathsf{M}(f' \circ h)c_1  = \mathsf{M}(f') ( \mathsf{M}(h)c_1)
\]
and if we denote the vector $\mathsf{M}(h)c_1 \in \mathbb{N}_{*}^{q_1-1}$ as $c_3$, then we have the two relations $c_3 = \mathsf{M}(h)c_1$ and $c_2 = \mathsf{M}(f')c_3$. Since $c_3$ is in $\mathbb{N}_{*}^{q_1-1}$, each of its components $(c_3)_i$ is positive. Hence, the vector $v_3 \in \mathbb{R}^{g_1-1}$ determined by the following equation for every $i \in [g_1-1]$ is well-defined:
\[
(v_3)_i = \frac{1}{(c_3)_i}(\mathsf{M}(h)(v_1 \cdot c_1))_i
\]
Note that the previous equation is equivalent to the equation $v_3\cdot c_3 = \mathsf{M}(h)(v_1 \cdot c_1)$. This equation also implies that the following identities:
\[
c_2v_2 = \mathsf{M}(f)(v_1 \cdot c_1) = \mathsf{M}(f')\mathsf{M}(h)(v_1 \cdot c_1) = \mathsf{M}(h)(v_3 \cdot c_3)
\]
If we denote $g_1-1$ as $g_3$, then we have constructed two morphisms $h:(g_1,v_1,c_1) \to (g_3,v_3,c_3)$ and $f':(g_3,v_3,c_3) \to (g_2,v_2,c_2)$ in $\mathbf{Data}$ for which the equation $f = f' \circ h$ holds. Because $g_1 = g_3+1$, the morphism $f$ is a codegeneracy morphism.
\end{proof}

\begin{remark}[Changes are non-negative]\label{rem:changes-non-negative}
A proof by induction using the result of Proposition \ref{prop:factorization} shows that every morphism $f$ of $\mathbf{Data}$ is either an isomorphism or a composition $f_n \circ \dots \circ f_2 \circ f_1$ of codegeneracy morphisms $f_1,f_2,\dots,f_n$. It then follows from Proposition \ref{prop:between-group-change:codegeneracy} and Proposition \ref{prop:between-group-variance:functoriality} that the quantity $\varepsilon(f)$ is non-negative (see Remark \ref{rem:computing-between-group-change-panda}).
\end{remark}

\subsection{\texorpdfstring{$\mathsf{F}$}{}-ratios and their functorial properties}\label{ssec:F-ratios}
In this section, we re-formalize the concept of within-group mean square sums \cite[page 47]{Oehlert} (see Definition \ref{def:within-group-variance}) and use it with that of between-group mean square sums (see Definition \ref{def:within-group-variance}) to recover the well-known concept of $\mathsf{F}$-ratio associated with ANOVA \cite[page 48]{Oehlert} (see Definition \ref{def:F-ratio}). Finally, we use the functorial properties shown in section \ref{ssec:between-group-mss-and-codegenerary-morphims} to characterize the numerical variations of $\mathsf{F}$-ratios (see Proposition \ref{prop:changes-F-ratio}).

\begin{convention}[Notation]
Let $(S,<)$ be a finite strict linear order and $y:S \to \mathbb{R}$ be a function. From now on, for every partition $p$ in $\mathbf{UP}(S)$ and every morphism $f:p_1 \to p_2$ in $\mathbf{UP}(S)$, we will use the following notations:
\[
\eta_y(p_1) := \eta(\mathsf{Anova}_y(p_1)),
\quad\quad
\quad\quad
\varepsilon_y(f) := \varepsilon(\mathsf{Anova}_y(f)).
\]
\end{convention}

The following definition uses a well-known implicit equation to characterize the usual concept of within-group mean square. We show that we can recover the text book definition of within-group mean squares \cite[page 48]{Oehlert} in Proposition \ref{prop:formula-within-group-mss}.

\begin{definition}[Within-group mean square sums]\label{def:within-group-variance}
Let $(S,<)$ be a finite strict linear order and $y:S \to \mathbb{R}$ be a function. For every partition $p:S \to [g]$ in $\mathbf{UP}(S)$ whose underlying surjection is not a bijection, we will call the \emph{within-group mean square sum} the real number $\rho_y(p)$ determined by the following equation:
\[
|S|\sigma^2_y(S) = (|S|-g)\rho_y(p) + (g-1) \eta_y(p)
\]
If the surjection $p:S \to [g]$ is a bijection, then we let $\rho_y(p)$ be zero.
\end{definition}

\begin{proposition}[A formula for within-group mean square sums]\label{prop:formula-within-group-mss}
Let $(S,<)$ be a finite strict linear order and $y:S \to \mathbb{R}$ be a function. For every partition $p:S \to [g]$ in $\mathbf{UP}(S)$ whose underlying surjection is not a bijection, the following identity holds:
\[
\rho_y(p) = \frac{1}{|S|-g}\sum_{i =1}^g \sum_{s \in p^{-1}(i)}\Big(y(s) - \mu_y(p^{-1}(i))\Big)^2
\]
The previous equation implies that $\rho_y(p)$ is non-negative for every partition $p$ in $\mathbf{UP}(S)$.
\end{proposition}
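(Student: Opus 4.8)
The plan is to solve the implicit equation of Definition \ref{def:within-group-variance} for $\rho_y(p)$; the entire content of the statement then reduces to the classical ANOVA decomposition of the total sum of squares into a between-group part and a within-group part. Concretely, I would first unwind the two known terms in the defining equation. By Definition \ref{def:cardinal-mean-variance}, the left-hand side is $|S|\sigma^2_y(S) = \sum_{s \in S}(y(s) - \mu_y(S))^2$. For the between-group term, combining Definition \ref{def:between-group-variance} with Definition \ref{def:Anova-setting} (which sets $v_i = \mu_y(p^{-1}(i))$ and $c_i = |p^{-1}(i)|$) and Proposition \ref{prop:invariance:2} (which gives $\mu(\mathsf{Anova}_y(p)) = \mu_y(S)$) yields
\[
(g-1)\eta_y(p) = \sum_{i=1}^g |p^{-1}(i)|\,\big(\mu_y(p^{-1}(i)) - \mu_y(S)\big)^2.
\]
The remark following Definition \ref{def:between-group-variance} shows that this equation is valid uniformly, including the degenerate case $g=1$.

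The heart of the argument --- and the step I expect to be the only real obstacle --- is proving the identity
\[
\sum_{s \in S}(y(s) - \mu_y(S))^2 = \sum_{i=1}^g \sum_{s \in p^{-1}(i)}\big(y(s) - \mu_y(p^{-1}(i))\big)^2 + \sum_{i=1}^g |p^{-1}(i)|\,\big(\mu_y(p^{-1}(i)) - \mu_y(S)\big)^2.
\]
Since the fibers $p^{-1}(1),\dots,p^{-1}(g)$ partition $S$, I would split the left-hand sum over $S$ into a double sum over the fibers and, within each fiber $p^{-1}(i)$, write $y(s) - \mu_y(S) = (y(s) - \mu_y(p^{-1}(i))) + (\mu_y(p^{-1}(i)) - \mu_y(S))$ and expand the square. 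The square of the first bracket sums to the within-group term, and the square of the second bracket (which is constant on the fiber) sums to $|p^{-1}(i)|$ times that constant squared, giving the between-group term. The cross term vanishes fiberwise because $\sum_{s \in p^{-1}(i)}(y(s) - \mu_y(p^{-1}(i))) = 0$ by the very definition of $\mu_y(p^{-1}(i))$ in Definition \ref{def:cardinal-mean-variance}; this cancellation is the only nontrivial point.

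Finally, I would substitute this decomposition into the defining equation $|S|\sigma^2_y(S) = (|S|-g)\rho_y(p) + (g-1)\eta_y(p)$, cancel the common between-group term $(g-1)\eta_y(p)$ from both sides, and divide by $|S|-g$, which is strictly positive precisely because $p:S \to [g]$ is a surjection that is not a bijection, i.e.\ $g < |S|$. This yields the claimed formula. Non-negativity of $\rho_y(p)$ then follows at once: for non-bijective $p$ the right-hand side is a sum of squares scaled by the positive factor $1/(|S|-g)$, and for bijective $p$ we have $\rho_y(p)=0$ by definition, so $\rho_y(p) \geq 0$ in all cases.
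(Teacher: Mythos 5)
Your proposal is correct and follows essentially the same route as the paper's proof: both solve the implicit equation of Definition \ref{def:within-group-variance} for $\rho_y(p)$, establish the total-sum-of-squares decomposition by expanding $y(s)-\mu_y(S) = \bigl(y(s)-\mu_y(p^{-1}(i))\bigr) + \bigl(\mu_y(p^{-1}(i))-\mu_y(S)\bigr)$ fiberwise with the cross term vanishing since $\sum_{s \in p^{-1}(i)}\bigl(y(s)-\mu_y(p^{-1}(i))\bigr)=0$, and invoke Proposition \ref{prop:invariance:2} to identify $\mu(\mathsf{Anova}_y(p))$ with $\mu_y(S)$. The only differences are cosmetic: you state the decomposition as a standalone identity before substituting, and you spell out the non-negativity claim (including the bijective case) slightly more explicitly than the paper does.
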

\begin{proof}
By Definition \ref{def:within-group-variance}, we have the following formula:
\[
(|S|-g)\rho_y(p) = |S|\sigma^2_y(S) - (g-1) \eta_y(p)
\]
By Definition \ref{def:cardinal-mean-variance} and Definition \ref{def:between-group-variance}, this means that the following identity holds:
\begin{equation}\label{eq:formula-within-group-mss}
(|S|-g)\rho_y(p) = \sum_{s \in S} (y(s) - \mu_y(S))^2 - \sum_{i=1}^{g}\Big(\mu_y(p^{-1}(i))-\mu(\mathsf{Anova}_y(p))\Big)^2
\end{equation}
From now on, the proof focuses on re-writing the leftmost sum of (\ref{eq:formula-within-group-mss}). To start with, observe that this sum can be re-written as:
\begin{equation}\label{eq:intermedite-formula-mss}
\sum_{s \in S} (y(s) - \mu_y(S))^2 = \sum_{i =1}^g \sum_{s \in p^{-1}(i)} (y(s) - \mu_y(S))^2
\end{equation}
For every element $s \in p^{-1}(i)$, we can use the decomposition
\[
y(s) - \mu_y(S) = \Big(y(s) - \mu_y(p^{-1}(i))\Big) + \Big(\mu_y(p^{-1}(i)) - \mu_y(S)\Big)
\]
in the expression of (\ref{eq:intermedite-formula-mss}) and expand the square according to this decomposition to obtain: 
\begin{align*}
\sum_{s \in S} (y(s) - \mu_y(S))^2 & = \sum_{i =1}^g \sum_{s \in p^{-1}(i)} \Big(y(s) - \mu_y(p^{-1}(i))\Big)^2 +  \sum_{i =1}^g \sum_{s \in p^{-1}(i)}\Big(\mu_y(p^{-1}(i)) - \mu_y(S)\Big)^2\\
& +  \sum_{i =1}^g \sum_{s \in p^{-1}(i)} 2\Big(y(s) - \mu_y(p^{-1}(i))\Big)\Big(\mu_y(p^{-1}(i)) - \mu_y(S)\Big)
\end{align*}
The last sum of the previous expression can be factorized as follows:
\[
\sum_{i =1}^g2\Big(\mu_y(p^{-1}(i)) - \mu_y(S)\Big) \sum_{s \in p^{-1}(i)} \Big(y(s) - \mu_y(p^{-1}(i))\Big)
\]
and, by Definition \ref{def:cardinal-mean-variance}, the leftmost sum of the previous expression is equal to zero, as shown below:
\[
\sum_{s \in p^{-1}(i)} \Big(y(s) - \mu_y(p^{-1}(i))\Big) = \Big(\sum_{s \in p^{-1}(i)}y(s)\Big) - |p^{-1}(i)| \times \mu_y(p^{-1}(i)) = 0
\]
In other words, we have shown that the following identity holds:
\[
\sum_{s \in S} (y(s) - \mu_y(S))^2 = \sum_{i =1}^g \sum_{s \in p^{-1}(i)} \Big(y(s) - \mu_y(p^{-1}(i))\Big)^2 +  \sum_{i =1}^g \sum_{s \in p^{-1}(i)}\Big(\mu_y(p^{-1}(i)) - \mu_y(S)\Big)^2
\]
By Proposition \ref{prop:invariance:2}, we can replace $\mu_y(S)$ with $\mu(\mathsf{Anova}_y(p))$. Doing so and combining the previous equation with  (\ref{eq:formula-within-group-mss}) gives us the following expression:
\[
(|S|-g)\rho_y(p)= \sum_{i =1}^g \sum_{s \in p^{-1}(i)} \Big(y(s) - \mu_y(p^{-1}(i))\Big)^2
\]
This proves the statement when $|S| - g > 0$, hence when $p$ is not a bijection.
\end{proof}

We now introduce $\mathsf{F}$-ratios, which we shall use to measure the likelihood of a given partition to have arisen from noisy or structured data.

\begin{definition}[$\mathsf{F}$-ratio]\label{def:F-ratio}
Let $(S,<)$ be a finite strict linear order and $y:S \to \mathbb{R}$ be a function. For every partition $p:S \to [g]$ in $\mathbf{UP}(S)$ for which $\rho_y(p)$ is non-zero, we define the \emph{$\mathsf{F}$-ratio of $p$} as the ratio:
\[
\mathsf{F}_y(p) = \frac{\eta_y(p)}{\rho_y(p)}
\]
\end{definition}

For convenience, we introduce two operations $\kappa$ and $\nu$. The reader should be able to see that the latter is somewhat close to the value 1 -- this will be relevant to interpret the result of Proposition \ref{prop:changes-F-ratio}.

\begin{convention}[Notation]\label{conv:kappa-nu}
Let $(S,<)$ be a finite strict linear order and $y:S \to \mathbb{R}$ be a function. For every partition $p:S \to [g]$ in $\mathbf{UP}(S)$ for which $g > 1$ and every morphism $f:p_1 \to p_2$ in $\mathbf{UP}(S)$ for which $p_1:S \to [g_1]$ and $p_2:S \to [g_2]$ and $1<g_2\leq g_1<|S|$, we will use the following notations:
\[
\kappa(p) = \frac{(|S|-g)}{(g-1)}
\quad\quad\quad
\nu(f) = \frac{\kappa(p_2)}{\kappa(p_1)} = \Big(1+\frac{g_1-g_2}{g_2-1}\Big)\Big(1+\frac{g_1-g_2}{|S|-g_1}\Big)
\]
In particular, the two operations $\kappa$ and $\nu$ take non-negative values.
\end{convention}

The following proposition implies that a morphism of partition will likely decrease the $\mathsf{F}$-ratio of the source if the morphism is associated with a high between+group change. The idea behind this result is that we can control the $\mathsf{F}$-ratio of the reduced element (Definition \ref{def:reduced-elements}) of a reduction (Definition \ref{def:reduction}) by controlling the between-group changes of its contractions through the formula of Proposition \ref{prop:between-group-change:codegeneracy}. 

\begin{proposition}[Changes in $\mathsf{F}$-ratios]\label{prop:changes-F-ratio}
Let $(S,<)$ be a finite strict linear order and $y:S \to \mathbb{R}$ be a function. For every morphism $f:p_1 \to p_2$ in $\mathbf{UP}(S)$ for which $p_1:S \to [g_1]$ and $p_2:S \to [g_2]$ and $1<g_2 \leq g_1<|S|$, the following relation holds:
\[
\nu(f)\mathsf{F}_y(p_1) - \frac{\varepsilon_y(f)}{|S|\sigma^2_y(S)}\kappa(p_2) \geq \mathsf{F}_y(p_2) 
\]
\end{proposition}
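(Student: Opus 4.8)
The plan is to pass from the three normalized quantities $\eta_y$, $\rho_y$ and the $\mathsf{F}$-ratio to the raw ANOVA sums of squares, where the inequality becomes a purely scalar statement. Write $T = |S|\sigma^2_y(S)$ for the total sum of squares and, for $i \in \{1,2\}$, set $B_i = (g_i-1)\eta_y(p_i)$ and $W_i = (|S|-g_i)\rho_y(p_i)$ for the between- and within-group sums of squares of $p_i$. The defining equation of $\rho_y$ in Definition \ref{def:within-group-variance} reads exactly $T = B_i + W_i$. A direct rewriting of Definition \ref{def:F-ratio} gives the compact identity $\mathsf{F}_y(p_i) = (B_i/W_i)\,\kappa(p_i)$, while Definition \ref{def:between-group-change} gives $\varepsilon_y(f) = B_1 - B_2$, which by $T = B_1 + W_1 = B_2 + W_2$ also equals $W_2 - W_1$.

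First I would substitute these into the target inequality. Since $\nu(f) = \kappa(p_2)/\kappa(p_1)$, the term $\nu(f)\mathsf{F}_y(p_1)$ collapses to $\kappa(p_2)\,B_1/W_1$, so the left-hand side factors as $\kappa(p_2)\bigl(B_1/W_1 - (B_1-B_2)/T\bigr)$ while the right-hand side is $\kappa(p_2)\,B_2/W_2$. Because $1 < g_2 \le g_1 < |S|$ forces $\kappa(p_2) > 0$, because $W_1,W_2 > 0$ (the $\mathsf{F}$-ratios are defined, so $\rho_y(p_i) \neq 0$, hence positive by Proposition \ref{prop:formula-within-group-mss}), and because $T > 0$, dividing by $\kappa(p_2)$ reduces the claim to the scalar inequality
\[
\frac{B_1}{W_1} - \frac{B_1 - B_2}{T} \ge \frac{B_2}{W_2}.
\]

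Writing $a = B_1$, $b = B_2$ and using $W_1 = T - a$, $W_2 = T - b$, I would collect the two outer fractions on one side; their difference simplifies to $(a-b)\,T/\bigl((T-a)(T-b)\bigr)$, so the claim becomes
\[
\frac{(a-b)\,T}{(T-a)(T-b)} \ge \frac{a-b}{T}.
\]
This is the one place where a genuine hypothesis enters: one needs $a - b = \varepsilon_y(f) \ge 0$, which is exactly Remark \ref{rem:changes-non-negative} applied to the morphism $\mathsf{Anova}_y(f)$ of $\mathbf{Data}$ (a morphism by Proposition \ref{prop:ANOVA-functor}). Given $a \ge b$, the inequality is trivial when $a = b$, and when $a > b$ one cancels the positive factor $a-b$ and is left with $T^2 \ge (T-a)(T-b)$, which holds because $0 \le b \le a$ places both $T-a = W_1$ and $T-b = W_2$ in the interval $(0,T]$.

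The only subtlety — and the step I would watch most carefully — is the sign of $\varepsilon_y(f)$: without $a \ge b$ the final cancellation would reverse direction and the estimate $T^2 \ge (T-a)(T-b)$ would be pushed the wrong way, so the argument genuinely rests on Remark \ref{rem:changes-non-negative}. Everything else is bookkeeping: confirming the sum-of-squares identity $T = B_i + W_i$, checking positivity of $\kappa(p_2)$, $W_1$, $W_2$ and $T$ under the standing hypotheses, and the elementary simplification of the difference of fractions.
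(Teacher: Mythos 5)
Your proposal is correct and takes essentially the same route as the paper's proof: after normalizing by $\kappa$, both arguments come down to the exact identity $\mathsf{F}_y(p_1)/\kappa(p_1) - \mathsf{F}_y(p_2)/\kappa(p_2) = \varepsilon_y(f)\,|S|\sigma^2_y(S)/(W_1 W_2)$ (with $W_i = (|S|-g_i)\rho_y(p_i)$) and then invoke $\varepsilon_y(f) \geq 0$ from Remark \ref{rem:changes-non-negative} together with the bound $W_1 W_2 \leq \big(|S|\sigma^2_y(S)\big)^2$. The only cosmetic difference is bookkeeping: the paper chains $W_1 W_2 \leq W_2^2 \leq \big(|S|\sigma^2_y(S)\big)^2$ using $W_1 \leq W_2$, whereas you bound each factor by $|S|\sigma^2_y(S)$ directly.
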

\begin{proof}
First, Definition \ref{def:within-group-variance} gives us the following identities for the two partitions $p_1$ and $p_2$:
\begin{equation}\label{eq:Changes-F-ratio:1}
\left\{
\begin{array}{ll}
|S|\sigma^2_y(S) &= (|S|-g_1)\rho_y(p_1) + (g_1-1) \eta_y(p_1),\\
|S|\sigma^2_y(S) &= (|S|-g_2)\rho_y(p_2) + (g_2-1) \eta_y(p_2).
\end{array}
\right.
\end{equation}
By Definition \ref{def:between-group-change}, subtracting the topmost equation of (\ref{eq:Changes-F-ratio:1}) from the bottommost one gives us the topmost relation of (\ref{eq:Changes-F-ratio:2}), given below. The other relation of (\ref{eq:Changes-F-ratio:2}) is a rearrangement of the equation given in Definition \ref{def:between-group-change}:
\begin{equation}\label{eq:Changes-F-ratio:2}
\left\{
\begin{array}{rl}
(|S|-g_1)\rho_y(p_1)  & = (|S|-g_2)\rho_y(p_2) - \varepsilon_y(f)\\
(g_1-1) \eta_y(p_1)   & = (g_2-1) \eta_y(p_2) + \varepsilon_y(f)
\end{array}
\right.
\end{equation}
We can now use the two equations of (\ref{eq:Changes-F-ratio:2}) and the notations of Convention \ref{conv:kappa-nu} to rewrite the expression of the $\mathsf{F}$-ratio $\mathsf{F}_y(p_1)$ (see Definition \ref{def:F-ratio}). Specifically, we have the following relation:
\[
\frac{\mathsf{F}_y(p_1)}{\kappa(p_1)}  = \frac{(g_2-1) \eta_y(p_2) + \varepsilon_y(f)}{(|S|-g_2)\rho_y(p_2) - \varepsilon_y(f)}
\]
Subtracting the previous relation by the ratio $\mathsf{F}_y(p_2)/\kappa(p_2)$ and using the expression given by Definition \ref{def:F-ratio} for the right-hand side gives us the following identity:
\begin{equation}\label{eq:Changes-F-ratio:3}
\frac{\mathsf{F}_y(p_1)}{\kappa(p_1)} - \frac{\mathsf{F}_y(p_2)}{\kappa(p_2)}  = \frac{(g_2-1) \eta_y(p_2) + \varepsilon_y(f)}{(|S|-g_2)\rho_y(p_2) - \varepsilon_y(f)} - \frac{(g_2-1) \eta_y(p_2)}{(|S|-g_2)\rho_y(p_2)}
\end{equation}
A straightforward simplification of the right-hand side of equation (\ref{eq:Changes-F-ratio:3}) gives us the following identity:
\[
\frac{\mathsf{F}_y(p_1)}{\kappa(p_1)} - \frac{\mathsf{F}_y(p_2)}{\kappa(p_2)}  = \frac{\varepsilon_y(f)(|S|-g_2)\rho_y(p_2)+ \varepsilon_y(f)(g_2-1) \eta_y(p_2)}{\big((|S|-g_2)\rho_y(p_2) - \varepsilon_y(f)\big)(|S|-g_2)\rho_y(p_2)}
\]
Factorizing the dividend of the right-hand side fraction of the previous expression by $\varepsilon_y(f)$ and using the bottommost expression of (\ref{eq:Changes-F-ratio:1}) gives us the following simplification:
\[
\frac{\mathsf{F}_y(p_1)}{\kappa(p_1)} - \frac{\mathsf{F}_y(p_2)}{\kappa(p_2)}  = \frac{\varepsilon_y(f)|S|\sigma^2_y(S)}{\big((|S|-g_2)\rho_y(p_2) - \varepsilon_y(f)\big)(|S|-g_2)\rho_y(p_2)}
\]
Finally, because the operations $\rho_y$ and $\varepsilon_y$ only take non-negative values (see Proposition \ref{prop:formula-within-group-mss} and Remark \ref{rem:changes-non-negative}), the divisor of the right-hand side fraction of the previous expression is always less than the quantity $(|S|-g_2)^2\rho_y(p_2)^2$. As a result, we obtain the following lower bound:
\begin{equation}\label{eq:Changes-F-ratio:4}
\frac{\mathsf{F}_y(p_1)}{\kappa(p_1)} - \frac{\mathsf{F}_y(p_2)}{\kappa(p_2)}  \geq \frac{\varepsilon_y(f)|S|\sigma^2_y(S)}{(|S|-g_2)^2\rho_y(p_2)^2}
\end{equation}
Now, because the operations $\eta_y$ and $\rho_y$ take positive values (see Definition \ref{def:between-group-variance} and Proposition \ref{prop:formula-within-group-mss}), we deduce from the bottommost equation of (\ref{eq:Changes-F-ratio:1}) that the relation $(|S|-g_2)\rho_y(p_2) \leq |S|\sigma^2_y(S)$ holds. Using this upper bound in the right-hand side term of (\ref{eq:Changes-F-ratio:4}) gives us the following inequality:
\[
\frac{\mathsf{F}_y(p_1)}{\kappa(p_1)} - \frac{\mathsf{F}_y(p_2)}{\kappa(p_2)}  \geq \frac{\varepsilon_y(f)}{|S|\sigma^2_y(S)}
\]
Since $\kappa$ takes positive values (see Convention \ref{conv:kappa-nu}), the previous inequality is equivalent to the inequality of the statement up to rearrangement.
\end{proof}

In section \ref{ssec:combinatorial-GWAS}, we will use the statement of Proposition \ref{prop:changes-F-ratio} to construct reductions (see Definition \ref{def:reduction}) whose reduced elements are associated with relatively high $\mathsf{F}$-ratio values.

\section{Association studies}\label{sec:association-studies}
The present section shows how we can use the results of the previous sections to design a combinatorial GWAS. To do so, we will use the language of segments and pedigrads \cite{CTGI}, which we will slightly simplify for the scope of this article. We will use the full expressive power of pedigrads in a subsequent article, in which we will refine our combinatorial GWAS to a more advanced type of analysis.

\subsection{Plain segments}\label{ssec:plain-segments}
In this section, we define ``plain segments'' (see Definition \ref{def:plain-segments}), which are a subtype of segments, as defined in \cite[Sect. 2.14]{CTGI}. We associate these plain segments with a sum operation (see Definition \ref{def:sums:segments}) that will play an important role in the design of our combinatorial GWAS algorithm.

\begin{definition}[Plain segments]\label{def:plain-segments}
For every non-negative integer $N$, we denote by $\mathbf{PS}(N)$ the category whose objects are functions $\tau:[N] \to \{0,1\}$, which we will call \emph{plain segments}, and whose arrows $\tau_1 \to \tau_2$ are given by relations $\tau_2(i) \leq \tau_1(i)$ for every integer $i \in [N]$. Note that this makes the category $\mathbf{PS}(N)$ a pre-ordered set (also referred to as a `pre-order category').
\end{definition}

While the following example focuses on a genetic interpretation of segments, note that segments can be more generally interpreted as any type of ordered data. Since data is, in practice, often ordered or indexed, this makes segments very general objects to encode data variables.

\begin{example}[Plain segments]
Note that a segment $\tau:[N] \to \{0,1\}$ in $\mathbf{PS}(N)$ can be described as a tuple $(\tau(1),\tau(2),\dots,\tau(N))$ of its images $\tau(i)$ -- we will use this representation for all our examples. 

Now, the point of a segment $\tau \in \mathbf{PS}(N)$ is to tell us, through its values 1, which traits are to be considered simultaneously during our analysis. In the case of our combinatorial GWAS, these traits could be nucleotides, of which we would like to measure the combined effects. For instance, the segment of $\mathbf{PS}(22)$ given below, on the left, specifies that we only want to look at the set of traits enumerated on the right.
\[
\tau = (0,0,0,0,0,1,1,1,1,0,1,1,0,1,0,1,0,0,0,0,0,0) \Leftrightarrow 
\left\{
\begin{array}{l}
\textrm{We look at the traits:}\\
6,7,8,9,11,12,14,16\\
\end{array}
\right.
\]
The morphism of $\mathbf{PS}(N)$ allow us to reduce the number of values 1 in a segment. For instance, we have an arrow $(1,1,1,1,1,0,1) \to (0,0,0,1,0,0,1)$ in $\mathbf{PS}(7)$.
\end{example}

\begin{definition}[Global plain segments]\label{def:global-plain-segments}
For every non-negative integer $N$, we will denote by $\mathbf{1}_N$ the segment defined by the constant function $[N] \to \{0,1\}$ mapping every $i\in [N]$ to $1$.
\end{definition}

\begin{proposition}[Global plain segments]\label{prop:global-plain-segments}
For every non-negative integer $N$, the segment $\mathbf{1}_N \in \mathbf{PS}(N)$ is initial, meaning that for every segment $\tau \in \mathbf{PS}(N)$, there is a unique arrow $!_{\tau}:\mathbf{1}_N \to \tau$.
\end{proposition}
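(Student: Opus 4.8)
The plan is to verify directly the defining universal property of an initial object: for each object $\tau \in \mathbf{PS}(N)$ there must be exactly one morphism $\mathbf{1}_N \to \tau$. I would split this into the two standard halves, existence and uniqueness, and treat uniqueness first since it is immediate from the structure already recorded in Definition \ref{def:plain-segments}. Indeed, that definition observes that $\mathbf{PS}(N)$ is a pre-ordered set regarded as a category, so there is at most one arrow between any ordered pair of objects. Consequently, once an arrow $\mathbf{1}_N \to \tau$ is shown to exist, it is automatically the unique such arrow $!_\tau$, and no separate uniqueness argument is needed.

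For existence, I would unwind the definition of a morphism in $\mathbf{PS}(N)$. By Definition \ref{def:plain-segments}, an arrow $\mathbf{1}_N \to \tau$ is precisely the data of the collection of inequalities $\tau(i) \le \mathbf{1}_N(i)$ holding for every $i \in [N]$. Here one must be careful about the direction of the comparison: the value of the \emph{target} segment is bounded above by the value of the \emph{source} segment. Using Definition \ref{def:global-plain-segments}, the source $\mathbf{1}_N$ satisfies $\mathbf{1}_N(i) = 1$ for all $i$, so the required inequality becomes $\tau(i) \le 1$. Since $\tau$ is a function into $\{0,1\}$, we have $\tau(i) \in \{0,1\}$ and hence $\tau(i) \le 1$ for every $i \in [N]$. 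Thus all the defining inequalities hold, the morphism $\mathbf{1}_N \to \tau$ exists, and combined with the pre-order observation above it is the unique arrow $!_\tau$.

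There is essentially no serious obstacle here: the statement is a direct consequence of unwinding the two relevant definitions, with the pre-order property of $\mathbf{PS}(N)$ supplying uniqueness for free. The only point requiring a moment of attention is reading the inequality in the morphism definition in the correct orientation, so that $\mathbf{1}_N$ (whose values are maximal in $\{0,1\}$) genuinely sits at the bottom of the order and therefore functions as the initial object rather than a terminal one.
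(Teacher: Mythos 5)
Your proof is correct and is essentially the paper's argument spelled out in full: the paper simply says the claim follows directly from Definition \ref{def:plain-segments}, and your unwinding (existence from $\tau(i)\le 1 = \mathbf{1}_N(i)$ given the arrow direction $\tau_2(i)\le\tau_1(i)$, uniqueness from the pre-order structure) is exactly that verification. Nothing is missing, and your caution about the orientation of the inequality is the one point where a reader could slip.
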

\begin{proof}
Directly follows from Definition \ref{def:plain-segments}.
\end{proof}

We now introduce a sum operation of plain segments. In the case of \cite{CTGI}, this operation would have been translated in terms of ``limit cones''.

\begin{definition}[Sums]\label{def:sums:segments}
For every non-negative integer $N$ and every pair $(\tau_1,\tau_2)$ of objects in $\mathbf{PS}(N)$, we denote by $\tau_1+\tau_2$ the segment $[N] \to \{0,1\}$ mapping every integer $i \in [N]$ to the maximum value of the set $\{\tau_1(i),\tau_2(i)\}$ (See Example \ref{exa:sums-exact-sums}).
\end{definition}

\begin{remark}[Cone property]\label{rem:cone-property}
For every non-negative integer $N$ and every pair $(\tau_1,\tau_2)$ of objects in $\mathbf{PS}(N)$, Definition \ref{def:sums:segments} implies that there exist two morphisms $\tau_1+\tau_2 \to \tau_1$ and $\tau_1+\tau_2 \to \tau_2$ in $\mathbf{PS}(N)$. We will later use these two arrows in Remark \ref{rem:cone-property-environment} and Proposition \ref{prop:pedigrad-property}.
\end{remark}

\begin{definition}[Exactness]\label{def:exactness}
For every non-negative integer $N$, any pair $(\tau_1,\tau_2)$ of objects in $\mathbf{PS}(N)$ is said to be \emph{exact} if the intersection $\tau_1^{-1}(1) \cap \tau_2^{-1}(1)$ is empty. By extension, if $(\tau_1,\tau_2)$ is exact, then we say that the sum $\tau_1+\tau_2$ is exact (See Example \ref{exa:sums-exact-sums}).
\end{definition}

\begin{example}[Sums and exact sums]\label{exa:sums-exact-sums}
If 1 and 0 are thought of as Boolean values, then sums and exact sums are equivalent to $\mathsf{OR}$ and $\mathsf{XOR}$ operations, respectively.
For instance, the following equation gives an example of a sum in $\mathbf{PS}(12)$
\[
(0,0,0,0,1,0,0,0,0,1,1,1) + (1,1,1,1,1,0,0,0,0,0,1,0) = (1,1,1,1,1,0,0,0,0,1,1,1)
\]
An exact sum is a sum for which one does not add two values 1 together. For instance, the following sum is exact in $\mathbf{PS}(12)$.
\[
(1,1,1,1,1,0,0,0,0,0,0,0) + (0,0,0,0,0,0,0,0,1,1,1,1) = (1,1,1,1,1,0,0,0,1,1,1,1)
\]
\end{example}

\subsection{Genotypes}\label{ssec:genotypes}
In the present section, we use the sum operation defined in section \ref{def:plain-segments} to define what we could think of as a \emph{pedigrad} \cite{CTGI} in the category of unlabeled partitions (see Proposition \ref{prop:pedigrad-property:genotype-partition}). This pedigrad turns out to model the collection of genotypic partitions that is naturally associated with a GWAS (see section \ref{ssec:motivations} above). We will denote by $\mathbf{Set}$ the category of sets and functions.

\begin{definition}[Truncation]\label{def:truncation}
For every non-negative integer $N$ and every segment $\tau \in \mathbf{PS}(N)$, we denote by $\mathsf{Tr}(\tau)$ the fiber $\tau^{-1}(1)$, which is a subset of $[N]$.
\end{definition}

\begin{proposition}[Truncation of sums]\label{prop:truncations-of-sums}
For every non-negative integer $N$ and every pair $(\tau_1,\tau_2)$ of objects in $\mathbf{PS}(N)$,
the following identity, for which we use the union of subsets of $[N]$, holds.
\[
\mathsf{Tr}(\tau_1+\tau_2) = \mathsf{Tr}(\tau_1) \cup \mathsf{Tr}(\tau_2)
\]
\end{proposition}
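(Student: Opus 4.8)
The plan is to prove the set equality by an element-wise argument, unwinding the definitions of $\mathsf{Tr}$ and of the sum operation. Fix an index $i \in [N]$. By Definition \ref{def:truncation}, the membership $i \in \mathsf{Tr}(\tau_1+\tau_2)$ is equivalent to the equation $(\tau_1+\tau_2)(i) = 1$, and by Definition \ref{def:sums:segments} the value $(\tau_1+\tau_2)(i)$ equals the maximum $\max\{\tau_1(i),\tau_2(i)\}$.

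The key observation is that, since both $\tau_1(i)$ and $\tau_2(i)$ lie in $\{0,1\}$, the quantity $\max\{\tau_1(i),\tau_2(i)\}$ equals $1$ if and only if at least one of $\tau_1(i)$ or $\tau_2(i)$ equals $1$. This reduces the equation $(\tau_1+\tau_2)(i) = 1$ to the disjunction ``$\tau_1(i) = 1$ or $\tau_2(i) = 1$'', which, again by Definition \ref{def:truncation}, is exactly the statement $i \in \mathsf{Tr}(\tau_1)$ or $i \in \mathsf{Tr}(\tau_2)$, that is, $i \in \mathsf{Tr}(\tau_1) \cup \mathsf{Tr}(\tau_2)$.

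Since $i \in [N]$ was arbitrary, the two subsets $\mathsf{Tr}(\tau_1+\tau_2)$ and $\mathsf{Tr}(\tau_1) \cup \mathsf{Tr}(\tau_2)$ of $[N]$ contain exactly the same elements, which yields the desired identity. I expect no genuine obstacle here: the entire content is the elementary Boolean equivalence $\max\{a,b\}=1 \Leftrightarrow (a=1 \textrm{ or } b=1)$ for $a,b \in \{0,1\}$, and the statement follows purely by composing this equivalence with the definitions of truncation and sum. One could equally present the argument as a double inclusion, but the logical equivalence established at each index makes that phrasing redundant.
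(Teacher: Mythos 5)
Your proof is correct and takes essentially the same route as the paper: both arguments unwind Definition \ref{def:truncation} and Definition \ref{def:sums:segments} at each index $i \in [N]$ and reduce everything to the Boolean fact that $\max\{\tau_1(i),\tau_2(i)\} = 1$ exactly when $\tau_1(i)=1$ or $\tau_2(i)=1$. The only difference is presentational — the paper organizes this as a double inclusion (with a three-case analysis in one direction), whereas you state the index-wise equivalence once, which is a harmless streamlining.
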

\begin{proof}
We need to show that the identity $(\tau_1+\tau_2)^{-1}(1) = \tau_1^{-1}(1) \cup \tau_2^{-1}(1)$ holds. We proceed by double induction. Let $i \in (\tau_1+\tau_2)^{-1}(1)$. This means that we have $(\tau_1+\tau_2)(i) = 1$. We deduce from Definition \ref{def:sums:segments} one of the following conditions hold:
\[
\textrm{($\tau_1(i) = 1$ and $\tau_2(i) = 0$);}\quad\quad
\textrm{($\tau_1(i) = 0$ and $\tau_2(i) = 1$);}\quad\quad
\textrm{($\tau_1(i) = 1$ and $\tau_2(i) = 1$).}
\]
As a result, we deduce that $i$ belongs to one of the sets $\tau_1^{-1}(1)$ or $\tau_2^{-1}(1)$, which shows the inclusion $(\tau_1+\tau_2)^{-1}(1) \subseteq \tau_1^{-1}(1) \cup \tau_2^{-1}(1)$.
The other inclusion is shown as follows: if $i \in \tau_1^{-1}(1) \cup \tau_2^{-1}(1)$, then the set $\{\tau_1(i),\tau_2(i)\}$ contains 1, which implies, by Definition \ref{def:sums:segments}, that $(\tau_1+\tau_2)(i) = 1$. This shows the other inclusion and finish the proof of the proposition.
\end{proof}

\begin{proposition}[Truncation functor]\label{prop:truncation-functor}
Let $N$ be a non-negative integer. For every morphism $f:\tau_1 \to \tau_2$, the inclusion $\tau_2^{-1}(1) \subseteq \tau_1^{-1}(1)$ holds. If we denote by $\mathsf{Tr}(f)$ the resulting injection $\mathsf{Tr}(\tau_2) \to \mathsf{Tr}(\tau_1)$, then the mapping $\mathsf{Tr}$ defines a functor from $\mathbf{PS}(N)$ to the opposite category $\mathbf{Set}^{\mathrm{op}}$ of sets and functions.
\end{proposition}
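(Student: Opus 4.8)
The plan is to first establish the claimed inclusion of truncations, and then upgrade the assignment $\tau \mapsto \mathsf{Tr}(\tau)$ together with these inclusions into a functor. For the inclusion I would argue pointwise: a morphism $f:\tau_1 \to \tau_2$ in $\mathbf{PS}(N)$ is, by Definition \ref{def:plain-segments}, nothing but the family of inequalities $\tau_2(i) \leq \tau_1(i)$ for every $i \in [N]$. Hence if $i \in \tau_2^{-1}(1)$, i.e. $\tau_2(i) = 1$, then $\tau_1(i) \geq 1$, and since $\tau_1$ takes values in $\{0,1\}$ this forces $\tau_1(i) = 1$, so $i \in \tau_1^{-1}(1)$. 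This yields $\mathsf{Tr}(\tau_2) = \tau_2^{-1}(1) \subseteq \tau_1^{-1}(1) = \mathsf{Tr}(\tau_1)$, and I would take $\mathsf{Tr}(f)$ to be the associated canonical inclusion of subsets of $[N]$, which is an injection.

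Next I would explain why this is the correct target for a functor into $\mathbf{Set}^{\mathrm{op}}$. A morphism $f:\tau_1 \to \tau_2$ of $\mathbf{PS}(N)$ must be sent to a morphism $\mathsf{Tr}(\tau_1) \to \mathsf{Tr}(\tau_2)$ in $\mathbf{Set}^{\mathrm{op}}$, which is by definition a $\mathbf{Set}$-function $\mathsf{Tr}(\tau_2) \to \mathsf{Tr}(\tau_1)$; the inclusion $\mathsf{Tr}(\tau_2) \hookrightarrow \mathsf{Tr}(\tau_1)$ produced above is precisely such a function, so the direction reversal built into the opposite category exactly absorbs the inclusion-reversing behaviour of $\mathsf{Tr}$.

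Finally I would check the two functoriality axioms. For identities, the morphism $\mathrm{id}_\tau$ corresponds to the trivial inequalities $\tau(i) \leq \tau(i)$, whose associated inclusion $\tau^{-1}(1) \hookrightarrow \tau^{-1}(1)$ is the identity map on $\mathsf{Tr}(\tau)$, hence the identity of $\mathbf{Set}^{\mathrm{op}}$ at that object. For composition, given $f:\tau_1 \to \tau_2$ and $g:\tau_2 \to \tau_3$, both $\mathsf{Tr}(g\circ f)$ and the $\mathbf{Set}^{\mathrm{op}}$-composite of $\mathsf{Tr}(f)$ and $\mathsf{Tr}(g)$ are subset inclusions into $[N]$: the former is the inclusion $\mathsf{Tr}(\tau_3) \hookrightarrow \mathsf{Tr}(\tau_1)$, while the latter, computed in $\mathbf{Set}$, is the ordinary composite of $\big(\mathsf{Tr}(\tau_3) \hookrightarrow \mathsf{Tr}(\tau_2)\big)$ with $\big(\mathsf{Tr}(\tau_2) \hookrightarrow \mathsf{Tr}(\tau_1)\big)$, which is again that same inclusion. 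Since inclusions among subsets of the fixed set $[N]$ compose to inclusions, the two agree. I do not expect any genuine obstacle here; the only point requiring care is the bookkeeping of the direction reversal in $\mathbf{Set}^{\mathrm{op}}$, and even this is streamlined by the fact (Definition \ref{def:plain-segments}) that $\mathbf{PS}(N)$ is a pre-order, so that the relevant morphisms are uniquely determined by their sources and targets.
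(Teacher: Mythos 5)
Your proposal is correct and follows essentially the same route as the paper: derive the inclusion $\tau_2^{-1}(1) \subseteq \tau_1^{-1}(1)$ pointwise from the defining inequalities of a morphism in $\mathbf{PS}(N)$, then obtain functoriality from the fact that inclusions between fixed subsets of $[N]$ are unique (so identities and composites match automatically). Your version is slightly more explicit in checking the two axioms, and incidentally states the defining inequality $\tau_2(i) \leq \tau_1(i)$ in the correct direction, which the paper's own proof writes with the roles of $\tau_1$ and $\tau_2$ inadvertently swapped.
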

\begin{proof}
The morphism $f:\tau_1 \to \tau_2$ implies that the inequality $\tau_1(i) \leq \tau_2(i)$ holds for every $i \in [N]$. As a result, if $\tau_2(i) = 1$, then $\tau_1(i) = 1$, which means that we have an inclusion $\tau_2^{-1}(1) \subseteq \tau_1^{-1}(1)$. Since $\mathbf{PS}(N)$ is a pre-order category (see Remark \ref{def:plain-segments}) and inclusions between a given pair of sets are unique, the mapping $\mathsf{Tr}$ is necessarily functorial from $\mathbf{PS}(N)$ to $\mathbf{Set}^{\mathrm{op}}$.
\end{proof}

Below, we define a functor referred to as \emph{environment functor}. The name for this functor comes from the formalism of \cite{CTGI}, in which the word `environment' referred to the space of symbols needed to encode the space of traits considered in an analysis.

\begin{definition}[Environment functor]\label{def:environment-functor}
For every set $G$ and non-negative integer $N$, we denote by $\mathsf{E}_N(G)$ the functor resulting from the composition of the functor $\mathsf{Tr}:\mathbf{PS}(N) \to \mathbf{Set}^{\mathrm{op}}$ with the hom-set functor $\mathbf{Set}(\_,G):\mathbf{Set}^{\mathrm{op}} \to \mathbf{Set}$. We will refer to $\mathsf{E}_N(G)$ as an \emph{environment functor}.
\end{definition}

\begin{example}[Genome]
If we let $G$ denote the set $\{\mathtt{A},\mathtt{C},\mathtt{G},\mathtt{T}\}$, then the image of the segment $(0,0,1,1,1,0,1,0,1)$ in $\mathbf{PS}(9)$ through the functor $\mathsf{E}_9(G):\mathbf{PS}(9) \to \mathbf{Set}$ is the set of function $\{3,4,5,7,9\} \to G$, namely the set of $5$-tuples of elements of $G$.
\[
\mathtt{ACGTT} \in \mathsf{E}_9(G)(0,0,1,1,1,0,1,0,1)
\]
Note that, for our combinatorial GWAS, one would prefer to use the Cartesian product $G^{\times 2} = G \times G$ instead of $G$. This is because our genome is made of two copies of the each chromosomes. In this case, the first and second components of $G^{\times 2}$ would each represent a copy of a chromosome, as shown below.
\[
\begin{array}{l}
\xymatrix@R-28pt{
\texttt{ATCGC}\\
\texttt{ATTGA}
}
\end{array}
\in
\begin{array}{l}
\mathsf{E}_9(G^{\times 2})(1,1,1,1,1,0,0,0,0)
\end{array}
\]
In the previous relation, the first columns $\texttt{A}\texttt{A}$ represents a pair $(\texttt{A},\texttt{A})$ in $G^{\times 2}$.
\end{example}

\begin{remark}[Cone property]\label{rem:cone-property-environment}
Let $G$ be a set, $N$ be a non-negative integer and $(S,<)$ be a finite strict linear oder. For ever pair of segments $(\tau_1,\tau_2) \in \mathbf{PS}(N)$, Remark \ref{rem:cone-property} give us two arrows $\tau_1+\tau_2 \to \tau_1$ and $\tau_1+\tau_2 \to \tau_2$ that gives us the following cone in $\mathbf{Set}$:
\[
\xymatrix@R-20pt{
&\mathsf{E}_N(G)(\tau_1+\tau_2)\ar[dr]_{\pi_1}\ar[dl]^{\pi_2}&\\
\mathsf{E}_N(G)(\tau_1)&&\mathsf{E}_N(G)(\tau_2)
}
\]
The previous cone of function gives us the following canonical function 
\[
(\pi_1,\pi_2):\mathsf{E}_N(G)(\tau_1+\tau_2) \to \mathsf{E}_N(G)(\tau_1)\times \mathsf{E}_N(G)(\tau_2)
\]
which divides any word in $\mathsf{E}_N(G)(\tau_1+\tau_2)$ into two subwords respecting the specification of the segments $\tau_1$ and $\tau_2$. For instance, we have the following mapping if we take $G = \{\texttt{A},\texttt{C},\texttt{G},\texttt{T}\}$.
\[
\begin{array}{llclclc}
(\pi_1,\pi_2)&:&\mathsf{E}_N(G)(1,1,1,1,1,1) &\to& \mathsf{E}_N(G)(1,1,1,0,0,0)&\times& \mathsf{E}_N(G)(0,1,0,1,1,1)\\
&&\quad\quad\quad\texttt{AGTAGT} &\mapsto &\quad\quad\quad\texttt{AGT\_\_\_}&,& \quad\quad\quad\texttt{\_G\_AGT}
\end{array}
\]
\end{remark}

\begin{proposition}[Pedigrad property]\label{prop:pedigrad-property}
Let $G$ be a set, $N$ be a non-negative integer and $(S,<)$ be a finite strict linear oder. For ever exact pair of segments $(\tau_1,\tau_2) \in \mathbf{PS}(N)$, then the canonical function $(\pi_1,\pi_2):\mathsf{E}_N(G)(\tau_1+\tau_2) \to \mathsf{E}_N(G)(\tau_1)\times \mathsf{E}_N(G)(\tau_2)$ of Remark \ref{rem:cone-property} is an isomorphism.
\end{proposition}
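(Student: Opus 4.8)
The plan is to reduce the statement to the elementary fact that the union of two disjoint sets is their coproduct in $\mathbf{Set}$, so that mapping out of it splits as a product. First I would unwind the definitions. By Definition \ref{def:environment-functor}, the object $\mathsf{E}_N(G)(\tau)$ is the hom-set $\mathbf{Set}(\mathsf{Tr}(\tau),G)$ of functions $\mathsf{Tr}(\tau) \to G$, where $\mathsf{Tr}(\tau) = \tau^{-1}(1)$. The two cone arrows $\tau_1+\tau_2 \to \tau_1$ and $\tau_1+\tau_2 \to \tau_2$ of Remark \ref{rem:cone-property} are sent by the functor $\mathsf{Tr}$ (Proposition \ref{prop:truncation-functor}) to the two inclusions $\iota_1:\mathsf{Tr}(\tau_1) \hookrightarrow \mathsf{Tr}(\tau_1+\tau_2)$ and $\iota_2:\mathsf{Tr}(\tau_2) \hookrightarrow \mathsf{Tr}(\tau_1+\tau_2)$, and hence, after applying $\mathbf{Set}(\_,G)$, the maps $\pi_1$ and $\pi_2$ of Remark \ref{rem:cone-property-environment} are precisely the restriction maps $f \mapsto f \circ \iota_1 = f|_{\mathsf{Tr}(\tau_1)}$ and $f \mapsto f \circ \iota_2 = f|_{\mathsf{Tr}(\tau_2)}$ on hom-sets. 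By Proposition \ref{prop:truncations-of-sums}, the common domain of these functions, $\mathsf{Tr}(\tau_1+\tau_2)$, equals $\mathsf{Tr}(\tau_1) \cup \mathsf{Tr}(\tau_2)$.

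Next I would use exactness. By Definition \ref{def:exactness}, the pair $(\tau_1,\tau_2)$ being exact means $\mathsf{Tr}(\tau_1) \cap \mathsf{Tr}(\tau_2) = \emptyset$, so the union $\mathsf{Tr}(\tau_1) \cup \mathsf{Tr}(\tau_2)$ is a disjoint union with $\iota_1$ and $\iota_2$ as coproduct injections. This is the only place where the exactness hypothesis enters, and it is exactly what is needed to construct a two-sided inverse $\psi$ to $(\pi_1,\pi_2)$ by gluing. Concretely, given a pair $(f_1,f_2) \in \mathbf{Set}(\mathsf{Tr}(\tau_1),G) \times \mathbf{Set}(\mathsf{Tr}(\tau_2),G)$, I would define $\psi(f_1,f_2):\mathsf{Tr}(\tau_1) \cup \mathsf{Tr}(\tau_2) \to G$ by $t \mapsto f_1(t)$ if $t \in \mathsf{Tr}(\tau_1)$ and $t \mapsto f_2(t)$ if $t \in \mathsf{Tr}(\tau_2)$. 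Disjointness guarantees that this assignment is unambiguous, and the union covering the whole domain guarantees that it is total, so $\psi(f_1,f_2)$ is a well-defined element of $\mathsf{E}_N(G)(\tau_1+\tau_2)$.

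Finally I would check that $\psi$ and $(\pi_1,\pi_2)$ are mutually inverse, which is immediate from the construction: restricting $\psi(f_1,f_2)$ along $\iota_1$ and $\iota_2$ recovers $f_1$ and $f_2$, giving $(\pi_1,\pi_2) \circ \psi = \mathrm{id}$; conversely, for any $f \in \mathbf{Set}(\mathsf{Tr}(\tau_1+\tau_2),G)$, the function $\psi(f|_{\mathsf{Tr}(\tau_1)},f|_{\mathsf{Tr}(\tau_2)})$ agrees with $f$ on each piece of the cover and hence equals $f$, giving $\psi \circ (\pi_1,\pi_2) = \mathrm{id}$. There is no genuine obstacle here: the only points demanding care are the contravariant bookkeeping that turns the cone arrows of $\mathbf{PS}(N)$ into the restriction maps $\pi_1,\pi_2$ through the $\mathbf{Set}^{\mathrm{op}}$-valued functor $\mathsf{Tr}$, and the observation that exactness is precisely the disjointness making the gluing well-defined. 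In categorical terms, the entire argument is just the universal property of the coproduct $\mathsf{Tr}(\tau_1) \cup \mathsf{Tr}(\tau_2)$ transported through the functor $\mathbf{Set}(\_,G)$, which sends coproducts to products.
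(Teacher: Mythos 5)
Your proof is correct and follows essentially the same route as the paper: identify $\mathsf{Tr}(\tau_1+\tau_2)$ with the union via Proposition \ref{prop:truncations-of-sums}, use exactness to see this union is disjoint and hence a coproduct in $\mathbf{Set}$ with the truncated cone arrows as injections, and conclude because $\mathbf{Set}(\_,G)$ turns coproducts into products. The only difference is that you make the last step concrete by constructing the gluing inverse $\psi$ explicitly, where the paper simply invokes the preservation property of the hom-functor; this is a matter of detail, not of approach.
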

\begin{proof}
By Proposition \ref{prop:truncations-of-sums}, we have the identity $\mathsf{Tr}(\tau_1+\tau_2) = \mathsf{Tr}(\tau_1) \cup \mathsf{Tr}(\tau_2)$. In addition, because $(\tau_1,\tau_2)$ is exact, the  intersection $\mathsf{Tr}(\tau_1) \cap \mathsf{Tr}(\tau_2) = \tau_1^{-1}(1) \cap \tau_2^{-1}(1)$ holds (See Definition \ref{def:truncation} and Definition \ref{def:exactness}). Hence, the set $\mathsf{Tr}(\tau_1+\tau_2)$ is a coproduct $\mathsf{Tr}(\tau_1) + \mathsf{Tr}(\tau_2)$ and the arrows $\mathsf{Tr}(\tau_1+\tau_2) \to \mathsf{Tr}(\tau_1)$ and $\mathsf{Tr}(\tau_1+\tau_2) \to \mathsf{Tr}(\tau_1)$ are the associated coproduct arrows (see Proposition \ref{prop:truncation-functor}). Since the hom-set $\mathbf{Set}(\_,G)$ sends coproducts to products, it follows from Definition \ref{def:environment-functor} that the pair $(\tau_1,\tau_2)$ (see Remark \ref{rem:cone-property}) defines a coproduct in $\mathbf{Set}$. As a result, the canonical arrow $\mathsf{E}_N(G)(\tau_1+\tau_2) \to \mathsf{E}_N(G)(\tau_1)\times \mathsf{E}_N(G)(\tau_2)$ is an isomorphism.
\end{proof}

Definition \ref{def:genotypes}, given below, encodes the genotypes of a given population as a function going from a finite set to the image of the environment functor on the plain segment of Definition \ref{def:global-plain-segments}. Semantically, we use this segment to mean that the mapping covers the whole genome. In Convention \ref{conv:genotype-transformation} and Proposition \ref{prop:genotype-transformation}, we send these genotypes to smaller segments by extending the mapping of Definition \ref{def:genotypes} to a natural transformation over the category $\mathbf{PS}(N)$.

\begin{definition}[Genotypes]\label{def:genotypes}
Let $G$ be a set, $N$ be a non-negative integer and $(S,<)$ be a finite strict linear oder. We define a \emph{genotype morphism} for $S$ over $G$ as a function $\gamma:S \to \mathsf{E}_N(G^{\times 2})(\mathbf{1}_N)$. The set of such functions will be denoted as $\mathsf{Gen}_N(S,G)$.
\end{definition}

\begin{convention}[Genotype transformations]\label{conv:genotype-transformation}
Let $G$ be a set, $N$ be a non-negative integer and $(S,<)$ be a finite strict linear oder. For every genotype morphism $\gamma:S \to \mathsf{E}_N(G^{\times 2})(\mathbf{1}_N)$, we denote by $\gamma_{\tau}$ the composition of $\gamma$ with the function $\mathsf{E}_N(G^{\times 2})(\mathbf{1}_N) \to \mathsf{E}_N(G^{\times 2})(\tau)$ induced by the unique arrow $!_{\tau}:\mathbf{1}_N \to \tau$ of $\mathbf{PS}(N)$ (see Proposition \ref{prop:global-plain-segments}).
\end{convention}

\begin{proposition}[Genotype transformations]\label{prop:genotype-transformation}
Let $G$ be a set, $N$ be a non-negative integer, $(S,<)$ be a finite strict linear oder and let $\gamma$ be a genotype morphism in $\mathsf{Gen}_N(S,G)$. The collection of functions $\gamma_{\tau}:S \to \mathsf{E}_N(G^{\times 2})(\tau)$ is natural in $\tau$.
\end{proposition}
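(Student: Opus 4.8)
The plan is to unwind what ``natural in $\tau$'' means and then reduce the verification to the universal property of the initial segment $\mathbf{1}_N$ together with functoriality of $\mathsf{E}_N(G^{\times 2})$. Concretely, the family $(\gamma_{\tau})_{\tau}$ should be exhibited as a natural transformation from the constant functor $\Delta_S:\mathbf{PS}(N) \to \mathbf{Set}$ (sending every object to $S$ and every arrow to $\mathrm{id}_S$) to the environment functor $\mathsf{E}_N(G^{\times 2}):\mathbf{PS}(N) \to \mathbf{Set}$. Thus I must show that for every morphism $f:\tau_1 \to \tau_2$ in $\mathbf{PS}(N)$, the identity $\mathsf{E}_N(G^{\times 2})(f) \circ \gamma_{\tau_1} = \gamma_{\tau_2}$ holds, the left-hand vertical leg of the naturality square being $\mathrm{id}_S$.

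First I would expand the definitions provided in Convention \ref{conv:genotype-transformation}: by construction each $\gamma_{\tau_i}$ equals $\mathsf{E}_N(G^{\times 2})(!_{\tau_i}) \circ \gamma$, where $!_{\tau_i}:\mathbf{1}_N \to \tau_i$ is the unique arrow out of the initial segment guaranteed by Proposition \ref{prop:global-plain-segments}. Substituting into the left-hand side and applying functoriality of $\mathsf{E}_N(G^{\times 2})$ gives
\[
\mathsf{E}_N(G^{\times 2})(f) \circ \gamma_{\tau_1} = \mathsf{E}_N(G^{\times 2})(f) \circ \mathsf{E}_N(G^{\times 2})(!_{\tau_1}) \circ \gamma = \mathsf{E}_N(G^{\times 2})(f \circ !_{\tau_1}) \circ \gamma.
\]
Now $f \circ !_{\tau_1}$ is an arrow $\mathbf{1}_N \to \tau_2$, and since $\mathbf{1}_N$ is initial there is exactly one such arrow; hence $f \circ !_{\tau_1} = !_{\tau_2}$. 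Plugging this in yields $\mathsf{E}_N(G^{\times 2})(!_{\tau_2}) \circ \gamma = \gamma_{\tau_2}$, which is precisely the required identity.

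There is essentially no hard step here: the statement is a formal consequence of initiality and functoriality, and the pre-order structure of $\mathbf{PS}(N)$ (Definition \ref{def:plain-segments}) makes the comparison of arrows immediate, since any two parallel arrows out of $\mathbf{1}_N$ automatically agree. The only point requiring minimal care is to frame the claim correctly as naturality of a transformation whose domain is the constant functor at $S$, so that the left leg of each naturality square is the identity on $S$; once this framing is fixed, the computation above closes the proof.
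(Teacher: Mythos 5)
Your proof is correct and follows essentially the same route as the paper: both arguments reduce naturality to the initiality of $\mathbf{1}_N$ (giving $f \circ !_{\tau_1} = !_{\tau_2}$) and then apply the functor $\mathsf{E}_N(G^{\times 2})$ to obtain the commuting naturality square. Your write-up merely makes explicit the functoriality step and the framing via the constant functor $\Delta_S$, which the paper leaves implicit in its commutative diagram.
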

\begin{proof}
It follows from Proposition \ref{prop:global-plain-segments} that, for every arrow $f:\tau_1 \to \tau_2$, the relation $f \circ !_{\tau_1}  = !_{\tau_2}$ holds. By Convention \ref{conv:genotype-transformation}, this means that the following diagram commutes, which shows the statement.
\[
\xymatrix@R-20pt{
&&\mathsf{E}_N(G^{\times 2})(\tau_1)\ar[dd]^{\mathsf{E}_N(G^{\times 2})(f)}\\
S \ar[r]^-{\gamma} & \mathsf{E}_N(G^{\times 2})(\mathbf{1}_N) \ar[ru]\ar@{}@<+1ex>[ru]^<{\mathsf{E}_N(G^{\times 2})(!_{\tau_1})}\ar[rd]\ar@{}@<-1ex>[rd]_<{\mathsf{E}_N(G^{\times 2})(!_{\tau_2})} &\\
&&\mathsf{E}_N(G^{\times 2})(\tau_2)
}
\]
\end{proof}

The following definition uses the universal construction of Proposition \ref{prop:factorization_system} to turn the natural transformation of Proposition \ref{prop:genotype-transformation} into a functor taking its values in a category of unlabeled partitions.

\begin{convention}[Genotype partitioning]\label{conv:genotype-partitioning}
Let $G$ be a set, $N$ be a non-negative integer, $(S,<)$ be a finite strict linear oder and let $\gamma$ be a genotype morphism in $\mathsf{Gen}_N(S,G)$. We will denote by $P_{\gamma}$ the functor $\mathbf{PS}(N) \to \mathbf{UP}(S)$ induced by the mapping:
\[
\tau \mapsto R\big(\mathsf{e}(\gamma_{\tau}):S \to \mathsf{Im}(\gamma_{\tau})\big).
\] 
The functoriality of $P_{\gamma}$ follows from the functorial property of Proposition \ref{prop:genotype-transformation}, the universal property of Proposition \ref{prop:factorization_system} and the functoriality property of Proposition \ref{prop:reflection_functor_partitions}.
\end{convention}

The following example shows that the functor of Convention \ref{conv:genotype-partitioning} essentially clusters individuals with respect to their genotypes at given locations in the genome.

\begin{example}[Genotype partitioning]\label{exa:genotype-partitioning}
Let $G$ denote the set $\{\mathtt{A},\mathtt{C},\mathtt{G},\mathtt{T}\}$, let $S$ denote the finite strict linear order $\{\texttt{Angie}<\texttt{Bob}<\texttt{Charles}<\texttt{Doug}<\texttt{Eric}\}$ and let $\gamma$ denote the function shown below, on the left. The images of the corresponding functor $P_{\gamma}:\mathbf{PS}(5) \to \mathbf{UP}(S)$ on the objects of the form $(0,\dots,0,1,0,\dots,0)$ are as shown on the right:
\[
\begin{array}{lll}
S&\to&\mathsf{E}_{5}(G^{\times 2})(\mathbf{1}_5)\\
\texttt{Angie}&\mapsto&\mathtt{ATTCG}\\
\texttt{Bob}&\mapsto&\mathtt{TTTCC}\\
\texttt{Charles}&\mapsto&\mathtt{ATTGG}\\
\texttt{Doug}&\mapsto&\mathtt{ATACC}\\
\texttt{Eric}&\mapsto&\mathtt{ATTCG}\\
\end{array}
\quad
\left\{
\begin{array}{l}
P_{\gamma}(1,0,0,0,0) = \{\texttt{Angie},\texttt{Charles},\texttt{Doug},\texttt{Eric}\},\{\texttt{Bob}\}\\
P_{\gamma}(0,1,0,0,0) = \{\texttt{Angie},\texttt{Bob},\texttt{Charles},\texttt{Doug},\texttt{Eric}\}\\
P_{\gamma}(0,0,1,0,0) =  \{\texttt{Angie},\texttt{Bob},\texttt{Charles},\texttt{Eric}\},\{\texttt{Doug}\}\\
P_{\gamma}(0,0,0,1,0) =  \{\texttt{Angie},\texttt{Bob},\texttt{Doug},\texttt{Eric}\},\{\texttt{Charles}\}\\
P_{\gamma}(0,0,0,0,1) = \{\texttt{Angie},\texttt{Charles},\texttt{Eric}\},\{\texttt{Bob},\texttt{Doug}\}\\
\end{array}
\right.
\]
We now give other examples of images of $P_{\gamma}$ for objects of $\mathbf{PS}(5)$ that can be described as sums of objects of the form $(0,\dots,0,1,0,\dots,0)$ (see Definition \ref{def:sums:segments}).
\[
\left\{
\begin{array}{l}
P_{\gamma}(1,0,1,0,0) = \{\texttt{Angie},\texttt{Charles},\texttt{Eric}\},\{\texttt{Bob}\},\{\texttt{Doug}\}\\
P_{\gamma}(0,0,0,1,1) = \{\texttt{Angie},\texttt{Eric}\},\{\texttt{Bob},\texttt{Doug}\},\{\texttt{Charles}\}\\
P_{\gamma}(0,0,0,0,0) = \{\texttt{Angie},\texttt{Charles},\texttt{Doug},\texttt{Eric},\texttt{Bob}\}\\
P_{\gamma}(1,1,1,1,1) = \{\texttt{Angie},\texttt{Eric}\},\{\texttt{Bob}\},\{\texttt{Charles}\},\{\texttt{Doug}\}\\
\end{array}
\right.
\]
\end{example}

The property shown in the following proposition will be central to the algorithm described in section \ref{ssec:combinatorial-GWAS}. In essence, this type of property makes it possible to use a divide-and-conquer paradigm to analyze the genome of a set of individuals.

\begin{proposition}[Pedigrad property]\label{prop:pedigrad-property:genotype-partition}
Let $G$ be a set, $N$ be a non-negative integer,  $(S,<)$ be a finite strict linear oder and let $\gamma$ be a genotype morphism in $\mathsf{Gen}_N(S,G)$. For every exact sum $\tau_1+\tau_2$ in $\mathbf{PS}(N)$, the canonical arrow $P_{\gamma}(\tau_1+\tau_2) \to P_{\gamma}(\tau_1) \times P_{\gamma}(\tau_2)$ is an identity.
\end{proposition}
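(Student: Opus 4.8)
The plan is to reduce the statement to an equality of objects in $\mathbf{UP}(S)$ and then invoke the pre-order structure. First I would recall, from Convention \ref{conv:genotype-partitioning}, that $P_{\gamma}(\tau) = R(\mathsf{e}(\gamma_{\tau}))$, so its fibers are exactly the fibers of the function $\gamma_{\tau}:S \to \mathsf{E}_N(G^{\times 2})(\tau)$, i.e. the parts $\{s : \gamma_{\tau}(s) = w\}$. The cone of Remark \ref{rem:cone-property}, consisting of the two arrows $\tau_1+\tau_2 \to \tau_1$ and $\tau_1+\tau_2 \to \tau_2$, is sent by the functor $P_{\gamma}$ to a cone over the pair $(P_{\gamma}(\tau_1), P_{\gamma}(\tau_2))$, and the universal property of the product produces the canonical arrow $P_{\gamma}(\tau_1+\tau_2) \to P_{\gamma}(\tau_1) \times P_{\gamma}(\tau_2)$. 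Since $\mathbf{UP}(S)$ is a pre-order category (Remark \ref{rem:pre-order-LP}), any arrow whose source and target are the same object must be the identity; hence it suffices to prove the equality of objects $P_{\gamma}(\tau_1+\tau_2) = P_{\gamma}(\tau_1)\times P_{\gamma}(\tau_2)$, after which the canonical arrow is automatically an identity.

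Next I would identify the fibers of the right-hand side. By Proposition \ref{prop:adjoint_equivalence:products} together with Proposition \ref{prop:products_labeled_partitions}, the product $P_{\gamma}(\tau_1)\times P_{\gamma}(\tau_2)$ is the image under $R$ of the partition induced by the Cartesian pairing $(P_{\gamma}(\tau_1), P_{\gamma}(\tau_2))$; its fibers are the nonempty intersections of a fiber of $P_{\gamma}(\tau_1)$ with a fiber of $P_{\gamma}(\tau_2)$, as in the refinement described in Example \ref{exa:product_of_partitions}. Because $P_{\gamma}(\tau_i)$ has the same fibers as $\gamma_{\tau_i}$, these intersections are precisely the fibers of the pairing $(\gamma_{\tau_1},\gamma_{\tau_2}):S \to \mathsf{E}_N(G^{\times 2})(\tau_1)\times \mathsf{E}_N(G^{\times 2})(\tau_2)$. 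Thus $\mathsf{fib}(P_{\gamma}(\tau_1)\times P_{\gamma}(\tau_2))$ equals the collection of fibers of $(\gamma_{\tau_1},\gamma_{\tau_2})$.

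It then remains to compare the fibers of $\gamma_{\tau_1+\tau_2}$ with those of $(\gamma_{\tau_1},\gamma_{\tau_2})$. Here the key input is naturality: by Proposition \ref{prop:genotype-transformation} the two cone arrows induce the relations $\pi_1 \circ \gamma_{\tau_1+\tau_2} = \gamma_{\tau_1}$ and $\pi_2 \circ \gamma_{\tau_1+\tau_2} = \gamma_{\tau_2}$, where $\pi_1,\pi_2$ are the maps of Remark \ref{rem:cone-property-environment}; hence $(\pi_1,\pi_2)\circ\gamma_{\tau_1+\tau_2} = (\gamma_{\tau_1},\gamma_{\tau_2})$. Since the sum $\tau_1+\tau_2$ is exact, Proposition \ref{prop:pedigrad-property} tells us that $(\pi_1,\pi_2)$ is a bijection, and post-composing a function by an injection does not change its fibers. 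Therefore $\gamma_{\tau_1+\tau_2}$ and $(\gamma_{\tau_1},\gamma_{\tau_2})$ have the same fibers, which by the previous paragraph are exactly the fibers of $P_{\gamma}(\tau_1)\times P_{\gamma}(\tau_2)$, while the fibers of $\gamma_{\tau_1+\tau_2}$ are those of $P_{\gamma}(\tau_1+\tau_2)$. Two objects of $\mathbf{UP}(S)$ with identical fibers are equal, so $P_{\gamma}(\tau_1+\tau_2) = P_{\gamma}(\tau_1)\times P_{\gamma}(\tau_2)$ and the canonical arrow is an identity.

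The main obstacle is bookkeeping rather than conceptual: one must check that the canonical arrow supplied by the product's universal property really has source $P_{\gamma}(\tau_1+\tau_2)$ (so that the pre-order argument applies), and that the identification of the product's fibers with the fibers of the pairing $(\gamma_{\tau_1},\gamma_{\tau_2})$ survives the relabeling by $R$. Both reduce to the fact that $\mathsf{e}$ and $R$ preserve the underlying set partition, so no labelled-partition subtlety remains once we pass to fibers; the genuine mathematical content is entirely carried by the exactness hypothesis through Proposition \ref{prop:pedigrad-property}.
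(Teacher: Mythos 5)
Your proof is correct and follows essentially the same route as the paper's: both hinge on exactness through Proposition \ref{prop:pedigrad-property} together with the naturality of Proposition \ref{prop:genotype-transformation}, identify the product of unlabeled partitions via Proposition \ref{prop:products_labeled_partitions} and Proposition \ref{prop:adjoint_equivalence:products}, and collapse the resulting comparison to an identity using the skeletal/pre-order structure of $\mathbf{UP}(S)$. The only cosmetic difference is that you convert the key isomorphism into an equality by comparing fibers directly (post-composition with an injection preserves fibers, and unlabeled partitions with the same fibers coincide by Proposition \ref{prop:fibers}), whereas the paper transports the isomorphism through the factorization system of Proposition \ref{prop:factorization_system} and then invokes Proposition \ref{prop:skeletal}; the mathematical content is the same.
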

\begin{proof}
First, it follows from Proposition \ref{prop:pedigrad-property} and Proposition \ref{prop:genotype-transformation} that the canonical arrow $\gamma_{\tau_1+\tau_2} \to \gamma_{\tau_1} \times \gamma_{\tau_2}$ is an isomorphism. By Proposition \ref{prop:factorization_system}, this means that the canonical arrow $\mathsf{e}(\gamma_{\tau_1+\tau_2}) \to \mathsf{e}(\gamma_{\tau_1} \times \gamma_{\tau_2})$ is an isomorphism in $\mathbf{LP}(S)$. Then, Proposition \ref{prop:products_labeled_partitions} shows that the object $\mathsf{e}(\gamma_{\tau_1} \times \gamma_{\tau_2})$ is the product $\mathsf{e}(\gamma_{\tau_1}) \times \mathsf{e}(\gamma_{\tau_2})$ in $\mathbf{LP}(S)$. As a result, Proposition \ref{prop:adjoint_equivalence:products} implies that $R\mathsf{e}(\gamma_{\tau_1} \times \gamma_{\tau_2})$ is the product $R\mathsf{e}(\gamma_{\tau_1}) \times R\mathsf{e}(\gamma_{\tau_2})$ in $\mathbf{UP}(S)$. Finally, we use Proposition \ref{prop:skeletal} to show that the image of the isomorphism $\mathsf{e}(\gamma_{\tau_1+\tau_2}) \to \mathsf{e}(\gamma_{\tau_1}) \times \mathsf{e}(\gamma_{\tau_2})$ through the functor $R:\mathbf{LP}(S) \to \mathbf{UP}(S)$ is an identity of the form $R\mathsf{e}(\gamma_{\tau_1+\tau_2}) \to R\mathsf{e}(\gamma_{\tau_1}) \times R\mathsf{e}(\gamma_{\tau_2})$ in $\mathbf{UP}(S)$.
\end{proof}

\subsection{Combinatorial GWAS}\label{ssec:combinatorial-GWAS}
In this last section, we define the concept of combinatorial association (see Definition \ref{def:combinatorial-association}) and explain, through an example (see Example \ref{exa:combinatorial-association}) how this concept relates to combinatorial GWAS in the sense of section \ref{ssec:motivations}. Finally, we show how the pedigrad $P_{\gamma}$ of section \ref{ssec:genotypes} can be used to construct combinatorial associations and we use the resulting procedure to design our combinatorial GWAS algorithm.

\begin{definition}[Combinatorial association]\label{def:combinatorial-association}
Let $G$ be a set, $N$ be a non-negative integer, $(S,<)$ be a finite strict linear oder and $\gamma$ be a genotype morphism in $\mathsf{Gen}_N(S,G)$. For every unlabeled partition $b$ in $\mathbf{UP}(S)$, we define a \emph{combinatorial association for $(b,\gamma)$} as a pair $(e,\tau)$ where $e \in \mathbb{V}(P_{\gamma}(\mathbf{1}_N),b)$ and $\tau \in\mathbf{PS}(N)$ such that
\begin{itemize}
\item[1)] (\emph{external factors}) $e$ is a minimal solution of $\mathbb{V}(P_{\gamma}(\mathbf{1}_N),b)$;
\item[2)] (\emph{localization}) $e$ is a solution of $\mathbb{V}(P_{\gamma}(\tau),b)$;
\item[3)] (\emph{minimality}) if there exists an arrow $f:\tau \to \tau'$ in $\mathbf{UP}(S)$ such that $e \in \mathbb{V}(P_{\gamma}(\tau'),b)$, then $f$ is an identity.
\end{itemize}
\end{definition}

\begin{example}[Combinatorial associations]\label{exa:combinatorial-association}
We illustrate the concept of combinatorial association by continuing Example \ref{exa:genotype-partitioning} -- we will use the notations therein considered. 

Suppose that we can associate each individual of $S$ with a phenotype value such that we can partition the set $S$ with respect to these values in terms of a partition $b$. The idea of a combinatorial association $(e,\tau)$ for the partition $b$ and the genotype morphism $\gamma \in\mathsf{Gen}_5(S,G)$ -- see Example \ref{exa:genotype-partitioning} -- is to compare the phenotypic clustering induced by $b$ to the genotypic clustering given by the images of the functor $P_{\gamma}$ to find minimal combinations of nucleotides that explain the phenotypic clustering of $S$. From this point of view, the partition $e$ represents the influence of non-genetic factors potentially causing the phenotypes and the plain segments $\tau$ localizes those SNPs that are likely to cause the phenotypic variation.

Let us give an example. Suppose that our set $S$ of individuals contains $\mathtt{healthy}$ and $\mathtt{diseased}$ individuals. This can be specified through a surjection as follows:
\[
y:
\left(
\begin{array}{lll}
S&\to&\{\mathtt{healthy},\mathtt{diseased}\}\\
\texttt{Angie}&\mapsto&\mathtt{healthy}\\
\texttt{Bob}&\mapsto&\mathtt{diseased}\\
\texttt{Charles}&\mapsto&\mathtt{healthy}\\
\texttt{Doug}&\mapsto&\mathtt{diseased}\\
\texttt{Eric}&\mapsto&\mathtt{diseased}.\\
\end{array}
\right)
\]
Sending the corresponding labeled partition $y:S \to \{\mathtt{healthy},\mathtt{diseased}\}$ through the functor $R:\mathbf{LP}(S) \to \mathbf{UP}(S)$ gives us the following unlabeled partition:
\[
b = \{\texttt{Angie},\texttt{Charles}\},\{\texttt{Bob},\texttt{Doug},\texttt{Eric}\}
\]
According to Example \ref{exa:genotype-partitioning}, the image $P_{\gamma}(\mathbf{1}_5)$ is equal to the partition  
\[
\{\texttt{Angie},\texttt{Eric}\},\{\texttt{Bob}\},\{\texttt{Charles}\},\{\texttt{Doug}\},
\]
which implies that the minimal solutions $e \in \mathbb{V}(P_{\gamma}(\mathbf{1}_5),b)$ can be as follows:
\[
\begin{array}{|l|c|}
\hline
\multicolumn{2}{|c|}{\cellcolor[gray]{.8}\{\texttt{Angie},\texttt{Eric}\},\{\texttt{Bob}\},\{\texttt{Charles}\},\{\texttt{Doug}\} \times e \to \{\texttt{Angie},\texttt{Charles}\},\{\texttt{Bob},\texttt{Doug},\texttt{Eric}\}}\\
\hline
e_1 & \{\texttt{Angie},\texttt{Bob},\texttt{Charles},\texttt{Doug}\},\{\texttt{Eric}\}\\
\hline
e_2 & \{\texttt{Angie}\},\{\texttt{Bob},\texttt{Charles},\texttt{Doug},\texttt{Eric}\}\\
\hline
\end{array}
\]
Hence, we can construct combinatorial associations of the form $(e_1,\tau)$ and $(e_2,\tau)$. Below, we give possible values for $\tau$, which turn out to be the same for both partitions $e_1$ and $e_2$. We can check that each plain segment $\tau$ satisfies the conditions of Definition \ref{def:combinatorial-association}.
\[
\begin{array}{|l|c|}
\hline
\multicolumn{2}{|c|}{\cellcolor[gray]{.8}\textrm{finding $\tau$ for the solution }e_1}\\
\hline
\multicolumn{2}{|c|}{\cellcolor[gray]{.8}P_{\gamma}(\tau) \times \{\texttt{Angie},\texttt{Bob},\texttt{Charles},\texttt{Doug}\},\{\texttt{Eric}\} \to \{\texttt{Angie},\texttt{Charles}\},\{\texttt{Bob},\texttt{Doug},\texttt{Eric}\}}\\
\hline
P_{\gamma}(1,0,1,0,0) &  \{\texttt{Angie},\texttt{Charles},\texttt{Eric}\},\{\texttt{Bob}\},\{\texttt{Doug}\}\\
\hline
P_{\gamma}(0,0,0,0,1) & \{\texttt{Angie},\texttt{Charles},\texttt{Eric}\},\{\texttt{Bob},\texttt{Doug}\}\\
\hline
\multicolumn{2}{|c|}{\cellcolor[gray]{.8}\textrm{finding $\tau$ for the solution }e_2}\\
\hline
\multicolumn{2}{|c|}{\cellcolor[gray]{.8}P_{\gamma}(\tau) \times \{\texttt{Angie}\},\{\texttt{Bob},\texttt{Charles},\texttt{Doug},\texttt{Eric}\} \to \{\texttt{Angie},\texttt{Charles}\},\{\texttt{Bob},\texttt{Doug},\texttt{Eric}\}}\\
\hline
P_{\gamma}(1,0,1,0,0) &  \{\texttt{Angie},\texttt{Charles},\texttt{Eric}\},\{\texttt{Bob}\},\{\texttt{Doug}\}\\
\hline
P_{\gamma}(0,0,0,0,1) & \{\texttt{Angie},\texttt{Charles},\texttt{Eric}\},\{\texttt{Bob},\texttt{Doug}\}\\
\hline
\end{array}
\]
To conclude, if we denote as $\tau_i$ the segment $(0,\dots,0,1,0,\dots,0)$ for which $\tau_i(i) = 1$, then the pairs $(e_1,\tau_1+\tau_3)$, $(e_1,\tau_5)$, $(e_2,\tau_1+\tau_3)$, and $(e_2,\tau_5)$ define combinatorial associations for $(b,\gamma)$.
\end{example}

Let $(S,<)$ be a finite strict alinear order. In the rest of this section, we give a heuristic to construct combinatorial associations for a given genotype morphism $\gamma$ in $\mathsf{Gen}_N(S,G)$ and an unlabeled partition $b:S \to [n]$ for which 
\begin{itemize}
\item[-] there is a function $y:S \to \mathbb{R}$ (associating every individual $i \in S$ with a phenotype);
\item[-] and a surjection $c:\mathsf{Im}(y) \to [n]$ (that categorizes the images of $y$ into groups);
\end{itemize}
such that $b$ is equal to the image of the labeled partition $c \circ \mathsf{e}(y)$ (see Proposition \ref{prop:factorization_system}) through the functor $R:\mathbf{LP}(S) \to \mathbf{UP}(S)$ (see Proposition \ref{prop:reflection_functor_partitions}), meaning that $b = R(c \circ \mathsf{e}(y))$.

Our strategy for designing our heuristic is to take advantage of the ``pedigrad property'' satisfied by the genotype partitioning functor $P_{\gamma}$ (see Proposition \ref{prop:pedigrad-property:genotype-partition}). More specifically, for a given minimal solution $e \in \mathbb{V}(P_{\gamma}(\mathbf{1}_N),b)$, we propose to find the plain segment $\tau$ of a combinatorial association $(e,\tau)$ by decomposing the plain segment $\mathbf{1}_N$ into an exact sum $\tau_1+\dots+\tau_k$ of plain segments in $\mathbf{PS}(N)$ and to find its smallest sub-sum $\tau_{i_1}+\dots+\tau_{i_h}$ for which the following arrow exists:
\begin{equation}\label{eq:pedigrad-causal-sum}
P_{\gamma}(\tau_{i_1}+\dots+\tau_{i_h}) \times e \to b.
\end{equation}
We find each segment $\tau_{i_j}$ by using the pedigrad property of Proposition \ref{prop:pedigrad-property:genotype-partition} on the exact sum $\mathbf{1}_N = \tau_1+\dots+\tau_k$. Specifically, for every $i \in [k]$, the morphism $P_{\gamma}(\mathbf{1}_N) \times e \to y$ is equal to the following morphism:
\[
P_{\gamma}(\tau_{i}) \times P_{\gamma}(\sum_{j \in [k]\backslash\{i\}}\tau_j) \times e \to b.
\]
Since we can expect the solutions of $\mathbb{V}(P_{\gamma}(\tau_{i}),b)$ to all share a common basis, it is reasonable to expect any solution $x_i \in \mathbb{V}(P_{\gamma}(\tau_{i}),b)$ to give us some information regarding what the solution 
\begin{equation}\label{eq:CGWAS:complement-tau-i}
e_{i} = P_{\gamma}(\sum_{j \in [k]\backslash\{i\}}\tau_j)
\end{equation}
looks like. To make sure that the solution $x_i$ does not overfit the partitioning structure of $b$ (see Proposition \ref{prop:existence:embedding-solutions}), so that it only assesses the essential contribution of partition (\ref{eq:CGWAS:complement-tau-i}) to the existence of morphism (\ref{eq:pedigrad-causal-sum}), we can take $x_i$ minimal in $\mathbb{V}(P_{\gamma}(\tau_{i}),b)$. Since $\mathbb{V}(P_{\gamma}(\tau_{i}),b)$ may contain several minimal solutions, we can take $x_i$ such that its associated $\mathsf{F}$-ratio $\mathsf{F}_{y}(x_i)$ is maximal\footnote{It is well known that this value is greater than or equal to 1 -- see \cite{Oehlert}}, meaning that the solution $x_i$ accounts for the most significant statistical structure needed to generate morphism (\ref{eq:pedigrad-causal-sum}). As a result, if the $\mathsf{F}$-ratio of the minimal solution $x_i$ turns out to be low compared to the $\mathsf{F}$-ratios of the other solutions $x_{j}$, then our construction ensures that the partition $e_i$, given in (\ref{eq:CGWAS:complement-tau-i}), is not as essential as the other solutions $e_{j}$ for the existence of morphism (\ref{eq:pedigrad-causal-sum}). This means that the partition $P_{\gamma}(\tau_{i})$ is more likely to explain the partitioning structure of $y$ than the other partitions $P_{\gamma}(\tau_{j})$.
\[
\begin{array}{|c|c|c|}
\hline
\multicolumn{1}{|c|}{\cellcolor[gray]{.8}\mathsf{F}_{y}(x_i)}&\multicolumn{1}{c|}{\cellcolor[gray]{.8}x_i}&\multicolumn{1}{c|}{\cellcolor[gray]{.8}P_{\gamma}(\tau_{i})}\\
\hline
\textrm{relatively low} & \textrm{conttribution does not seem to be essential to $y$} &\textrm{more likely to explain $y$}\\
\hline
\textrm{relatively large} & \textrm{conttribution seems essential to $y$} &\textrm{less likely to explain $y$}\\
\hline
\end{array}
\]
Our previous discussion suggests an obvious procedure, which we formalize in terms of pseudo-code. Our algorithm will make use of two intermediate algorithms, described beforehand.

The goal of our first algorithm is to find, for every embedding problem $(a,b)$, a minimal solution $x \in \mathbb{V}(a,b)$ such that the $\mathsf{F}$-ratio $\mathsf{F}_y(x)$ is maximal among all the other solutions of $\mathbb{V}(a,b)$. We propose the heuristic $\mathsf{A}_1(a,b)$, given below, whose steps are justified by the statements of Theorem \ref{theo:minimal-solutions} and Proposition \ref{prop:changes-F-ratio} (see the explanation given afterwards).

\vspace{4pt}
\noindent\textsf{%
\begin{tabular}{p{.35cm}|p{14.92cm}} 
\hline
\multicolumn{2}{c}{\cellcolor[gray]{.8}$\mathsf{A}_1$}\\
\hline
1& \textbf{Input:} $(a,b)$ where $a:S \to [n],b:S \to [m] \in \mathbf{UP}(S)$\\
2& \textbf{Construct} a function $r:[n] \to [m]$ for which $(a_i \subseteq b_j \Rightarrow r(i) = j)$ (see Theorem \ref{theo:minimal-solutions})\\
3& \textbf{Construct} a reduction $u = \{u_1,\dots,u^*\}$ of $\chi(r,a,b)$ such that $\varepsilon_{y}(u_k \to u_{k+1})$ is minimal\\
4& \textbf{Return} $u^*$\\
\hline
\end{tabular}
}
\vspace{4pt}

\noindent
By Proposition \ref{prop:changes-F-ratio}, line \textsf{3} of algorithm $\mathsf{A}_1$ implies that the $\mathsf{F}$-ratio $\mathsf{F}_y(u_{k+1})$ is likely to stay close to the value $\nu(u_{k} \to u_{k+1})\cdot \mathsf{F}_y(u_{k}) \simeq 1 \cdot \mathsf{F}_y(u_{k})$, meaning that the $\mathsf{F}$-ratio $\mathsf{F}_y(u^{*})$ should be rather maximal compared to all the other constructions of reduced elements. In particular, because the arrow $u_{k} \to u_{k+1}$ is a contraction (see Definition \ref{def:contraction}), Proposition \ref{prop:contractions-codegeneracy-morphisms} allows us to use the formula of Proposition \ref{prop:between-group-change:codegeneracy} to directly determine the contraction $u_{k+1}$ of $u_k$ whose associated quantity $\varepsilon_{y}(u_k \to u_{k+1})$ is minimal among the other possible contractions of $u_k$.

The goal of our second algorithm is to return, for every unlabeled partition $b$ in $\mathbf{UP}(S)$, every genotype morphism $\gamma$ in $\mathsf{Gen}_N(S,G)$, every plain segment $\tau \in \mathbf{Seg}(N)$ and every solution $e \in \mathbb{V}(P_{\gamma}(\tau),b)$, a segment $\tau'$ for which there is an arrow $\tau \to \tau'$ in $\mathbf{Seg}(N)$ and such that $e \in \mathbb{V}(P_{\gamma}(\tau'),b)$. We propose the heuristic $\mathsf{A}_2(N,b,\gamma,\tau,e)$ given below.

\vspace{4pt}
\noindent\textsf{%
\begin{tabular}{p{.35cm}|p{14.92cm}} 
\hline
\multicolumn{2}{c}{\cellcolor[gray]{.8}$\mathsf{A}_2$}\\
\hline
1& \textbf{Input:} $(N,b,\gamma,\tau,e)$ where $b \in \mathbf{UP}(S)$, $\gamma\in \mathsf{Gen}_N(S,G)$, $\tau \in \mathbf{Seg}(N)$, and $e \in \mathbb{V}(P_{\gamma}(\tau),b)$\\
2& \textbf{Decompose} $\tau$ as an exact sum $\tau_1+\dots+\tau_k$\\
3& \textbf{For} every $i \in \{1,\dots,k\}$ \textbf{do:}\\
4& \quad\textbf{Take} $x_i = \mathsf{A}_1(P_{\gamma}(\tau_i)\times e,b)$\\
5& \textbf{Order} $\{1,\dots,k\}$ as $\{i_1\dots,i_k\}$ such that $\mathsf{F}_{y}(x_{i_j}) \leq \mathsf{F}_{y}(x_{i_{j+1}})$ for every $j \in [k]$ \\
6& \textbf{Take} $h = \mathsf{min}\{j~|~ e \in \mathbb{V}(P_{\gamma}(\tau_{i_1}+\dots+\tau_{i_j}),b)\}$\\
7& \textbf{Return} $\tau_{i_1}+\dots+\tau_{i_h}$\\
\hline
\end{tabular}
}
\vspace{4pt}

Finally, for every unlabeled partition $b$ in $\mathbf{UP}(S)$, every genotype morphism $\gamma$ in $\mathsf{Gen}_N(S,G)$, and every positive integer $n$, we propose the heuristic $\mathsf{A}_3(N,b,\gamma,n)$, which essentially consists in applying $n$ times the mapping $(\tau,e) \mapsto (\mathsf{A}_2(N,b,\gamma,\tau,e),e)$ on itself.

\vspace{4pt}
\noindent\textsf{%
\begin{tabular}{p{.35cm}|p{14.92cm}} 
\hline
\multicolumn{2}{c}{\cellcolor[gray]{.8}$\mathsf{A}_3$}\\
\hline
1& \textbf{Input:} $(N,b,\gamma,n)$ where $b \in \mathbf{UP}(S)$, $\gamma\in \mathsf{Gen}_N(S,G)$, and $n \geq 1$\\
2& \textbf{If} $n = 1$ \textbf{then:} \\
3& \quad\textbf{Take} $e =\mathsf{A}_1(P_{\gamma}(\mathbf{1}_N),b)$ \\
4& \quad\textbf{Return} $(\mathsf{A}_2(N,b,\gamma,\mathbf{1}_N,e),e)$\\
5& \textbf{Else:} \\
6&\quad\textbf{Take} $(\tau,e) = \mathsf{A}_3(N,b,\gamma,n-1)$ \\
7&\quad\textbf{Return} $(\mathsf{A}_2(N,b,\gamma,\tau,e),e)$\\
\hline
\end{tabular}
}



\bibliographystyle{plain}

\end{document}